\newtheoremstyle{slthm}% name
{9pt}%      Space above, empty = `usual value'
{5pt}%      Space below
{\slshape}% Body font
{}%         Indent amount (empty = no indent, \parindent = para indent)
{\bfseries}% Thm head font
{.}%        Punctuation after thm head
{.5em}%     Space after thm head: " " = normal interword space;
\newtheoremstyle{prcl}% name
{9pt}%      Space above, empty = `usual value'
{5pt}%      Space below
{\slshape}% Body font
{}%         Indent amount (empty = no indent, \parindent = para indent)
{\bfseries}% Thm head font
{.}%        Punctuation after thm head
{.5em}%     Space after thm head: " " = normal interword space;
\newtheoremstyle{prblm}% name
{9pt}%      Space above, empty = `usual value'
{5pt}%      Space below
{\rm}% Body font
{}%         Indent amount (empty = no indent, \parindent = para indent)
{\bfseries}% Thm head font
{.}%        Punctuation after thm head
{.5em}%     Space after thm head: " " = normal interword space;
\theoremstyle{slthm}
\newtheorem{thm}{Theorem}[section]
\newtheorem{lemma}[thm]{Lemma}
\newtheorem{prop}[thm]{Proposition}
\newtheorem{cor}[thm]{Corollary}
\newtheorem{facts}[thm]{Facts}
\theoremstyle{definition}
\newtheorem{df}[thm]{Definition}
\newtheorem{nrmk}[thm]{Remark}
\newtheorem{nrmks}[thm]{Remarks}
\newtheorem{expl}[thm]{Example}
\newtheorem{expls}[thm]{Examples}
\theoremstyle{remark}
\newtheorem*{rmk}{Remark}
\newtheorem*{rmks}{Remarks}
\theoremstyle{prcl}
\newtheorem{nprclaim}[thm]{Proclaim}
\theoremstyle{prblm}
\newenvironment{renumerate}
{
	\begin{enumerate}}
	{\end{enumerate}}
\newcounter{flexnummark}
\DeclareMathOperator{\eh}{eh}
\DeclareMathOperator{\lm}{lm}
\DeclareMathOperator{\el}{el}
\DeclareMathOperator{\level}{level}
\DeclareMathOperator{\alevel}{al}
\DeclareMathOperator{\Log}{Log}
\DeclareMathOperator{\Exp}{Exp}
\DeclareMathOperator{\llog}{\mathfrak{log}}
\DeclareMathOperator{\eexp}{\mathfrak{exp}}
\DeclareMathOperator{\nn}{\mathfrak{n}}
\DeclareMathOperator{\vv}{\mathfrak{v}}
\DeclareMathOperator{\uu}{\mathfrak{u}}
\DeclareMathOperator{\Hsr}{\H_{st}}
\DeclareMathOperator{\cl}{cl}
\DeclareMathOperator{\im}{Im}
\DeclareMathOperator{\re}{Re}
\DeclareMathOperator{\supp}{supp}
\DeclareMathOperator{\sign}{sgn}
\DeclareMathOperator\h{h}
\DeclareMathOperator{\bexp}{\textbf{exp}}
\DeclareMathOperator{\blog}{\textbf{log}}
\newcommand{\rest}[1]{\!\!\upharpoonright_{#1}}
\newcommand{\into}{\longrightarrow}
\renewcommand{\bar}{\overline}
\def\Ind#1#2{#1\setbox0=\hbox{$#1x$}\kern\wd0\hbox to 0pt{\hss$#1\mid$\hss}
	\lower.9\ht0\hbox to 0pt{\hss$#1\smile$\hss}\kern\wd0}
\def\Notind#1#2{#1\setbox0=\hbox{$#1x$}\kern\wd0\hbox to 0pt{\mathchardef
		\nn=12854\hss$#1\nn$\kern1.4\wd0\hss}\hbox to
	0pt{\hss$#1\mid$\hss}\lower.9\ht0 \hbox to
	0pt{\hss$#1\smile$\hss}\kern\wd0}
\newcommand{\set}[1]{\left\{#1\right\}}
\newcommand{\KK}{\mathbb{K}}
\newcommand{\LL}{\mathbb{L}}
\newcommand{\NN}{\mathbb{N}}
\newcommand{\ZZ}{\mathbb{Z}}
\newcommand{\RR}{\mathbb{R}}
\newcommand{\CC}{\mathbb{C}}
\newcommand{\curly}[1]{\mathcal{#1}}
\newcommand{\A}{\curly{A}}
\newcommand{\B}{\curly{B}}
\newcommand{\C}{\curly{C}}
\newcommand{\D}{\curly{D}}
\newcommand{\E}{\curly{E}}
\newcommand{\F}{\curly{F}}
\newcommand{\G}{\curly{G}}
\renewcommand{\H}{\curly{H}}
\newcommand{\I}{\curly{I}}
\newcommand{\K}{\curly{K}}
\newcommand{\M}{\curly{M}}
\renewcommand{\P}{\curly{P}}
\newcommand{\R}{\curly{R}}
\renewcommand{\S}{\curly{S}}
\newcommand{\U}{\curly{U}}
\newcommand{\f}{\mathfrak{f}}
\newcommand{\g}{\mathfrak{g}}
\renewcommand{\h}{\mathfrak h}
\newcommand{\m}{\mathfrak{m}}
\newcommand{\n}{\mathfrak{n}}
\newcommand{\p}{\mathfrak{p}}
\renewcommand{\t}{\mathfrak{t}}
\renewcommand{\u}{\mathfrak{u}}
\renewcommand{\v}{\mathfrak{v}}
\newcommand{\frA}{\mathfrak{A}}
\newcommand{\frB}{\mathfrak{B}}
\newcommand{\frM}{\mathfrak{M}}
\newcommand{\frS}{\mathfrak{S}}
\newcommand{\frU}{\mathfrak{U}}
\newcommand{\frV}{\mathfrak{V}}
\newcommand{\ff}{\mathbf{f}}
\renewcommand{\gg}{\mathbf{g}}
\renewcommand{\nn}{\mathbf{n}}
\renewcommand{\ss}{\mathbf{s}}
\renewcommand{\uu}{\mathbf{u}}
\renewcommand{\vv}{\mathbf{v}}
\newcommand{\bP}{\mathbf{P}}
\newcommand{\la}{\curly{L}}
\DeclareMathOperator{\Ranexp}{\RR_{an,exp}}
\DeclareMathOperator{\Lanexp}{\la_{an,exp}}
\DeclareMathOperator{\Lanexplog}{\la_{an,exp,log}}
\newcommand{\hplus}{\H^{>0}}
\newcommand{\bhplus}{\bar{\H^{>0}}}
\newcommand{\Hang}{\H^{\text{ang}}}
\newcommand{\Hpoly}{\H_{\text{poly}}}
\newcommand{\Rpoly}{\R_{\text{poly}}}
\newcommand{\Mpoly}{\frM_{\text{poly}}}
\newcommand{\Hone}{\H^{\asymp 1}}
\newcommand{\Pc}[2]{\mathbb{#1}\left\{#2\right\}}
\numberwithin{equation}{section}
\title {Analytic continuation of $\log$-$\exp$-analytic germs}
\author {Tobias Kaiser and Patrick Speissegger}
\address{Universit\"at Passau \\
Fakult\"at f\"ur Informatik und Mathematik \\
Innstr. 33 \\
94032 Passau \\
Germany}
\email{tobias.kaiser@uni-passau.de}
\address {Department of Mathematics and Statistics, McMaster University, 1280
Main Street West, Hamilton, Ontario L8S 4K1, Canada}
\email {speisseg@math.mcmaster.ca}
\date{\today\ at \currenttime}
\subjclass[2010]{Primary 03C99, Secondary 30H99}
\keywords {O-minimal structures, $\log$-$\exp$-analytic germs, analytic continuation}
\thanks{Second author supported by NSERC of Canada grant
RGPIN 261961 and the Zukunftskolleg of Universit\"at Konstanz}
\begin{document}

\begin{abstract}
	We describe maximal, in a sense made precise, $\LL$-analytic continuations of germs at $+\infty$ of unary functions definable in the o-minimal structure $\Ranexp$ on the Riemann surface $\LL$ of the logarithm.  As one application, we give an upper bound on the logarithmic-exponential complexity of the compositional inverse of an infinitely increasing such germ, in terms of its own logarithmic-exponential complexity and its level.  As a second application, we strengthen Wilkie's theorem on definable complex analytic continuations of germs belonging to the residue field $\Rpoly$ of the valuation ring of all polynomially bounded definable germs.
\end{abstract}

\maketitle
\markboth{Tobias Kaiser and Patrick Speissegger}{Analytic continuation of $\log$-$\exp$-analytic germs}

\section*{Introduction}

The o-minimal structure $\Ranexp$, see van den Dries and Miller \cite{Dries:1994eq} or van den Dries, Macintyre and Marker \cite{Dries:1994tw}, is one of the most important regarding applications, because it defines all elementary functions (with the necessary restriction on periodic ones such as $\sin$ or $\cos$).  Holomorphic functions definable in $\Ranexp$ have turned out to be crucial in applications to diophantine geometry, see for instance Pila \cite{MR2800724} and Peterzil and Starchenko \cite{MR3039679}.

It is known \cite{Dries:1994eq,Dries:1994tw} that every function definable in the o-minimal structure $\Ranexp$ is piecewise analytic.  This implies that, if $f$ is the germ at $+\infty$ of a one-variable function definable in $\Ranexp$, also called a \textbf{$\log$-$\exp$-analytic germ} here, there is an open domain $U \subseteq \CC$ and a complex analytic continuation $\ff:U \into \CC$ of $f$, or an open domain $\frU \subseteq \LL$ and an $\LL$-analytic continuation $\f:\frU \into \LL$, where $\LL$ is the Riemann surface of the logarithm (see Section \ref{real_domains} for details).  Concerning complex analytic continuations $\ff:U \into \CC$ of $f$, it is shown by Kaiser \cite[Theorem C]{MR3509471} that $\ff$ can be chosen to be definable.  Wilkie \cite[Theorem 1.11]{MR3509953} characterizes those $f$ for which $\ff$ extends definably on some right translate of a sector properly containing a right half-plane of $\CC$; he then applies this continuation result to a diophantine problem.  

The aim of this paper is to describe $\LL$-analytic continuations $\f:\frU \into \LL$: we find a maximal $\frU$ (in a sense to be made precise) such that $\f$ is \textbf{half-bounded}, that is, either $\f$ or $1/\f$ is bounded (see the Continuation Corollary, Theorem \ref{H_ext_cor} below).  We obtain this statement from the more precise Continuation Theorem \ref{I_ext_prop} below, which only applies to \textbf{infinitely increasing} $f$, that is, those $f$ for which $\lim_{x \to +\infty} f(x) = +\infty$ holds.

These $\LL$-analytic continuations of $f$ depend on two integer-valued quantities associated to $f$: the \textit{exponential height} $\eh(f)$ of $f$ and the \textit{level} $\level(f)$ of $f$.  The former measures the logarithmic-exponential complexity of $f$; roughly speaking, if $f$ is unbounded, then $\eh(\exp \circ f) = \eh(f)+1$, while if $f$ is bounded, then $\eh(\exp \circ f) = \eh(f)$ (see Section \ref{description} for details).  The latter measures the exponential order of growth of the germ $f$; we refer the reader to Marker and Miller \cite{Marker:1997kn} for details and to Facts \ref{level_facts} to recall the main properties.  The level extends to all $\log$-$\exp$-analytic germs in an obvious manner, see Section \ref{angular_level} below.  

\begin{rmk}
	We show in Section \ref{description} that $\level(f) \le \eh(f)$, for all $\log$-$\exp$-analytic germs $f$.  The two are not equal in general: we have $\level(x+e^{-x}) = 0 \ne 1 = \eh(x+e^{-x})$.  
\end{rmk}

What we find in the Continuation Corollary is that, if $\f:\frU \into \LL$ is a maximal, half-bounded $\LL$-analytic continuation of $f$, then the size (in a sense to be made precise) of $\frU$ is determined by $\eh(f)$ and, conversely, that the size of $\frU$ determines an upper bound on $\eh(f)$.  Moreover, if $f$ is infinitely increasing, we also find that $\f$ is injective and, in this case, $\level(f)$ determines the size of the image $\f(\frU)$; see the Simplified Continuation Theorem \ref{simplified_ext} below.  The Continuation Theorem \ref{I_ext_prop} is more technical, but it is the central result of this paper, as our applications actually rely on these extra technicalities.  Finally we also describe, in the Complex Continuation Corollary \ref{complex_cont_cor} below, the resulting maximal complex continuations of germs in $\H$.

We include two applications of the Continuation Theorem and its corollaries.  In Application \ref{eh_application}, we give an upper bound on $\eh(f^{-1})$, in terms of $\eh(f)$ and $\level(f)$, of an infinitely increasing $\log$-$\exp$-analytic germ $f$, where $f^{-1}$ denotes the compositional inverse of $f$.  In Application \ref{pre-wilkie}, we strengthen Wilkie's theorem \cite[Theorem 1.11]{MR3509953} on definable complex analytic continuations of germs belonging to the residue field $\Rpoly$ of the valuation ring of all polynomially bounded $\log$-$\exp$-analytic germs.

The main motivation for us to prove the Continuation Theorem, however, is to show that all \textit{principal monomials} of $\H$, as defined towards the end of Section \ref{description} below, can be used in asymptotic expansions to obtain a quasianalytic Ilyashenko field $\K$ extending the Ilyashenko field $\F$ constructed in Speissegger \cite{Speissegger:2016aa}.  The details of this application (which also relies on Application \ref{eh_application} below), its motivations and the construction of $\K$ are the subject of a forthcoming paper.

The paper is organized as follows: in Section \ref{main_section}, we introduce some of the terminology needed and state the main results (except for Continuation Theorem \ref{I_ext_prop}).  In Section \ref{description}, we give a description of the set $\H$ of all $\log$-$\exp$-analytic germs and introduce the formal equivalent of \textit{convergent LE-series}, in the spirit of van den Dries, Macintyre and Marker \cite[Remark 6.31]{Dries:2001kh}.  We introduce the notions needed  for the Continuation Theorem \ref{I_ext_prop} in Sections \ref{real_domains}, \ref{angular_level} and \ref{opposing} and give its proof in Sections \ref{extendable} and \ref{expansive_section}.  Applications \ref{eh_application} and \ref{pre-wilkie} are discussed in Sections \ref{geom_simple_section} and \ref{definability_section}, respectively.

\section{Statements of results and some ideas}
\label{main_section}

One of the main issues is that the language needed to state our results has to be developed from scratch.  To illustrate some of the notions involved, we now briefly describe that of an \textit{$\eta$-domain}, the type of domain in $\LL$ that the sets $\frU$ above are selected from, and we state the precise Continuation Corollary and a simplified version of the Continuation Theorem that we call the Simplified Continuation Theorem below; the full statement of the Continuation Theorem is deferred to Section \ref{expansive_section}.  

We denote by $\LL:= \set{(r,\theta):\ r>0,\ \theta \in \RR}$ the Riemann surface of the logarithm with its usual covering map $\pi:\LL \into \CC \setminus \{0\}$ defined by $\pi(r,\theta) = re^{i\theta}$.  We let $|(r,\theta)|:= r$ be the \textbf{modulus} and $\arg(r,\theta):= \theta$ be the \textbf{argument} of $(r,\theta)$.  We usually write $x = (|x|,\arg x)$ for an element of $\LL$, and we identify the positive real half-line $(0,+\infty)$ with the set $\{x \in \LL:\ \arg x = 0\}$.

To define an $\eta$-domain, we first introduce \textbf{real domains} in Section \ref{real_domains} as the sets of the form $$\frU_h:= \set{x \in H_\LL(a):\ |\arg x| < h(|x|)},$$ where $a \ge 0$, $$H_\LL(a):= \set{x \in \LL:\ |x| > a}$$ and $h:(a,+\infty) \into (0,+\infty)$ is continuous.  Identifying $\LL$ with the set $(0,+\infty) \times \RR$, a real domain $\frU_h$ is definable in $\Ranexp$ if and only if $h$ is a $\log$-$\exp$-analytic germ.  Considering two real domains $\frU_{h_1}$ and $\frU_{h_2}$ equivalent if there exists $a>0$ such that $\frU_{h_1} \cap H_\LL(a) = \frU_{h_2} \cap H_\LL(a)$, and calling the corresponding equivalence classes of real domains \textbf{germs at $\infty$ of real domains}, we get a bijective map $h \mapsto \frU_h:\H^{>0} \into \G^\infty_{\text{dfr}}(\LL)$, where $\H^{>0}$ is the set of all positive $\log$-$\exp$-analytic germs and $\G^\infty_{\text{dfr}}(\LL)$ denotes the set of all germs at $\infty$ of definable real domains.   This bijection satisfies $h_1 < h_2$ if and only if $\frU_{h_1} \subset \frU_{h_2}$ as germs at $\infty$; in particular, any measure of size on $\H^{>0}$, such as valuation or level, can be transferred to $\G^\infty_{\text{dfr}}(\LL)$.

To understand what measure of size on $\H^{>0}$ is appropriate for our purposes, we consider the analytic continuations on $\LL$ of the elementary functions (Section 2): scalar multiplication $$m_r(x):= rx$$ and the power function $$p_r(x):= x^r,$$ for $r>0$ and $x>0$, as well as $\exp$ and $\log$.  It is easy to see that $m_r$ and $p_r$ have definable, biholomorphic continuations $\m_r, \p_r:\LL \into \LL$, respectively, while $\exp$ has a holomorphic continuation $\eexp:\LL \into \LL$ that is neither definable nor injective.  However, $\log$ has a definable, biholomorphic continuation $\llog:H_\LL(1) \into S_\LL(\pi/2)$, where $$S_\LL(a):= \set{x \in \LL:\ |\arg x| < a}$$ for $a>0$.  Indeed, one of our reasons for working with $\LL$-analytic rather than complex analytic continuations is that the definable, biholomorphic restriction of $\eexp$ to $S_\LL(\pi/2)$ has a larger domain than any definable, biholomorphic complex continuation of $\exp$ in the right complex half-plane.

It is not hard to show (see Section \ref{real_domains}) that each of these biholomorphic continuations maps definable real domains to definable real domains.  But for any $f \in \hplus$, if $g \in \hplus$ is such that $\frU_g = \llog(\frU_f)$, then $g$ is bounded, as $\frU_g$ is a subset of $S_\LL(\pi/2)$ (as germs at $\infty$).  This ``big crunch'' indicates that none of the measures of size mentioned earlier are quite right to describe the behaviour of $\llog$.  We show in Section \ref{angular_level} (especially Proposition \ref{alevel_cor})  that there is a decreasing map $\alevel:\H^{>0} \into \NN \cup \{-1\}$, called \textbf{angular level}, such that the following hold for $f \in \H^{>0}$ and $r>0$:
\begin{renumerate}
	\item if $g \in \hplus$ is such that $\frU_g = \m_r(\frU_f)$, then $\alevel(g) = \alevel(f)$;
	\item if $g \in \hplus$ is such that $\frU_g = \p_r(\frU_f)$, then $\alevel(g) = \alevel(f)$;
	\item if $g \in \hplus$ is such that $\frU_g = \llog(\frU_f)$, then $\alevel(g) = \alevel(f) + 1$;
	\item if $f \le 1/\log$, then $\alevel(f) = \level(f) + 1$.
\end{renumerate}
This angular level is related to the usual level, as point (iv) indicates, but takes into account the ``big crunch'' of $\llog$ mentioned earlier.

Finally, not all $f \in \hplus$ are as well behaved as the elementary ones above: if $t_a(x) := x+a$ for $a>0$ and $x>0$, then $t_a$ has an injective, holomorphic continuation $\t_a:\LL \into \LL$ that is periodic in $\arg x$, hence is not definable and, in general, maps definable real domains to domains that are neither definable nor real.  However, it is easy to see that, given $f \in \hplus$, there are $g_1, g_2 \in \hplus$ such that $\alevel(f) = \alevel(g_1) = \alevel(g_2)$ and $\frU_{g_1} \subseteq \t_a(\frU_f) \subseteq \frU_{g_2}$.  This leads to our desired definition: given a domain $\frU \subseteq \LL$ and $\eta \in \NN \cup \{-1\}$, we call $\frU$ an \textbf{$\eta$-domain} if there exist $g_1, g_2 \in \hplus$ such that $\alevel(g_1) = \alevel(g_2) = \eta$ and $$\frU_{g_1} \subseteq \frU \subseteq \frU_{g_2}.$$  Every definable, real domain is an $\eta$-domain, for some appropriate $\eta$, and the maps $\m_r$ and $\p_r$, as well as $\t_a$, map $\eta$-domains to $\eta$-domains, while $\llog$ maps $\eta$-domains to $(\eta+1)$-domains.  

Denoting by $\H$ the set of all $\log$-$\exp$-analytic germs, we are now ready to state the Simplified Continuation Theorem, which describes biholomorphic continuations of all infinitely increasing $\log$-$\exp$-analytic germs in the spirit of those of the elementary germs described above.  A map $\varphi:\frU \into \LL$, with $\frU \subseteq \LL$, is called \textbf{angle-positive} if $$\sign(\arg\varphi(x)) = \sign(\arg x)$$ for all $x \in \frU$.

\begin{thm}[Simplified Continuation]
	\label{simplified_ext}
	Let $f \in \H$ be infinitely increasing, and set $\eta:= \max\{0,\eh(f)\}$ and $\lambda:= \level(f)$.  Then there exist an $(\eta-1)$-domain $\frU$, an $(\eta-1-\lambda)$-domain $\frV$ and an angle-positive, biholomorphic continuation $\f:\frU \into \frV$ of $f$ that maps $k$-domains to $(k-\lambda)$-domains, for $k \ge \eta-1$.
\end{thm}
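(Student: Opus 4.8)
The plan is to reduce the statement to the elementary germs by factoring $f$ through a normal form and then tracking $\eta$-domains along the composition. Since $f$ is infinitely increasing, standard results on $\log$-$\exp$-analytic germs (the Marker–Miller theory, to be recalled in Section \ref{description}) let us write $f$ up to composition with elementary germs in a controlled way. First I would use the fact, proved in Section \ref{description}, that $\eh$ and $\level$ are compatible with composition by $\exp$, $\log$, $m_r$, $p_r$ and translation in the expected manner, together with the normal form for infinitely increasing germs: there is a representation in which iterated logarithms strip away the level, leaving a germ of level $0$ whose exponential height is controlled by $\eh(f)$. Concretely, one reduces to the case where $f$ is of the form $\log^{\circ\lambda}$ composed (on the appropriate side) with a germ $g$ of level $0$ with $\eh(g) \le \eta$, and $g$ in turn is built from germs asymptotic to $1$ (and their shifts) by finitely many applications of $\exp$, $m_r$, $p_r$, and translation, iterated $\eta$ times.

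The main body of the argument is then the inductive bookkeeping on $\eta$-domains. By the discussion preceding the theorem, the elementary continuations behave as follows: $\m_r$ and $\p_r$ send $k$-domains to $k$-domains, $\t_a$ sends $k$-domains to $k$-domains, and $\llog$ sends $k$-domains to $(k+1)$-domains; dually $\eexp$ (restricted to the relevant sector, e.g. $S_\LL(\pi/2)$, where it is definable and biholomorphic onto its image) sends $(k+1)$-domains to $k$-domains. Applying the normal form: starting from a germ asymptotic to $1$, whose continuation is defined and biholomorphic on some $\eta$-domain (base case, using that such a germ is in the relevant $\Ran$-fragment and so extends to a definable real domain, which is an $\eta$-domain for suitable $\eta$), each application of $\eexp$ lowers the level-shift available by one; after the $\eta$ applications building $g$ we land on a $0$-domain, hence (re-indexing) a biholomorphic continuation $\g$ of $g$ on an $(\eta-1)$-domain $\frU_0$ that maps $k$-domains to $k$-domains for $k \ge \eta-1$; finally precomposing with $\eexp^{\circ\lambda}$, equivalently viewing $f = g \circ (\text{nothing})$ and $f^{-1}$... — more precisely, since $f = \log^{\circ\lambda}\circ g$ is not quite right, I would instead write $f$ so that the $\lambda$ logarithms act on the output: continuing $g$ on an $(\eta-1)$-domain into a $0$-domain-type image and then the level shows up as applying $\llog$ $\lambda$ times to the source, so that $\f = \g \circ \eexp^{\circ\lambda}$ is defined on an $(\eta-1)$-domain $\frU$, is biholomorphic onto a $(0-\lambda) = (-\lambda)$... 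I must match the stated indices: the theorem asks for $\frU$ an $(\eta-1)$-domain, $\frV$ an $(\eta-1-\lambda)$-domain, and $\f$ mapping $k$-domains to $(k-\lambda)$-domains. This forces the arrangement $\f = \eexp^{\circ\lambda}\circ\g$ with $\g$ mapping $(\eta-1)$-domains biholomorphically to $(\eta-1)$-domains and $\eexp^{\circ\lambda}$ mapping $(\eta-1)$-domains to $(\eta-1-\lambda)$-domains; so the $\lambda$ exponentials sit on the \emph{output} side, which is exactly the level structure ($f$ grows like the $\lambda$-fold exponential, so $f = \eexp^{\circ\lambda}\circ(\text{level }0)$). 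Each of the $\lambda$ exponentials is biholomorphic on the image $(k)$-domain and sends it to a $(k-1)$-domain, by the dual of the $\llog$ statement; composing gives the claimed map on $k$-domains.

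Angle-positivity and injectivity are then inherited: each elementary building block ($\m_r$, $\p_r$, $\eexp$ restricted to $S_\LL(\pi/2)$, $\t_a$, and the base analytic germ) is angle-positive (one checks $\sign(\arg(\cdot))$ is preserved — for $\m_r$, $\p_r$ and the base germ because they are real-power-series-like with positive leading behaviour, for $\eexp$ on $S_\LL(\pi/2)$ directly, for $\t_a$ because horizontal translation does not change the sign of the imaginary part in the relevant region), and angle-positive maps compose to angle-positive maps; likewise biholomorphy is preserved under composition on matching $\eta$-domains. I expect the main obstacle to be the base case and the sector bookkeeping: one must ensure at each stage that the domain produced by the previous elementary map is actually large enough (contains an $\eta$-domain of the right level, as a germ at $\infty$) for the next elementary map to be applied on it biholomorphically — in particular that the shifts $\t_a$ coming from the normal form do not destroy the $\eta$-domain property, which is precisely the content of the remark before the theorem that $\t_a$ maps $\eta$-domains to $\eta$-domains, and that the compositions of $\eexp$ stay inside $S_\LL(\pi/2)$ at each level, i.e. that the angular level does the job it was designed for. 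The genuinely hard analytic input — that $\llog$ causes the "big crunch" and that angular level is the correct invariant — has been isolated into Proposition \ref{alevel_cor}, so modulo that and the normal form, the proof of the Simplified Continuation Theorem is this composition-tracking argument; the full Continuation Theorem \ref{I_ext_prop} is where the extra uniformity is needed and is proved separately in Sections \ref{extendable} and \ref{expansive_section}.
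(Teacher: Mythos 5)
There is a genuine gap, and it sits exactly where your proposal is vaguest: the ``normal form''. An infinitely increasing germ of $\H$ is \emph{not}, in general, a finite composition of $\exp$, $\log$, $m_r$, $p_r$, translations and germs asymptotic to $1$; what Section \ref{description} actually provides is a \emph{multiplicative} decomposition $f = a\,\lm(f)\,(1-\epsilon)$ into a constant, a principal monomial and a unit at $+\infty$ (Remark \ref{simple_sum}(3)), and multiplication by a unit is not among your compositional building blocks. Already $f = x\,(1+e^{-x})$ (level $0$, $\eh(f)=1$) escapes your scheme. Stripping $\lambda$ logarithms off the output to reduce to level $0$ is harmless but does not help: the level-$0$ (or, in the paper's setup, the unit) factor is precisely the case your argument never reaches, because pure composition-tracking of the elementary maps is only the easy half of the story (it is Lemma \ref{weak_ext_comp} together with Examples \ref{levelled_maps} and \ref{expansive_expl}, and it is indeed how the paper handles $\log_k$, $\eexp$, $\m_r$, $\p_r$).

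The step you dispose of in your ``base case'' (``real-power-series-like with positive leading behaviour'') is the actual content of the theorem: one must show that multiplying the continuation $\g$ of $a\,\lm(f)$ by the continuation $\u$ of the unit again gives a \emph{biholomorphic}, angle-positive map sending $k$-domains to $(k-\lambda)$-domains. The unit's continuation is not injective, and a product $\g\u$ of an injective map with a bounded one need be neither injective nor have an image of the right angular size. The paper's proof of Proposition \ref{exp_ext_prop} (from which Theorem \ref{I_ext_prop} and then the Simplified Continuation Theorem follow) handles this by proving that units have $\D$-Lipschitz continuations via Cauchy estimates on shrunk domains (Proposition \ref{u_small}), that infinitely increasing germs have \emph{expansive}, angle-positive continuations, that expansiveness survives multiplication by a $\D$-Lipschitz unit (Lemma \ref{expansive_unit}), and that the image of a $k$-domain under $\g\u$ is still a $(k-\eta)$-domain via the boundary-curve and angular estimates of Lemmas \ref{mult_by_unit_pos_exp} and \ref{mult_by_unit}; moreover the induction is on the pair $(\eta,\eta-\lambda)$, the case $\lambda<\eta$ being treated by passing to $\exp\circ f$ (Proposition \ref{exp_comp}) and writing $\f=\llog\circ\h$, not by placing $\lambda$ exponentials on the output side (where $\eexp$ is not even injective on the intermediate images without further shrinking). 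None of this machinery appears in your proposal, so the argument as written cannot be completed.
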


This theorem was what we wanted to prove originally, but we were unable to do so without making the statement considerably more precise.  Therefore, we defer to the end of Section \ref{angular_level} to give an outline of the proof of the Continuation Theorems (simplified or not), where enough additional terminology is available to do so.

For arbitrary germs in $\H$, we have the following consequence of the Continuation Theorem:

\begin{cor}[Continuation]
	\label{H_ext_cor}
	Let $f \in \H$ and $\eta \in \NN$.  Then $\eh(f) \le \eta$ if and only if there exist an $(\eta-1)$-domain $\frU$ and a half-bounded, analytic continuation $\f:\frU \into \KK$ of $f$, where $\KK$ is either $\CC$ or $\LL$.
\end{cor}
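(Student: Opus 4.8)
The plan is to deduce both directions of Corollary~\ref{H_ext_cor} from Continuation Theorem~\ref{I_ext_prop}, which handles infinitely increasing germs, after reducing an arbitrary $f \in \H$ to that case. By the Monotonicity Theorem, $f$ has a limit in $\RR \cup \{\pm\infty\}$ at $+\infty$. If $f$ is the germ of a constant, both sides hold trivially. If $\lim f = -\infty$, pass to $-f$, noting $\eh(-f) = \eh(f)$ and that $\f \mapsto -\f$ preserves half-boundedness and sends continuations to continuations. If $\lim f = c \in \RR$ with $f$ non-constant, then $f - c$ is a nonzero germ tending to $0$, so $g := \pm 1/(f-c)$, with the sign chosen so that $g \to +\infty$, is infinitely increasing, $f = c \pm 1/g$, and the properties of $\eh$ from Section~\ref{description} — invariance under reciprocal, subadditivity under $\max$, and $\eh$ of a nonzero constant being $0$ — give $\max\{0,\eh(g)\} = \max\{0,\eh(f)\}$, so the inequalities $\eh(f) \le \eta$ and $\eh(g) \le \eta$ are equivalent. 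I will use repeatedly that every $\eta$-domain is simply connected and contained in some $H_\LL(a)$, that an $(\eta'-1)$-domain with $\eta' \le \eta$ contains an $(\eta-1)$-domain (Section~\ref{angular_level}), and that the case $\KK = \CC$ reduces to $\KK = \LL$ by lifting through $\pi$ after restricting to a thin $(\eta-1)$-subdomain on which the continuation is zero-free.

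\emph{Forward direction.} Assume $\eh(f) \le \eta$. By the reduction we may assume $f$ infinitely increasing (and post-compose with $c \pm 1/(\cdot)$ at the end otherwise). Continuation Theorem~\ref{I_ext_prop}, or already its simplified form~\ref{simplified_ext}, supplies an angle-positive biholomorphic continuation $\f^* : \frU^* \into \frV^*$ of $f$ with $\frU^*$ a $(\max\{0,\eh(f)\}-1)$-domain; since $\frV^*$ is again a domain of this kind, $\frV^* \subseteq H_\LL(a)$ for some $a$, so $|\f^*| > a$ on $\frU^*$ and $\f^*$ is half-bounded (indeed $1/\f^*$ is bounded). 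As $\max\{0,\eh(f)\} \le \eta$, the domain $\frU^*$ either is an $(\eta-1)$-domain or contains one, $\frU$; restricting $\f^*$ to $\frU$, and in the non-infinitely-increasing case post-composing with $c \pm 1/(\cdot)$ (which stays bounded, hence half-bounded), yields the required continuation with $\KK = \LL$.

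\emph{Converse direction.} Let $\frU$ be an $(\eta-1)$-domain and $\f : \frU \into \KK$ a half-bounded continuation of $f$; reduce to $\KK = \LL$ and assume $f$ non-constant. Since $\frU$ contains the real-axis germ, on which $\f = f$, the alternative in which $1/\f$ is bounded cannot occur when $\lim f = 0$; and if $\lim f = c \neq 0$ with $1/\f$ bounded, then $1/\f$ is a bounded continuation of $1/f$, and as $\eh(1/f) = \eh(f)$ we may replace $(f,\f)$ by $(1/f, 1/\f)$. After this (and passing to $-f$ if $\lim f = -\infty$) we are reduced to two cases: $f$ infinitely increasing with $1/\f$ bounded, or $\lim f = c \in \RR$ with $\f$ bounded. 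In each case let $\f^*$ be the maximal half-bounded continuation of $f$ furnished by Continuation Theorem~\ref{I_ext_prop} — obtained directly in the first case, and in the second by post-composing the biholomorphic continuation of $g := \pm 1/(f-c)$ with $x \mapsto c \pm 1/x$ — so that $\f^*$ is defined on a $(\max\{0,\eh(f)\}-1)$-domain $\frU^*$ and is half-bounded of the same kind as $\f$ ($1/\f^*$ bounded in the first case, $\f^*$ bounded in the second). By uniqueness of analytic continuation $\f$ and $\f^*$ agree near the real-axis germ, hence, for suitable representatives, on $\frU \cap \frU^*$; so they admit a common analytic extension to $\frU \cup \frU^*$, which is again half-bounded because its two pieces are half-bounded of the same kind. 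Maximality of $\f^*$ now forces $\frU \cup \frU^* \subseteq \frU^*$, i.e.\ $\frU \subseteq \frU^*$ as germs at $\infty$. Choosing $g_1, g_2 \in \hplus$ with $\alevel(g_1) = \eta - 1$, $\frU_{g_1} \subseteq \frU$, $\alevel(g_2) = \max\{0,\eh(f)\} - 1$ and $\frU^* \subseteq \frU_{g_2}$, we obtain $\frU_{g_1} \subseteq \frU_{g_2}$, so $g_1 \le g_2$, so $\eta - 1 = \alevel(g_1) \ge \alevel(g_2) = \max\{0,\eh(f)\}-1$ since $\alevel$ is decreasing; therefore $\eh(f) \le \eta$.

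The converse is the delicate part, and the load-bearing points there are the correct invocation of the maximality clause of Continuation Theorem~\ref{I_ext_prop} and the verification that the common extension of $\f$ and $\f^*$ remains half-bounded — which is exactly why the case split on the two alternatives in the definition of half-bounded, and the reduction to an infinitely increasing germ, are genuinely needed rather than cosmetic. The remaining ingredients — the identities for $\eh$, the existence of $(\eta-1)$-subdomains, the elementary geometry of $\eta$-domains, and the lifting argument handling $\KK = \CC$ — should be routine given Sections~\ref{description}--\ref{opposing}.
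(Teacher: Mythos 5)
Your forward direction is fine and essentially the paper's own argument (reduce to an infinitely increasing germ via $1/(f-c)$, apply Continuation Theorem \ref{I_ext_prop}, note the image lies in some $H_\LL(a)$ so the reciprocal is bounded, then translate back by $c$; compare Corollary \ref{ltr-implication}). The converse, however, has a genuine gap at its load-bearing step: you invoke a ``maximality clause of Continuation Theorem \ref{I_ext_prop}'' to conclude that the given domain $\frU$ must be contained (as a germ at $\infty$) in the domain $\frU^*$ produced by the theorem. No such clause exists. Theorem \ref{I_ext_prop} asserts only the \emph{existence} of an angle-positive $(\eta,\lambda)$-extension; it says nothing about every half-bounded continuation of $f$ having domain contained in $\frU^*$. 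The word ``maximal'' in the introduction is informal, and the precise sense in which the continuations are maximal is exactly the content of the corollary you are trying to prove — citing it here is circular. Gluing $\f$ and $\f^*$ to a half-bounded continuation on $\frU \cup \frU^*$ produces no contradiction and no inclusion by itself.

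The missing ingredient is analytic, not formal: the right-to-left implication is a quasianalyticity statement and cannot be obtained by soft arguments from Theorem \ref{I_ext_prop} alone. (For instance, for $f = e^{-e^x}$ one must \emph{rule out} a half-bounded continuation on any $0$-domain; nothing in the existence theorem does that.) The paper's route is Proposition \ref{max_eh}: after reducing to bounded $f$, decompose $f = a + f_1 + f_2$ via Lemma \ref{1.5} so that $\eh(f_1) \le \eta$ while every principal monomial of $f_2$ has exponential height $> \eta$, hence $\level(f_2) \ge \eta+1$; continue $f_1$ by the Continuation Theorem, subtract to get a bounded continuation $\ff_2$ of $f_2$ on an $(\eta-1)$-domain, and then kill $f_2$ by Proposition \ref{P-L}, the Phragm\'en--Lindel\"of-type vanishing theorem (resting on \cite[Lemma 24.37]{Ilyashenko:2008fk}): a bounded holomorphic function on a $k$-domain whose restriction to $\RR$ decays like a germ of level $k+2$ is identically zero. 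Your proposal contains no analogue of this input, and without it the converse does not go through; the remaining bookkeeping in your last display (comparing angular levels of sandwiching germs) is fine but rests on the unestablished inclusion $\frU \subseteq \frU^*$.
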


The left-to-right implication of this corollary is a straightforward consequence of the Continuation Theorem (see Corollary \ref{ltr-implication}).  We obtain the right-to-left implication of the Continuation Corollary by combining the Continuation Theorem with a consequence of the Phrag\-m\'en-Lindel\"of principle found in Ilyashenko and Yakovenko \cite[Lemma 24.37]{Ilyashenko:2008fk}, see Proposition \ref{max_eh} below.

We now give two applications of the Continuation Theorem and its corollary.  The first of these considers the following question: if $f$ is infinitely increasing, and if $f^{-1}$ denotes the compositional inverse of $f$, is there a bound on $\eh\left(f^{-1}\right)$ described in terms of $\eh(f)$?  (Note that this question with level in place of exponential height has the natural answer $\level(f^{-1}) = -\level(f)$, see \cite{Marker:1997kn}.)  What we find (see Corollary \ref{gp_ext}(1) below) is the following more general statement:

\begin{nprclaim}[Application] 
	\label{eh_application}
	Assume that $f$ is infinitely increasing.  Then $$\eh\left(g \circ f^{-1}\right) \le \max\{\eh(g) + \eh(f) - 2\level(f), \eh(f) - \level(f)\},$$ where $g$ is any $\log$-$\exp$-analytic germ.
\end{nprclaim}

In Section \ref{geom_simple_section}, we also call a $\log$-$\exp$-analytic germ \textbf{simple} if $\eh(f) = \level(f)$, and we establish several consequences of Application \ref{eh_application} for such germs. 

The second application concerns the definability of the analytic continuations obtained in the Continuation Theorem and its corollary.  Wilkie \cite[Theorem 1.11]{MR3509953} obtains complex definable continuations for the germs contained in the subset $\Rpoly$ of $\H$ (see Section \ref{definability_section}), in fact characterizing $\Rpoly$ as the set of all $f \in \H$ that have a definable, complex continuation $\ff$ on some right translate of a sector properly containing a right half-plane of $\CC$.  

Building on the Continuation Theorem \ref{I_ext_prop}, we determine exactly which restrictions of our maximal $\LL$-analytic continuations are definable:  we call a set $\frS \subseteq \LL$ \textbf{angle-bounded} if the set $\{|\arg x|:\ x \in \frS\}$ is bounded.

\begin{thm}[Definability]
	Let $f \in \H$ be infinitely increasing, and set $\eta:= \max\{0,\eh(f)\}$ and $\lambda:= \level(f)$.  Let $\f:\frU \into \frV$ be one of the biholomorphic continuations of $f$ obtained from the Simplified Continuation Theorem, and let $\frU' \subseteq \frU$ be a definable domain.  If $\f(\frU')$ is angle-bounded, then $\f\rest{\frU'}$ is definable. 
\end{thm}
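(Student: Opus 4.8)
The plan is to argue by "descent along the elementary maps", unwinding the construction of $\f$ from the Continuation Theorem. Recall that the biholomorphic continuation $\f:\frU \into \frV$ is assembled from the elementary continuations $\m_r$, $\p_r$, $\t_a$, $\eexp$ and $\llog$ (and their inverses), reflecting the $\LE$-presentation of $f$. The non-definable building blocks are exactly $\eexp$ and the translations $\t_a$; every $\m_r$, $\p_r$ and $\llog$ is already definable and biholomorphic, as recorded in Section~\ref{real_domains}. So the issue is to show that, once we restrict to a definable domain $\frU'$ whose image is angle-bounded, no genuinely non-definable behaviour survives.

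First I would reduce to the key case. Writing $f = \llog^{\circ k} \circ F$ for a suitable $k$ and an $F$ of exponential height $\le 1$ whose continuation avoids the "big crunch" (so that $\f = \llog^{\circ k} \circ \F$ after shrinking), and using that $\llog^{\circ k}$ is definable and biholomorphic and maps definable domains to definable domains, it suffices to treat the case $\eta \le 1$, i.e. to show that $\F\rest{\frU'}$ is definable whenever $\frU' \subseteq \dom(\F)$ is definable and $\F(\frU')$ is angle-bounded. At this level, $\F$ is built from $\m_r$, $\p_r$, a single $\eexp$, and translations $\t_a$ applied to an $\Ran$-germ; concretely, after peeling off the definable outer pieces, the only possibly-non-definable layer is of the form $x \mapsto \eexp(x + a(x))$ or a translate thereof, where the inner part is definable.

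The crucial observation is that angle-boundedness of the image forces the non-definable layers to act on an angle-bounded set, on which they \emph{coincide} with definable maps. Concretely: $\eexp:\LL \into \LL$ is non-definable only because it is not injective — but its restriction to any angle-bounded set $\frS$ with $\{|\arg x|: x \in \frS\}$ bounded by $N$ agrees with a definable biholomorphism onto its image (compose the definable complex $\exp$ on a strip with a fixed, definable deck transformation determined by $N$), since on such $\frS$ the sheets do not overlap in a way that the covering map cannot separate. Likewise, $\t_a$ restricted to an angle-bounded set agrees with a definable map after the same kind of bookkeeping. Since $\F(\frU')$ is angle-bounded by hypothesis, and the intermediate images are angle-bounded because they are obtained from $\F(\frU')$ by applying $\llog^{-1}=\eexp$-type and definable maps backwards (here one uses that $\eexp$ \emph{decreases} the argument spread and that the definable maps $\m_r,\p_r$ distort it boundedly), I can replace each non-definable layer by its definable avatar on the relevant angle-bounded piece. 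Composing these definable restrictions yields that $\f\rest{\frU'}$ is definable.

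The main obstacle, I expect, is the bookkeeping that propagates angle-boundedness \emph{inward} through the tower: a priori angle-boundedness of the outermost image $\f(\frU')$ is given, but one must check that each intermediate set produced while unwinding $\f$ is again angle-bounded, so that the "definable avatar" replacement is legitimate at every stage. This is where the precise form of the Continuation Theorem \ref{I_ext_prop} — in particular the control it gives on the argument spreads of the intermediate $\eta$-domains and on how $\eexp$ and the translations act on them — will be needed; the rough $\eta$-domain language of the Simplified Continuation Theorem is not enough. Once that propagation is established, the remaining verification that each elementary non-definable map is definable on an angle-bounded set is routine, as is the final composition argument.
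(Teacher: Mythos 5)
There is a genuine gap, and it starts with your structural picture of $\f$. The continuation produced by the Continuation Theorem is \emph{not} a finite composition of the elementary maps $\m_r$, $\p_r$, $\t_a$, $\eexp$, $\llog$: the maps $\t_a$ never occur in the construction at all (they only motivate the notion of $\eta$-domain), and the essential non-elementary ingredient is the continuation of a \emph{unit} $u \in \E_\eta$, which is a lifted convergent power series evaluated at reciprocals of monomial continuations (Proposition \ref{derivative_of_small}, Corollary \ref{unit_lemma}). These lifted power series are themselves non-definable on angle-unbounded domains, because they are periodic in the argument; controlling exactly when their restrictions are definable is the heart of the matter (this is Lemma \ref{laurent_dfbl_ext} in the paper), and your claim that the non-definable building blocks are ``exactly $\eexp$ and the translations $\t_a$'' misses it. Your reduction step is also false: writing $f = \llog^{\circ k}\circ F$ means $F = \exp_k\circ f$, and by Proposition \ref{exp_comp} left-composition with $\exp$ never lowers exponential height, so you cannot force $\eh(F)\le 1$; consequently the claimed normal form ``$x\mapsto \eexp(x+a(x))$ with definable inner part'' is not available. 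The correct reduction is on the right, $f = g\circ\log_k$ with $g\in\E$, after which one still faces arbitrary exponential height and must induct on $\eh(g)$ through the full algebraic structure (monomial times unit, unit a series in small monomials via Lemma \ref{f_going_to_0}), not through a composition tower.

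Finally, the inward propagation of angle-boundedness, which you correctly flag as the main obstacle, is left without a mechanism, and the heuristic you offer (that preimages of angle-bounded sets under $\eexp$ are angle-bounded because $\eexp$ ``decreases argument spread'') is not true in general: $\arg\eexp(x) = |x|\sin(\arg x)$ can be bounded while $\arg x$ is not. The paper gets the propagation by proving a \emph{strengthened} inductive statement --- if $\f(\frU')$ is angle-bounded then $\frU'$ itself is angle-bounded and $\f\rest{\frU'}$ is definable --- combined with the fact that the unit continuations take values in $S_\LL(\pi/2)$, so that $(\m_a\circ\n)(\frU') \subseteq \f(\frU')/\u(\frU')$ is angle-bounded and the monomial case of the induction can be applied; in the monomial case one uses $\g(\frU') = \llog(\f(\frU'))\subseteq S_\LL(\pi/2)$ and the inductive hypothesis for the lower-height germ $g$ with $f=\exp\circ g$. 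Without this strengthened statement, the division trick, and the periodicity criterion of Lemma \ref{laurent_dfbl_ext}, the ``definable avatar'' replacement cannot be carried out layer by layer, so the proposal as it stands does not yield the theorem.
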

 
As a consequence, we obtain a similar (but not identical) characterization of membership in $\Rpoly$ that strengthens the complex continuation part of \cite[Theorem 1.11]{MR3509953}:

\begin{nprclaim}[Application]
	\label{pre-wilkie}
	Let $f \in \H$.  Then $f \in \Rpoly$ if and only if there exist a $(-1)$-domain $\frU$ and a half-bounded, analytic continuation $\ff:\frU \into \CC$ of $f$ such that, for every angle-bounded, definable domain $\frU' \subseteq \frU$, the restriction $\ff\rest{\frU'}$ is definable.
\end{nprclaim}

We note that the continuations in Application \ref{pre-wilkie} are complex-valued analytic continuations on domains in $\LL$.

Finally, in the Complex Continuation Corollary \ref{complex_cont_cor} below, we give a description of \textit{complex} analytic continuations of the germs in $\H$ implied by the Continuation Theorem and Corollary.  For $a \ge 0$, we set $$H(a):= \set{z \in \CC:\ \re z > a}.$$  We denote by $\arg$ the standard branch of the argument on $\CC \setminus (-\infty,0]$ and, for $\alpha \in (0,\pi]$, we set $$S(\alpha):= \set{z \in \CC:\ |\arg z| < \alpha}.$$  The following special case of Corollary \ref{complex_cont_cor} is worth writing down, as it avoids all $\LL$-related terminology introduced earlier:  

\begin{cor}
	\label{ccc}
	Let $f \in \H$ be such that $\eh(f) \le 0$.
	\begin{enumerate}
		\item There are $a \ge 0$ and a half-bounded complex analytic continuation $\ff:H(a) \into \CC$ of $f$.   
		\item Assume in addition that $f \in \I$.  Then 
		\begin{enumerate}
			\item $|\ff(z)| \to \infty$ as $|z| \to \infty$, for $z \in H(a)$;
			\item if $f \prec x^2$, then $\ff(H(a)) \subseteq \CC \setminus (-\infty,0]$, $\ff:H(a) \into \ff(H(a))$ is biholomorphic and we have $$\sign(\arg\ff(z)) = \sign(\arg z) = \sign(\im z) =
			\sign(\im\ff(z))$$ for $z \in H(a)$;
			\item if $\eh(f) < 0$ then, for every $\alpha > 0$, there exists $b \ge a$ such that $\ff(H(a)) \cap H(b) \subseteq S(\alpha)$. \qed
		\end{enumerate}
	\end{enumerate}
\end{cor}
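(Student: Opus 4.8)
The plan is to deduce Corollary \ref{ccc} as the $\CC$-valued shadow of the Continuation Corollary \ref{H_ext_cor} and the Simplified Continuation Theorem \ref{simplified_ext}, using the covering map $\pi:\LL \into \CC \setminus \{0\}$ to transfer statements from $\LL$ to $\CC$. The key observation throughout is that $\pi$ restricts to a biholomorphism from the sector $S_\LL(\pi)$ onto $\CC \setminus (-\infty,0]$, and that a $(-1)$-domain $\frU$ contains (as a germ at $\infty$) a set of the form $\frU_{g}$ with $\alevel(g)=-1$; by point (iv) of the angular level properties, $\alevel(g) = -1$ corresponds to $g$ being (eventually) bounded below by a positive constant near the boundary behaviour that matters, so after composing with a suitable scalar multiplication $\m_r$ and a translate $\t_a$ — which preserve $(-1)$-domains — one may arrange that $\frU$ contains a genuine right half-plane $H_\LL(a) \cap S_\LL(\alpha)$ for some $\alpha$, and in fact a set whose image under $\pi$ is the Euclidean half-plane $H(a)$. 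This is where I would be careful: one needs that a $(-1)$-domain, pushed to $\CC$, contains $H(a)$ for some $a$, and this should follow from the fact that the functions $h$ with $\alevel = -1$ are exactly those with $h \gtrsim$ some positive constant, i.e. $\liminf h > 0$, so $\frU_h$ contains a sector of positive opening $S_\LL(c)$, and a translate of such a sector contains a half-plane.

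For part (1): apply Corollary \ref{H_ext_cor} with $\eta = 0$ (legitimate since $\eh(f) \le 0$, and $0 \in \NN$), obtaining a $(-1)$-domain $\frU_0$ and a half-bounded analytic continuation $\f:\frU_0 \into \KK$ with $\KK \in \{\CC,\LL\}$. If $\KK = \CC$ we are essentially done after restricting to $H(a) \subseteq \frU_0$ as above. If $\KK = \LL$, compose with $\pi$: since $f$ itself is real on $(0,+\infty) \subset \LL$, the germ $\f$ agrees with the real germ there, and $\ff := \pi \circ \f\rest{H(a)}$ is a complex analytic continuation of $f$; half-boundedness of $\f$ (either $\f$ or $1/\f$ bounded) transfers to half-boundedness of $\ff$ since $|\pi(x)| = |x|$. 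So $\ff:H(a) \into \CC$ is the desired half-bounded continuation.

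For part (2), assume additionally $f \in \I$, i.e. $f$ is infinitely increasing (reading $\I$ as the set of such germs). Apply the Simplified Continuation Theorem: with $\eta = \max\{0,\eh(f)\} = 0$ and $\lambda = \level(f)$, we get an angle-positive biholomorphic continuation $\f:\frU \into \frV$ with $\frU$ a $(-1)$-domain and $\frV$ a $(-1-\lambda)$-domain. For (2)(a): $\f$ maps $k$-domains to $(k-\lambda)$-domains; since $\frU$ contains $H_\LL(a)$-type sets of arbitrarily high angular level near $\arg = 0$ is not quite the point — rather, infinite increase of $f$ plus angle-positivity forces $|\f(x)| \to \infty$ as $|x| \to \infty$ on the real line, and the monotonicity/biholomorphy propagates this to all of $H(a)$; more honestly I would invoke that $\frV$ being a $(-1-\lambda)$-domain with $\f$ surjective onto it, combined with $f$ infinitely increasing on the reals, gives $|\f(x)| \to \infty$ uniformly, then push through $\pi$. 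For (2)(b): if $f \prec x^2$ then $\level(f) \le 0$, in fact one checks $\lambda \le 0$, so $\frV$ is a $(-1-\lambda)$-domain with $-1-\lambda \ge -1$; angle-positivity of $\f$ means $\arg\f(x)$ and $\arg x$ have the same sign, and the bound $f \prec x^2$ should ensure $\f$ does not wind, i.e. $\f(H_\LL(a) \cap S_\LL(\pi)) \subseteq S_\LL(\pi)$, so that $\pi \circ \f$ lands in $\CC \setminus (-\infty,0]$ and is biholomorphic onto its image; the chain of sign equalities $\sign(\arg\ff(z)) = \sign(\arg z) = \sign(\im z) = \sign(\im\ff(z))$ then follows from angle-positivity together with the elementary fact that on $\CC \setminus (-\infty,0]$ one has $\sign(\arg z) = \sign(\im z)$. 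For (2)(c): if $\eh(f) < 0$, the Continuation Corollary gives the stronger conclusion that $\f$ (equivalently $1/\f$, but here $\f \to \infty$ so it must be $\f$ that is relevant on the image side) has image a $(-1-\lambda)$-domain, and since $\eh(f) < 0$ forces an improved angular-level gain — intuitively one extra application of $\llog$'s "big crunch" is available — the image is actually a $0$-domain or better, i.e. $\f(\frU)$ eventually lies inside every sector $S_\LL(\alpha)$, $\alpha > 0$; pushing through $\pi$ and restricting to $H(b)$ for $b$ large gives $\ff(H(a)) \cap H(b) \subseteq S(\alpha)$.

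The main obstacle I anticipate is the precise bookkeeping of angular levels and $\eta$-domains under the passage from $\LL$ to $\CC$: specifically, verifying that a $(-1)$-domain in $\LL$ contains the preimage $\pi^{-1}(H(a)) \cap S_\LL(\pi)$ of a Euclidean half-plane for some $a$ (this is the "$\alevel = -1 \Leftrightarrow$ contains a fixed-opening sector, hence after translation a half-plane" step), and dually that $f \prec x^2$ is exactly the borderline guaranteeing the image stays within $S_\LL(\pi)$ so that $\pi$ remains injective on it — the exponent $2$ presumably enters because $p_2 = \p_2$ doubles arguments and $S_\LL(\pi/2) \mapsto S_\LL(\pi)$ under $\p_2$, so germs growing slower than $x^2$ have continuations confined to $S_\LL(\pi)$. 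Once these dictionary entries between the $\LL$-language of the Continuation Theorem and the elementary $\CC$-language of the corollary are nailed down, everything else is a routine transfer via $\pi$, using only that $\pi$ is a local biholomorphism, is injective on $S_\LL(\pi)$, preserves moduli, and sends $\arg$ in $\LL$ to the standard $\arg$ on $\CC \setminus (-\infty,0]$ modulo $2\pi$.
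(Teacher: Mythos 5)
Your overall strategy is the right one and is essentially the paper's: the paper records this corollary as the special case $\eh(f)\le 0$ of the Complex Continuation Corollary \ref{complex_cont_cor}, which is itself proved exactly as you propose, by pushing the $\LL$-continuations from Corollary \ref{H_ext_cor} and Theorem \ref{simplified_ext} down to $\CC$ via $\pi$.  The problems are in the two angular-level ``dictionary'' steps that you yourself flag as the crux.  First, your characterization of angular level $-1$ is wrong: by Definition \ref{multi-quadratic_domain} and Examples \ref{al_expls}, germs $\asymp 1$ (and all germs in $\D$) have angular level $\ge 0$, while $\alevel(g)=-1$ forces $g\in\I$; so ``$\alevel(g)=-1$ iff $\liminf g>0$'' fails in both directions (e.g.\ $\alevel(\sqrt{\log})=0$ although $\sqrt{\log}\in\I$).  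Your fallback geometric step is also false: no translate or rescaling of a fixed sector $S_\LL(c)$ with $c<\pi/2$ contains a right half-plane, since points $b+iy$ with $|y|\to\infty$ have argument tending to $\pm\pi/2$.  The fact you need is true, but for a different reason: the inner bounding germ $g_1$ of a $(-1)$-domain is infinitely increasing, so eventually $g_1>\pi$ and hence $\frU\supseteq S_\LL(\pi)\cap H_\LL(b)$ as germs at $\infty$, whose image under $\pi_0$ contains $H(b)$ (this is what the paper's closing sentence of Corollary \ref{complex_cont_cor} is recording).

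Second, your argument for (2)(c) rests on the claim that a $0$-domain ``or better'' eventually lies inside every sector $S_\LL(\alpha)$; this is false, since $\frU_{\sqrt{\log}}$ is a $0$-domain and is not angle-bounded.  Moreover you apply the level count to $\f(\frU)$, which for $\lambda=-1$ is only guaranteed to be a $0$-domain.  The correct bookkeeping is to apply Theorem \ref{simplified_ext} to the $0$-domain corresponding to $H(a)$ itself: since $\lambda=\level(f)\le\eh(f)\le -1$, its image is a $(-\lambda)$-domain with $-\lambda\ge 1$, and by Proposition \ref{alevel_cor}(3) its outer bounding germ lies in $\D$, i.e.\ tends to $0$, which is what yields $\ff(H(a))\cap H(b)\subseteq S(\alpha)$.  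Two further steps are asserted rather than proved: for (2)(a), surjectivity onto an unbounded $(-1-\lambda)$-domain plus infinite increase on the reals does not by itself give $|\f(x)|\to\infty$ uniformly; the paper gets this by shrinking $\frU$ so that $\f^{-1}$ is continuous on the closure (properness), and alternatively one can quote clause (1c) of Definition \ref{weakly_extending} via the full Continuation Theorem \ref{I_ext_prop}, which gives $|\f(x)|\ge h_1(|x|)$ with $h_1\in\I$.  For (2)(b), your intuition about $p_2$ doubling arguments is the right mechanism, but it must be implemented: continue $h:=p_2/f\in\I$ (note $\eh(h)\le 0$) on the same domain, use the identity theorem to identify $\h$ with $\p_2/\f$, and then angle-positivity of both $\f$ and $\h$ gives $|\arg\f(x)|<2|\arg x|$, hence $\f$ maps the part of $\frU$ over $H(0)$ into $S_\LL(\pi)$, after which the sign equalities and biholomorphy of $\ff$ follow as you say.
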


Indeed, in our forthcoming paper generalizing the construction of Ilyashenko algebras in \cite{Speissegger:2016aa}, the only results we need from this paper are Corollary \ref{ccc} and Application \ref{eh_application}.

\section{A description of the Hardy field of $\Ranexp$}
\label{description}

One of the main results of \cite{Dries:1994tw} is that every function definable in $\Ranexp$ is piecewise given by $\Lanexplog$-terms.  Working from this result, the goal of this section is to describe the set $\H$ of all germs at $+\infty$ of unary functions definable in $\Ranexp$ in the spirit of the LE-series in \cite{Dries:2001kh}.

We let $\C$ be the ring of all germs at $+\infty$ of continuous functions $f:\RR \into \RR$ (so that $\H \subset \C$).  
A germ $f \in \C$ is $$\text{\textbf{small} if } \lim_{x \to +\infty} f(x) = 0,$$ $$\text{\textbf{large} if } \lim_{x \to +\infty} |f(x)| = \infty$$ and $$\text{\textbf{infinitely increasing} if } \lim_{x \to +\infty} f(x) = +\infty.$$  To compare elements of $\C$, we use the dominance relation $\prec$ found in Aschenbrenner and van den Dries \cite[Section 1]{MR2130825}, defined by $f \prec g$ if and only if $g(x) \ne 0$ for all sufficiently large $x$ and $\lim_{x \to +\infty} f(x)/g(x) = 0$ or, equivalently, $f(x) = o(g(x))$ as $x \to +\infty$.  Thus, $f \preceq g$ if and only if $f(x) = O(g(x))$ as $x \to +\infty$, and we write $f \asymp g$ if and only if $f \preceq g$ and $g \preceq f$.  Note that the relation $\asymp$ is an equivalence relation on $\C$, and the corresponding equivalence classes are the \textbf{Archimedean classes} of $\C$; we denote by $\Pi_\asymp:\C \into \C/_\asymp$ the corresponding projection map.

For $h \in \H$, we set
$$h(+\infty) := \lim_{x \to +\infty} h(x) \in \RR \cup \{-\infty,+\infty\},$$   and we denote by $\I$ the set of all infinitely increasing germs in $\H$, that is,  $$\I:= \set{f \in \H:\ f(+\infty) = +\infty}.$$  Note that $\I$ is a group under composition.

We start by defining the exponential height for germs defined by $\Lanexp$-terms.  To do so, we define the set $\E \subseteq \H$ of \textbf{$\Lanexp$-germs} and  associated \textbf{exponential height} $\eh:\E \into \NN \cup \{-\infty\}$ and \textbf{exponential level} $\el:\E \into \NN \cup \{-\infty\}$.  More precisely, we fix $n \in \NN \cup \{-\infty\}$ and define, by induction on $n$, the following sets:
\begin{itemize}
\item the set $\P\M_n$ of \textbf{pure exponential monomials} of exponential height $n$;
\item the set $\M_n \supseteq \P\M_n$ of \textbf{exponential monomials} of exponential height at most $n$;
\item the set $\E_n \supseteq \M_n$ of \textbf{$\Lanexp$-germs} of exponential height at most $n$;
\item the set $\P\E_n \subseteq \E_n$ of \textbf{pure} $\Lanexp$-germs of exponential height $n$, and its subset $\P\E_n^\infty$ of all \textbf{purely infinite} $\Lanexp$-germs;
\end{itemize}
such that the following hold:
\begin{itemize}
\item[(E1)$_n$] $\P\M_n \cup \{1\}$ is a multiplicative subgroup of $\H$, $$\P\M_n = \exp \circ \P\E_{n-1}^\infty \quad\text{if } n \ge 1,$$ the sets $\P\M_{-\infty}, \P\M_0, \dots, \P\M_n$ are pairwise disjoint and, for $m \in \P\M_n$ and $n \ge 0$, there exists a nonzero $\nu \in \NN$ such that $\exp_n(x^\nu) \ge m \ge \exp_n(x^{1/\nu})$ or $\exp_n(x^\nu) \ge 1/m \ge \exp_n(x^{1/\nu})$;
\item[(E2)$_n$] $\M_n$ is a multiplicative subgroup of $\H$ and, for $n \ge 0$, we have $\M_{n-1} \subseteq \M_n$ and for $m \in \M_n$, there are unique $I = I(m) \subseteq \{0,\dots, n\}$ and $m_i = m_i(m) \in \P\M_i$, for $i \in I$, such that $m = \prod_{i \in I} m_i$ (where we make the convention that $\prod_{i \in \emptyset} m_i = 1$), and we have $m \asymp 1$ if and only if $m=1$;
\item[(E3)$_n$] $\E_n$ is an $\RR$-subalgebra of $\H$ and, if $n \ge 0$, then $\E_{n-1} \subseteq \E_n$ and for nonzero $f \in \E_n$, there exist a unique countable ordinal $\alpha$, unique monomials $m_\beta \in \M_n$ and unique nonzero $r_\beta \in \RR$, for $\beta < \alpha$, such that $m_\beta \prec m_\gamma$ for $\gamma < \beta < \alpha$ and the sum $\sum_{\beta < \alpha} r_\beta m_\beta(x)$ converges absolutely to $f(x)$ on some interval $(a,+\infty)$, and such that $f \asymp m_0$;
\item[(E4)$_n$] $\P\E_n^\infty \cup \{0\}$ and $\P\E_n \cup \{0\}$ are $\RR$-vector subspaces of $\H$, $\P\E_n^\infty$ is closed under multiplication, the sets $\P\E_{-\infty}, \P\E_0, \dots, \P\E_n$ are pairwise disjoint and, for large $f \in \P\E_n$ and $n \ge 0$, there exists a nonzero $\nu \in \NN$ such that $\exp_n(x^\nu) \ge |f| \ge \exp_n(x^{1/\nu})$.
\end{itemize}
In the situation of (E3)$_n$ above, we call the monomials $m_\beta$ the \textbf{principal monomials} of $f$, write $$\M(f):= \set{m_\beta:\ \beta < \alpha}$$ and let $$\lm(f):= m_0$$ be the \textbf{leading principal monomial} of $f$.

For the definition of these sets, we distinguish three cases: \medskip

\noindent\textbf{Case $n=-\infty$:} We set $\P\M_{-\infty} = \M_{-\infty} := \{1\}$, $\P\E_{-\infty} = \E_{-\infty} := \RR$ and $\P\E_{-\infty}^\infty := \emptyset$.  We leave the (easy) verification of (E1)$_{-\infty}$--(E4)$_{-\infty}$ to the reader.
\medskip

\noindent\textbf{Case $n=0$:} We define $$\P\M_0 := \set{x^k:\ k \in \ZZ \text{ nonzero}},$$ where $x$ denotes the identity function on $\RR$, and we set $\M_0 := \P\M_0 \cup \{1\}$.  Then (E1)$_0$ and (E2)$_0$ follow immediately, and we define
\begin{equation*}
\E_0 := \set{p\left(\frac1x\right):\ p \text{ is a convergent Laurent series}};
\end{equation*}
then (E3)$_0$ also follows immediately, so we set
\begin{equation*}
\P\E_0 := \set{f \in \E_0:\ 0 \in I(m)  \text{ for every } m \in \M(f)},
\end{equation*}
i.e., $\P\E_0$ consists of those elements of $\E_0$ with zero constant coefficient, and
\begin{equation*}
\P\E_0^\infty:= \set{f \in \P\E_0:\ \text{every } m \in \M(f) \text{ is large}},
\end{equation*}
i.e., $\P\E_0^\infty$ consists of all polynomials in $x$ with zero constant coefficient.
(E4)$_0$ is now straightforward as well.\medskip

\noindent\textbf{Case $n>0$:}  Assume that $\P\M_i$, $\M_i$, $\E_i$ and $\P\E_i$ have been defined with the required properties, for $i = -\infty, 0, \dots, n-1$.  First, we define $$\P\M_n:= \exp \circ \P\E_{n-1}^\infty.$$

\begin{proof}[Proof of {\rm (E1)$_n$}]
(E1)$_n$ follows from (E4)$_{n-1}$: we only prove the last assertion.  Let $m \in \P\M_n$, and let $f \in \P\E_{n-1}^\infty$ be such that $m = \exp \circ f$.  By (E4)$_{n-1}$, there exists a nonzero $\nu \in \NN$ such that $\exp_{n-1}(x^\nu) \ge |f| \ge \exp_{n-1}(x^{1/\nu})$.  If $f(\infty) = +\infty$, then it follows that $\exp_n(x^\nu) \ge m \ge \exp_n(x^{1/\nu})$; if $f(\infty) = -\infty$, then $\exp_n(x^\nu) \ge 1/m \ge \exp_n(x^{1/\nu})$.
\end{proof}

Second, we set
\begin{equation*}
\M_n := \set{\prod_{i \in I} m_i:\ I \subseteq \{0, \dots, n\} \text{ and } m_i \in \P\M_i}.
\end{equation*}

\begin{proof}[Proof of {\rm (E2)$_n$}]
We first prove the uniqueness of $I(m)$ and of $m_i(m)$ for $i \in I$.  Let $m_i \in \P\M_i \cup \{1\}$, for $i=0, \dots, n$ be such that $m:= \prod_{i=0}^n m_i = 1$.  Since $\M_n$ and each $\P\M_i \cup \{1\}$ are multiplicative subgroups of $\H$, it suffices to show that $m_i = 1$ for each $i$.  Suppose, for a contradiction, that $m_i \ne 1$ for some $i$, and fix the largest such $i \in \{0, \dots, n\}$.  Since $m_i = m$ if $i=0$, we must have $i>0$.  By (E1)$_{i-1}$ and (E1)$_i$, there exists a nonzero $\nu \in \NN$ such that $m':= \prod_{j=0}^{i-1} m_j$ satisfies $\exp_{i-1}(x^\nu) \ge \max\{m',1/m'\} \ge \exp_{i-1}(x^{1/\nu})$, while $\exp_i(x^\nu) \ge \max\{m_i,1/m_i\} \ge \exp_i(x^{1/\nu})$.  It follows that $m \ne 1$, as desired.  A similar argument also shows that $m=1$ if and only if $m \asymp 1$.
\end{proof}

Third, we define
\begin{multline*}
\E_n:= \big\{p(m_1, \dots, m_k, M_1, \dots, M_l):\ p \in \Pc{R}{X_1, \dots, X_k}[T_1, \dots, T_l],\\ m_1, \dots, m_k \in \M_n \text{ are small and } M_1, \dots, M_l \in \M_n \text{ are large}\big\},
\end{multline*}
where $\Pc{R}{X_1, \dots, X_k}$ denotes the set of convergent power series.  Note that this definition produces the $\E_0$ defined above in the case $n=0$.

\begin{nrmk}
	\label{support_rmk}
	Let $f = p(m_1, \dots, m_k,M_1, \dots, M_l) \in \E_n$.  Since $p$ is polynomial in $T_1, \dots, T_l$, the infinite sum $p(m_1, \dots, m_k,M_1, \dots, M_l)$ converges absolutely on some interval $(a,+\infty)$ (with $a$ depending on $f$).  Writing $p = \sum_{r \in \NN^k, s \in \NN^l} p_{r,s} X^r T^s$ with $X = (X_1, \dots, X_k)$ and $T = (T_1, \dots, T_l)$, and writing $m = (m_1, \dots, m_k)$ and $M = (M_1, \dots, M_l)$, note that, for $u \in \NN^k$ and $v \in \NN^l$, the set
	\begin{equation*}
	A_{u,v}(m,M):= \set{(r,s) \in \NN^{k+l}:\ m^r M^s = m^u M^v}
	\end{equation*}
	is finite and $\sum_{(r,s) \in A_{u,v}(m,M)} p_{r,s} m^r M^s = p_A m_A$, where $A = A_{u,v}(m,M)$, $m_A = m^uM^v \in \M_n$ and $p_A \in \RR$ only depend on $A$.  On the other hand, the collection $\A = \A(m,M) := \{A_{u,v}(m,M):\ (u,v) \in \NN^{k+l}\}$ is a partition of $\NN^{k+l}$, so the sum $\sum_{A \in \A} p_A m_A$ is absolutely convergent on $(a,+\infty)$.  So we set
	\begin{equation*}
	S(m,M):= \bigcup\set{A \in \A(m,M):\ p_A \ne 0}
	\end{equation*}
	and
	\begin{equation*}
	q:= \sum_{(r,s) \in S(m,M)} p_{r,s} X^r T^s.
	\end{equation*}
	Since the support of $q$ is contained in the support of $p$, the series $q$ belongs to $\Pc{R}{X}[T]$, and we have $f = q(m,M)$.  Replacing $p$ by $q$ if necessary, we may therefore assume that $p_A \ne 0$ if and only if $A \cap \supp(p) \ne \emptyset$, for all $A \in \A$.  In particular, for $\eta \in \M_n$, we may assume that $\eta \in \M(f)$ if and only if $\eta = m^uM^v$ for some $(u,v) \in \supp(p)$.
\end{nrmk}

\begin{proof}[Proof of {\rm (E3)$_n$}]
	Let $f = p(m_1, \dots, m_k,M_1, \dots, M_l) \in \E_n$, and adopt the corresponding notations from Remark \ref{support_rmk}.  Since $m_A \ne m_B$ for any two distinct $A,B \in \A$, we conclude from (E2)$_n$ that $m_A \not\asymp m_B$ for any two distinct $A,B \in \A$.  In other words, the relation $\prec$ linearly orders the set $\{m_A:\ A \in \A \text{ and } p_A \ne 0\}$.  Since $p$ is polynomial in $T$ and $m_1, \dots, m_k$ are small, this ordering is a reverse well-ordering, so we let $\alpha$ be its reverse-order type and rewrite $\sum_{A \in \A} p_A m_A$ as $\sum_{\beta < \alpha} p_\beta m_\beta$. 
	
	Finally, we claim that $f \asymp m_0$: to see this, it suffices to show that $\sum_{0 < \beta < \alpha} p_\beta m_\beta = f - p_0m_0 \prec m_0$.   Note first that $$\frac f{m_0} = \sum_{r \in \NN^k, s \in \NN^l} p_{r,s} m^r \frac{M^s}{m_0};$$ since we can assume, by Remark \ref{support_rmk}, that $m_0$ is the maximal monomial in the support of $p$, it follows that each $m_r \frac{M^s}{m_0}$ is bounded, so that the convergence to $\frac {f(x)}{m_0(x)}$ is not only absolute, but also uniform in $x$ for sufficiently large $x$.  Therefore, $\sum_{0 < \beta < \alpha} p_\beta \frac{m_\beta(x)}{m_0(x)}$ converges absolutely and uniformly to $\frac{f(x) - p_0m_0(x)}{m_0(x)}$.  So let $\epsilon > 0$; absolute and uniform convergence means that there exists a finite $I \subseteq \alpha \setminus \{0\}$ such that $$\sum_{0 < \beta < \alpha,\ \beta \notin I} |p_\beta| \frac{|m_\beta|}{|m_0|} < \frac{\epsilon}2$$ as germs at $+\infty$.  On the other hand, since $I$ is finite and each $m_\beta/m_0$, for $\beta > 0$, is small, we have $$\sum_{\beta \in I} |p_\beta| \frac{|m_\beta|}{|m_0|} < \frac {\epsilon}2.$$  Hence $\left|\frac {f - p_0m_0}{m_0}\right| < \epsilon$, and since $\epsilon > 0$ was arbitrary, the claim and (E3)$_n$ follow.
\end{proof}

Fourth, we set
\begin{equation*}
\P\E_n := \set{f \in \E_n:\ n \in I(m) \text{ for every } m \in \M(f)}
\end{equation*}
and
\begin{equation*}
\P\E_n^\infty:= \set{f \in \P\E_n:\ \text{every } m \in \M(f) \text{ is large}}.
\end{equation*}

\begin{proof}[Proof of {\rm (E4)$_n$}]
	We verify that $\P\E_n^\infty$ is closed under multiplication, leaving the other closure properties for the reader to check.  Given $f = p(m,M)$ and $g = q(m,M)$ in $\P\E_n^\infty$ we may assume, by Remark \ref{support_rmk}, that every monomial $m^\alpha M^\beta$, with $(\alpha,\beta) \in  \supp(p) \cup \supp(q)$, is large.  In addition, we have $n \in I(m^\alpha M^\beta)$ for these monomials, so that $$m^\alpha M^\beta = m'_{\alpha,\beta} \cdot n_{\alpha,\beta}$$ with $m'_{\alpha,\beta} \in \M_{n-1}$ and $n_{\alpha,\beta} \in \P\M_n$ large, for $(\alpha,\beta) \in \supp(p) \cup \supp(q)$.  Therefore, the product of any two such monomials is of the same nature, that is, every principal monomial $m$ of $fg$ is of the form $m = m' n$ with $m' \in \M_{n-1}$ and $n \in \P\M_n$ large, so that $fg \in \P\E_n^{\infty}$.
	
	Finally, if $f \in \P\E_n$ is large, the estimate for $|f|$ follows from the observation in (E3)$_n$ that $f \asymp \lm(f)$ and the estimates in (E1)$_n$.
\end{proof}

This finishes the inductive construction of the sets $\E_n$.  We now set $\E:= \bigcup \E_n$, $\M:= \bigcup_n \M_n$ and define $\eh:\E \into \NN \cup \{-\infty\}$ by $$\eh(f):= \min\set{k:\ f \in \E_k}.$$
We leave it to the reader to check that, for $f \in \E$, we have
\begin{equation*}
\eh(f)= \max\set{\eh(m):\ m \in \M(f)}. 
\end{equation*}
%\end{df}

\begin{nrmks}
\label{simple_sum}
\begin{enumerate}
\item The estimates in (E4)$_n$ imply that every $f \in \P\E_n^\infty$ has \textbf{level} $n$, as defined by Marker and Miller in \cite{Marker:1997kn}; in particular, $\level(m) = \eh(m)$ for $m \in \M$.  Also, for $f \in \E$, since $f \asymp \lm(f)$, the levels of $f$ and of $\lm(f)$ are the same if $f$ is infinitely increasing.  Thus, for infinitely increasing $f \in \E$, we have 
\begin{equation}
	\label{level-eh} \level(f) = \level(\lm(f)) = \eh(\lm((f)) \le \eh(f).
\end{equation}
\item Let $l \ge 1$ and $m \in \M_l \setminus \M_{l-1}$, and let $k < l$.  If $n \in \M_k$, then $mn \in \M_l \setminus \M_{l-1}$; that is, $l \in I(mn)$.  Thus, if $f \in \E_k$, then $mf \in \P\E_l$.  It follows that:
\begin{renumerate}
	\item if $g \in \P\E_l$ and $f \in \E_k$, then $fg \in \P\E_l$;
	\item if $g \in \E_l \setminus \E_{l-1}$ and $f \in \E_k$, then $fg \in \E_l \setminus \E_{l-1}$.
\end{renumerate} 
\item Let $f \in \E_n$ be nonzero.  The proof of (E3)$_n$ shows that there are a nonzero $a \in \RR$ and a small $\epsilon \in \E_n$ such that $$f = a \lm(f) (1-\epsilon).$$  To see this, take $a:= p_0$ and $\epsilon:= -\frac{f-p_0m_0}{p_0m_0}$ in the notation used there; this $\epsilon$ belongs to $\E_n$, because $\E_n$ is an $\RR$-algebra containing $\M_n$.
\end{enumerate}
\end{nrmks}

\begin{lemma}
\label{f_going_to_0}
Let $f \in \E_n$ be small.  
\begin{enumerate}
\item There exist $k \in \NN$, $G \in \Pc{R}{X_0, \dots, X_k}$ and small $m_0, \dots, m_k \in \M_n$ such that $G(0) = 0$ and $f = G(m_1, \dots, m_k)$.
\item Let $T$ be a single indeterminate and $P \in \Pc{R}{T}$.  Then $P \circ f \in \E_n$.
\end{enumerate}
\end{lemma}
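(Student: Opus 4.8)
The plan is to prove both parts simultaneously by a single induction, using the structure theory already developed for $\E_n$. The key observation is that a small germ $f \in \E_n$ is, up to replacing $p$ as in Remark \ref{support_rmk}, of the form $p(m,M)$ where every monomial $m^uM^v$ appearing in $\supp(p)$ is small; in particular $p$ has zero constant term and all the "large" variables $M_j$ enter only in combinations that are dominated by small monomials. I would first reduce to the situation where no large monomials occur at all.

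For part (1): start from $f = p(m_1,\dots,m_k,M_1,\dots,M_l)$ with all principal monomials of $f$ small (which is automatic since $f$ itself is small and $f \asymp \lm(f)$), and invoke Remark \ref{support_rmk} to assume that $\eta \in \M(f)$ iff $\eta = m^uM^v$ for some $(u,v)\in\supp(p)$. Then every $m^uM^v$ with $(u,v)\in\supp(p)$ is small, hence lies in $\M_n$ and is a small monomial. Enumerate these finitely-or-countably-many products as new small monomials $m'_1, m'_2, \dots$; since $p$ is a convergent power series in the $X$'s and a polynomial in the $T$'s, regrouping $p$ by the value of $m^uM^v$ (using the partition $\A(m,M)$ from Remark \ref{support_rmk}) expresses $f$ as $\sum_A p_A m_A$ with each $m_A$ small. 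The only point requiring care is that we must realize this sum as $G(m'_1,\dots,m'_r)$ for a genuinely \emph{convergent} power series $G$ in finitely many variables with $G(0)=0$: here one picks a single small monomial $\mu \in \M_n$ with $m_A \preceq \mu$ for the leading $A$ (e.g. $\mu = \lm(f)$), writes each $m_A = \mu \cdot (m_A/\mu)$ — but $m_A/\mu$ need not be a monomial in $\M_n$ in the usual sense — so instead I would argue more directly: among $m_1,\dots,m_k$ (the original small generators) and their reciprocals where bounded, finitely many suffice to express all the $m_A$'s up to $\E_n$-units, because $\M_n$ is finitely generated over $\M_{n-1}$ in each bounded range and the proof of (E3)$_n$ already shows absolute, uniform convergence of $f/\lm(f)$. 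Taking $k$ large enough to include all generators actually occurring and setting $G$ to be the resulting series (convergent because it is a subseries/regrouping of the convergent $p$, composed with monomial substitutions) with constant term $G(0)=0$ (as $f$ is small) finishes (1).

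For part (2): given $P \in \Pc{R}{T}$ and small $f \in \E_n$, write $P(T) = P(0) + T\cdot Q(T)$ with $Q \in \Pc{R}{T}$, so $P\circ f = P(0) + f\cdot(Q\circ f)$. Since $\E_n$ is an $\RR$-algebra (by (E3)$_n$) it suffices to show $Q\circ f \in \E_n$, and by induction it suffices to show $P \circ f \in \E_n$ when $P(0)=0$, i.e. to handle the composition of two power series vanishing at $0$. Using part (1), write $f = G(m_1,\dots,m_k)$ with $G$ convergent and $G(0)=0$; then $P\circ f = (P\circ G)(m_1,\dots,m_k)$, and $P \circ G$ is again a convergent power series in $X_1,\dots,X_k$ with zero constant term by the standard fact that composition of convergent power series (inner one vanishing at the origin) is convergent. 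Since $m_1,\dots,m_k$ are small elements of $\M_n$, the germ $(P\circ G)(m_1,\dots,m_k)$ is visibly of the form required to lie in $\E_n$ (it is $p(m,M)$ with no large variables, $p = P\circ G \in \Pc{R}{X}$), so $P\circ f \in \E_n$.

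The main obstacle is the bookkeeping in part (1): ensuring that the regrouped series $G$ is literally a \emph{convergent} power series in \emph{finitely many} variables with zero constant term, rather than merely a formal rearrangement. The cleanest route is probably to not regroup at all, but instead observe directly that $f = p(m,M)$ with all $m^uM^v$ ($(u,v)\in\supp p$) small forces each individual $M_j$-power to be "absorbed": concretely, replace each large $M_j$ by writing the relevant monomial products $m^uM^v$ themselves as fresh small generators $X_0,\dots,X_k$ and note $f$ becomes a linear-in-the-new-variables expression, hence trivially $G(m'_0,\dots,m'_k)$ with $G$ a (finite or convergent) series; convergence is inherited from that of $p$ on $(a,+\infty)$ established in Remark \ref{support_rmk}, and $G(0)=0$ because $f$ is small. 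Once (1) is in hand, (2) is routine via closure of $\E_n$ under the $\RR$-algebra operations and convergence of composition of convergent power series.
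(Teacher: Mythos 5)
Your part (2) is fine and is essentially the paper's one-line argument (convergent power series are closed under composition), and the ``cleanest route'' you settle on for part (1) --- turn the small products $m^uM^v$ into fresh small generators so that $f$ becomes linear in them --- is in spirit exactly the paper's proof. But as written there is a genuine gap at precisely the point you flag as the main obstacle: you never show that \emph{finitely many} such products suffice. Since $p\in\Pc{R}{X}[T]$ is a full convergent series in the small variables, $\supp(p)$ is in general infinite, so ``writing the relevant monomial products $m^uM^v$ themselves as fresh small generators $X_0,\dots,X_k$'' does not typecheck: taken literally it produces infinitely many new variables, and the claimed linear-in-the-new-variables expression is then not an element of $\Pc{R}{X_0,\dots,X_k}$, which is what the lemma demands. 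Your earlier attempted repair --- regrouping as $\sum_A p_Am_A$ and invoking the claim that ``$\M_n$ is finitely generated over $\M_{n-1}$ in each bounded range'' --- is unsubstantiated and should be dropped.

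The missing step, which is the actual content of the paper's proof, is a finite preparation using the polynomial-in-$T$ structure: write $p=\sum_v p_v(X)T^v$ over the finitely many $v$ occurring, and decompose each coefficient series over the finitely many minimal exponents of its support (Dickson's lemma), $p_v(X)=\sum_{u\in B_v}X^u\,g_{u,v}(X)$ with $B_v$ finite and each $g_{u,v}$ a unit of $\Pc{R}{X}$. This yields a finite set $B\subseteq\supp(p)$ of corner exponents; the corresponding monomials $m^uM^v$ are small because $(u,v)\in\supp(p)$ implies $m^uM^v\in\M(f)$, and
\begin{equation*}
f=\sum_{(u,v)\in B}\bigl(m^uM^v\bigr)\,g_{u,v}(m_1,\dots,m_\kappa),
\end{equation*}
so that $f=q$ evaluated at the original small monomials together with the finitely many new small monomials $m^uM^v$, $(u,v)\in B$, where $q$ is linear in the new variables with unit-series coefficients in the old ones; this $q$ is a genuine convergent series in finitely many variables with $q(0)=0$. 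With this finite decomposition inserted, your argument for (1) goes through, and (2) follows as you say.
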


\begin{proof}
	(1) If $n = 0$, the proof is straightforward, so we assume that $n > 0$.  Let $\kappa,\lambda \in \NN$, small $m_1, \dots, m_\kappa \in \M_n$, large $M_1, \dots, M_\lambda \in \M_n$ and $p \in \Pc{R}{X}[T]$ be such that $f = p(m, M)$, where $X = (X_1, \dots, X_\kappa)$, $T = (T_1, \dots, T_\lambda)$, $m = (m_1, \dots, m_\kappa)$ and $M = (M_1, \dots, M_\lambda)$.  Changing $p$ if necessary, we may assume that $M_1$, \dots, $M_\lambda$ are infinitely increasing.  By Remark \ref{support_rmk}, we may assume that $m^uM^v \in \M(f)$, for every $(u,v) \in \supp(p)$.  Since $f$ is small, every monomial in $\M(f)$ is small, so $m^uM^v$ is small for every $(u,v) \in \supp(p)$; in particular, $(0,0) \notin \supp(p)$.
	
	On the other hand, since $p$ is polynomial in $T$, there is a finite set $B \subseteq \supp(p)$ and, for each $(u,v) \in B$, there is a unit $p_{(u,v)} \in \Pc{R}{X}$, such that $$p(X,T) = \sum_{(u,v) \in B} X^uT^v p_{(u,v)}(X).$$  For $i = 1, \dots, |B|$, we let $X_{\kappa+i}$ be a new indeterminate, and we fix a bijection $\iota:\{1, \dots, |B|\} \into B$ and put $k:= \kappa + |B|$.  Then the series
	\begin{equation*}
	q(X_1, \dots, X_k) := \sum_{i=1}^{|B|} X_{\kappa+i} p_{\iota(i)}(X_1, \dots, X_\kappa)
	\end{equation*}
	belongs to $\Pc{R}{X_1, \dots, X_k}$ and satisfies $q(0) = 0$, the monomials $m_{\kappa+i}:= (mM)^{\iota(i)}$ are small for each $i$, and we have $f = q(m_1, \dots, m_k)$, as required.
	
	Since the collection of convergent power series is closed under composition, part (2) follows from part (1).
\end{proof}

\begin{lemma}
	\label{F-derivative}
	Let $n \ge 0$ and $f \in \E_n$.  Then $f' \in \E_n$.
\end{lemma}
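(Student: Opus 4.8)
The plan is to induct on $n$ and reduce everything to the chain rule applied to the explicit representation of an $\Lanexp$-germ as $f = p(m_1,\dots,m_k,M_1,\dots,M_l)$ with $p \in \Pc{R}{X}[T]$, the $m_i$ small and the $M_j$ large monomials in $\M_n$. The base case $n = 0$ is immediate: if $f = p(1/x)$ for a convergent Laurent series $p$, then $f' = -\tfrac1{x^2}p'(1/x)$, and since $p'$ is again a convergent Laurent series, $f' \in \E_0$. For the inductive step, assume $n > 0$ and that the claim holds for all indices $< n$.

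First I would reduce the computation of $f'$ to computing the derivatives of the monomials $m_1,\dots,m_k,M_1,\dots,M_l$. Formally differentiating $f = p(m,M)$ and using that $p$ is a (convergent power series in $X$, polynomial in $T$), the chain rule gives $f' = \sum_i \partial_{X_i}p(m,M)\cdot m_i' + \sum_j \partial_{T_j}p(m,M)\cdot M_j'$. The partial derivatives $\partial_{X_i}p$ and $\partial_{T_j}p$ are again of the same type (convergent power series in $X$, polynomial in $T$), hence each term $\partial_{X_i}p(m,M)$ and $\partial_{T_j}p(m,M)$ lies in $\E_n$; one must check convergence of these formally-differentiated series on a suitable interval $(a,+\infty)$, which follows from the standard fact that a convergent power series can be differentiated termwise inside its domain of convergence, together with the uniform-convergence estimates already used in the proof of (E3)$_n$ and Remark \ref{support_rmk}. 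So it suffices to show that $m' \in \E_n$ for every $m \in \M_n$, since $\E_n$ is an $\RR$-algebra.

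Next I would handle the derivative of a monomial $m \in \M_n$. Write $m = \prod_{i \in I} m_i$ with $I \subseteq \{0,\dots,n\}$ and $m_i \in \P\M_i$; by the product rule $m' = m \cdot \sum_{i \in I} m_i'/m_i$, so it is enough to show $m_i'/m_i \in \E_n$ for each $i$ (noting $m \in \M_n \subseteq \E_n$ and using that $\E_n$ is an algebra). For $i = 0$ we have $m_0 = x^k$ and $m_0'/m_0 = k/x \in \E_0 \subseteq \E_n$. For $1 \le i \le n$, we have $m_i = \exp \circ g$ with $g \in \P\E_{i-1}^\infty$, hence $m_i'/m_i = g'$; since $g \in \E_{i-1}$ and $i-1 < n$, the inductive hypothesis gives $g' \in \E_{i-1} \subseteq \E_n$. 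This closes the induction.

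The main obstacle, and the only point requiring genuine care rather than bookkeeping, is the convergence/interchange issue in the chain-rule step: one must justify that termwise differentiation of the absolutely convergent series $p(m,M)$ (as a germ at $+\infty$) again converges and represents $f'$, and that the resulting pieces are honestly elements of $\E_n$ in the sense of (E3)$_n$ rather than merely formal expressions. This is exactly the kind of estimate already carried out in the proof of (E3)$_n$ — pass to $f/\lm(f)$, exploit that the tails $m^r M^s/\lm(f)$ are bounded and that the $m_i$ are small, obtain uniform convergence on some $(a,+\infty)$, and then invoke the classical theorem on differentiating uniformly convergent series — so I would cite that proof and Remark \ref{support_rmk} rather than repeat the details.
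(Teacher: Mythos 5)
Your proposal is correct and follows essentially the same route as the paper: induction on $n$, with the base case from closure of convergent Laurent series under differentiation, the pure monomials handled via $(\exp\circ g)' = (\exp\circ g)\cdot g'$ and the inductive hypothesis, then the product rule for monomials and the chain rule for general elements of $\E_n$. You merely spell out the chain-rule/termwise-differentiation step that the paper leaves implicit, and your resolution of it (the differentiated series $\partial_{X_i}p$, $\partial_{T_j}p$ are again convergent power series in $X$ and polynomial in $T$, so the resulting expressions lie in $\E_n$ by definition) is sound.
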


\begin{proof}
	By induction on $n$; the case $n=0$ follows from the fact that the set of convergent Laurent series is  closed under differentiation.  So we assume $n>0$ and the lemma holds for lower values of $n$.  If $f \in \P\M_n$, then $f = \exp(g)$ for some $g \in \P\E_{n-1}^\infty$, so that $$f' = \exp(g) \cdot g' = f \cdot g';$$ in this case, $f' \in \E_n$ by the inductive hypothesis and (E3)$_n$.  From this, the inductive hypothesis and the product rule, the lemma follows easily if $f \in \M_n$ and hence, by the chain rule and the definition of $\E_n$, for general $f$.
\end{proof}

\begin{cor}
	\label{F_isa_field}
	Each $\E_n$, and hence $\E$, is a differential subfield of $\H$.
\end{cor}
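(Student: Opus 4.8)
The plan is to note that almost everything has been done already: by property (E3)$_n$ each $\E_n$ is an $\RR$-subalgebra of $\H$, and by Lemma \ref{F-derivative} it is closed under differentiation (the case $\E_{-\infty} = \RR$ being trivial). Since $\H$ is a field and $\E_n \subseteq \H$, the only point that remains is to show that $\E_n$ is closed under forming multiplicative inverses of its nonzero elements.

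So I would fix a nonzero $f \in \E_n$. By Remarks \ref{simple_sum}(3), there are a nonzero $a \in \RR$ and a small $\epsilon \in \E_n$ with $f = a\,\lm(f)(1-\epsilon)$, hence
$$\frac1f = \frac1a \cdot \frac1{\lm(f)} \cdot \frac1{1-\epsilon}.$$
Now $\lm(f) \in \M_n$, and $\M_n$ is a multiplicative subgroup of $\H$ by (E2)$_n$, so $1/\lm(f) \in \M_n \subseteq \E_n$. For the last factor, take the convergent power series $P(T) := \sum_{k \ge 0} T^k \in \Pc{R}{T}$, which equals $1/(1-T)$; since $\epsilon \in \E_n$ is small, Lemma \ref{f_going_to_0}(2) gives $1/(1-\epsilon) = P \circ \epsilon \in \E_n$. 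As $\E_n$ is an $\RR$-algebra, it follows that $1/f \in \E_n$. Thus each $\E_n$ is a subfield of $\H$, and combined with Lemma \ref{F-derivative} it is a differential subfield of $\H$.

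Finally, $\E = \bigcup_n \E_n$ with $\E_{-\infty} \subseteq \E_0 \subseteq \E_1 \subseteq \cdots$, so $\E$ is a directed union of differential subfields of $\H$ and is therefore itself a differential subfield of $\H$.

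There is no real obstacle here; the substance was already carried out in establishing (E3)$_n$, Remarks \ref{simple_sum}(3), and Lemma \ref{f_going_to_0}. The only minor care needed is to check that the germ $\epsilon$ produced in Remarks \ref{simple_sum}(3) is genuinely small, so that the geometric series $P \circ \epsilon$ converges and Lemma \ref{f_going_to_0}(2) applies — but this is immediate from its construction as $\epsilon = -(f - p_0\lm(f))/(p_0\lm(f))$ together with the estimate $f \asymp \lm(f)$ from the proof of (E3)$_n$.
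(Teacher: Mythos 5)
Your proposal is correct and follows essentially the same route as the paper: write $f = a\,\lm(f)(1-\epsilon)$ via Remark \ref{simple_sum}(3), invert the unit with the geometric series through Lemma \ref{f_going_to_0}(2), and invoke Lemma \ref{F-derivative} for closure under differentiation. The only difference is cosmetic — you spell out that $1/\lm(f) \in \M_n \subseteq \E_n$ via (E2)$_n$, which the paper leaves implicit.
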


\begin{proof}
	Let $f \in \E_n$ be nonzero; by Lemma \ref{F-derivative}, it suffices to show that $1/f \in \E_n$.  By Remark \ref{simple_sum}(3), there are a nonzero $a \in \RR$ and a small $\epsilon \in \E_n$ such that $f = a\lm(f)(1-\epsilon)$.  It follows from Lemma \ref{f_going_to_0} that $G \circ \epsilon \in \E_n$, where $G(T)$ is the geometric series; hence $1/f = \frac{G \circ \epsilon}{a\lm(f)}$ belongs to $\E_n$, as claimed.
\end{proof}

\begin{lemma}
\label{1.5}
Let $f \in \E$.  Then there are unique $f_j \in \P\E_j$, for $j=-\infty,0, \dots, \eh(f)$, such that $ f_{\eh(f)} \ne 0$ and $$f = f_{-\infty} + f_0 + \cdots + f_{\eh(f)}.$$  Moreover, if $f$ is infinitely increasing, then $$\level(f) = \max\set{j= -\infty, 0, \dots, \eh(f):\ f_j \in \I}.$$
\end{lemma}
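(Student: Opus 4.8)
The plan is to establish the decomposition by induction on $\eh(f)$, peeling off the top-level summand $f_{\eh(f)}$ and recursing on the remainder, and then to obtain the level statement by locating the $\prec$-largest monomial of $f$ among the pieces $f_j$. (We assume $f\neq 0$: since $\eh(0)=-\infty$, the requirement $f_{\eh(f)}\neq 0$ is meant only for nonzero $f$, and $f_j\in\P\E_j$ is read as allowing $f_j=0$.)

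\emph{Existence and uniqueness.} If $\eh(f)=-\infty$, then $f\in\RR=\P\E_{-\infty}$ and $f_{-\infty}:=f$ works. Let $n:=\eh(f)\ge 0$. By Remark~\ref{support_rmk}, write $f=p(m,M)$ in reduced form, with $p\in\Pc{R}{X}[T]$, the entries of $m$ small and those of $M$ large monomials in $\M_n$, and $\M(f)=\set{m^uM^v:\ (u,v)\in\supp(p)}$. By (E2)$_n$, the set $S:=\set{(u,v)\in\supp(p):\ n\in I(m^uM^v)}$ coincides with $\set{(u,v)\in\supp(p):\ m^uM^v\notin\M_{n-1}}$, and each $A$-class (in the sense of Remark~\ref{support_rmk}) is contained in $S$ or disjoint from it. Let $q\in\Pc{R}{X}[T]$ be the restriction of $p$ to $S$ — again convergent in $X$ and polynomial in $T$, its support lying in $\supp(p)$ — and put $f_n:=q(m,M)\in\E_n$. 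Then $q$ is again reduced, so $\M(f_n)=\M(f)\setminus\M_{n-1}$ and hence $f_n\in\P\E_n$. Now $h:=f-f_n$ lies in the $\RR$-algebra $\E_n$, and since $f_n$ collects exactly the monomials of $f$ outside $\M_{n-1}$ together with their $f$-coefficients, $\M(h)=\M(f)\cap\M_{n-1}$; the identity $\eh(h)=\max\set{\eh(m):\ m\in\M(h)}$ together with $\M_{n-1}\subseteq\E_{n-1}$ then gives $\eh(h)<n$. Applying the induction hypothesis to $h$ (and filling in zeros for the levels between $\eh(h)$ and $n-1$), then adjoining $f_n$, yields the decomposition; moreover $f_n\neq 0$, for otherwise $f=h$ would have exponential height $<n$. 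For uniqueness, let $f=g_{-\infty}+g_0+\cdots+g_n$ be any decomposition with $g_j\in\P\E_j\cup\set{0}$. By (E2)$_n$ every element of $\P\E_n$ has all its monomials outside $\M_{n-1}$, whereas $g_{-\infty}+\cdots+g_{n-1}$, being a sum of elements of $\E_j$ with $j<n$, lies in $\E_{n-1}$ and so has all its monomials in $\M_{n-1}$ (by (E3)$_{n-1}$). Since $\M(f)\subseteq\M_n=(\M_n\setminus\M_{n-1})\sqcup\M_{n-1}$, uniqueness of the series expansion in (E3)$_n$ forces $g_n=f_n$; then $g_{-\infty}+\cdots+g_{n-1}=h$, and one concludes by the induction hypothesis.

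\emph{The level statement.} Suppose $f$ is infinitely increasing, and set $\lambda:=\level(f)$. By \eqref{level-eh}, $\lambda=\eh(\lm(f))\ge 0$, so $\lm(f)\in\M_\lambda\setminus\M_{\lambda-1}$; hence $\lm(f)$ is a monomial of $f_\lambda$ (so $f_\lambda\neq 0$) and, being the $\prec$-largest monomial of $f$, is the leading monomial of $f_\lambda$. By Remark~\ref{simple_sum}(3), both $f$ and $f_\lambda$ have the form $a\,\lm(f)\,(1-\epsilon)$ with $\epsilon$ small and the \emph{same} $a\neq 0$ (the $f$-coefficient of $\lm(f)$); since $f\to+\infty$ forces $a\,\lm(f)\to+\infty$, also $f_\lambda\to+\infty$, i.e.\ $f_\lambda\in\I$. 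Conversely, fix $j>\lambda$ with $f_j\neq 0$, and put $\mu:=\lm(f_j)\in\M_j\setminus\M_{j-1}$; as a monomial of $f$ distinct from $\lm(f)$, it satisfies $\mu\prec\lm(f)$. Write $\mu=\mu'\mu_j$ with $\mu'\in\M_{j-1}$ and $\mu_j\in\P\M_j$ (by (E2)$_j$). The estimate in (E1)$_j$, combined with the fact that every monomial of $\M_{j-1}$ is $\preceq\exp_{j-1}(x^N)$ for a suitable $N$, shows that were $\mu_j$ large, $\mu$ would dominate every monomial of $\M_{j-1}\supseteq\M_\lambda$, in particular $\lm(f)$, contradicting $\mu\prec\lm(f)$. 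Hence $\mu_j$, and therefore $\mu$ and $f_j\asymp\mu$, is small, so $f_j\notin\I$. Thus $\max\set{j=-\infty,0,\dots,n:\ f_j\in\I}=\lambda=\level(f)$.

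The step I expect to be most delicate is the bookkeeping in the existence argument — moving cleanly between a germ, its unique series expansion, and its reduced $p(m,M)$-representation, and in particular verifying that restricting $\supp(p)$ to $S$ again produces an element of $\E_n$ with the correct principal monomials — together with extracting, in the level step, the precise growth comparison from the somewhat technical estimates in (E1)$_j$.
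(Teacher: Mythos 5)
Your proof is correct and takes essentially the same approach as the paper: both start from the reduced representation $f = p(m,M)$ of Remark \ref{support_rmk} and split $\supp(p)$ according to the exponential height of the monomials $m^uM^v$, the only structural difference being that the paper forms all pieces $f_\nu = p_\nu(m,M)$ at once while you peel off the top height and induct. You additionally write out the uniqueness argument and the growth comparison behind the level formula, which the paper only states tersely; these details are consistent with its framework.
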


\begin{proof}
If $f \in \E_0$, we let $f_{-\infty}$ be the constant term of $f$ and set $f_0:= f-f_{-\infty}$.  So we assume that $f \in \E_n$ with $n > 0$.  Let $k,l \in \NN$, small $m_1, \dots, m_k \in \M_n$, large $M_1, \dots, M_l \in \M_n$ and $p \in \Pc{R}{X}[T]$ be such that $f = p(m, M)$, where $X = (X_1, \dots, X_k)$, $T = (T_1, \dots, T_l)$, $m = (m_1, \dots, m_k)$ and $M = (M_1, \dots, M_l)$.  By Remark \ref{support_rmk}, we may assume that $m^uM^v \in \M(f)$, for every $(u,v) \in \supp(p)$.  Thus, we have $\eh(m^rM^s) \le \eh(f)$ for $(r,s) \in \supp(p)$.  So for $\nu = -\infty,0, \dots, \eh(f)$, we set
\begin{equation*}
S_\nu:= \set{(r,s) \in \supp(p):\ \eh(m^rM^s) = \nu}
\end{equation*}
and
\begin{equation*}
p_\nu:= \sum_{(r,s) \in S_\nu} p_{r,s} X^r T^s.
\end{equation*}
Then each $p_\nu$ belongs to $\Pc{R}{X}[T]$, $p_{\eh(f)} \ne 0$, and we have $f = f_{-1} + \cdots + f_{\eh(f)}$, where $f_\nu:= p_\nu(m,M)$ belongs to $\P\E_\nu$ for each $\nu$.

Finally, if $f$ is infinitely increasing, then at least one of the $f_j$ is also infinitely increasing, so it follows that $\lm(f) = \lm(f_{j_0})$ with $j_0 = \max\{j:\ f_j \in \I\}$.
\end{proof}

For $n \ge 0$, we also consider the vector space $$\P\E_n^0:= \set{f \in \P\E_n:\ \text{every } m \in \M(f) \text{ is small}}.$$  Arguing as in the proof of (E4)$_n$, we see that $\P\E_n^0$ is an $\RR$-algebra and, as $\RR$-vector spaces, we have $\P\E_n = \left(\P\E_n^\infty \cup \{0\}\right) \oplus \P\E_n^0$.  Lemma \ref{1.5} now gives the following:

\begin{cor}
\label{direct_sum}
			We have $$\E = \bigoplus \P\E_n = \P\E_{-\infty} \oplus \bigoplus_{n \in \NN} \left(\left(\P\E_n^\infty \cup \{0\}\right) \oplus \P\E_n^0\right)$$ as $\RR$-vector spaces. \qed
\end{cor}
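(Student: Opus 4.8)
This corollary is essentially immediate from Lemma \ref{1.5} together with the direct-sum decomposition $\P\E_n = (\P\E_n^\infty \cup \{0\}) \oplus \P\E_n^0$ recorded just before it (indeed the statement is given without proof), so the plan is mostly to assemble those two facts carefully.

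First I would prove the first equality $\E = \bigoplus_n \P\E_n$, with $n$ ranging over $\{-\infty\} \cup \NN$. Each $\P\E_n$ is an $\RR$-subspace of $\E$ by (E4)$_n$ (and by definition when $n = -\infty$), and the identity $f = f_{-\infty} + f_0 + \cdots + f_{\eh(f)}$ from Lemma \ref{1.5} writes an arbitrary $f \in \E$ as a finitely supported sum of elements of the $\P\E_j$, giving $\E = \sum_n \P\E_n$. For directness, the point I would record is that a nonzero $g \in \P\E_N$ satisfies $\eh(g) = N$: using $\eh(g) = \max\{\eh(m):\ m \in \M(g)\}$, it is enough to observe that a principal monomial $m$ of $g$ has $N \in I(m)$, whence by the estimate in (E1)$_N$ the monomial $m$ (or $1/m$) lies between $\exp_N(x^{1/\nu})$ and $\exp_N(x^\nu)$, so $m \notin \E_{N-1}$ although $m \in \M_N \subseteq \E_N$. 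Consequently, in any finitely supported representation $f = \sum_j f_j$ with $f_j \in \P\E_j$, the largest index $j$ with $f_j \neq 0$ must be $\eh(f)$, so the representation is the one from Lemma \ref{1.5}, and its uniqueness yields directness. (Equivalently: if $\sum_j h_j = 0$ with $h_j \in \P\E_j$ not all zero and $N$ the top nonzero index, then $h_N = -\sum_{j<N} h_j \in \E_{N-1}$, contradicting $\eh(h_N) = N$.)

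Then I would deduce the second equality by substituting $\P\E_n = (\P\E_n^\infty \cup \{0\}) \oplus \P\E_n^0$ for each $n \in \NN$ --- each summand an $\RR$-subspace by (E4)$_n$ and the remark preceding the corollary, with trivial intersection because a nonzero germ cannot have all its principal monomials simultaneously large and small --- while leaving the term $\P\E_{-\infty} = \RR$ untouched, and then reindexing the iterated direct sum over $\{-\infty\} \cup \NN$.

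The hard part is not really hard: all the genuine content lives in Lemma \ref{1.5} and in the proof of (E4)$_n$, and what remains is the bookkeeping in the directness check above.
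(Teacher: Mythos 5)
Your proposal is correct and follows essentially the same route as the paper, which gives no separate argument but derives the corollary directly from Lemma \ref{1.5} together with the decomposition $\P\E_n = \left(\P\E_n^\infty \cup \{0\}\right) \oplus \P\E_n^0$ recorded just before it. Your added check that a nonzero $g \in \P\E_N$ has $\eh(g) = N$ (so that the top index of any representation is forced) merely makes explicit the bookkeeping the paper leaves implicit.
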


\subsection*{Behaviour under composition}
It follows from part (5) of the first proposition in \cite{Marker:1997kn} and Remark \ref{simple_sum}(2) that for any $f,g \in \I$  we have 
\begin{equation}\label{level_eq}
	\level(f \circ g) = \level(f) + \level(g).
\end{equation}
A corresponding formula does not hold for the exponential height in general: take $g(x) = x+\exp(-x)$ and $f(x) = \exp(x)$.  Then $\eh(f \circ g) = 1 \ne 2 = \eh(f) + \eh(g)$.  This example is a special case of the situation described in Proposition \ref{exp_comp} below.  %The trouble is that the ``small part'' of $g$ does not behave well under composition; this can be explained by the extension properties of these maps to complex domains, see Section \ref{extension}.
Nevertheless, there is a conditional summation formula for the exponential height, see Corollary \ref{eh-lemma} below.

\begin{prop}
	\label{exp_comp}
	Let $f \in \E$ be large.  Then $\exp \circ f = m g \in \E$, where $m \in \M_{\level(f)+1} \setminus \M_{\level(f)}$ and $g \in \E_{\eh(f)}$ is bounded, and we have
	\begin{equation*}
		\eh(\exp \circ f) = \begin{cases}
		\eh(f) &\text{if } \level(f) < \eh(f), \\ \eh(f)+1 &\text{if } \level(f) = \eh(f).
		\end{cases}
	\end{equation*}
	Moreover, if $\eh(f) = \level(f)$, then $\exp \circ f \in \P\E_{\eh(f)+1}$.
\end{prop}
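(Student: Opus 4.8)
The plan is to first reduce to the case where $f$ is infinitely increasing: if $f$ is instead infinitely decreasing, apply the result to $-f$ — noting $\level(-f)=\level(f)$ and $\eh(-f)=\eh(f)$ since each $\E_n$ is an $\RR$-vector space — and pass to reciprocals, using that each $\E_n$ is a field by Corollary \ref{F_isa_field} and that $\M_{\lambda+1}\setminus\M_\lambda$ is closed under inversion. So assume $f\in\I$ and, via Lemma \ref{1.5}, write $f=f_{-\infty}+f_0+\cdots+f_N$ with $N:=\eh(f)$, $f_j\in\P\E_j$, and $\lambda:=\level(f)=\max\{j:f_j\in\I\}$. I would record two structural facts. (a) Every nonzero element of $\P\E_j$ ($j\ge 0$) is either large or small: this is immediate from $\P\E_j=(\P\E_j^\infty\cup\{0\})\oplus\P\E_j^0$ and the fact, used in the proof of (E3)$_n$, that a series is $\asymp$ to its leading monomial. (b) Writing $L:=\{j\ge 0:f_j\text{ is large}\}$, one has $\max L=\lambda$: the large $f_j$ of maximal index dominates $f$, hence (as $f\in\I$) is itself infinitely increasing and so has index $\le\lambda$, while the reverse inequality $\lambda\le\max L$ is trivial. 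Now split $f_j=f_j^\infty+f_j^0$ with $f_j^\infty\in\P\E_j^\infty\cup\{0\}$ and $f_j^0\in\P\E_j^0$, and set $\psi:=\sum_{j\in L}f_j^\infty$ and $\rho:=f-\psi=f_{-\infty}+\sum_j f_j^0$, so that $\exp\circ f=(\exp\circ\psi)(\exp\circ\rho)$.

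The first assertion then comes from two observations. First, $\exp\circ\psi=\prod_{j\in L}\exp(f_j^\infty)$, and $\exp(f_j^\infty)\in\P\M_{j+1}$ by the definition $\P\M_{j+1}=\exp\circ\P\E_j^\infty$; since the $j\in L$ are distinct and each factor is $\ne 1$, this product is already in the canonical form of (E2)$_n$, so $m:=\exp\circ\psi$ lies in $\M_{\lambda+1}$ with $\lambda+1\in I(m)$, i.e. $m\in\M_{\lambda+1}\setminus\M_\lambda$. Second, $\rho$ is bounded (a constant plus the small germ $\sum_j f_j^0$), its Lemma \ref{1.5} decomposition is $f_{-\infty}+f_0^0+\cdots+f_N^0$ so $\eh(\rho)=\max\{j:f_j^0\ne 0\}\le N$, and $g:=\exp\circ\rho=e^{f_{-\infty}}\cdot\exp(\sum_j f_j^0)$ lies in $\E_N$ by Lemma \ref{f_going_to_0}(2), with $g\asymp 1$. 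Hence $\exp\circ f=mg$ with $m,g$ as claimed.

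For the exponential-height formula, the key preliminary I would prove is that $\eh(\exp\circ h)=\eh(h)$ for every bounded $h\in\E$: the inequality $\le$ is Lemma \ref{f_going_to_0}(2) (after subtracting $\lim h$, a nonzero scalar not affecting $\M$), and for $\ge$ one argues by contradiction — if $\eh(\exp\circ h)<\eh(h)=:d$, then $\exp\circ h-1\in\E_{d-1}$ is small, so $h=\log(1+(\exp\circ h-1))\in\E_{d-1}$ again by Lemma \ref{f_going_to_0}(2), contradicting $\eh(h)=d$. Applying this to $g$ gives $\eh(g)=\eh(\rho)$. Next, multiplication by the monomial $m$ is an order-preserving bijection on monomials, so $\M(mg)=m\cdot\M(g)$; evaluating $\eh(m\mu)$ for $\mu\in\M(g)$ via Remark \ref{simple_sum}(2) — one has $\eh(m\mu)\le\max(\lambda+1,\eh(\mu))$, with equality whenever $\eh(\mu)\ne\lambda+1$ — and using that $1\in\M(g)$ (as $g\asymp 1$) together with a $\mu$ realizing $\eh(\mu)=\eh(g)$, one obtains $\eh(mg)=\max(\lambda+1,\eh(\rho))$. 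Since $\eh(f)=N=\max(\lambda,\eh(\rho))$, this splits exactly into the two stated cases: if $\lambda=N$ then $\eh(\rho)\le N$ and the answer is $N+1$; if $\lambda<N$ then necessarily $\eh(\rho)=N$ and the answer is $N$. For the final clause (the case $\lambda=N$): every $\mu\in\M(g)$ has $\eh(\mu)\le N$, so $\mu\in\M_N$ and Remark \ref{simple_sum}(2) gives $m\mu\in\M_{N+1}\setminus\M_N$, i.e. $N+1\in I(\mu')$ for every principal monomial $\mu'$ of $\exp\circ f$; hence $\exp\circ f\in\P\E_{N+1}$.

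I expect the main obstacle to be the lower bound $\eh(\exp\circ f)\ge\eh(f)$ when $\level(f)<\eh(f)$: one wants a principal monomial of $\exp\circ f$ of exponential height $\eh(f)$, but the candidates coming from $\M(g)$ can a priori cancel or have their top $\P\M$-layer annihilated upon multiplication by $m$. The point that resolves this is precisely the bounded-germ lemma above, which trades the monomial-level question for the identity $\eh(\exp\circ\rho)=\eh(\rho)$ via the logarithm; combined with the elementary but slightly fiddly layer bookkeeping of Remark \ref{simple_sum}(2), and with the observation that in the borderline situation $\lambda+1=\eh(f)$ the required height is simply $\eh(m)$ (take $\mu=1$), everything goes through.
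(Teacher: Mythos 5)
Your argument is correct in substance and its first half is essentially the paper's: you perform the same splitting via Lemma \ref{1.5} into purely infinite and ``small'' layers, set $m=\exp\circ\,(\sum_j f_j^\infty)\in\M_{\level(f)+1}\setminus\M_{\level(f)}$ and $g=\exp\circ\,(f_{-\infty}+\sum_j f_j^0)\in\E_{\eh(f)}$ via Lemma \ref{f_going_to_0}(2), and settle the ``moreover'' clause with Remark \ref{simple_sum}(2). Where you genuinely diverge is in the exponential-height formula. The paper argues by cases: when $\level(f)=\eh(f)-1$ it invokes the level identities \eqref{level-eh} and \eqref{level_eq}, and when $\level(f)<\eh(f)-1$ it factors $g=g_1g_2$ with $g_2=\exp\circ f_{\eh(f)}^0$ and simply asserts $\eh(g_2)=\eh(f)$ before applying Remark \ref{simple_sum}(2a). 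You instead prove one uniform auxiliary fact, $\eh(\exp\circ h)=\eh(h)$ for bounded $h\in\E$, by the logarithm trick through Lemma \ref{f_going_to_0}(2), and then compute $\eh(mg)=\max\{\level(f)+1,\eh(\rho)\}$ directly from $\M(mg)=m\cdot\M(g)$, $1\in\M(g)$, and the monomial bookkeeping of Remark \ref{simple_sum}(2). This buys a case-free derivation and, notably, supplies a proof of exactly the lower bound ($\eh(\exp\circ f_{\eh(f)}^0)=\eh(f)$) that the paper leaves implicit; the paper's route, by contrast, avoids any new lemma by leaning on the already-established level calculus. Your preliminary reduction of large decreasing $f$ to $f\in\I$ (via $-f$ and reciprocals, using Corollary \ref{F_isa_field} and the fact that inversion preserves $I(\cdot)$ of a monomial) is also a point of care the paper glosses over, and it works, including for the ``moreover'' clause, provided you invert the factorization $mg$ rather than the element of $\P\E_{\eh(f)+1}$ itself.

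One small repair is needed in your auxiliary lemma: for bounded $h$ with $c:=h(+\infty)$ possibly nonzero, $\exp\circ h-1$ need not be small, so the contradiction step should be run with $e^{-c}\exp\circ h-1$ (equivalently, with $h-c$ in place of $h$), giving $h-c=\log\bigl(1+(e^{-c}\exp\circ h-1)\bigr)\in\E_{d-1}$ and hence $h\in\E_{d-1}$, the desired contradiction. This matters in your application, since there $h=\rho$ has limit $f_{-\infty}$, but it is a one-line fix and does not affect the structure of the proof.
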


\begin{proof}
	Let $f \in \E$ be large, and let $f_{-\infty} \in \RR$ and $f_i^0 \in \P\E_i^0$ and $f_i^\infty \in \P\E_i^\infty \cup \{0\}$, for $i= 0, \dots, \eh(f)$, be such that $$f = f_{-\infty} + f_0^\infty + f_0^0 + \cdots + f_{\eh(f)}^\infty + f_{\eh(f)}^0,$$ so that $$\exp \circ f = (\exp \circ f_{-\infty}) \cdot \prod_{i=0}^{\eh(f)} \left(\exp \circ f_i^\infty\right) \cdot \left(\exp \circ f_i^0\right).$$
	
	We define $$m:= \exp \circ \left(f_0^\infty + \cdots + f_{\eh(f)}^\infty\right)$$ and $$g:= \exp\circ \left(f_{-\infty} + f_0^0 + \cdots + f_{\eh(f)}^0\right).$$ Since $f$ is large, we have $f_i^\infty = 0$ for $i > \level(f)$ while $f_{\level(f)}^\infty \ne 0$.  It follows that $m \in \M_{\level(f)+1} \setminus \M_{\level(f)}$ by definition of $\M_i$.  On the other hand, $g = E \circ \left(f_0^0 + \cdots + f_{\eh(f)}^0\right) \in \E_{\eh(f)}$ by Lemma \ref{f_going_to_0}(2), where $E$ is the Taylor expansion of $\exp$ at $f_{-\infty}$.  This proves the first statement.
	
	Assume now that $\level(f) = \eh(f)$.  Then $m \in \M_{\eh(f)+1} \setminus \M_{\eh(f)}$, while $\eh(g) \le \eh(f)$.  It follows from Remark \ref{simple_sum}(2a) that $\eh(\exp \circ f) \in \P\E_{\eh(f)+1}$ in this case.  
	
	So we assume for the rest of the proof that $\level(f) < \eh(f)$, so that $f_{\eh(f)}^0 \ne 0$ and $\eh(\exp \circ f) = \eh(mg) \le \eh(f)$.  If $\level(f) = \eh(f)-1$, then $\eh(\exp \circ f) \ge \level(\exp \circ f) = \eh(f)$ by inquality \eqref{level-eh} and equation \eqref{level_eq}, so that $\eh(\exp \circ f) = \eh(f)$ in this subcase.  So we also assume from now on that $\level(f) < \eh(f) - 1$, and we further factorize $g = g_1 g_2$, where $$g_1:= \exp \circ \left(f_{-\infty} + f_0^0 + \cdots + f_{\eh(f)-1}^0\right) \text{ and } g_2:= \exp \circ f_{\eh(f)}^0.$$ Note that $\eh(g_1) < \eh(f)$ by Lemma \ref{f_going_to_0}(2), while $f_{\eh(f)}^0 \ne 0$ implies that $\eh(g_2) = \eh(f)$.  But $\eh(m) < \eh(f)$ in this last subcase, so $\eh(\exp \circ f) = \eh(g) = \eh(g_2) = \eh(f)$ by Remark \ref{simple_sum}(2a).
\end{proof}

\begin{lemma}
\label{0-case}
Let $n \in \NN$ and $g \in \P\E_n^\infty$ be infinitely increasing.
\begin{enumerate}
\item For nonzero $k \in \ZZ$, we have $g^k \in \P\E_n^\infty$ if $k > 0$, and $g^k \in \P\E_n^0$ if $k < 0$.
\item If $f \in \P\E_0^\infty$, then $f \circ g \in \P\E_n^\infty$.
\item If $f \in \P\E_0$, then $f \circ g \in \P\E_n$.
\end{enumerate}
\end{lemma}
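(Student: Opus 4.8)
The plan is to prove (1) first and then deduce (2) and (3) from it, using the vector-space and algebra structure of the spaces $\P\E_n^\infty$, $\P\E_n^0$, $\P\E_n$ together with a power-series argument for the non-polynomial part of $f$.

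For (1) with $k>0$ there is nothing to do: $\P\E_n^\infty$ is closed under multiplication by (E4)$_n$, so $g^k\in\P\E_n^\infty$. For $k<0$ it is enough to show $1/g\in\P\E_n^0$, since $\P\E_n^0$ is an $\RR$-algebra (the discussion following Lemma \ref{1.5}), whence $g^k=(1/g)^{|k|}\in\P\E_n^0$. Now $1/g\in\E_n$ by Corollary \ref{F_isa_field}, and $1/g\to 0$ as $g$ is infinitely increasing, so $1/g$ is small; hence every monomial in $\M(1/g)$ is small, being $\preceq\lm(1/g)$. What remains — and this is the step I expect to be the main obstacle — is to show $n\in I(m)$ for every $m\in\M(1/g)$.

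To do this, write $g=a\,\lm(g)(1-\epsilon)$ with nonzero $a\in\RR$ and small $\epsilon\in\E_n$ by Remark \ref{simple_sum}(3), so that
\[
  \frac1g \;=\; \frac1a\cdot\frac1{\lm(g)}\cdot(G\circ\epsilon),
\]
where $G$ is the geometric series; then $G\circ\epsilon\in\E_n$ by Lemma \ref{f_going_to_0}(2), and $\M(G\circ\epsilon)$ consists of $1$ together with small monomials of $\M_n$. The crucial point is the following consequence of the growth estimates in (E1)$_n$--(E2)$_n$: \emph{if $\mu\in\M_n$ is small, then writing $\mu=\mu'\,n_\mu$ with $\mu'\in\M_{n-1}$ and $n_\mu\in\P\M_n\cup\{1\}$ as in (E2)$_n$, one has $n_\mu\preceq 1$.} Indeed, if $n_\mu\in\P\M_n$ were large then $n_\mu\ge\exp_n(x^{1/\nu})$ for some $\nu$ by (E1)$_n$, while $1/\mu'\le\exp_{n-1}(x^{\nu'})$ for some $\nu'$ by the estimate in the proof of (E2)$_n$, and then $\mu=\mu'n_\mu\to+\infty$, contradicting smallness of $\mu$. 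Applying the same reasoning to $\lm(g)$ — which is large because $g$ is infinitely increasing and $g\asymp\lm(g)$ — shows that its $\P\M_n$-component is large, so the $\P\M_n$-component of $1/\lm(g)$ is small and, in particular, not $1$. A typical monomial of $1/g$ has the form $\lm(g)^{-1}\mu$ with $\mu\in\M(G\circ\epsilon)$; splitting both factors into their $\M_{n-1}$- and $\P\M_n$-parts, the $\P\M_n$-part of $\lm(g)^{-1}\mu$ is the product of a small element of $\P\M_n$ with an element of $\P\M_n\cup\{1\}$ that is $\preceq 1$, hence is small and $\ne 1$. Thus $n\in I(m)$ for every $m\in\M(1/g)$, so $1/g\in\P\E_n^0$ and (1) is proved.

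For (2), $f\in\P\E_0^\infty$ is a polynomial $\sum_{j=1}^d c_jx^j$ with $c_d\ne 0$, so $f\circ g=\sum_{j=1}^d c_jg^j\in\P\E_n^\infty\cup\{0\}$ by (1) and the fact that $\P\E_n^\infty\cup\{0\}$ is an $\RR$-vector space; since $f\circ g$ is large it is nonzero, so $f\circ g\in\P\E_n^\infty$. For (3), use $\P\E_0=(\P\E_0^\infty\cup\{0\})\oplus\P\E_0^0$ to write $f=f^\infty+f^0$ with $f^\infty$ a polynomial in $x$ with zero constant term and $f^0=Q(1/x)$ for a convergent power series $Q$ with $Q(0)=0$. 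Then $f\circ g=(f^\infty\circ g)+Q(1/g)$: the first summand lies in $\P\E_n^\infty\cup\{0\}$ by (2) (trivially if $f^\infty=0$), and $Q(1/g)\in\E_n$ by Lemma \ref{f_going_to_0}(2) since $1/g$ is small. Finally, every monomial of $Q(1/g)$ is a product of at least one monomial of $1/g$ (as $Q(0)=0$); by (1) each monomial of $1/g$ has small, nontrivial $\P\M_n$-part, so any such product again has small $\P\M_n$-part $\ne 1$ and is itself small, whence $Q(1/g)\in\P\E_n^0$. Adding the two summands gives $f\circ g\in\P\E_n$, as required.
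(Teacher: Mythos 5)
Your proof is correct and takes essentially the same route as the paper's: it reduces (1) to showing $1/g \in \P\E_n^0$ by writing $g = a\,\lm(g)(1-\epsilon)$ and $1/g = (G\circ\epsilon)/(a\,\lm(g))$ with $G$ the geometric series, and then deduces (2) and (3) from (1) using the decomposition of $\P\E_0$. The paper's version is merely terser, leaving implicit the verification---which you spell out via the growth estimates of (E1)$_n$ and (E2)$_n$---that every principal monomial of $1/g$ is small and retains a nontrivial (small) $\P\M_n$-component.
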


\begin{proof}
	(1) Since $\P\E_n^\infty$ and $\P\E_n^0$ are closed under multiplication, it suffices to show that $1/g \in \P\E_n^0$ for $g \in \P\E_n^\infty$.  Let $g \in \P\E_n^\infty$; by Remark \ref{simple_sum}(3), we can write $g = a\lm(g)(1 - \epsilon)$, with $a \in \RR$ nonzero and $\epsilon \in \E_n$ small; in particular, every principal monomial of $\epsilon$ is small.  Hence $1/g = \frac{G\circ h}{r\lm(g)}$, where $G$ is the geometric series.  Since $n \in I(\lm(g))$ and $\lm(g)$ is large, and since every principal monomial of $\epsilon$ is small, it follows that $1/g \in \P\E_n^0$.
	Parts (2) and (3) follow from part (1).
\end{proof}

\begin{lemma}
\label{composition_lemma}
Let $p,n \in \NN$ with $p \ge 1$, $f \in \P\M_p$  and $g \in \P\E_n^\infty$ be infinitely increasing.  Then $f \circ g \in \P\M_{p+n}$.
\end{lemma}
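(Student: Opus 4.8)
The plan is to reduce Lemma~\ref{composition_lemma} to a composition property of the classes $\P\E_k^\infty$ and prove it by induction on exponential height. Since $\P\M_p=\exp\circ\P\E_{p-1}^\infty$ for $p\ge1$, writing $f=\exp\circ h$ with $h\in\P\E_{p-1}^\infty$ gives $f\circ g=\exp\circ(h\circ g)$; as $\P\M_{p+n}=\exp\circ\P\E_{p+n-1}^\infty$ and $(p+n)-1=(p-1)+n$, the lemma is equivalent to:
\[
(\star_m):\qquad h\in\P\E_m^\infty\ \text{and}\ g\in\P\E_n^\infty\cap\I\ \Longrightarrow\ h\circ g\in\P\E_{m+n}^\infty .
\]
I would prove $(\star_m)$ by induction on $m\ge0$, simultaneously with two companion statements, for all $g\in\P\E_n^\infty\cap\I$: \;$(A_m)$, that for $\eta\in\M_m$ one has $\eta\circ g\in\E_{m+n}$ with every principal monomial in $\M_{m+n}$, and (for $m\ge1$) the $\P\M_{m+n}$-component of each such principal monomial is the composition with $g$ of the $\P\M_m$-component of $\eta$ (an element of $\P\M_{m+n}$ by $(\star_{m-1})$) when $m\in I(\eta)$, and is $1$ otherwise; and \;$(B_m)$, that $h\circ g\in\E_{m+n}$ for every $h\in\E_m$. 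There is no circularity, since $(\star_{m-1})$ is exactly Lemma~\ref{composition_lemma} for exponential heights $\le m$; and I shall use freely that composing a small (resp.\ large) germ with the infinitely increasing germ $g$ produces a small (resp.\ large) germ.

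For the base case $m=0$: $(A_0)$ reduces to $x^k\circ g=g^k\in\E_n$ by Corollary~\ref{F_isa_field}; $(B_0)$ follows by writing an element of $\E_0$ as a polynomial in $x$ plus a convergent power series in $1/x$, whose composition with $g$ is a polynomial in $g$ plus a convergent power series in $1/g$, both in $\E_n$ by Corollary~\ref{F_isa_field} and Lemma~\ref{f_going_to_0}(2); and $(\star_0)$ is Lemma~\ref{0-case}(2). For the inductive step ($m\ge1$), assume all three statements for smaller indices. Given $\eta\in\M_m$, factor $\eta=x^k\cdot\eta_1\cdots\eta_m$ with $\eta_i\in\P\M_i\cup\{1\}$; then $\eta\circ g=g^k\cdot\prod_{i=1}^m(\eta_i\circ g)$, where $g^k\in\E_n$ and, by $(\star_{i-1})$, $\eta_i\circ g\in\P\M_{i+n}$ whenever $\eta_i\ne1$. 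Since the monomials coming from $g^k$ lie in $\M_n$ and those from $\eta_i\circ g$ with $i<m$ lie in $\M_{m+n-1}$, reading off $\P\M_{m+n}$-components yields $(A_m)$. For $(B_m)$, write $h$ as a convergent power series — polynomial in the large monomials — in small and large monomials of $\M_m$; by $(A_m)$ each small (resp.\ large) monomial composed with $g$ is a small element of $\E_{m+n}$ (resp.\ an element of $\E_{m+n}$), so, rewriting each small composite via Lemma~\ref{f_going_to_0}(1) and invoking closure of convergent power series under composition together with Corollary~\ref{F_isa_field}, one obtains $h\circ g\in\E_{m+n}$.

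It remains to prove $(\star_m)$. Let $h\in\P\E_m^\infty$ and write $h=a\,\lm(h)\,(1-\epsilon)$ with $a\in\RR$ nonzero and $\epsilon\in\E_m$ small, as in Remark~\ref{simple_sum}(3); then $h\circ g=a\,(\lm(h)\circ g)\,(1-\epsilon\circ g)$, and by $(B_m)$ we have $h\circ g\in\E_{m+n}$ with $\epsilon\circ g$ small. Since $\lm(h)\in\M(h)$ and $h\in\P\E_m$, the $\P\M_m$-component $\mu_0$ of $\lm(h)$ is nontrivial, and it is large: were it small, the growth estimates in (E1)$_m$ (applied to $\lm(h)=(\lm(h)/\mu_0)\cdot\mu_0$ with $\lm(h)/\mu_0\in\M_{m-1}$) would make the monomial $\lm(h)$ small, contradicting $h\in\P\E_m^\infty$. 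Now $\epsilon$ is a linear combination of monomials $\omega/\lm(h)$ with $\omega$ a principal monomial of $h$, so by $(A_m)$ every principal monomial of $h\circ g$ has $\P\M_{m+n}$-component either $\mu_0\circ g$ or $(\mu_0\circ g)\cdot\bigl((\omega_0/\mu_0)\circ g\bigr)=\omega_0\circ g$, where $\omega_0$ is the $\P\M_m$-component of $\omega$ and we used that composition with $g$ is a ring homomorphism. Since $\omega\in\M(h)$ is large with $m\in I(\omega)$, the same (E1)$_m$ argument shows $\omega_0$ is large, hence $\omega_0\circ g\in\P\M_{m+n}$ is large. Thus every principal monomial of $h\circ g$ has nontrivial $\P\M_{m+n}$-component — so $h\circ g\in\P\E_{m+n}$ — and, that component being large, an analogous (E1)$_{m+n}$ estimate shows the monomial itself is large; hence $h\circ g\in\P\E_{m+n}^\infty$, which completes the induction and proves the lemma.

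The step I expect to be the main obstacle is $(\star_m)$: a priori, the small corrections packaged in $\epsilon$, once composed with the fast-growing $g$, could shrink a principal monomial enough to destroy its $\P\M_{m+n}$-component or to render it non-large. Both dangers are averted by the homomorphism identity $(\mu_0\circ g)\cdot\bigl((\omega_0/\mu_0)\circ g\bigr)=\omega_0\circ g$ combined with the fact that the $\P\M_m$-component of a large monomial occurring in $\M(h)$ is itself large — and it is precisely here that the growth estimates in (E1) and the hypothesis that $g$ be infinitely increasing, not merely large, are needed.
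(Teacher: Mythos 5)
Your proposal is correct and follows essentially the same route as the paper's proof: write $f = \exp \circ f_0$ with $f_0 \in \P\E_{p-1}^\infty$, induct on exponential height with Lemma \ref{0-case}(2) as the base case, and show that composition with $g$ sends $\P\E_{m}^\infty$ into $\P\E_{m+n}^\infty$. Your simultaneous induction with the companions $(A_m)$ and $(B_m)$ simply makes explicit the monomial bookkeeping (tracking $\P\M$-components and the fact that $\E_m \circ g \subseteq \E_{m+n}$, the latter recorded in the paper only afterwards as Corollary \ref{eh-lemma}(1)) that the paper compresses into its one-line appeal to the inductive hypothesis and (E3)$_{p-1}$.
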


\begin{proof}
	Let $f_0 \in \P\E_{p-1}^\infty$ be such that $f = \exp \circ f_0$; we proceed by induction on $p$.  If $p=1$ then, by Lemma \ref{0-case}(2), $f_0 \circ g \in \P\E_n^\infty$, so $f \circ g \in \P\M_{1+n}$ by definition, as required.  So we assume $p > 1$ and the lemma holds for lower values of $p$.  By the inductive hypothesis and (E3)$_{p-1}$, we have $m \in \M(f_0)$ if and only if $m \circ g \in \M(f_0 \circ g)$. It follows, also from the inductive hypothesis, that $f_0 \circ g \in \P\E_{p-1+n}^\infty$.  Hence $f \circ g \in \P\M_{p+n}$ by definition.
\end{proof}

We set $\P\E^\infty := \bigcup_{n \in \NN} \P\E_n^\infty$.

\begin{cor}
	\label{eh-lemma}
	\begin{enumerate}
		\item Let $f \in \E$ and $g \in \P\E^\infty$ be infinitely increasing.  Then $\eh(f \circ g) = \eh(f) + \eh(g)$, and if $f \in \P\E_{\eh(f)}$, then $f \circ g \in \P\E_{\eh(f) + \eh(g)}$.
		\item $\E_n \subseteq \E_{n+1} \circ \log$ for $n \in \NN$.
	\end{enumerate}
\end{cor}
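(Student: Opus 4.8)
The plan is to prove part (1) first and then deduce part (2) from it. For part (1), I would proceed by induction on $n := \eh(g)$, using the decomposition machinery already developed. The base case $n = 0$ means $g \in \P\E_0^\infty$, i.e.\ $g$ is a polynomial in $x$ with zero constant term that is infinitely increasing; here the claim is essentially that composing with such a $g$ does not change exponential height, which can be read off from Lemma \ref{0-case}(2),(3) together with the behaviour of $\M(f)$ under the substitution $x \mapsto g$. For the inductive step, write $g = \exp \circ g_0 + (\text{small terms})$ in the sense of Remark \ref{simple_sum}(3) applied inside $\P\E_n^\infty$, so $g = a\,\lm(g)(1-\epsilon)$ with $\lm(g) = \exp \circ h$ for some $h \in \P\E_{n-1}^\infty$ infinitely increasing; then reduce the general $f \in \E$ to the case where $f$ is a single monomial $m \in \M$, using that $\eh(f) = \max\{\eh(m) : m \in \M(f)\}$ and (E3) (one must check the principal monomials of $f \circ g$ are exactly the $m \circ g$, which is where Remark \ref{support_rmk} and the linear-ordering argument in the proof of (E3)$_n$ get reused).

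The heart of the monomial case is Lemma \ref{composition_lemma}: for $m \in \P\M_p$ with $p \ge 1$ and $g \in \P\E_n^\infty$ infinitely increasing, $m \circ g \in \P\M_{p+n}$, hence $\eh(m \circ g) = p + n = \eh(m) + \eh(g)$. A general $m \in \M$ factors as $m = \prod_{i \in I} m_i$ with $m_i \in \P\M_i$ by (E2), and $m \circ g = \prod_{i \in I} (m_i \circ g)$; the top factor $m_{i^*} \circ g$ with $i^* = \max I = \eh(m)$ lands in $\P\M_{i^* + n} \setminus \M_{i^* + n - 1}$ by Lemma \ref{composition_lemma}, while all other factors have strictly smaller exponential height (after composing), so by the disjointness and estimates in (E1) the product still has exponential height exactly $i^* + n = \eh(m) + \eh(g)$. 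This gives $\eh(f \circ g) = \eh(f) + \eh(g)$. For the refinement when $f \in \P\E_{\eh(f)}$: every $m \in \M(f)$ has $\eh(f) \in I(m)$, so $m \circ g \in \P\M_{\eh(f)+n}$ contributes to the top block, and Remark \ref{simple_sum}(2a) (or a direct check that every principal monomial of $f \circ g$ lies in $\M_{\eh(f)+n} \setminus \M_{\eh(f)+n-1}$) shows $f \circ g \in \P\E_{\eh(f)+\eh(g)}$.

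For part (2), given $f \in \E_n$ I would apply part (1) with $g = \exp \in \P\E_1^\infty$ (which is infinitely increasing with $\eh(\exp) = 1$): then $f \circ \exp \in \E_{n+1}$, and composing on the right with $\log$ gives $f = (f \circ \exp) \circ \log \in \E_{n+1} \circ \log$, as required. (One only needs that $\exp \circ \log = \id$ as germs at $+\infty$, which is immediate.)

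I expect the main obstacle to be the bookkeeping in the reduction from arbitrary $f \in \E$ to the monomial case: one must argue that the absolutely convergent expansion $f = \sum_\beta r_\beta m_\beta$ from (E3) composes term by term with $g$ to an absolutely convergent expansion $f \circ g = \sum_\beta r_\beta (m_\beta \circ g)$ whose principal monomials are precisely the $m_\beta \circ g$ — i.e.\ that no cancellation or re-merging of Archimedean classes occurs under $x \mapsto g$. This follows because composition with an infinitely increasing germ preserves the relations $\prec$ and $\asymp$, so the reverse-well-ordering of the monomials is preserved; the convergence is inherited because $g \to +\infty$ and the original series converges on a neighbourhood of $+\infty$. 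Once this is in place, everything else is an application of Lemma \ref{composition_lemma} and the structural facts (E1)--(E4).
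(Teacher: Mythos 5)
Your two key ingredients, Lemma \ref{composition_lemma} for pure monomials and Lemma \ref{0-case} for the height-zero part, are exactly the ones the paper's (very short) proof of (1) rests on, and your derivation of (2) from (1) via $g=\exp$ coincides with the paper's. The genuine gap is in your reduction from general $f$ to a single monomial. You claim that the expansion $f=\sum_\beta r_\beta m_\beta$ from (E3) composes term by term to an expansion of $f\circ g$ whose principal monomials are precisely the $m_\beta\circ g$. That is false: $m_\beta\circ g$ is in general not a monomial at all. Any $m_\beta$ with $0\in I(m_\beta)$ contributes a factor $g^k$, and for a non-monomial $g\in\P\E_n^\infty$ (say $g=e^x+e^{x/2}$) the germ $g^k$ has several, possibly infinitely many, principal monomials (Lemma \ref{0-case}(1)); similarly, for mixed $m_\beta$ with $\eh(m_\beta)\in I(m_\beta)$ the composite lies in $\P\E_{\eh(m_\beta)+n}$, not in $\P\M_{\eh(m_\beta)+n}$ as you assert in the refinement step. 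Your justification---that composition with an infinitely increasing germ preserves $\prec$ and $\asymp$---cannot repair this, because $\eh$ is not an invariant of the Archimedean class (the paper's own example $x+e^{-x}\asymp x$ with $\eh=1\ne 0$). As a result your argument delivers neither the upper bound $f\circ g\in\E_{\eh(f)+\eh(g)}$ (membership in $\E_k$ means exhibiting a representation $q(m,M)$ with $q$ a convergent series polynomial in the large variables; summing the composed monomial series term by term does not produce one) nor the lower bound $f\circ g\notin\E_{\eh(f)+\eh(g)-1}$.

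The repair is to argue from the finite data rather than the monomial expansion. Either compose the representation $f=p(m_1,\dots,m_k,M_1,\dots,M_l)$ directly, noting via Lemma \ref{composition_lemma}, Lemma \ref{0-case} and Remark \ref{simple_sum}(2) that each $m_i\circ g$ is small and each $M_j\circ g$ is large in $\E_{\eh(f)+\eh(g)}$; or, better for the lower bound and the refinement, use the graded decomposition of Lemma \ref{1.5}: write $f=f_{-\infty}+f_0+\cdots+f_{\eh(f)}$ with $f_j\in\P\E_j$ and $f_{\eh(f)}\ne0$, show $f_j\circ g\in\P\E_{j+\eh(g)}$ for each $j$ (Lemma \ref{0-case}(3) for $j=0$; Lemma \ref{composition_lemma} combined with Remark \ref{simple_sum}(2) for $j\ge1$), note that $f_{\eh(f)}\circ g\ne0$ since $g$ is infinitely increasing, and invoke the uniqueness in Corollary \ref{direct_sum} to read off both $\eh(f\circ g)=\eh(f)+\eh(g)$ and, when $f\in\P\E_{\eh(f)}$, that $f\circ g\in\P\E_{\eh(f)+\eh(g)}$. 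Two smaller points: your induction on $\eh(g)$ is vestigial, since the inductive hypothesis is never used (Lemma \ref{composition_lemma} treats all $\eh(g)$ at once), and the base case as stated misapplies Lemma \ref{0-case}, whose hypothesis restricts $f$ (to $\P\E_0$), not $g$. Part (2) of your proposal is correct and is the paper's argument verbatim.
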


\begin{proof}
	Part (1) is trivial if $\eh(f) = -\infty$, follows from Lemma \ref{0-case}(3) if $\eh(f) = 0$ and follows from the definition of $\eh(f)$ and Lemma \ref{composition_lemma} if $\eh(f) > 0$.  
	For part (2), given $f \in \E_n$, we have $f \circ \exp \in \E_{n+1}$ by part (1), so that $f = f \circ \exp \circ \log \in \E_{n+1} \circ \log$.
\end{proof}

\begin{prop}
\label{1.9}
Let $f \in \H$.  Then $f \in \E$ if and only if $f$ is definable by some $\Lanexp$-term.  
\end{prop}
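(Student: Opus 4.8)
The plan is to prove the two implications separately; the forward one is a direct unwinding of the construction of $\E$, while the content lies in the backward one. For the forward implication I would show by induction on $n$ that every $f \in \E_n$ is the germ at $+\infty$ of a single $\Lanexp$-term. Writing $f = p(m_1,\dots,m_k,M_1,\dots,M_l)$ as in the definition of $\E_n$ and expanding $p = \sum_v p_v(X)\,T^v$ (a finite sum, since $p$ is polynomial in the large variables), one has $f = \sum_v p_v(m_1,\dots,m_k)\,M^v$ as germs at $+\infty$. Every monomial in $\M_n$ is a product of atoms from the groups $\P\M_i$ with $i \le n$, and each such atom is either a power $x^k$ or, for $i \ge 1$, $\exp$ of an element of $\P\E_{i-1}^\infty \subseteq \E_{i-1}$, which is term-definable by the inductive hypothesis; hence every monomial in $\M_n$ is term-definable. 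Since the $m_i$ are small, after an obvious rescaling $p_v(m_1,\dots,m_k)$ is the germ of a restricted analytic function applied to the $m_i$, hence term-definable, and likewise each $M^v$. As $\exp$ and the field operations belong to $\Lanexp$, assembling the finitely many pieces yields a single $\Lanexp$-term for $f$. (Incidentally this re-derives $\E \subseteq \H$.)

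For the backward implication, recall that an $\Lanexp$-term is built from the variable, constants, the field operations, the restricted analytic function symbols, and $\exp$ (no $\log$). I would argue by induction on the structure of the $\Lanexp$-term $t$, showing for each subterm $s$ that either the germ at $+\infty$ of $x \mapsto s(x)$ is undefined, or it is defined (hence, by o-minimality of $\Ranexp$, lies in $\H$) and in fact lies in $\E$. The cases $s = x$ and $s$ a constant are immediate. For $s = s_1 \pm s_2$, $s = s_1 s_2$, and $s = s_1^{-1}$ use that $\E$ is a differential subfield of $\H$ (Corollary \ref{F_isa_field}): for $s_1^{-1}$, if the germ of $s_1$ is undefined or zero then the germ of $s_1^{-1}$ is undefined, and otherwise it is the multiplicative inverse in $\E$. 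For $s = \exp(s_1)$ with germ $h \in \E$: if $h$ is large then $\exp \circ h \in \E$ directly by Proposition \ref{exp_comp}; if $h$ has a finite limit $c$, then $h - c \in \E$ is small and $\exp \circ h = e^{c}\,(E \circ (h-c))$ lies in $\E$ by Lemma \ref{f_going_to_0}(2), where $E \in \Pc{R}{T}$ is the Taylor series of $\exp$ at $0$.

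The delicate case is $s = \tilde g(s_1,\dots,s_m)$ with $g$ analytic on a neighbourhood of $[-1,1]^m$. Each germ $s_i \in \E \subseteq \H$ lies in a Hardy field, so $c_i := \lim_{x\to+\infty} s_i(x) \in \RR \cup \{\pm\infty\}$ exists and, when $c_i \in \RR$, the germ $s_i - c_i$ is eventually of constant sign. If some $c_i \notin [-1,1]$, or some $c_i = \pm 1$ with $s_i$ eventually outside $[-1,1]$, then $\tilde g(s_1,\dots,s_m) = 0$ as a germ, which is in $\E$. Otherwise all $s_i(x) \in [-1,1]$ for large $x$, so $\tilde g(s_1,\dots,s_m) = g(s_1,\dots,s_m)$ there; let $G \in \Pc{R}{Y_1,\dots,Y_m}$ be the Taylor expansion of $g$ at $(c_1,\dots,c_m) \in [-1,1]^m$ and put $\epsilon_i := s_i - c_i \in \E$, which is small, so $g(s_1,\dots,s_m) = G(\epsilon_1,\dots,\epsilon_m)$ as germs. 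By Lemma \ref{f_going_to_0}(1), each $\epsilon_i$ is a convergent power series with zero constant term in finitely many small monomials from a common $\M_N$; since convergent power series are closed under substitution of series with zero constant term, $G(\epsilon_1,\dots,\epsilon_m)$ is again a convergent power series in those small monomials, hence an element of $\E_N$ by the very definition of $\E_N$ (take no large monomials).

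I expect the restricted-analytic case to be the main obstacle: one must enumerate exactly which limiting behaviours of the $s_i$ collapse the value to $0$ — and note that the required sign information is available precisely because each $s_i - c_i$ lies in a Hardy field — and then verify cleanly that substituting small germs from $\E$ into a convergent multivariate power series stays in $\E$, for which I would extend Lemma \ref{f_going_to_0} from one to several variables using the routine fact that convergent power series are closed under composition. The rest is bookkeeping: o-minimality of $\Ranexp$ guarantees that term-defined functions have genuine germs in $\H$, and the closure properties assembled in Section \ref{description} — that $\E$ is a differential field, is closed under $\exp$ via Proposition \ref{exp_comp}, and absorbs power-series substitutions of small elements via Lemma \ref{f_going_to_0} — do the real work.
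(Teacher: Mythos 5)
Your proof is correct and follows essentially the same route as the paper: reduce the converse to closure of $\E$ under the term-forming operations, handling $\exp$ via Proposition \ref{exp_comp} and power-series/restricted-analytic substitution via Taylor re-expansion at the limit point together with Lemma \ref{f_going_to_0}. Your extra care with the degenerate cases (limits outside $[-1,1]^m$) and with the multivariate form of Lemma \ref{f_going_to_0} only makes explicit what the paper's terser argument glosses over.
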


\begin{proof}
It is clear from the definition of $\E$ that every $f \in \E$ is definable by some $\Lanexp$-term.  For the converse, since $\E$ is an $\RR$-subalgebra of $\H$ and $x, \exp \in \E$, it suffices to show the following:
\begin{renumerate}
	\item if $p \in \Pc{R}{X_1, \dots, X_k}$ and $f_1, \dots, f_k \in \E$ are such that the point $(f_1(\infty), \dots, f_k(\infty))$ lies in the domain of convergence of $p$, then $p(f_1, \dots, f_k) \in \E$;
	\item if $f \in \E$, then $\exp \circ f \in \E$.
\end{renumerate}

For (i), after replacing $p$ by its convergent Taylor series at the point $(f_1(\infty), \dots, f_k(\infty))$ if necessary, we may assume that $f_1, \dots, f_k$ are small.  The claim now follows from Lemma \ref{f_going_to_0}(2).  

For (ii), we may assume by (i) that $f$ is large; so (ii) follows from Proposition \ref{exp_comp}.
\end{proof}

\subsection*{Closing under $\log$}  We obtain $\H$ from $\E$ by composing with $\log$ on the right:

\begin{lemma}
\label{log_push}
Let $f \in \E_n$ be such that $f>0$. Then $\log \circ f \in \E_{n+1} \circ \log$.
\end{lemma}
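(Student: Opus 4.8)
The plan is to compute $\log \circ f$ from the canonical form $f = a\,\lm(f)\,(1-\epsilon)$ supplied by Remark \ref{simple_sum}(3), to isolate the one summand that fails to be an $\Lanexp$-germ (namely the $\log$ of the $x$-power hidden in $\lm(f)$), and then to push everything to the right of a $\log$ via Corollary \ref{eh-lemma}(2). We may assume $n \ge 0$: if $n = -\infty$ then $f$ is a positive constant, $\log\circ f$ is a constant, and the claim is trivial.

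First I would apply Remark \ref{simple_sum}(3) to write $f = a\,\lm(f)\,(1-\epsilon)$ with $a \in \RR$ nonzero and $\epsilon \in \E_n$ small; since $f>0$, every monomial in $\M_n$ is positive, and $1-\epsilon>0$ near $+\infty$, we must have $a>0$. Using (E2)$_n$, factor $\lm(f) = x^k\,\mu$, where $k \in \ZZ$ (possibly $0$) is the $\P\M_0$-exponent of $\lm(f)$ and $\mu$ is the product of the remaining factors $m_i \in \P\M_i$, $i \ge 1$; each such $m_i$ equals $\exp\circ g_i$ with $g_i \in \P\E_{i-1}^\infty \subseteq \E_n$, so $\mu = \exp\circ\!\left(\sum_i g_i\right)$ and hence $\log\circ\mu = \sum_i g_i \in \E_n$. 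Also $\log(1-\epsilon) = L\circ\epsilon$, where $L(T) = -\sum_{j\ge 1} T^j/j \in \Pc{R}{T}$ satisfies $L(0)=0$, so $L\circ\epsilon \in \E_n$ by Lemma \ref{f_going_to_0}(2). Collecting terms,
$$\log\circ f = k\log x + G, \qquad G := \log a + \log\circ\mu + L\circ\epsilon \in \E_n,$$
the point being that $k\log x$ is the only piece not lying in $\E$.

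Finally, Corollary \ref{eh-lemma}(2) gives $G = h_0\circ\log$ with $h_0 := G\circ\exp \in \E_{n+1}$, while $k\log x = (kx)\circ\log$ with $kx \in \E_0 \subseteq \E_{n+1}$; since $\E_{n+1}$ is an $\RR$-algebra, $\log\circ f = (h_0 + kx)\circ\log \in \E_{n+1}\circ\log$, as required. I do not anticipate a genuine obstacle here: the only thing to watch is that $\log x \notin \E$, which is exactly why the $x$-power of $\lm(f)$ must be peeled off and absorbed into the outer composition with $\log$ rather than treated as part of an $\Lanexp$-germ; the remainder is bookkeeping with the three cited results.
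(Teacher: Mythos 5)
Your proof is correct and follows essentially the same route as the paper: split off the monomial part of $f$, observe that the logarithm of the unit factor lies in $\E_n$ (via Lemma \ref{f_going_to_0}(2)), and push everything into $\E_{n+1}\circ\log$ using Corollary \ref{eh-lemma}(2), with the power-of-$x$ factor contributing $k\log x=(kx)\circ\log$ exactly as in the paper's $n=0$ case. If anything, your explicit factorization $\lm(f)=x^k\mu$ is slightly more careful than the paper's write-up of the case $n>0$, which treats the leading monomial as $\exp\circ h$ with $h\in\E_{n-1}$ and thereby glosses over a possible $x^k$-factor.
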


\begin{proof}
The conclusion is trivial if $n = -\infty$, so we assume $n \ge 0$.  By Remark \ref{simple_sum}(2), there exist $m \in \M_{n}$ and $g \in \E_{n}$ such that $g \asymp 1$ and $f = mg$.  So $\log \circ f = \log \circ m + \log \circ g$, and $\log \circ g$ belongs to $\E_n \subseteq \E_{n+1} \circ \log$ by Corollary \ref{eh-lemma}(2).  On the other hand, if $n = 0$, then $m = x^k$ for some nonzero $k \in \ZZ$, so $\log \circ m \in \E_0 \circ \log$.  If $n>0$, then by definition $m = \exp \circ h$ for some $h \in \E_{n-1}$, so $\log \circ m = h \in \E_{n-1} \subseteq \E_{n+1} \circ \log$, which finishes the proof of the lemma.
\end{proof}

\begin{prop}
\label{1.11}
We have $\H = \bigcup_{k \in \NN} \E \circ \log_k$; in particular, for large $f \in \H$, there exist $k,l,\nu \in \NN$ such that $\level(f) = k-l$ and $$\exp_k(x^\nu) \circ \log_l \ge |f| \ge \exp_k(x^{1/\nu}) \circ \log_l.$$
\end{prop}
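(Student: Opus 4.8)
The plan is to prove the two assertions separately, building on the structural results already in place. For the first assertion, $\H = \bigcup_{k \in \NN} \E \circ \log_k$, the inclusion $\E \circ \log_k \subseteq \H$ for each $k$ is clear, since each $\E \circ \log_k$ consists of germs at $+\infty$ of functions definable in $\Ranexp$, and $\H$ is closed under composition on the right with (the germ of) $\log_k$. For the reverse inclusion, I would invoke the cited result of \cite{Dries:1994tw} that every function definable in $\Ranexp$ is piecewise given by $\Lanexplog$-terms, so every $f \in \H$ is, as a germ at $+\infty$, given by a single $\Lanexplog$-term $\tau$. The point is then to push all the $\log$'s occurring in $\tau$ to the outside: formally, I would argue by induction on the structure of $\tau$ that there is some $k \in \NN$ and an $\Lanexp$-term $\sigma$ such that $f = \sigma \circ \log_k$ as germs. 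The building blocks for this induction are Lemma \ref{log_push} (which handles the case $\log \circ f$ with $f \in \E_n$), Corollary \ref{eh-lemma}(2) (which says $\E_n \subseteq \E_{n+1} \circ \log$, used to absorb extra applications of $\log$ into a uniform composition on the right), and Proposition \ref{1.9} (which identifies $\E$ with the set of germs definable by $\Lanexp$-terms). The main work is bookkeeping: closing $\E$ under the needed operations when everything is already composed with a fixed $\log_k$, which is where Corollary \ref{eh-lemma}(2) lets one increase $k$ as needed to make room.

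For the ``in particular'' clause, let $f \in \H$ be large, and write $f = h \circ \log_l$ with $h \in \E$ and $l \in \NN$, using the first assertion. By replacing $f$ with $1/f$ if necessary (which changes $|f|$ to $1/|f|$ and leaves the desired inequality shape symmetric once we also allow the reciprocal form — or more simply, note $h$ large iff $1/h$ large, and the estimates in (E1)$_n$/(E4)$_n$ come in both forms), I may assume $h$ is infinitely increasing. Then $\lm(h) \in \M(h)$ is itself large, and by Remark \ref{simple_sum}(1) we have $\level(h) = \level(\lm(h)) = \eh(\lm(h)) =: n$ for some $n \in \NN$. Now apply the estimate in (E1)$_n$ (or (E4)$_n$) to the pure exponential monomial components of $\lm(h)$: there is a nonzero $\nu_0 \in \NN$ with $\exp_n(x^{\nu_0}) \ge \lm(h) \ge \exp_n(x^{1/\nu_0})$. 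Since $h \asymp \lm(h)$ by (E3)$_n$, absorbing the bounded factor into a slightly larger $\nu$ gives a nonzero $\nu_1 \in \NN$ with $\exp_n(x^{\nu_1}) \ge |h| \ge \exp_n(x^{1/\nu_1})$ as germs at $+\infty$. Composing on the right with $\log_l$ and setting $k := n$ then yields
\[
\exp_k(x^{\nu}) \circ \log_l \ge |f| \ge \exp_k(x^{1/\nu}) \circ \log_l
\]
for a suitable $\nu$, and $\level(f) = \level(h) + \level(\log_l) = n - l = k - l$ by equation \eqref{level_eq} (noting $\level(\log_l) = -l$), which is exactly the claimed statement.

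The step I expect to be the main obstacle is the inductive ``push the logs outward'' argument for the reverse inclusion in the first assertion. The subtlety is that an $\Lanexplog$-term can interleave $\log$'s, $\exp$'s, analytic functions and algebraic operations in a complicated way, and one must be careful that composing $\E$-germs on the right with a single $\log_k$ is genuinely compatible with all the operations generating $\H$ — in particular with applying $\exp$ on the outside (which can be absorbed because $\E \circ \log_k$ is closed under $\exp$ on the left by Proposition \ref{1.9}(ii) and composition associativity) and with applying $\log$ on the outside (which is precisely Lemma \ref{log_push}, modulo bumping $k$ up by one via Corollary \ref{eh-lemma}(2)). Once those compatibilities are spelled out, the induction runs smoothly, but stating it cleanly — perhaps by first reducing, via \cite{Dries:1994tw}, to terms of a normal form with all $\log$'s applied directly to the variable — is where the care is needed. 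Everything else is a direct application of the estimates already established in this section.
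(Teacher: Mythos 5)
Your argument for the equality $\H = \bigcup_k \E\circ\log_k$ is essentially the paper's own proof: the paper also combines the increasing chain $\E \subseteq \E\circ\log \subseteq \E\circ\log_2 \subseteq \cdots$ (Corollary \ref{eh-lemma}(2)) with Lemma \ref{log_push} and Proposition \ref{1.9} to see that $\bigcup_k \E\circ\log_k$ contains every germ given by an $\Lanexplog$-term, and then quotes \cite[Corollary 4.7]{Dries:1994tw}; and it dispatches the estimates with ``follow from (E4),'' for which your expanded argument via $\lm(h)$, $h \asymp \lm(h)$ and equation \eqref{level_eq} is the right fleshing-out. The one step that is wrong as written is your reduction to $h$ infinitely increasing: replacing $f$ by $1/f$ turns a large germ into a small one, and ``$h$ large iff $1/h$ large'' is false, so this substitution does not achieve the reduction. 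The fix is immediate: either replace $f$ (hence $h$) by $-f$, which leaves $|f|$ and the level unchanged, or skip the reduction altogether by noting that $\lm(h) \in \M$ is a positive, infinitely increasing monomial with $|h| \asymp \lm(h)$, so the (E1)/(E4) estimates apply directly to $|h|$ for any large $h$. With that correction your proof is complete and follows the same route as the paper.
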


\begin{proof}
Since $\E \subseteq \E\circ\log \subseteq \E\circ\log_2 \subseteq \cdots$, it follows from Lemma \ref{log_push} and Proposition \ref{1.9} that $\bigcup_{k \in \NN} \E \circ \log_k$ contains all functions of $\H$ that are defined by $\Lanexplog$-terms.  Equality between the two sets then follows from \cite[Corollary 4.7]{Dries:1994tw}.  The estimates follow from (E4).
\end{proof}

In view of the previous proposition, for $h \in \H$ and $k \in \NN$ and $f \in \E$ such that $h = f \circ \log_k$, we let $$\M(h):= \set{ m \circ \log_k:\ m \in \M(g)}$$ be the set of \textbf{principal monomials} of $h$ and $$\lm(h):= \lm(g) \circ \log_k$$ be the \textbf{leading monomial} of $h$.

\begin{lemma}
\label{log_comp_lemma}
	Let $g_1,g_2 \in \E$ and $k_1, k_2 \in \NN$ be such that $k_1 \le k_2$ and $g_1 \circ \log_{k_1} = g_2 \circ \log_{k_2}$.  Then $\eh(g_2) = \eh(g_1) + k_2-k_1$.
\end{lemma}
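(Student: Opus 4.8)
The plan is to reduce the given identity to a single right-composition with an iterated exponential and then apply Corollary~\ref{eh-lemma}(1). Since $k_1 \le k_2$, I would compose both sides of $g_1 \circ \log_{k_1} = g_2 \circ \log_{k_2}$ on the right with $\exp_{k_2}$; using $\log_{k_1} \circ \exp_{k_2} = \exp_{k_2-k_1}$, this yields, as germs at $+\infty$,
$$g_2 = g_1 \circ \exp_{k_2-k_1}.$$

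Next I would record that, for every $k \in \NN$, the germ $\exp_k$ lies in $\P\E^\infty$, is infinitely increasing, and satisfies $\eh(\exp_k) = k$. For $k=0$ this is immediate, since $\exp_0 = x \in \P\E_0^\infty$. For $k \ge 1$ it follows by an easy induction: $\exp_1 = \exp \circ x \in \P\M_1$, and if $\exp_{k-1} \in \P\M_{k-1} \subseteq \P\E_{k-1}^\infty$ then, by Lemma~\ref{composition_lemma} (or directly from $\P\M_k = \exp \circ \P\E_{k-1}^\infty$), we get $\exp_k = \exp \circ \exp_{k-1} \in \P\M_k \subseteq \P\E_k^\infty$; it is infinitely increasing because $\exp_{k-1}$ is, and $\eh(\exp_k) = \level(\exp_k) = k$ by Remarks~\ref{simple_sum}(1).

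Finally, I would apply Corollary~\ref{eh-lemma}(1) with $f := g_1 \in \E$ and $g := \exp_{k_2-k_1} \in \P\E^\infty$ infinitely increasing, obtaining
$$\eh(g_2) = \eh\!\left(g_1 \circ \exp_{k_2-k_1}\right) = \eh(g_1) + \eh\!\left(\exp_{k_2-k_1}\right) = \eh(g_1) + k_2 - k_1,$$
which is the assertion. Since each step is a direct invocation of an earlier result, I do not expect any real obstacle; the only point requiring a short justification is that $\exp_k \in \P\E^\infty$ with $\eh(\exp_k) = k$, and even that is routine from the inductive construction of the sets $\P\M_n$ and the fact that $\eh$ and $\level$ agree on $\M$.
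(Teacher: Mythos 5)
Your proposal is correct and takes essentially the same route as the paper, whose proof is precisely to observe that the hypothesis gives $g_2 = g_1 \circ \exp_{k_2-k_1}$ and then to invoke Corollary \ref{eh-lemma}(1); you merely spell out the composition with $\exp_{k_2}$ and the verification that $\exp_{k_2-k_1} \in \P\E^\infty$ with $\eh(\exp_{k_2-k_1}) = k_2-k_1$. One harmless imprecision: the blanket inclusion $\P\M_{k-1} \subseteq \P\E_{k-1}^\infty$ is false for small pure monomials (those lie in $\P\E_{k-1}^0$), but since $\exp_{k-1}$ is large its membership in $\P\E_{k-1}^\infty$ holds, so your argument goes through.
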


\begin{proof}
	The hypothesis implies that $g_1 \circ \exp_{k_2-k_1} = g_2$, so the conclusion follows from Corollary \ref{eh-lemma}(1).
\end{proof}

Justified by Lemma \ref{log_comp_lemma}, we extend $\eh$ uniquely to all of $\H$ as follows: given $f \in \H$, choose $g \in \E$ and $k \in \NN$ such that $f = g \circ \log_k$, and put $$\eh(f):= \eh(g)-k.$$

\begin{cor}
\label{eh-cor}
\begin{enumerate}
\item Let $f \in \H$.  Then $\eh(f \circ \exp) = \eh(f) + 1$ and $\eh(f \circ \log) = \eh(f) -1$.
\item Let $f \in \H$ be infinitely increasing.  Then $$\level(f) = \eh(\lm(f)) \le \eh(f).$$
\item For $k \in \NN$ the set $$\H_{\le k}:= \set{ f \in \H:\ \eh(f) \le k}$$ is a differential subfield of $\H$.
\end{enumerate}
\end{cor}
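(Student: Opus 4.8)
All three claims come down to unwinding the definition of $\eh$ on $\H$ (legitimate by Lemma~\ref{log_comp_lemma}) and quoting the composition formulas already established; the only step with real content is the closure of $\H_{\le k}$ under the derivation in part~(3).

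For part~(1), I would first observe that every $f \in \H$ can be written as $f = g \circ \log_l$ with $g \in \E$ and $l \ge 1$: Proposition~\ref{1.11} gives such a representation with $l \in \NN$, and if $l = 0$ one replaces $g$ by $g \circ \exp \in \E$ and $l$ by $1$, which changes nothing since $\eh(g \circ \exp) = \eh(g) + 1$ by Corollary~\ref{eh-lemma}(1) (here $\exp \in \P\E_1^\infty$ is infinitely increasing and $\eh(\exp) = 1$). For such a representation, $f \circ \exp = g \circ \log_{l-1}$ and $f \circ \log = g \circ \log_{l+1}$ are again of the form ``element of $\E$ composed with some $\log_{(\cdot)}$'', so the definition of $\eh$ on $\H$ gives $\eh(f \circ \exp) = \eh(g) - (l-1) = \eh(f) + 1$ and $\eh(f \circ \log) = \eh(g) - (l+1) = \eh(f) - 1$.

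For part~(2), the case $f \in \E$ is exactly inequality~\eqref{level-eh} of Remark~\ref{simple_sum}(1). For general infinitely increasing $f \in \H$, write $f = g \circ \log_l$ with $g \in \E$; then $g$ is infinitely increasing (since $f$ and $\log_l$ are), $\lm(f) = \lm(g) \circ \log_l$ by definition, and hence $\eh(\lm(f)) = \eh(\lm(g)) - l$. By the $\E$-case applied to $g$ one has $\level(g) = \eh(\lm(g))$, while $\level(f) = \level(g) + \level(\log_l) = \level(g) - l$ by~\eqref{level_eq} and $\level(\log) = -1$ (the latter coming from~\eqref{level_eq}, $\level(\id) = 0$, and $\level(\exp) = \eh(\exp) = 1$ via Remark~\ref{simple_sum}(1)). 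Combining, $\level(f) = \eh(\lm(g)) - l = \eh(\lm(f))$, and $\eh(\lm(f)) = \eh(\lm(g)) - l \le \eh(g) - l = \eh(f)$ because $\eh(g) = \max\{\eh(m):\ m \in \M(g)\}$.

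For part~(3), the plan is to show
$$\H_{\le k} = \bigcup_{l \in \NN}\bigl(\E_{k+l} \circ \log_l\bigr),$$
that this is an increasing union, and that each $\E_{k+l} \circ \log_l$ is a differential subfield of $\H$; as an increasing union of differential subfields is again one, this finishes the proof. The displayed equality is immediate from the definition of $\eh$ on $\H$: for $f = g \circ \log_l$, $\eh(f) \le k$ iff $\eh(g) \le k+l$ iff $g \in \E_{k+l}$. The union is increasing since $g \circ \log_l = (g \circ \exp) \circ \log_{l+1}$ with $g \circ \exp \in \E_{k+l+1}$ by Corollary~\ref{eh-lemma}(1). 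That $\E_{k+l}\circ\log_l$ is a subfield of $\H$ follows from Corollary~\ref{F_isa_field}, since right-composition with the fixed germ $\log_l$ embeds the field $\E_{k+l}$ into $\H$. For closure under the derivation, the chain rule gives $(g \circ \log_l)' = (g' \circ \log_l)\cdot\log_l'$ with $g' \in \E_{k+l}$ by Lemma~\ref{F-derivative}, and, writing $\log_0 = x$ and using $\log_j x = \exp_{l-j}(\log_l x)$,
$$\log_l'(x) = \prod_{j=0}^{l-1}\frac{1}{\log_j x} = \Bigl(\prod_{i=1}^{l}\frac{1}{\exp_i}\Bigr)\!\circ\log_l.$$
Since $\exp_i \in \P\M_i \subseteq \E_i$ for $i \ge 1$, Corollary~\ref{F_isa_field} shows $w_l := \prod_{i=1}^{l}(1/\exp_i) \in \E_l \subseteq \E_{k+l}$, whence $(g \circ \log_l)' = (g'\,w_l)\circ\log_l \in \E_{k+l}\circ\log_l$. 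The only real obstacle in all of this is precisely this last bookkeeping step: one must check that the chain-rule factor $\log_l'$ introduces no extra exponential height (that it already lies in $\E_l \circ \log_l$); the rest is definition-chasing.
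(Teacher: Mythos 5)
Your proposal is correct and follows essentially the same route as the paper: part (1) by reducing to the definition of $\eh$ on $\H$ together with Corollary \ref{eh-lemma}(1) (your normalization $l\ge 1$ replaces the paper's case split $k>0$ versus $k=0$), part (2) from inequality \eqref{level-eh} and the definition of $\lm$ and $\eh$ on $\H$, and part (3) via the same identity $(g\circ\log_l)'=\bigl(g'\cdot\prod_{i=1}^{l}\exp_i^{-1}\bigr)\circ\log_l$ combined with Lemma \ref{F-derivative} and Corollary \ref{F_isa_field}. The extra bookkeeping you supply (the increasing-union description of $\H_{\le k}$) is just an explicit version of what the paper leaves implicit.
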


\begin{proof}
(1) Let $g \in \E$ and $k \in \NN$ be such that $f = g \circ \log_k$.  Note that $\eh(f \circ \log) = \eh(f)-1$ by definition of $\eh$.  If $k>0$, the same is true for $f \circ \exp$; whereas if $k=0$, then $f \in \E$ and $\eh(f \circ \exp) = \eh(f)+1$ by Corollary \ref{eh-lemma}(1).

Part (2) follows from the definition of $\eh(f)$ and inequality \eqref{level-eh}.

For part (3), let $k \in \NN$ and $f \in \H_{\le k}$.  Let also $i,j \in \NN$ and $g \in \E_i$ be such that $f = g \circ \log_j$.  Then $$f' = (g' \circ \log_j) \frac1{\log_0 \cdots \log_{j-1}} = \frac{g'}{\exp_j \cdots \exp_1} \circ \log_j \in \H_{\le k},$$ so part (3) follows from Corollary \ref{F_isa_field}.
\end{proof}

For each $i \in \ZZ \cup \{-\infty\}$, we set
\begin{equation*}
\H_i := \set{f \in \H:\ \eh(f) = i} \cup \{0\}.
\end{equation*}
By Corollary \ref{eh-lemma}(1), each $\H_i$ is an $\RR$-vector subspace of $\H$, and we have $\H_i \cap \H_j = \{0\}$ for $i \ne j$.

\begin{cor}
\label{direct_sum2}
As $\RR$-vector spaces, we have 
\begin{align*}
\H = \bigoplus_{n \in \ZZ \cup \{-\infty\}} \H_n. &\qed
\end{align*}
\end{cor}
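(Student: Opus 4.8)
The plan is to establish the two ingredients a direct‑sum decomposition requires: that the subspaces $\H_n$ span $\H$, and that their sum is direct. Both reduce, via $\H = \bigcup_k \E \circ \log_k$ (Proposition~\ref{1.11}), to facts about $\E$ that are already available, together with the well‑definedness of the extended $\eh$ guaranteed by Lemma~\ref{log_comp_lemma}.

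\emph{Spanning.} Given $f \in \H$, I would write $f = g \circ \log_k$ with $g \in \E$ and $k \in \NN$, and decompose $g = g_{-\infty} + g_0 + \cdots + g_{\eh(g)}$ with $g_j \in \P\E_j \cup \{0\}$ by Lemma~\ref{1.5}. Then $f = \sum_j (g_j \circ \log_k)$ is a finite sum, each summand lies in $\E \circ \log_k \subseteq \H$, and whenever $g_j \ne 0$ one has $\eh(g_j \circ \log_k) = \eh(g_j) - k = j - k$; here $\eh(g_j) = j$ since a nonzero element of $\P\E_j$ lies in $\E_j$ but not in $\E_{j-1}$ (none of its monomials belongs to $\M_{j-1}$). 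Hence $f \in \sum_n \H_n$.

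\emph{Directness.} By the standard criterion for sums indexed by a totally ordered set (if each $\H_n$ meets the sum of the $\H_m$ with $m<n$ trivially, the sum is direct), it suffices to show $\H_n \cap \sum_{m<n} \H_m = \{0\}$ for each $n$, and for this the key point is the inequality
\begin{equation*}
	\eh(h_1 + \cdots + h_r) \le \max\{\eh(h_1), \dots, \eh(h_r)\}
\end{equation*}
for $h_1, \dots, h_r \in \H$: granting it, a nonzero element of $\H_n$, which has exponential height $n$, cannot be a sum of elements of exponential height less than $n$. By induction this reduces to the two‑term case $\eh(f + g) \le \max\{\eh(f), \eh(g)\}$. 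To prove that, I would first bring $f$ and $g$ to a common logarithmic depth: by Proposition~\ref{1.11} and the inclusion $\E \subseteq \E \circ \log$ (which follows from Corollary~\ref{eh-lemma}(2)), there are $k \in \NN$ and $\phi, \psi \in \E$ with $f = \phi \circ \log_k$ and $g = \psi \circ \log_k$; then $f + g = (\phi + \psi) \circ \log_k$, and by the definition of $\eh$ on $\H$ the claim becomes $\eh(\phi + \psi) \le \max\{\eh(\phi), \eh(\psi)\}$ in $\E$. Applying Lemma~\ref{1.5} to $\phi$ and to $\psi$, the sum $\phi + \psi$ has $j$‑th component $\phi_j + \psi_j \in \P\E_j \cup \{0\}$, so by the uniqueness clause of Lemma~\ref{1.5} its exponential height is the largest $j$ with $\phi_j + \psi_j \ne 0$, which is at most $\max\{\eh(\phi), \eh(\psi)\}$. (The refinement "$\eh(f+g)=\max\{\eh(f),\eh(g)\}$ when $\eh(f)\ne\eh(g)$" also falls out, though it is not needed here.)

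\emph{Main obstacle.} There is no substantial difficulty: the statement is bookkeeping built on Lemma~\ref{1.5}, Corollary~\ref{direct_sum}, and the well‑definedness of $\eh$ on $\H$. The one place that needs care is the reduction to a common logarithmic depth — namely checking that $\E \circ \log_{k_1}$ and $\E \circ \log_{k_2}$ both sit inside $\E \circ \log_{\max(k_1,k_2)}$, for which one uses $\E_n \subseteq \E_{n+1} \circ \log$ from Corollary~\ref{eh-lemma}(2) — and the harmless reading of the summands "$f_j \in \P\E_j$" in Lemma~\ref{1.5} as "$f_j \in \P\E_j \cup \{0\}$", the zero summands contributing nothing.
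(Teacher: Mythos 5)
Your proposal supplies exactly what the paper leaves implicit: the paper offers no argument at all for this corollary (it is stated with a \qed, resting on the remark just before it that each $\H_i$ is an $\RR$-vector subspace with $\H_i\cap\H_j=\{0\}$), and your two ingredients --- spanning via Proposition \ref{1.11} together with Lemma \ref{1.5} composed with $\log_k$ (with Lemma \ref{log_comp_lemma} guaranteeing that $\eh$ is well defined on $\H$), and independence via the inequality $\eh(h_1+\cdots+h_r)\le\max_i\eh(h_i)$ obtained at a common logarithmic depth --- are precisely the intended ones. If anything, your use of the largest-index criterion is more careful than the paper's appeal to pairwise trivial intersections, which by itself would not suffice for directness of an infinite family.

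The one step that does not go through as written is the appeal to the ``standard criterion,'' which presupposes that each $\H_n$ is a subspace. With the literal definition $\H_n=\set{f\in\H:\ \eh(f)=n}\cup\{0\}$ this fails: $e^x+x$ and $-e^x$ both have exponential height $1$ while their sum $x$ has height $0$, and correspondingly $e^x+x$ admits two distinct representations by elements of the $\H_n$ (as the single element $e^x+x\in\H_1$, or as $e^x\in\H_1$ plus $x\in\H_0$). Your height inequality does prove $\H_n\cap\sum_{m<n}\H_m=\{0\}$, but uniqueness of decompositions --- which is what $\bigoplus$ asserts --- requires the summands to be closed under subtraction. You inherit this from the paper, whose preceding claim that each $\H_i$ is a subspace (attributed to Corollary \ref{eh-lemma}(1)) is itself untenable for the literal $\H_i$; the statement becomes correct, and your spanning computation already exhibits the components, once $\H_n$ is read as the pure part $\bigcup_k\left(\P\E_{n+k}\circ\log_k\right)\cup\{0\}$, which is a subspace by Corollary \ref{eh-lemma}(1) and for which uniqueness is exactly the uniqueness clause of Lemma \ref{1.5} (equivalently Corollary \ref{direct_sum}) transported along $\log_k$. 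So your route coincides with the paper's intended one and the flaw originates in the paper's definition of $\H_n$, but as written your directness argument does not establish the stated direct sum for the literal $\H_n$.
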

Finally, we set $$\E^\infty := \bigoplus_{n \in \NN} \left(\P\E_n^\infty \cup \{0\}\right)$$ and $$\U:= \bigcup_{k \in \NN} \E^\infty \circ \log_k;$$ the germs in $\U$ are called \textbf{purely infinite}.    This set $\U$ is an $\RR$-vector subspace of $\H$ and, by Corollary \ref{direct_sum} and Proposition \ref{1.11}, we have $$\H = \U \oplus \B,$$  where $\B$ is the $\RR$-vector subspace of all bounded germs of $\H$.  We set $$\la:= \exp \circ\, \U,$$ a multiplicative $\RR$-vector subspace of $\H^{>0}$.

\begin{prop}
	\label{transmonomials}
	\begin{enumerate}
		\item $\la = \bigcup_{k \in \NN} \M \circ \log_k$, that is, $\la$ is the set of principal monomials of $\H$.
		\item We have $h \asymp \lm(h)$, for $h \in \H$; in particular, every Archimedean class of $\H^{>0}$ contains exactly one representative from $\la$.
		\item Every $h \in \H$ is of the form $p(m,M)$, where $p \in \Pc{R}{X}[Y]$, $X = (X_1, \dots, X_k)$, $Y = (Y_1, \dots, Y_l)$ for some $k,l \in \NN$, $m = (m_1, \dots, m_k)$ and $M = (M_1, \dots, M_l)$ with each $m_i \in \la$ small and each $M_j \in \la$ large.
	\end{enumerate}
\end{prop}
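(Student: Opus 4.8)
The plan is to prove the three parts in the order stated, deriving (2) and (3) from (1) together with (E3)$_n$ and the decomposition $\H = \U \oplus \B$. For Part (1) I would first record two closure facts about $\U = \bigcup_k \E^\infty \circ \log_k$: it is an $\RR$-vector space, and $\U \circ \log \subseteq \U$ (immediate, since $\E^\infty \circ \log_k \circ \log = \E^\infty \circ \log_{k+1}$). Next I would check that $\log \circ m \in \U$ for every $m \in \M$: writing $m = \prod_{i \in I} m_i$ with $m_i \in \P\M_i$, the $i = 0$ factor, if present, is $x^a$ for a nonzero integer $a$, so $\log \circ m_0 = (ax)\circ\log \in \P\E_0^\infty \circ \log \subseteq \U$, while for $i \ge 1$ the defining identity $\P\M_i = \exp \circ \P\E_{i-1}^\infty$ gives $\log \circ m_i \in \P\E_{i-1}^\infty \subseteq \E^\infty \subseteq \U$; summing and using that $\U$ is a vector space yields $\log \circ m \in \U$. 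This gives one inclusion: for $m \in \M$ and $k \in \NN$, $\log \circ (m \circ \log_k) = (\log \circ m) \circ \log_k \in \U \circ \log_k \subseteq \U$, so $m \circ \log_k = \exp \circ ((\log \circ m)\circ\log_k) \in \exp \circ \U = \la$, whence $\bigcup_k \M \circ \log_k \subseteq \la$. For the reverse inclusion I would take $u = g \circ \log_k \in \U$ with $g = g_0 + \dots + g_n$, $g_i \in \P\E_i^\infty \cup \{0\}$, and expand $\exp \circ g = \prod_i \exp \circ g_i$; each nonzero factor lies in $\P\M_{i+1}$ (with $\P\M_1 = \exp \circ \P\E_0^\infty$ handling $i = 0$), so $\exp \circ g \in \M_{n+1}$ and $\exp \circ u \in \M \circ \log_k$. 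Finally, the identification of $\la$ with the set of principal monomials of $\H$ is formal from the definition of $\M(h)$ following Proposition \ref{1.11}: every principal monomial of $\H$ lies in $\bigcup_k \M \circ \log_k = \la$, and each $\mu = m \circ \log_k \in \la$ (with $m \in \M$) is its own unique principal monomial since $m = 1 \cdot m$ forces $\M(m) = \{m\}$.

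For Part (2), given nonzero $h \in \H$ I would write $h = g \circ \log_k$ with $g \in \E$; then $g \asymp \lm(g)$ by (E3)$_{\eh(g)}$, and since $\log_k$ is infinitely increasing, precomposition with $\log_k$ preserves $\asymp$, so $h \asymp \lm(g)\circ\log_k = \lm(h)$. By Part (1), $\lm(h) \in \la$, and every element of $\la$ is positive (monomials in $\M$ are positive, and composition with $\log_k$ preserves positivity), so each Archimedean class of $\H^{>0}$ contains a representative from $\la$. For the uniqueness clause, if $\mu_j = \exp \circ u_j$ with $u_j \in \U$ satisfy $\mu_1 \asymp \mu_2$, then $\exp \circ (u_1 - u_2) = \mu_1/\mu_2 \asymp 1$ forces $u_1 - u_2$ to be bounded; but $u_1 - u_2 \in \U$ and $\U \cap \B = \{0\}$ by the decomposition $\H = \U \oplus \B$, so $u_1 = u_2$ and $\mu_1 = \mu_2$.

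For Part (3), I would write $h = g \circ \log_k$ with $g \in \E$ and pick, from the definition of $\E = \bigcup_n \E_n$, a representation $g = p(m', M')$ with $p \in \Pc{R}{X}[Y]$, the entries of $m'$ small monomials in $\M$, and the entries of $M'$ large monomials in $\M$. Precomposing with $\log_k$: the series $p(m' \circ \log_k, M' \circ \log_k)$ still converges absolutely for large $x$; each entry of $m' \circ \log_k$ lies in $\M \circ \log_k \subseteq \la$ and remains small, and each entry of $M' \circ \log_k$ lies in $\la$ and remains large, because $\log_k x \to +\infty$. This is the required representation of $h$. I expect the only genuinely delicate point to be the bookkeeping in Part (1) — in particular the asymmetric role of $\P\M_0$, whose generator is $x$ rather than $\exp$ of something, against the higher $\P\M_i$, and verifying the closure properties of $\U$ used above; Parts (2) and (3) are then essentially immediate from Part (1), from $\H = \U \oplus \B$, and from (E3)$_n$.
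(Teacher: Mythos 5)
Your proposal is correct, and its skeleton is the paper's: everything reduces to the identity $\P\M_i=\exp\circ\P\E_{i-1}^\infty$, the level-zero factor $x^a$ is absorbed by one extra composition with $\log$, and parts (2) and (3) then follow from part (1), (E3) and facts recorded just before the proposition. The differences are in the bookkeeping, and in one place they are genuine. For (1), the paper argues multiplicatively: it uses $\exp\circ\,\E^\infty\subseteq\M$ for one inclusion and, for the other, shows $\M\subseteq\exp\circ\,\E^\infty\circ\log$ via $x=\exp\circ\log$, the inclusion $\E^\infty\circ\exp\subseteq\E^\infty$ (Lemma \ref{composition_lemma}) and the fact that $\exp\circ\,\E^\infty\circ\log$ is a multiplicative group; you work additively instead, proving $\log\circ m\in\U$ factor by factor and invoking the $\RR$-vector-space structure of $\U$ (legitimately available, since the paper records it just before the proposition---note that this is where Lemma \ref{composition_lemma} is implicitly hiding, as directedness of the union defining $\U$ needs $\E^\infty\subseteq\E^\infty\circ\log$). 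For the uniqueness clause of (2) your argument is genuinely different: you write two $\asymp$-equivalent elements of $\la$ as $\exp\circ u_1$ and $\exp\circ u_2$ and conclude $u_1-u_2\in\U\cap\B=\{0\}$ from the decomposition $\H=\U\oplus\B$ (again available, being derived from Corollary \ref{direct_sum} and Proposition \ref{1.11}, with no circularity), whereas the paper stays at the level of monomials, combining (E2) (distinct elements of $\M$ lie in distinct Archimedean classes) with the chain $\M\subseteq\M\circ\log\subseteq\M\circ\log_2\subseteq\cdots$ to bring two elements of $\la$ to a common logarithmic depth. Your route is a bit slicker and trades the monomial combinatorics for the direct-sum decomposition; the paper's route keeps the uniqueness argument internal to (E1)--(E2). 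Your treatments of the existence half of (2) and of (3) coincide with the paper's (terse) proofs.
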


\begin{proof}
	(1) By definition, we have 
	\begin{equation}\label{monomial_sets}
	\exp \circ\,\E^\infty = \prod_{n \in \NN} \left(\P\M_{n+1} \cup \{1\}\right) \subseteq \M,
	\end{equation}  
	so that $\la = \bigcup_{k \in \NN} (\exp \circ \E^\infty) \circ \log_k \subseteq \bigcup_{k \in \NN} \M \circ \log_k$.  On the other hand, we have $x = \exp \circ \log \in (\exp \circ \E^\infty) \circ \log$, while Lemma \ref{composition_lemma} implies that $\E^\infty \circ \exp \subseteq \E^\infty$, so that $\exp \circ \E^\infty \subseteq \exp \circ \E^\infty \circ \log$.  Since $\exp \circ \E^\infty \circ \log$ is a multiplicative group, it follows from the equality in \eqref{monomial_sets} that $\M \subseteq \exp \circ \E^\infty \circ \log$.  Therefore, we get $\bigcup_{k \in \NN} \M \circ \log_k \subseteq \bigcup_{k \in \NN} (\exp \circ \E^\infty) \circ \log_k$, which proves part (1).
	
	(2) Let $h \in \H$, and let $g \in \E$ and $k \in \NN$ be such that $h = g \circ \log_k$.  Then $g \asymp \lm(g)$ by $(E3)$, so that $h \asymp \lm(g) \circ \log_k = \lm(h)$.  On the other hand, no two distinct germs in $\M$ are in the same Archimedean class by (E2).  Also, by Lemma \ref{composition_lemma}, we have $\M = \M \circ \exp \circ \log \subseteq \M \circ \log \subseteq \M \circ \log_2 \subseteq \cdots$, so it follows from part (1) that no two distinct germs in $\la$ are in the same Archimedean class.
	
	(3) follows from the definition of $\E$, part (1) and Proposition \ref{1.11}.
\end{proof}

%\newpage
\section{Real domains}
\label{real_domains}

In this section, we introduce real domains and study the $\LL$-analytic continuations of some elementary germs in $\H$.  We let $\Log:\LL \into \CC$ be the biholomorphic map $$\Log x:= \log|x| + i \arg x,$$ and we let $\Exp:\CC \into \LL$ be its inverse.  Corresponding to this chart, we define $$d(x,y):= |\Log x - \Log y|$$ and set $B(x,s):= \set{y \in \LL:\ d(x,y)<s}$ for $x \in \LL$.

Here is a general observation about analytic continuations on $\LL$:

\begin{lemma}
\label{L-extension}
Let $U \subseteq \CC \setminus \{0\}$ be a domain and $f:U \into \CC \setminus\{0\}$ be holomorphic, and assume that $\pi^{-1}(U)$ is simply connected.  Then $f$ lifts to a unique holomorphic $\f:\pi^{-1}(U) \into \LL$ such that $f \circ \pi = \pi \circ \f$.
\end{lemma}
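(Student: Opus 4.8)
The plan is to build $\f$ directly using the covering space structure of $\LL$. First I would recall that $\pi:\LL \into \CC \setminus \{0\}$ is the universal covering map, with $\LL$ simply connected, and that $\Log$ gives a global chart. The key observation is that, since $f:U \into \CC\setminus\{0\}$ lands in $\CC \setminus \{0\}$, the composition $f \circ \pi:\pi^{-1}(U) \into \CC \setminus \{0\}$ is a holomorphic map whose target is covered by $\pi:\LL \into \CC\setminus\{0\}$. Since $\pi^{-1}(U)$ is simply connected (and locally path-connected, being an open subset of $\LL$), the lifting criterion from covering space theory applies: the map $f\circ\pi$ lifts through $\pi$ to a continuous map $\f:\pi^{-1}(U) \into \LL$ with $\pi\circ\f = f\circ\pi$, and such a lift is unique once we do not prescribe a base point — but here we must be slightly careful, since a priori the lifting criterion only gives a lift after fixing the image of one base point. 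I would handle this by noting that $\pi^{-1}(U)$ may be disconnected, so I would argue component by component: on each connected component $V$ of $\pi^{-1}(U)$, $V$ is simply connected, pick $x_0 \in V$ and any point in the fibre $\pi^{-1}(f(\pi(x_0)))$, and the lifting criterion yields a unique continuous $\f\rest V$ through that base point; different choices of base point in the fibre differ by a deck transformation of $\LL$, i.e. by $(r,\theta)\mapsto(r,\theta+2\pi k)$.

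For \emph{uniqueness} of $\f$ as stated, I expect the intended reading is that the equation $f\circ\pi = \pi\circ\f$ together with holomorphy pins down $\f$ once we also insist it be a genuine lift of $f$ in a way compatible across $\LL$'s own structure — more precisely, I would observe that $\Log$ intertwines things: writing $\Log \circ \f$, we have $\pi\circ\f = f\circ\pi$ means $|\f| = |f\circ\pi|$ and $\arg\f \equiv \arg(f\circ\pi) \pmod{2\pi}$ as real-valued functions, so $\Log\circ\f - \Log\circ f\circ\pi$ takes values in $2\pi i\ZZ$; being continuous on each component $V$, it is constant there. Thus on each component the lift is unique up to an additive constant in $2\pi i\ZZ$, which is exactly the freedom in choosing the fibre point; if the statement means uniqueness after fixing such normalizations (or if, as is often the case in these continuation arguments, $U$ itself is connected and one point of $\f$ is implicitly fixed by the continuation being \emph{of} a given germ), then uniqueness holds. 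I would phrase the written proof to make this normalization explicit, e.g. "the lift is unique up to composition with a deck transformation of $\LL$," or fix a base point.

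Finally, \emph{holomorphy} of $\f$: since $\pi$ is a local biholomorphism, locally around any $x \in \pi^{-1}(U)$ choose a neighbourhood $W$ on which $\pi\rest W$ is biholomorphic onto its image and small enough that $\f(W)$ lies in a single sheet $W'$ with $\pi\rest{W'}$ biholomorphic; then $\f\rest W = (\pi\rest{W'})^{-1} \circ (f\circ\pi)\rest W$ is a composition of holomorphic maps, hence holomorphic. Equivalently, in the $\Log$ chart, $\Log\circ\f$ differs locally from $\Log\circ f\circ\pi$ by a constant, and the latter is holomorphic. The main obstacle — really the only subtlety — is the bookkeeping around connectedness and the precise sense of uniqueness: the lifting criterion is entirely standard, but the phrase "unique holomorphic $\f$" needs either $U$ connected or an explicit normalization, so in the write-up I would either restrict attention to connected $U$ (which suffices for all later uses, where $U$ arises from a single germ) or spell out the "unique up to deck transformation on each component" caveat.
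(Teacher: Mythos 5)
Your argument is correct, but it takes a different route from the paper. The paper works entirely in the global $\Log$-chart: it observes that $V:=\Log(\pi^{-1}(U))$ is a simply connected plane domain on which $f\circ\exp$ is holomorphic and nowhere zero, invokes the classical existence of a holomorphic logarithm (Rudin, Theorem 13.11) to get $g$ with $f\circ\exp=\exp\circ g$, and then defines $\f:=\Exp\circ g\circ\Log$, checking $f\circ\pi=\pi\circ\f$ from $\pi\circ\Exp=\exp$ and $\exp\circ\Log=\pi$. You instead apply the topological lifting criterion for the covering $\pi:\LL\into\CC\setminus\{0\}$ to the map $f\circ\pi$ on the simply connected, locally path-connected set $\pi^{-1}(U)$, and obtain holomorphy locally from the fact that $\pi$ is a local biholomorphism. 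These are two faces of the same fact (the logarithm theorem is the analytic avatar of the lifting criterion), but your version makes the monodromy mechanism explicit, whereas the paper delegates it to a textbook citation and is correspondingly shorter. One point in your favour: you correctly note that the lift is unique only up to a deck transformation $(r,\theta)\mapsto(r,\theta+2\pi k)$, i.e. after fixing the image of one point (or, in the intended applications, by requiring $\f$ to continue a given real germ); the paper's proof constructs $\f$ and does not comment on the uniqueness claim at all, so your normalization discussion is a genuine refinement rather than a gap. Your worry about disconnected $\pi^{-1}(U)$ is harmless but unnecessary under the standard convention that simply connected includes (path-)connected, which is also what the paper implicitly uses when it calls $\Log(\pi^{-1}(U))$ a simply connected domain.
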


\begin{proof}
Note that $\exp(\Log(\pi^{-1}(U))) = U$ and that $V:= \Log(\pi^{-1}(U))$ is a simply connected domain in $\CC$.  By assumption, both $f \circ \exp:V \into \CC\setminus\{0\}$ and $1/f \circ \exp:V \into \CC\setminus\{0\}$ are holomorphic, so by \cite[Theorem 13.11]{Rudin:1987fk}, there exists a holomorphic $g:V \into \CC$ such that $f \circ \exp = \exp \circ g$.  Now define $\f:= \Exp \circ g \circ \Log$; since $\pi \circ \Exp = \exp$ and $\exp \circ \Log = \pi$, it follows that $f \circ \pi = \pi \circ \f$.
\end{proof}

\begin{expls}
\label{basic_extensions_2}
Particular examples of the previous lemma include:
\begin{enumerate}
\item $f \in \CC(X)$ and $U= D(a)$, where $a>0$ is such that $Z(f):= \set{z \in \CC:\ f(z) = 0} \subseteq B(0,a):= \set{ w \in \CC:\ |w| < a}$ and $$D(a):= \set{z \in \CC:\ |z|>a};$$
\item for $a \in \RR$, the injective map $t_a: D(|a|) \into \CC \setminus \{0\}$ defined by $t_a(z):= z+a$.  Note that, in this case, the corresponding lifting extends to a biholomorphic map $\t_a:\LL \setminus \pi^{-1}(-a) \into \LL \setminus \pi^{-1}(a)$ with compositional inverse $\t_{-a}$;
\item $f \in \Pc{R}{T}$ is nonzero, where $T$ is a single indeterminate, and $a>0$ is sufficiently small such that $f(z) \ne 0$ for $z \in U:= B(0,a) \setminus \{0\}$.  In this situation, if $f(0) = 0$, then $|\f(x)| \to 0$ as $|x| \to 0$; while if $f(0) = a \ne 0$, then $d(\f(x),a) \to 0$ as $|x| \to 0$;
\item The map $\exp:\CC \setminus \{0\} \into \CC \setminus \{0\}$; we denote the corresponding analytic continuation by $\eexp:\LL \into \LL \setminus \{(1,0)\}$.  Note that $\eexp = \Exp \circ \pi$.
\end{enumerate}
\end{expls}

However, we need more precise information on the kinds of simply connected domains on which definable functions have analytic continuations.
We set $$H(a):= \set{z \in \CC:\ \re z > a},$$ for $a \in \RR \cup \{-\infty\}$, and $$S:= \set{z \in \CC:\ |\im z| < \frac{\pi}{2}}.$$  We denote by $\log:H(0) \into S$ the main branch of the logarithm on $H(0)$ and by $\arg:H(0) \into \left(-\frac{\pi}{2}, \frac{\pi}{2}\right)$ the main branch of the argument; note that, for $a>1$, we have $\log H(a) \subseteq H(\log a)$.

%\subsection{Real domains}
%\label{real_domains}

\begin{df}
\label{real domain}
A set $U \subseteq H(0)$ is a \textbf{real domain} if there exist $a \ge 0$ and a continuous function $f = f_{U,\arg}:(a,+\infty) \into (0,\frac{\pi}{2}]$ such that $$U \cap D(a) = \set{z \in D(a):\ |\arg z| < f(|z|)}.$$  Note that, in this situation, $U \cap D(a)$ is definable if $f_{U,\arg}$ is definable.
\end{df}

\begin{expls}
\label{real_domain_expl}
The following are definable real domains:
\begin{enumerate}
\item $H(a)$ for $a \ge 0$;
\item for $0 < \alpha \le \frac{\pi}{2}$, the \textbf{sector} $S(\alpha):= \set{z \in H(0):\ |\arg z| < \alpha}$.
\end{enumerate}
\end{expls}

A special class of definable real domains are the following:

\begin{df}
\label{strict_real_domain}
A set $U \subseteq H(0)$ is a \textbf{standard domain} if there exist $a \ge 0$ and a continuous function $f = f_{U,\im}:(a,+\infty) \into (0,+\infty)$ such that $$U \cap H(a) = \set{z \in H(a):\ |\im z| < f(\re z)}.$$  Note that, in this situation, $U \cap H(a)$ is definable if and only if $f_{U,\im}$ is definable.
\end{df}

For example, every sector $S(\alpha)$, for $\alpha < \frac\pi2$, 
is a definable standard domain; but the half-planes $H(a)$, for $a>0$, are not standard domains.  

\begin{lemma}
\label{tame_domain_lemma}
Let $U \subseteq H(0)$.  If $\,U$ is a definable standard domain, then $U$ is a definable real domain.  Moreover, if $V \subseteq U$ is a definable real domain, then $V$ is a definable standard domain.
\end{lemma}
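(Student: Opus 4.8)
The statement has two halves, and both come down to controlling how the coordinates $\re z, \im z, |z|, \arg z$ relate on $H(a)$ for large $a$. The key elementary estimates are: for $z \in H(a)$ with $a$ large, $|z| \asymp \re z$ (more precisely $\re z \le |z| \le \re z \cdot \sec(\arg z)$, and $\arg z$ small forces $|z|/\re z \to 1$), and $\im z = |z| \sin(\arg z)$, $\arg z = \arctan(\im z / \re z)$. The whole point is that these conversions are given by definable functions, so applying them to a $\log$-$\exp$-analytic germ $f$ defining one kind of domain produces a $\log$-$\exp$-analytic germ defining the other, up to the freedom of increasing $a$.

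\begin{proof}
	Suppose first that $U$ is a definable standard domain, so that $U \cap H(a) = U_f$ for some $a \ge 0$ and some definable continuous $f:(a,+\infty) \into (0,+\infty)$. After increasing $a$, we may assume that $\frac{\pi}{4}\cdot t \ge f(t)$ for all $t > a$ (if $f$ is eventually bounded this is automatic; if not, replace $f$ by $\min\{f(t),\frac{\pi}{4}t\}$, which is still definable, continuous and positive, and defines the same germ of standard domain after further increasing $a$ — indeed shrinking $f$ shrinks $U_f$, and it suffices to check that the resulting real domain germ is contained in the original standard domain germ, which will follow from the computation below together with Lemma \ref{tame_domain_lemma}'s second assertion applied in reverse; more simply, one checks directly). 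For $z \in H(a)$ write $z = \re z + i\,\im z$; since $\re z > a$, the condition $|\im z| < f(\re z)$ together with $f(\re z) \le \frac{\pi}{4}\re z$ gives $|\arg z| = \arctan(|\im z|/\re z) < \arctan(f(\re z)/\re z) =: g(\re z, \im z)$, but we want the bound in terms of $|z|$. Define $h(s) := \arctan\left(f\left(s\cdot\text{(something)}\right)/\cdots\right)$ — rather than fuss with the implicit change of variables, argue as follows: the map $\Phi: U_f \into \LL$ sending $z$ to $(|z|,\arg z)$ is definable, and the set $\Phi(U_f \cap H(a))$ is a definable subset of $\LL$; I claim that, after increasing $a$, it is of the form $\{(r,\theta): r > a', |\theta| < h(r)\}$ for a definable continuous $h:(a',+\infty)\into(0,\tfrac{\pi}{2}]$. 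This is where o-minimality does the work: for each fixed modulus $r$, the fibre $\{\theta: (r,\theta) \in \Phi(U_f)\}$ is, by the shape of $U_f$ and the monotonicity built into $f \le \frac\pi4 t$, a symmetric interval $(-h(r),h(r))$ (symmetry because $U_f$ is symmetric under conjugation); definability of $h$ follows by definable choice, continuity from continuity of $f$ and of $\arctan$, and positivity since $f > 0$. Finally $h \le \frac\pi2$ after increasing $a'$ because $\arg z \to 0$ is forced once $f(\re z) \le \frac\pi4 \re z$ and $\re z \asymp |z|$. Capping $h$ at $\frac\pi2$ if needed (taking $\min$), we conclude $U$ is a definable real domain with $f_{U,\arg} = h$.

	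For the converse, let $V \subseteq U$ be a definable real domain, so $V \cap H(b) = \{z \in H(b): |\arg z| < k(|z|)\}$ for some $b \ge 0$ and definable continuous $k:(b,+\infty)\into(0,\tfrac\pi2]$. Run the same argument with the roles of $(\re z, \im z)$ and $(|z|,\arg z)$ interchanged: the map $\Psi: z \mapsto (\re z, \im z)$ is definable, and the image $\Psi(V \cap H(b))$ is a definable subset of $H(0)$; for each fixed value $t$ of the real part, the fibre $\{\im z : \re z = t,\ z \in V\}$ is a symmetric open interval $(-m(t), m(t))$ (symmetry from conjugation-symmetry of $V$; it is an interval and open because $V$ is a real domain, i.e. cut out by $|\arg z| < k(|z|)$ with $k$ continuous, and along a vertical line $\re z = t$ the quantity $\arg z$ increases monotonically with $|\im z|$ while $|z|$ also increases, so the inequality defines an interval — one checks the combined inequality $|\arctan(s/t)| < k(\sqrt{t^2+s^2})$ in $s$ has symmetric interval solution set, at least after increasing $b$ so that the left side genuinely dominates near the endpoints). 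Definable choice gives a definable $m$, continuity and positivity are inherited from $k$, and one must finally check $\re z > b'$ for the image, i.e. that $V \cap H(b)$ really sits inside some $H(b')$ — but $V \subseteq H(0)$, so $\re z > 0$, and the standard domain definition only requires positivity of $m$ on $(b',+\infty)$, which holds with $b' = b$. Hence $V$ is a definable standard domain with $f_{V,\im} = m$.
\end{proof}

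\emph{Remark on the plan.} The substantive content, and the step most likely to need care in the write-up, is the claim in each direction that the relevant image set has exactly the fibrewise shape required — a symmetric open interval varying continuously. Set-theoretically this is an o-minimality argument (definable fibres of a definable set in $\RR$ are finite unions of points and intervals; symmetry under conjugation collapses this to a single symmetric interval), but getting the monotonicity right — that along a vertical line $|\arg z|$ is increasing in $|\im z|$, and along a circle $|z| = r$ the real part behaves monotonically in $|\arg z|$ — together with the repeated ``after increasing $a$'' bookkeeping is where the real work sits. The normalization $f \le \frac\pi4 t$ in the first half (forcing the angle to be small, hence the real and modulus comparable) is the one genuinely clever move; without it a standard domain with $f$ growing faster than linearly need not be a real domain in the strict sense demanded here, which is consistent with the stated asymmetry (every standard domain is a real domain, but only real \emph{sub}domains of a given real domain are standard).
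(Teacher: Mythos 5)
Your proof has two genuine gaps, one in each half. In the first half, the normalization ``we may assume $f(t)\le\frac{\pi}{4}t$'' is invalid: replacing $f$ by $\min\{f,\frac{\pi}{4}t\}$ does \emph{not} define the same germ of standard domain once $f$ grows super-linearly (for $f(t)=t^2$ the two domains differ by an unbounded region, and no intersection with a half-plane $H(a)$ repairs this). Your closing remark, that a standard domain with super-linear $f$ ``need not be a real domain in the strict sense demanded here,'' contradicts the very statement being proved and is in fact false: since $U\subseteq H(0)$, every point already satisfies $|\arg z|<\frac\pi2$, and the paper's construction shows the angle bound is automatic. The paper simply re-parametrizes the boundary graph: with $\rho_g(x):=\sqrt{x^2+g(x)^2}$ and $\theta_g(x):=\arctan(g(x)/x)$, the definable germ $\rho_g$ is infinitely increasing, hence eventually injective, and $\xi_g:=\theta_g\circ\rho_g^{-1}$ takes values in $(0,\frac\pi2)$ and serves as $f_{U,\arg}$. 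Your substitute for this, the claim that each fibre over a fixed modulus is a single symmetric interval, is asserted rather than proved: o-minimality gives only a finite union of points and intervals, and conjugation symmetry does not collapse such a union to one interval; the single-crossing argument (each large circle meets the boundary graph exactly once, which is what the eventual injectivity of $\rho_g$ delivers) is exactly the missing content, and you flag it yourself as ``where the real work sits.''

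The second half has a more basic flaw: you never use the hypothesis that $V$ is contained in a \emph{standard} domain $U$, and without it the assertion is false --- the half-plane $H(a)$ is a definable real domain (take $f_{V,\arg}\equiv\frac\pi2$) but not a standard domain, as the paper points out just before the lemma. Concretely, your vertical fibre $\{\im z:\ \re z=t,\ z\in V\}$ need not be a bounded interval, so the ``definable $m$'' you extract need not take values in $(0,+\infty)$; your argument as written would prove the false statement that every definable real domain is standard. The containment $V\subseteq U$ is precisely what bounds these fibres, and in the paper's proof it enters through the inequality $f_{V,\arg}\le\xi_g$: since $\cos$ is decreasing on $(0,\frac\pi2)$ and $x\cos(\xi_g(x))$ is infinitely increasing (because $U$ is standard, i.e.\ its boundary is a graph over the real axis), the function $\eta_f(x):=x\cos(f_{V,\arg}(x))$ is infinitely increasing, can be inverted, and yields $f_{V,\im}=\eta_f^{-1}\cdot\left(\sin\circ f_{V,\arg}\circ\eta_f^{-1}\right)$. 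Any repair of your fibrewise approach must import this hypothesis at exactly this point (bounding the fibres and supplying the monotonicity needed for the single-crossing claim along vertical lines), in addition to supplying the interval arguments left unproved in both halves.
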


\begin{proof}
Assume there exist $b \ge 0$ and a definable $g = f_{U,\im}:(b,+\infty) \into (0,+\infty)$ such that $$U \cap H(b) = \set{z \in H(b):\ |\im z| < g(\re z)}.$$  The functions $\rho_g,\theta_g:(b,+\infty) \into (0,+\infty)$ defined by $$\rho_g(x):= \sqrt{x^2 + g(x)^2} \quad\text{ and }\quad \theta_g(x):= \arctan(g(x)/x)$$ are definable, and $\rho_g$ is infinitely increasing.  So there exists $c \ge b$ such that $\rho_g\rest{(c,+\infty)}$ is injective with compositional inverse $\rho_g^{-1}:(a,+\infty) \into (c,+\infty)$.  Therefore, we set $$\xi_g:= \theta_g \circ \rho_g^{-1},$$ and we have $U \cap H(a) = \set{z \in H(a):\ |\arg z| < \xi_g(|z|)}$.  Increasing $a$ if necessary, we conclude that $U \cap D(a) = \set{z \in D(a):\ |\arg z| < \xi_g(|z|)}$.

Moreover, let $d \ge a$ and $f:(d,+\infty) \into (0,\pi/2)$ be such that $V := \set{z \in D(a):\ |\arg z| < f(|z|)} \subseteq U$; then $f \le f_{U,\arg} = \xi_g$.  Define $\eta_f:(d,+\infty) \into (0,+\infty)$ by $$\eta_f(x):= x \cdot (\cos \circ f)(x).$$  Note that, for $z \in \partial U$, we have $\re z = |z| \cdot (\cos \circ \xi_g)(|z|)$; since $U$ is a definable standard domain, it follows that the function $x \mapsto x \cdot (\cos \circ \xi_g)(x)$ is infinitely increasing.  Since $\cos\rest{(0,\pi/2)}$ is decreasing and $\eta_f$ is definable, it follows that $\eta_f$ is infinitely increasing as well.  Increasing $d$ if necessary, we may assume that $\eta_f$ is strictly increasing with image $(e,+\infty)$, for some $e>0$, and we denote by $\eta_f^{-1}:(e,+\infty) \into (d,+\infty)$ its compositional inverse.  Then $$V \cap H(e) = \set{z \in H(e):\ |\im z| < \left(\eta_f^{-1}\cdot\left(\sin \circ f \circ \eta_f^{-1}\right)\right)(\re z)},$$ which shows that $V$ is a standard domain.
\end{proof}

It is convenient to talk about \textit{germs} of sets in $H(0)$ at $\infty$: for two subsets $A,B \subseteq H(0)$, we set $A \sim B$ if and only if there exists $a>0$ such that $A \cap D(a) = B \cap D(a)$.  The corresponding equivalence classes of subsets of $H(0)$ are called \textbf{germs} of subsets of $H(0)$ \textbf{at $\infty$}.  We will not explicitely distinguish between subsets of $H(0)$ and their germs at $\infty$ when the meaning is clear from context.  In this sense, every definable real domain $U$ corresponds to a unique element $f = f_{U,\arg} \in (0,\frac\pi2]_\H$, where
$$(0,\pi/2]_\H := \set{h \in \H:\ 0 < h \le \frac\pi2},$$ and we write $U = U_f$ in this sense; the standard domains correspond to $$\Hsr:= \set{f \in (0,\pi/2]_\H:\ f = f_{U,\arg} \text{ for some standard } U}.$$  It follows from Lemma \ref{tame_domain_lemma} that:

\begin{cor}
	\label{Hsr_cut}
	$\Hsr$ is a cut in $(0,\pi/2)_\H$. \qed
\end{cor}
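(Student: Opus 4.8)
The claim is that $\Hsr$ is a cut in $(0,\pi/2)_\H$, i.e.\ that it is a downward-closed, nonempty, proper subset (and, depending on the precise notion of ``cut'' in use here, that its complement is nonempty and every element of $\Hsr$ lies strictly below every element of the complement). The plan is to verify each of these properties directly, using the two halves of Lemma \ref{tame_domain_lemma}.

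First I would establish \textbf{downward closure}. Suppose $f \in \Hsr$ and $h \in (0,\pi/2)_\H$ with $h \le f$; I want $h \in \Hsr$. The real domain $U_h$ is contained in $U_f$ as a germ at $\infty$, and $U_f$ is a standard domain by hypothesis. By the second assertion of Lemma \ref{tame_domain_lemma}, every definable real domain contained in a definable standard domain is itself a definable standard domain; applying this with $V = U_h \subseteq U = U_f$ gives that $U_h$ is a standard domain, hence $h = f_{U_h,\arg} \in \Hsr$. (A small point to check: if $h < f$ only eventually, or $h \le f$ with possible equality, the inclusion $U_h \subseteq U_f$ still holds as germs at $\infty$, so the argument is unaffected.)

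Next, \textbf{nonemptiness and properness}. For nonemptiness, pick any $\alpha \in (0,\pi/2)$: the constant germ $\alpha$ corresponds to the sector $S(\alpha)$, which is a definable standard domain (as remarked just before the Lemma), so $\alpha \in \Hsr$; more generally Lemma \ref{tame_domain_lemma} shows every definable standard domain is a definable real domain, so $\Hsr$ is exactly the set of $f_{U,\arg}$ arising from standard $U$ and is manifestly nonempty. For properness, I would exhibit an element of $(0,\pi/2)_\H$ not in $\Hsr$. The natural candidate is the constant germ $\pi/2$ itself, or a germ $f$ with $f(x) \to \pi/2$ slowly enough that $x \cos f(x)$ is bounded: then the associated real domain is (a translate of) the half-plane $H(a)$ — or something asymptotically as wide — which is not a standard domain, since for a standard domain $U$ the boundary satisfies $\re z = |z|\cos\xi_g(|z|) \to \infty$, forcing $x \cos f(x)$ to be infinitely increasing. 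Concretely, one checks that $f \in \Hsr$ forces the definable germ $x \mapsto x\cos f(x)$ to be infinitely increasing (this is extracted from the proof of Lemma \ref{tame_domain_lemma}), and then any $f \in (0,\pi/2)_\H$ for which $x\cos f(x)$ is bounded — e.g.\ $f(x) = \arccos(1/x)$, so that $x\cos f(x) = 1$ — lies outside $\Hsr$. This simultaneously shows the complement is nonempty.

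Finally, the \textbf{cut property proper}: every $f \in \Hsr$ lies strictly below every $g \in (0,\pi/2)_\H \setminus \Hsr$. This is immediate from downward closure: if some $g \notin \Hsr$ satisfied $g \le f$ for some $f \in \Hsr$, then downward closure would force $g \in \Hsr$, a contradiction; hence $f < g$ (as germs) for all such $f, g$. I expect the only mildly delicate point to be the properness step — pinning down cleanly that $x \cos f(x)$ must be infinitely increasing for $f \in \Hsr$, which is really just a recapitulation of the ``$U$ is a standard domain $\Rightarrow x\cos\xi_g(x)$ infinitely increasing'' observation inside the proof of Lemma \ref{tame_domain_lemma}. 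Everything else is a formal consequence of the Lemma.
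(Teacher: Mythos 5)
Your proof is correct and follows the paper's route: the corollary is stated in the paper as an immediate consequence of Lemma \ref{tame_domain_lemma}, whose second assertion gives exactly the downward closure you verify, and your extra checks (nonemptiness via sectors, properness via a germ like $\arccos(1/x)$ whose real domain is a half-plane, hence not standard) are the routine details the paper leaves implicit. The only nitpick is that the constant germ $\pi/2$ you first float as a witness for properness does not lie in $(0,\pi/2)_\H$, but your actual witness with $x\cos f(x)$ bounded is fine.
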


The notion of real domain also makes sense on the Riemann surface $\LL$ of the logarithm: for $a \ge 0$, we set $$H_\LL(a):= \set{(r,\varphi) \in \LL:\ r > a}.$$ 

\begin{df}
\label{real_domain_2}
Let $\frU \subseteq \LL$.
We call $\frU$ a \textbf{real domain} if there exist $a \ge 0$ and a continuous function $f = f_{\frU,\arg}:(a,+\infty) \into (0,+\infty]$ such that $$\frU = \frU_f :=
\set{z \in H_\LL(a):\ |\arg z| < f(|z|)}.$$  Note that $\frU$ is simply connected, and that $\frU$ is definable if and only if $f_{\frU}$ is.
We call $\frU$ \textbf{angle-bounded} if $f_{\frU}$ is bounded.
\end{df}

For example, for $a>0$, the set $H_\LL(a)$ is a definable real domain and, for $\alpha > 0$, the \textbf{strip} or \textbf{sector} $$S_\LL(\alpha) := \set{x \in \LL:\ |\arg x| < \alpha}$$ is a definable and angle-bounded real domain.

We call a real domain $\frU \subseteq S_\LL(\pi/2)$ a \textbf{standard domain} if $\pi(\frU) \subseteq H(0)$ is a standard domain.

\begin{lemma}
\label{image_of_real_domains}
Let $\frU, \frV \subseteq \LL$ be real domains and $\varphi:\frV \into \LL$ be holomorphic and injective.  Assume that $\frU \subseteq \frV$ and $\varphi(\frU)$ and $\varphi(\frV)$ are  real domains.  Then $f_{\frU} < f_{\frV}$ if and only if $f_{\varphi(\frU)} < f_{\varphi(\frV)}$.
\end{lemma}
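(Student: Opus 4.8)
The plan is to first reduce to a single implication. By the open mapping theorem $\varphi$ is a homeomorphism from $\frV$ onto the open set $\varphi(\frV)$, and the inverse $\varphi^{-1}\colon\varphi(\frV)\to\frV$ is again injective and holomorphic; since $\varphi^{-1}$ carries the real domain $\varphi(\frU)$ onto $\frU$ and the real domain $\varphi(\frV)$ onto $\frV$, it will be enough to prove the implication $f_\frU<f_\frV\Rightarrow f_{\varphi(\frU)}<f_{\varphi(\frV)}$, the converse then following by applying this implication to $\varphi^{-1}$, $\varphi(\frU)$ and $\varphi(\frV)$ in place of $\varphi$, $\frU$ and $\frV$.

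The second step is a topological reformulation of the relation ``$<$'': for real domains $\frA\subseteq\frB$ I claim that $f_\frA<f_\frB$ holds if and only if there is $b\ge 0$ with $\cl_\LL(\frA)\cap H_\LL(b)\subseteq\frB$ (the case in which $f_\frA$ equals $+\infty$ on a cofinal set being degenerate, as then both sides fail). Indeed, the two boundary curves $r\mapsto(r,\pm f_\frA(r))$ of $\frA$ always lie in $\cl_\LL(\frA)$, and beyond a suitable modulus they constitute precisely the points of $\cl_\LL(\frA)$ not already in $\frA$; such a point $(r,f_\frA(r))$ lies in $\frB$ exactly when $f_\frA(r)<f_\frB(r)$, and continuity of $f_\frA$ and $f_\frB$ promotes this pointwise statement to the germ statement.

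The third step transfers the condition along $\varphi$, and contains the only real difficulty. Assuming $f_\frU<f_\frV$, fix $b$ with $\cl_\LL(\frU)\cap H_\LL(b)\subseteq\frV$. Because $\varphi$ is a homeomorphism of $\frV$ onto $\varphi(\frV)$, it carries the closure of $\frU$ relative to $\frV$ onto the closure of $\varphi(\frU)$ relative to $\varphi(\frV)$, i.e.\ $\varphi\big(\cl_\LL(\frU)\cap\frV\big)=\cl_\LL(\varphi(\frU))\cap\varphi(\frV)$. To finish one needs that $\varphi$ and $\varphi^{-1}$ send germs at $\infty$ to germs at $\infty$ — equivalently, that a sequence $z_n\in\frV$ satisfies $|z_n|\to\infty$ if and only if $|\varphi(z_n)|\to\infty$ — and this is exactly where the hypothesis that \emph{all four} of $\frU,\frV,\varphi(\frU),\varphi(\frV)$ are real domains is needed. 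I would prove it via Carathéodory's prime-end correspondence, using that a real domain has a unique prime end with unbounded impression, namely the one realized by the sequences whose modulus tends to $\infty$: if $\varphi$ carried the prime end at $\infty$ of $\frV$ to a finite prime end of $\varphi(\frV)$, then pulling a sequence of modulus $\to\infty$ in the real domain $\varphi(\frU)$ back through $\varphi^{-1}$ would give a sequence in $\frU$ that simultaneously tends to $\infty$ in modulus (since $\frU$ is a real domain) and converges to a finite boundary point of $\frV$ (by the prime-end correspondence), which is absurd.

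Granting germ-at-$\infty$ preservation, the inclusion $\cl_\LL(\frU)\cap H_\LL(b)\subseteq\frV$ becomes $\cl_\LL(\varphi(\frU))\cap H_\LL(b')\subseteq\varphi(\frV)$ for a suitable $b'$, and the reformulation of the second step applied to $\varphi(\frU)\subseteq\varphi(\frV)$ yields $f_{\varphi(\frU)}<f_{\varphi(\frV)}$. The main obstacle, then, is establishing that $\varphi$ respects germs at $\infty$; I expect the authors may prefer a more elementary argument for this (for instance exhausting $\frV$ and $\varphi(\frV)$ by the subdomains $\frV\cap H_\LL(N)$ and $\varphi(\frV)\cap H_\LL(N)$ and arguing by connectedness), but the prime-end picture is the conceptual reason it is true.
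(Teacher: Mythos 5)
Your Steps 1 and 2 are fine, but the step you yourself identify as ``the only real difficulty'' --- that $\varphi$ preserves germs at $\infty$, i.e.\ $|z_n|\to\infty$ iff $|\varphi(z_n)|\to\infty$ for sequences in $\frV$ --- is not established by the prime-end argument you sketch, and its key claim is false as stated. Definition \ref{real_domain_2} allows $f_{\frU,\arg}\equiv+\infty$, so $H_\LL(a)$ is a real domain; in the $\Log$-chart it is the half-plane $\set{\re z>\log a}$, whose boundary on the sphere is a Jordan curve, so it has exactly one prime end with impression $\{\infty\}$ --- but that prime end is realized both by sequences with $\re z_n\to+\infty$ (i.e.\ $\LL$-modulus $\to\infty$) and by sequences with bounded real part and $|\im z_n|\to\infty$ (i.e.\ bounded $\LL$-modulus, unbounded argument). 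So ``the prime end realized by the sequences whose modulus tends to $\infty$'' is not a correct characterization, and matching prime ends at infinity under $\varphi$ cannot distinguish escape in modulus from escape in argument; the contradiction at the end of your third step does not follow (nor does a sequence of modulus $\to\infty$ have to converge to a single prime end, and the parenthetical ``tends to $\infty$ in modulus since $\frU$ is a real domain'' is circular --- that is exactly what is to be proved). If you insist on reading $<$ as a germ inequality, the hypothesis that all four sets are real domains is what excludes inversion-type pathologies, but that has to be argued directly; the prime-end correspondence alone does not deliver it.

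The paper's own proof shows this machinery is beside the point: it observes that for real domains $\frU\subseteq\frV$ the inequality $f_{\frU}<f_{\frV}$ is equivalent to the strict inclusion $\frU\subsetneq\frV$, that an injective map preserves strict inclusions ($\frU\subsetneq\frV$ iff $\varphi(\frU)\subsetneq\varphi(\frV)$), and then applies the same equivalence to $\varphi(\frU)\subseteq\varphi(\frV)$. The comparison is thus made on each side separately, through set inclusion, and no information whatsoever about how $\varphi$ behaves near $\infty$ is needed. I recommend replacing your third step by this inclusion argument (your reduction via $\varphi^{-1}$ then becomes unnecessary as well).
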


\begin{proof}
It is straightforward to see that $\frU \subsetneq \frV$ if and only if $f_{\frU} < f_{\frV}$.  
Since $\varphi$ is injective, it follows that $f_{\frU} < f_{\frV}$ iff $\frU \subsetneq \frV$ iff $\varphi(\frU) \subsetneq \varphi(\frV)$ iff $f_{\varphi(\frU)} < f_{\varphi(\frV)}$.
\end{proof}

\begin{expl}
\label{basic_extensions}
Note that $\Log(H_\LL(1)) = H(0)$ and, for $x \in S_\LL(\pi/2)$, we have $\Log(x) = \log(\pi(x))$.  Thus we define $\llog:H_\LL(1) \into S_\LL(\pi/2)$ by $$\llog(x) := (\pi\rest{S(\pi/2)})^{-1}(\Log(x));$$ note that $\eexp\rest{S_\LL(\pi/2)} = \llog^{-1}$.

For real $r>0$, we also define the \textbf{power function} $\p_r:\LL \into \LL$ by $$\p_r(s,\theta):= (s^r,r\theta)$$ and the \textbf{scalar multiplication} $\m_r:\LL \into \LL$ by $$\m_r(s,\theta):= (rs,\theta).$$
Thus, if $\varphi$ is any of $\llog$, $\p_r$ or $\m_r$ we have $\varphi:\LL \into \LL$, while $\eexp:U_{\pi/2} \into \LL$.
\end{expl}

We also work near $\infty$, in the following sense: two sets $\frA,\frB \subseteq \LL$ are called \textbf{equivalent at $\infty$} if there exists $R>0$ such that $\frA \cap H_\LL(R) = Y\frB \cap H_\LL(R)$.  The corresponding equivalence classes are called the \textbf{germs at $\infty$} of subsets of $\LL$.  We will not explicitely distinguish between subsets of $\LL$ and their germs at $\infty$ when the meaning is clear from context.  In this sense, every definable real domain $\frU \subseteq \LL$ corresponds to a unique element $f = f_{\frU} \in \bar{\H^{>0}}:= \H^{>0} \cup \{+\infty\}$, where
$$\H^{>0} := \set{h \in \H:\ h > 0},$$ and we write $\frU = \frU_f$ in this sense.

Given germs $\frU,\frV$ at $\infty$ of domains in $\LL$, we write ``$\varphi:\frU \into \frV$'' to mean that $\varphi$ is a map defined on some domain in $\LL$ whose germ at $\infty$ is $\frU$ and taking values in some domain in $\LL$ whose germ at $\infty$ is $\frV$.

\subsection*{Basic images} 
Our first goal is to understand images of definable real domains under power functions, $\llog$ and $\eexp$.

\begin{expl}
\label{log_of_real}
Let $\frU \subseteq H_\LL(1)$ be a definable real domain.  Then direct computation shows that $\llog(\frU)$ is a definable standard domain.
\end{expl}

\begin{df}
\label{arg_functions}
Let $\frU,\frV$ be germs at $\infty$ of domains in $\LL$ and $\varphi:\frU \into \frV$.
\begin{enumerate}
\item If $\varphi\rest{\frU}$ is injective and $\varphi(\frU')$ is (the germ at $\infty$ of) a real domain, for every real domain $\frU' \subseteq \frU$, we say that $\varphi$ \textbf{maps real domains to real domains} and write $\varphi:\frU \xrightarrow{\text{real}} \frV$.
\item If $\varphi\rest{\frU}$ is injective and $\varphi(\frU')$ is a definable real domain, for every definable real domain $\frU' \subseteq \frU$, we say that $\varphi$ \textbf{maps definable real domains to definable real domains} and write $\varphi:\frU \xrightarrow{\text{dfr}} \frV$.
\item If $h \in \bhplus$ and $\varphi:\frU_h \xrightarrow{\text{dfr}} \frV$, we denote  by $\nu_\varphi:(0,h)_{\H} \into \hplus$ the map defined by $$\nu_\varphi(g):= f_{\varphi(\frU_g)}.$$ If, moreover, we have that $\varphi$ is an analytic continuation of $f \in \H$, we set $\nu_f:= \nu_\varphi$.
\end{enumerate}
\end{df}

\begin{expls}
\label{arg_function_expls}
Let $f \in \hplus$.
\begin{enumerate}
\item For $r>0$, the maps $\m_r$ and $\p_r$ are injective, and direct computation shows that $$\nu_{m_r}(f) = f \circ m_{1/r}$$ and $$\nu_{p_r}(f) = m_r \circ  f \circ p_{1/r}$$ where $m_s:\RR \into \RR$ and $p_s:[0,\infty) \into [0,\infty)$ are defined by $m_s(x):= sx$ and $p_s(x):= x^s$, for $s>0$.  In particular, $\m_r:\LL \xrightarrow{\text{dfr}} \LL$ and $\p_r:\LL \xrightarrow{\text{dfr}} \LL$; moreover, $\p_r:H_{\LL}(1) \xrightarrow{\text{dfr}} H_{\LL}(1)$.
\item We define $\rho_{\log}(f) := \sqrt{\log^2 + f^2}$ and $\theta_{\log}(f):= \arctan(f/\log)$.  Then $\rho_{\log}(f) \in \I$, and $$\nu_{\log}(f) = \theta_{\log}(f) \circ \left(\rho_{\log}(f)\right)^{-1} \in \Hsr.$$  It follows that $\llog:H_{\LL}(1) \xrightarrow{\text{dfr}} S_{\LL}(\pi/2)$ with $\nu_{\log}:\hplus \into \Hsr$.
Since $\eexp\rest{S_\LL(\pi/2)} = \left(\llog\rest{H_\LL(1)}\right)^{-1}$, it follows that $\eexp:S_{\LL}(\pi/2) \xrightarrow{\text{dfr}} H_{\LL}(1)$, that both $\nu_{\log}$ and $\nu_{\exp}:\Hsr \into \H^{>0}$ are injective and $\nu_{\exp}$ is the compositional inverse of $\nu_{\log}$.
\item For $a \in \RR$, an elementary calculation shows that, if $f$ is bounded, then $\t_a:\frU_f \xrightarrow{\text{real}} \frU_f$ and $\t_a:\frU_f \xrightarrow{\text{dfr}} \frU_f$, so that $\nu_{\t_a}:\H^{>0} \setminus \I \into \H^{>0} \setminus \I$; moreover, we have $\nu_{\t_a}^{-1} = \nu_{\t_{-a}}$.  On the other hand, if $f \in \I$, then $\t_a(\frU_f)$ is not a definable real domain---in fact, it is not even a real domain in general.  However, the same calculation as for bounded $f$ shows that, for $f \in \I$ and real $\epsilon > 0$, we have (as germs)
\begin{equation}
\label{translation_image}
\frU_{f\circ t_{-\epsilon}} \subseteq \t_a(\frU_f) \subseteq \frU_{f\circ t_\epsilon}.
\end{equation}
\end{enumerate}
\end{expls}

The next lemma, together with the previous examples, can be used to describe $\nu_w$ for every finite composition $w$ of $\exp$, $\log$ and $p_r$ with $r>0$---at least in principle.  As we do not need the details here, we leave it to the reader to explore this.

\begin{lemma}
	\label{arg_fun_comp}
	Let $g,h \in \bar{\H^{>0}}$, $\frU$ the germ at $\infty$ of some domain in $\LL$, $\varphi:\frU_g \xrightarrow{\text{dfr}} \frU_h$ and $\psi:\frU_h \xrightarrow{\text{dfr}} \frU$.  Then $\psi \circ \varphi:\frU_g \xrightarrow{\text{dfr}} \frU$ and $$\nu_{\psi \circ \varphi} = \nu_\psi \circ \nu_\varphi.$$
\end{lemma}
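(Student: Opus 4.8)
The plan is to unwind the definitions and track how the ``$\arg$-profile'' functions compose. Recall the setup: we are given $g, h \in \bar{\H^{>0}}$, a germ $\frU$ at $\infty$ of a domain in $\LL$, and maps $\varphi \colon \frU_g \xrightarrow{\text{dfr}} \frU_h$ and $\psi \colon \frU_h \xrightarrow{\text{dfr}} \frU$. By Definition \ref{arg_functions}(2), both $\varphi$ and $\psi$ are injective on their respective domains and send every definable real subdomain to a definable real domain. The composite $\psi \circ \varphi$ is thus defined on a domain with germ $\frU_g$ and takes values in a domain with germ $\frU$.

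First I would check injectivity of $\psi \circ \varphi$ on (a representative of) $\frU_g$: this is immediate, since a composite of injective maps is injective. Next, let $\frU' \subseteq \frU_g$ be an arbitrary definable real domain; I must show $(\psi \circ \varphi)(\frU')$ is a definable real domain. Since $\varphi \colon \frU_g \xrightarrow{\text{dfr}} \frU_h$, the image $\varphi(\frU')$ is a definable real domain, and it is contained in (a representative of) $\frU_h$; then applying $\psi \colon \frU_h \xrightarrow{\text{dfr}} \frU$ to this definable real domain gives that $\psi(\varphi(\frU'))$ is again a definable real domain. Hence $\psi \circ \varphi \colon \frU_g \xrightarrow{\text{dfr}} \frU$. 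One should be slightly careful about the ``germ at $\infty$'' bookkeeping here: the image $\varphi(\frU')$ is only determined as a germ at $\infty$, but since $\psi$ is also only acting as a germ, the composite is well-defined on germs, and the equality $\frU_h \cap H_\LL(R) = \varphi(\frU_g) \cap H_\LL(R)$ for suitable $R$ lets us feed $\varphi(\frU')$ legitimately into $\psi$.

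For the identity $\nu_{\psi \circ \varphi} = \nu_\psi \circ \nu_\varphi$, I would simply evaluate both sides on an arbitrary $\ell \in (0,g)_\H$. By definition, $\nu_{\psi \circ \varphi}(\ell) = f_{(\psi \circ \varphi)(\frU_\ell)} = f_{\psi(\varphi(\frU_\ell))}$. Now $\varphi(\frU_\ell) = \frU_{\nu_\varphi(\ell)}$ by definition of $\nu_\varphi$, and since $\varphi \colon \frU_g \xrightarrow{\text{dfr}} \frU_h$ with $\ell < g$, Lemma \ref{image_of_real_domains} (applied with $\frU = \frU_\ell$, $\frV = \frU_g$) guarantees $\nu_\varphi(\ell) < h$, so $\nu_\varphi(\ell) \in (0,h)_\H$ and $\psi$ may be applied to $\frU_{\nu_\varphi(\ell)}$. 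Then $\psi(\frU_{\nu_\varphi(\ell)}) = \frU_{\nu_\psi(\nu_\varphi(\ell))}$, again by definition of $\nu_\psi$, so $f_{\psi(\varphi(\frU_\ell))} = \nu_\psi(\nu_\varphi(\ell)) = (\nu_\psi \circ \nu_\varphi)(\ell)$, which is what we wanted.

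The only genuinely non-formal point — and the step I expect to require the most care — is verifying that $\nu_\varphi$ maps $(0,g)_\H$ into $(0,h)_\H$, i.e., that $\varphi$ really does carry definable real subdomains of $\frU_g$ to definable real subdomains of $\frU_h$ (not merely to definable real domains sitting somewhere in $\LL$). This is where Lemma \ref{image_of_real_domains} and the monotonicity ``$\frU' \subsetneq \frV \iff f_{\frU'} < f_{\frV}$'' are used: injectivity of $\varphi$ forces $\varphi(\frU_\ell) \subsetneq \varphi(\frU_g) = \frU_h$ as germs, hence $\nu_\varphi(\ell) < h$. With that inclusion in hand, the whole argument is a routine chase through Definition \ref{arg_functions}, and no estimates are needed.
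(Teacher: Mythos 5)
The paper offers no written argument for this lemma (its proof is ``Straightforward; we leave the details to the reader''), and your unwinding of Definition \ref{arg_functions} together with Lemma \ref{image_of_real_domains} is exactly the intended routine verification: injectivity of the composite, images of definable real subdomains, and the evaluation of $\nu_{\psi\circ\varphi}$ on $\ell \in (0,g)_\H$ are all handled as the authors presumably had in mind. One inaccuracy should be repaired, though: the notation $\varphi:\frU_g \xrightarrow{\text{dfr}} \frU_h$ only says that $\varphi$ takes values in a domain whose germ at $\infty$ is $\frU_h$; it does not assert surjectivity, so both your equality $\frU_h \cap H_\LL(R) = \varphi(\frU_g)\cap H_\LL(R)$ and the identity $\varphi(\frU_g) = \frU_h$ in your final paragraph are unwarranted. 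Fortunately only containment is needed: since $\frU_g$ is itself a definable real domain, $\varphi(\frU_g)$ is (the germ of) a definable real domain with $\varphi(\frU_g) \subseteq \frU_h$ as germs at $\infty$, so feeding $\varphi(\frU_\ell) \subseteq \varphi(\frU_g)$ into $\psi$ is legitimate, and Lemma \ref{image_of_real_domains} applied to $\frU_\ell \subsetneq \frU_g$ gives $\nu_\varphi(\ell) = f_{\varphi(\frU_\ell)} < f_{\varphi(\frU_g)} \le h$, which is all that is required for $\nu_\psi \circ \nu_\varphi$ to be defined on $(0,g)_\H$. With that substitution of containment for equality, your argument goes through verbatim.
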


\begin{proof}
	Straightforward; we leave the details to the reader.
\end{proof}

%\newpage
\section{Angular level}
\label{angular_level}

To describe the analytic continuation properties of germs in $\H$, we need a rough measure of the size of a real domain.  
We first recall the main properties of level.

\begin{facts}[the Proposition in \cite{Marker:1997kn}]
	\label{level_facts}
	Let $f,g \in \I$.
	\begin{enumerate}
		\item $\level(\exp) = 1$, $\level(\log) = -1$ and, for real $r>0$, $\level(p_r) = 0$.
		\item If $f \le g$, then $\level(f) \le \level(g)$.
		\item $\level(f \circ g) = \level(f) + \level(g)$.
		\item $\level(fg) = \level(f+g) = \max\{\level(f),\level(g)\}$.
		\item If $u \in \H$ is such that $0 < u(+\infty) < +\infty$, then $\level(fu) = \level(f)$.
	\end{enumerate}
\end{facts}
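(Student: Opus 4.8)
These five properties are quoted from \cite{Marker:1997kn}, so strictly speaking nothing needs to be reproved here; but the plan, if one wants to recover them inside the present framework, is to read everything off the normal form of Proposition \ref{1.11}, which writes a large $f \in \H$ as $\exp_k(x^{1/\nu}) \circ \log_l \le |f| \le \exp_k(x^\nu) \circ \log_l$ with $\level(f) = k-l$, using only monotonicity of iterated $\exp$ and $\log$ together with the identities $\log(x^\nu) = \nu\log x$ and $\log_l \circ \exp_m = \exp_{m-l}$ (with $\exp_{-j}$ read as $\log_j$).

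The first step is a separation lemma: for $f,g \in \I$, if $f \preceq g$ then $\level(f) \le \level(g)$; equivalently, the integer $k-l$ attached to a germ by its normal form is independent of the chosen normal form and is $\preceq$-monotone. The point is that $\exp_k(x^\nu) \prec \exp_{k+1}(x^{1/\mu})$ for all $\nu,\mu$ -- since $\exp(x^{1/\mu})$ outgrows every power of $x$ -- so a strict domination between iterated exponentials of different heights can never be reversed; stripping off the order-preserving $\log_l$'s reduces the general statement to this. Granting the lemma, (2) is immediate and (1) is a one-line check: $\exp = \exp_1(x)\circ\log_0$ gives $k=1$, $l=0$; $\log = \exp_0(x)\circ\log_1$ gives $k=0$, $l=1$; and $x^{1/\nu} \le p_r(x) = x^r \le x^\nu$ for $\nu \ge r$ gives $k = l = 0$.

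For (3) I would compose the two normal forms. Since germs in $\I$ are eventually increasing, bounding $g$ from both sides and then $f$ from both sides squeezes $f \circ g$ between $(\exp_k(x^{1/\mu})\circ\log_l)\circ(\exp_{k'}(x^{1/\nu})\circ\log_{l'})$ and the corresponding upper germ; repeatedly applying $\log_l \circ \exp_{k'} = \exp_{k'-l}$ and absorbing residual logarithms via $\log_j(x^\nu) \asymp \log_j x$ collapses each side to a normal form of depth difference $(k-l)+(k'-l')$. The bookkeeping splits into cases according to the signs of $k'-l$ and the resulting iterated-$\log$ depths, but each case is mechanical, using only that $\exp_j(x^{1/\nu})$ still dominates every $\exp_{j-1}(x^\mu)$. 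Statements (4) and (5) then follow formally: if $0 < u(+\infty) < +\infty$ then $u \asymp 1$, so $fu \asymp f$ has the normal form of $f$ and (5) holds; and since $f,g \in \I$ are positive and infinitely increasing and Hardy-field germs are $\preceq$-comparable, we have $\max\{f,g\} \preceq f+g \preceq 2\max\{f,g\}$ and $\max\{f,g\} \preceq fg \preceq \max\{f,g\}^2$, so (2), (3) and $\level(h^2) = \level(p_2 \circ h) = \level(h)$ give $\level(f+g) = \level(fg) = \max\{\level(f),\level(g)\}$.

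The one genuine obstacle is the separation lemma, that is, well-definedness and $\preceq$-monotonicity of level: this is exactly where the honest hierarchy of growth rates of iterated exponentials is needed, and it is the substantive content of \cite{Marker:1997kn}. Everything else above is formal manipulation of the normal form of Proposition \ref{1.11}.
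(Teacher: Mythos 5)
The paper offers no proof of Facts \ref{level_facts} at all: as the bracketed attribution in the statement indicates, the five properties are simply imported from the Proposition of \cite{Marker:1997kn}, so your opening observation that nothing needs to be reproved is exactly the paper's treatment, and your sketch is an optional re-derivation rather than a competing argument. On its own terms the sketch is broadly workable, but one point needs care: anchoring it to Proposition \ref{1.11} as stated is circular, because the clause $\level(f)=k-l$ in that proposition is not independent input --- in the paper it comes from the estimates in (E4) \emph{together with} the very Marker--Miller facts you want to recover (monotonicity and additivity under composition; compare Remark \ref{simple_sum}(1) and equation \eqref{level_eq}). To make your route honest you must use only the sandwich estimates of Proposition \ref{1.11} (which rest on (E4) alone), take $k-l$ as the \emph{definition} of level for germs in $\H$, and prove your separation lemma to see that $k-l$ is well defined and $\preceq$-monotone; that lemma is in fact elementary (compose on the right with $\exp_{\max\{l,l'\}}$, squeeze $\exp_k(x^{\nu})\circ\exp_j$ between $\exp_{k+j}(x^{1/2})$ and $\exp_{k+j}(x^{2})$, and use $\exp_m(x^{\nu})\prec\exp_{m+1}(x^{1/\mu})$), so it is not really where the cited content lies. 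What the citation genuinely supplies, and what your plan silently elides, is the identification of this normal-form quantity with level as Marker and Miller define it (via iterated logarithms, in arbitrary levelled o-minimal structures), which the paper needs because it invokes their results under that definition elsewhere. Your formal deductions of (1), (2), (4) and (5) from the normal form are fine (note that $\max\{f,g\}\preceq fg$ uses $f,g\ge 1$ eventually, which holds in $\I$), and the case analysis for (3), while tedious, does go through by the same collapsing estimates.
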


We extend the level to all of $\hplus$ as follows: we set
\begin{equation*}
\D := \set{1/f:\ f \in \I},
\end{equation*}
and for $n \in \ZZ$, we let $\I_n$ be the set of all $f \in \I$ of level $n$, and we set
\begin{equation*}
\D_n := \set{1/f:\ f \in \I_n}.
\end{equation*}
For $f \in \D$, we define the \textbf{level} of $f$ to be the unique $n \in \ZZ$ such that $f \in \D_n$.  Furthermore, we set $\Hone:= \H^{>0} \setminus (\I \cup \D)$ and $\level(f):= -\infty$, for $f \in \Hone$.
Note that $\B \cap \H^{>0} = \Hone \cup \D$.

\begin{prop}
	\label{D_level_lemma}
	Let $f,g \in \hplus$.
	\begin{enumerate}
		\item If $f,g \in \I \cup \Hone$ and $f \le g$, then $\level(f) \le \level(g)$.
		\item If $f,g \in \D \cup \Hone$ and $f \le g$, then $\level(f) \ge \level(g)$.
		\item If $g \in \I$, then $\level(f \circ g) = \level(f) + \level(g)$.
		\item $\level(fg) \le \max\{\level(f),\level(g)\}$; equality holds whenever $\level(f) \ne \level(g)$.
		\item If $f,g \in \D$ and $h$ is a nonzero $\RR$-linear combination of $f$ and $g$, then $\level(h) \ge \max\{\level(f), \level(g)\}$.
		\item If $f,g \in \U$ and $h$ is a nonzero $\RR$-linear combination of $f$ and $g$, then $\level(h) \le \max\{\level(f), \level(g)\}$.
		\item If $f \asymp g$, then $\level(f) = \level(g)$.
	\end{enumerate}
\end{prop}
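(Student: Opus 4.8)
The plan is to reduce each part, as far as possible, to the Marker--Miller properties collected in Facts \ref{level_facts} together with the structure theory of Section \ref{description}. The single most useful tool will be the identity
$$\level(h) = \eh(\lm(h)) \qquad (h \in \hplus),$$
with the convention $\eh(1) = -\infty$. It holds because $h \asymp \lm(h)$ with $\lm(h) \in \la$ by Proposition \ref{transmonomials}: for $h \in \I$ it is Corollary \ref{eh-cor}(2); for $h \in \Hone$ it is trivial, since then $\lm(h) = 1$; and for $h \in \D$ it follows by applying Corollary \ref{eh-cor}(2) to $1/h \in \I$, using that $\lm(1/h) = 1/\lm(h)$ and that $\eh(1/\mu) = \eh(\mu)$, the latter because $\H_{\le k}$ is a field (Corollary \ref{eh-cor}(3)). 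Two further bookkeeping facts are used throughout: products do not cancel, so $\lm(fg) = \lm(f)\lm(g)$; and a cancellation of leading terms in a sum can only replace a leading monomial by a $\prec$-strictly smaller one.

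Parts (1) and (2) are immediate as soon as an $\Hone$-germ appears, its level being $-\infty$: in (1), if $g \in \Hone$ then $g$, hence $f$, is bounded, so $f \in \Hone$ too, and symmetrically for $f \in \Hone$ in (2). For $f, g \in \I$, part (1) is Facts \ref{level_facts}(2). For $f, g \in \D$ in (2), take reciprocals: $f \le g$ gives $1/g \le 1/f$ in $\I$, so Facts \ref{level_facts}(2) yields $\level(1/g) \le \level(1/f)$, which is $\level(g) \le \level(f)$ by the definition of the level on $\D$.

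For (3): if $f \in \I$ this is Facts \ref{level_facts}(3); if $f = 1/F$ with $F \in \I$, then $F \circ g \in \I$ (the group $\I$ being closed under composition), hence $f \circ g = 1/(F \circ g) \in \D$ and $\level(f \circ g) = \level(F \circ g) = \level(F) + \level(g) = \level(f) + \level(g)$; if $f \in \Hone$, then $f \circ g$ has the finite positive limit $f(+\infty)$, so $f \circ g \in \Hone$ and both sides equal $-\infty$. For (4): $fg \asymp \lm(f)\lm(g) \in \la$, so $\level(fg) = \eh(\lm(f)\lm(g))$; since $\H_{\le k}$ is a ring for each $k$ (Corollary \ref{eh-cor}(3)), $\eh(\lm(f)\lm(g)) \le \max\{\eh(\lm(f)),\eh(\lm(g))\} = \max\{\level(f),\level(g)\}$, and when the two exponential heights differ, the field property of $\H_{\le k}$ forbids a strict drop, giving equality. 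Part (7) is the special case $\lm(f)=\lm(g)$ of this: $f \asymp g$ forces equal leading monomials, hence equal levels.

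Parts (5) and (6) are where the work is. Again I would pass to leading monomials. For $h = af + bg$ with $a, b \in \RR$ not both zero, either one of $\lm(f), \lm(g)$ dominates (or one coefficient vanishes), so that $\lm(h)$ is that dominant, surviving monomial; or $\lm(f) = \lm(g)$, so that $\lm(h)$ equals this common monomial unless the leading terms cancel, in which case $\lm(h)$ is some $\prec$-strictly smaller monomial built from a lower-order term of $f$ or $g$. Because $\level(\cdot) = \eh(\lm(\cdot))$, the asserted inequalities then follow from the monotonicity of $\eh$ along $\prec$: on the large monomials relevant to $\U$, $\mu \prec \nu$ implies $\eh(\mu) \le \eh(\nu)$, which delivers (6); on the small monomials relevant to $\D$, the appropriate monotonicity together with the reciprocal definition of the level on $\D$ delivers (5). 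The needed monotonicity of $\eh$ on monomials is in turn read off from the exponential growth estimates in (E1)$_n$ and (E4)$_n$. I expect the main obstacle to be precisely this package: a careful case analysis around leading-term cancellation in $h$, and pinning down exactly which monotonicity of $\eh$ is in force for small versus large monomials; with that settled, the rest is a routine appeal to Facts \ref{level_facts} and to the ring and field structure of the $\H_{\le k}$.
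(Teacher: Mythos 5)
Your overall strategy is sound and, for most parts, either coincides with or harmlessly refines the paper's argument. Parts (1)--(3) are handled exactly as in the paper (reduction to Facts \ref{level_facts}). For part (4) you take a genuinely different route: you reduce to leading monomials via the identity $\level(h) = \eh(\lm(h))$ and then invoke the field structure of $\H_{\le k}$ (Corollary \ref{eh-cor}(3)), whereas the paper works directly with the germs themselves, using $f = (fg)/g$ resp.\ $1/g = f/(fg)$ together with Facts \ref{level_facts}(4); both arguments are correct, and yours has the advantage of making part (7) an immediate byproduct ($f \asymp g$ forces $\lm(f) = \lm(g)$ by Proposition \ref{transmonomials}(2)), while the paper instead deduces (7) from (4) via $f = (f/g)\cdot g$. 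For parts (5) and (6) your starting point --- $\M(h) \subseteq \M(f) \cup \M(g)$, so that $\lm(h)$ is a principal monomial of $f$ or of $g$ --- is precisely the paper's entire proof of these two items, and it does deliver (6), since the level is increasing along $\preceq$ on large monomials.

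The obstacle you flag in part (5) is, however, a real gap and not one you can close. Every monomial in $\M(f) \cup \M(g)$ is small and is $\preceq \lm(f)$ or $\preceq \lm(g)$; since the level is \emph{antitone} along $\preceq$ on $\D$ (your part (2)), this only yields $\level(h) = \level(\lm(h)) \ge \min\{\level(f),\level(g)\}$, not the stated $\ge \max$. Indeed the inequality with $\max$ fails as stated: take $f = 1/\exp$ and $g = 1/p_1$, so that $\level(f) = 1$ and $\level(g) = 0$ under the paper's convention $\level(1/F) = \level(F)$ for $F \in \I$, while $h := f+g \asymp 1/p_1$ has $\level(h) = 0 < 1$. (The statement is correct when $\level(f) = \level(g)$, which is exactly the case in which cancellation of leading terms can occur; presumably $\min$ is intended.) The paper's own one-line proof of (5)--(6) likewise supports only the $\min$ bound for (5), so this is a defect of the statement rather than of your method; but as written your argument for (5) does not, and cannot, establish the claimed inequality.
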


\begin{proof}
	Parts (1)--(3) follow directly from Facts \ref{level_facts}; we leave the details to the reader.  For part (4), we assume that $f \in \I$ and $g \in \D$; all other cases follow easily from Facts \ref{level_facts}.  The desired inequality clearly holds if $\level(fg) = -\infty$, so we also assume that $\level(fg) \in \ZZ$.
	In this situation, if $fg \in \I$, then the equality $f = (fg)/g$ implies \begin{align*}
	\max\{\level(f),\level(g)\} &\ge \level(f) \\ &= \max\{\level(fg), \level(g)\} \\ &\ge \level(fg),
	\end{align*} 
	while if $fg \in \D$, the equality $1/g = f/(fg)$ implies 
	\begin{align*}
	\max\{\level(f),\level(g)\} &\ge \level(g) \\&= \max\{\level(fg),\level(f)\} \\&\ge \level(fg).
	\end{align*}
	Moreover, in the first case, if $\level(g) \ne \level(f)$, then $\level(g) < \level(f)$ and $\level(fg) = \max\{\level(f),\level(g)\}$; a similar argument works in the second case, and (4) is proved.
	
	For (5) and (6), note that $\M(h) \subseteq \M(f) \cup \M(g)$.
	
	For (7), note that $f = (f/g) \cdot g$ and apply (4).
\end{proof}

\begin{expl}
	\label{strict_level_ineq}
	For an example where equality fails in Proposition \ref{D_level_lemma}(4), let $n \in \NN$ be at least 2, and set $f:= \log/\log_n \in \I \cap (\D \cdot \log)$ and $g:= 1/\log \in \D$.  Since $\sqrt{\log} > \log_n$, we have $\log > f > \sqrt{\log}$, so that $\level(f) = \level(g) = -1$; but $\level(fg) = -n$.
\end{expl}

What we are looking for is a notion of ``angular level'' $\alevel:\H^{>0} \into \ZZ$ that mirrors some of the properties of level, but where composition in Proposition \ref{D_level_lemma}(3) is replaced by the application of $\nu$, in the following sense: we want $\alevel(\nu_{\log}(f)) = \alevel(f) + 1$, $\alevel(\nu_{\exp}(f)) = \alevel(f)-1$ and $\alevel(\nu_{p_r}(f)) = \alevel(f)$.  The level does not do this, since for any $f \in \I \setminus \D\cdot \log$, the germ $\nu_{\log}(f)$ has level $-\infty$ (because $\theta_{\log}(f) \notin \D$ in this case).  However, the next two lemmas show that this is the only situation where the level is not adequate.

\begin{lemma}
\label{nu_mono}
Let $f,g \in \hplus$ and $0< r < s$ be real numbers.
\begin{enumerate}
\item If $f < g$, then $0 < \nu_{m_r}(f) < \nu_{m_r}(g)$, $0 < \nu_{p_r}(f) < \nu_{p_r}(g)$ and $\nu_{\log}(f) < \nu_{\log}(g)$.
\item If $f$ or $g$ is strictly decreasing and $sf \ge rg$ then, for any $t,u>0$, we have $\nu_{p_r \circ m_t}(g) < \nu_{p_s \circ m_u}(f)$ and $\nu_{m_t \circ p_r}(g) < \nu_{m_u \circ p_s}(f)$.
\end{enumerate}
\end{lemma}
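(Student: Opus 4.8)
The plan is to reduce everything to explicit formulas for the maps $\nu_{m_r}$, $\nu_{p_r}$ and $\nu_{\log}$ already recorded in Examples \ref{arg_function_expls}, together with Lemma \ref{arg_fun_comp} to handle the composite cases. Recall $\nu_{m_r}(f) = f \circ m_{1/r}$, $\nu_{p_r}(f) = m_r \circ f \circ p_{1/r}$, and $\nu_{\log}(f) = \theta_{\log}(f) \circ (\rho_{\log}(f))^{-1}$, where $\theta_{\log}(f) = \arctan(f/\log)$ and $\rho_{\log}(f) = \sqrt{\log^2 + f^2}$.

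For part (1): the cases of $\nu_{m_r}$ and $\nu_{p_r}$ are immediate, since precomposition with the fixed increasing bijection $m_{1/r}$ (resp.\ $p_{1/r}$) preserves the pointwise order of germs, and postcomposition with the increasing $m_r$ does too. For $\nu_{\log}$, suppose $f < g$ as germs at $+\infty$. First I would note that $\arctan$ is strictly increasing, so $\theta_{\log}(f) < \theta_{\log}(g)$ pointwise; and $\rho_{\log}(f) < \rho_{\log}(g)$ pointwise, hence (both being infinitely increasing) their compositional inverses satisfy $(\rho_{\log}(f))^{-1} > (\rho_{\log}(g))^{-1}$ pointwise on a common domain $(c,+\infty)$. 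The subtlety is that we are composing an increasing function with the larger inverse against a smaller function with the smaller inverse, so the naive monotonicity argument does not immediately give the claim; instead I would argue directly. Write $x = \rho_{\log}(g)(y)$; then $\nu_{\log}(g)(x) = \theta_{\log}(g)(y) = \arctan\!\big(g(y)/\log y\big)$. Comparing with $\nu_{\log}(f)(x) = \arctan\!\big(f(z)/\log z\big)$ where $z = (\rho_{\log}(f))^{-1}(x) > y$, one uses that $t \mapsto \arctan(h(t)/\log t)$ need not be monotone, so the cleanest route is the observation in the proof of Lemma \ref{tame_domain_lemma}: the real domain $\frU_f$ is strictly contained in $\frU_g$ (as $f < g$), and $\nu_{\log} = \nu_{\llog}$ is induced by the injective map $\llog$, so by Lemma \ref{image_of_real_domains} we get $\nu_{\log}(f) = f_{\llog(\frU_f)} < f_{\llog(\frU_g)} = \nu_{\log}(g)$. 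This is the approach I would actually write up for all three maps in part (1): $f < g \iff \frU_f \subsetneq \frU_g$, apply Lemma \ref{image_of_real_domains} to the injective continuations $\m_r$, $\p_r$, $\llog$.

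For part (2): here $sf \ge rg$ with one of $f,g$ strictly decreasing, and one wants $\nu_{p_r \circ m_t}(g) < \nu_{p_s \circ m_u}(f)$ and the analogous statement with $m$ and $p$ swapped. Using Lemma \ref{arg_fun_comp}, $\nu_{p_s \circ m_u} = \nu_{p_s} \circ \nu_{m_u}$ and $\nu_{p_r \circ m_t} = \nu_{p_r} \circ \nu_{m_t}$, and then by the explicit formulas $\nu_{p_s \circ m_u}(f) = m_s \circ \nu_{m_u}(f) \circ p_{1/s} = m_s \circ (f \circ m_{1/u}) \circ p_{1/s}$, i.e. $x \mapsto s\, f(x^{1/s}/u)$; similarly $\nu_{p_r \circ m_t}(g)(x) = r\, g(x^{1/r}/t)$. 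So the inequality to prove is $r\, g(x^{1/r}/t) < s\, f(x^{1/s}/u)$ for large $x$. Substituting $x^{1/s}/u \mapsto$ variable and using that $p_{1/s}$ grows faster than $p_{1/r}$ for $r<s$... wait, $1/s < 1/r$, so $x^{1/s} \prec x^{1/r}$; since one of $f, g$ is strictly decreasing, the strict decrease lets us absorb the mismatch between the arguments $x^{1/r}/t$ and $x^{1/s}/u$: for large $x$ we have $x^{1/r}/t > x^{1/s}/u$, and the decreasing function's value at the larger argument is smaller, which is the direction we want. Concretely, if $g$ is strictly decreasing then $g(x^{1/r}/t) < g(x^{1/s}/u \cdot c)$ for a suitable constant once $x$ is large, and then $s f(\cdot) \ge r g(\cdot)$ applied at the argument $x^{1/s}/u$ (after a harmless rescaling absorbed into the level-type comparison) finishes it; if instead $f$ is strictly decreasing one argues symmetrically, using $f(x^{1/s}/u) > f(x^{1/r}/t \cdot c')$. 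The $m$–$p$ swapped case is entirely parallel: $\nu_{m_u \circ p_s}(f)(x) = u\, f((x/u)^{1/s})$ and so on, and the same strict-monotonicity absorption applies.

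I expect the main obstacle to be the bookkeeping in part (2): matching up the two different argument-rescalings $x^{1/r}/t$ versus $x^{1/s}/u$ and seeing precisely where the hypothesis that $f$ or $g$ is strictly decreasing is used to flip an inequality in the favorable direction. Part (1) should be short once one commits to the "$f<g \iff \frU_f \subsetneq \frU_g$, then apply Lemma \ref{image_of_real_domains}" strategy rather than trying to manipulate the composite formula for $\nu_{\log}$ by hand.
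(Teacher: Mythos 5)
Your proposal is correct and takes essentially the same route as the paper: part (1) is exactly the paper's one-line appeal to Examples \ref{arg_function_expls} and Lemma \ref{image_of_real_domains}, and part (2) is the paper's computation, namely unfold $\nu_{p_r\circ m_t}$ and $\nu_{p_s\circ m_u}$ via Lemma \ref{arg_fun_comp} and the explicit formulas, note $p_{1/r}\circ m_{1/t} > p_{1/s}\circ m_{1/u}$ eventually, and chain the two inequalities in the order dictated by which of $f,g$ is strictly decreasing, with the hypothesis $sf\ge rg$ applied at a single common argument. The only cosmetic difference is your auxiliary constants $c,c'$ and the ``rescaling absorbed into the level-type comparison'' remark, which are unnecessary: since $x^{1/r}/t > x^{1/s}/u$ for all large $x$, you may take $c=c'=1$ and the two-step chain (strict decrease, then $sf\ge rg$, or vice versa) closes immediately, exactly as in the paper.
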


\begin{proof}
Part (1) follows from Examples \ref{arg_function_expls} and Lemma \ref{image_of_real_domains}.  For part (2), assume that $sf \ge rg$ and let $t,u > 0$. Since $p_{1/r} \circ m_{1/t} > p_{1/s} \circ m_{1/u}$, it follows that
\begin{equation*}
\nu_{m_t \circ p_r}(g) = r g \circ p_{1/r} \circ m_{1/t} \le s f \circ p_{1/r} \circ m_{1/t} < sf \circ p_{1/s} \circ m_{1/u} = \nu_{m_u \circ p_s}(f),
\end{equation*}
if $f$ is strictly decreasing, while
\begin{equation*}
\nu_{m_t \circ p_r}(g) = r g \circ p_{1/r} \circ m_{1/t} < r g \circ p_{1/s} \circ m_{1/u} \le sf \circ p_{1/s} \circ m_{1/u} = \nu_{m_u \circ p_s}(f),
\end{equation*}
if $g$ is strictly decreasing; this proves the second inequality in (2).  The proof of the first inequality is similar.
\end{proof}

\begin{rmk}
Part (2) of the previous lemma is false if $f$ is increasing and $g=f$.
\end{rmk}

\begin{lemma}
\label{nu_lemma}
Let $f \in \D \cdot \log$.  Then there exists $u = u_{\log}(f) \in \H$ such that $u \sim 1$ and $$\nu_{\log}(f) \sim \frac f\log \circ \exp(p_1 \cdot u)\,.$$
\end{lemma}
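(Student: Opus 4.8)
The plan is to unwind the definition of $\nu_{\log}(f)$ from Examples~\ref{arg_function_expls}(2) and then invert its radial component explicitly. First I would write $f = d \cdot \log$ with $d \in \D$, so that $f/\log = d$ is a small germ, and recall from Examples~\ref{arg_function_expls}(2) that $\nu_{\log}(f) = \theta_{\log}(f) \circ \bigl(\rho_{\log}(f)\bigr)^{-1}$, where $\theta_{\log}(f) = \arctan(f/\log)$, $\rho_{\log}(f) = \sqrt{\log^2 + f^2}$, and $\rho_{\log}(f) \in \I$. Writing $\arctan(T) = T \cdot A(T)$ and $\sqrt{1+T} = S(T)$ with $A, S \in \Pc{R}{T}$ and $A(0) = S(0) = 1$, and using that $\H$ is closed under composition with convergent power series of small germs (the analogue for $\H$ of Lemma~\ref{f_going_to_0}(2), via Proposition~\ref{1.11}), the germs $v := A \circ d$ and $w_0 := S \circ d^2$ lie in $\H$, satisfy $v \sim 1$ and $w_0 \sim 1$, and yield the factorizations
$$\theta_{\log}(f) = \frac f\log \cdot v, \qquad \rho_{\log}(f) = \log \cdot w_0,$$
the latter because $\log > 0$ and $f = d\log$ near $+\infty$.

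Next I would invert $R := \rho_{\log}(f) = \log \cdot w_0 \in \I$: solving $y = R(x) = \log(x)\,w_0(x)$ for $\log x$ gives $\log x = y/w_0(x)$, hence $x = \exp\!\bigl(y \cdot (1/w_0)(R^{-1}(y))\bigr)$. Thus, setting $u := (1/w_0) \circ R^{-1}$ — which lies in $\H$ because $\H$ is closed under composition with germs in $\I$ — we obtain the \emph{exact} identity $R^{-1} = \exp(p_1 \cdot u)$. Since $w_0 \sim 1$ forces $1/w_0 \sim 1$ and post-composition with the infinitely increasing germ $R^{-1}$ preserves this, we have $u \sim 1$.

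Finally I would assemble the pieces: as $v \sim 1$ and $R^{-1} \in \I$, also $v \circ R^{-1} \sim 1$, so
$$\nu_{\log}(f) = \theta_{\log}(f) \circ R^{-1} = \Bigl(\tfrac f\log \circ R^{-1}\Bigr) \cdot \bigl(v \circ R^{-1}\bigr) \sim \frac f\log \circ R^{-1} = \frac f\log \circ \exp(p_1 \cdot u),$$
which is the assertion, with $u_{\log}(f) := u$. The crux is the explicit inversion of $R$ in the second step; the remaining points — stability of the relation $\sim 1$ under post-composition with an infinitely increasing germ and under the convergent power series $A$, $S$, together with the closure of $\H$ under composition with germs in $\I$ — are routine, so I do not foresee a genuine obstacle.
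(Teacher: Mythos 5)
Your proposal is correct and follows essentially the same route as the paper: factor $\rho_{\log}(f) = \log \cdot w$ with $w \sim 1$, invert to obtain $\rho_{\log}(f)^{-1} = \exp(p_1 \cdot u)$ with $u \sim 1$, and use $\theta_{\log}(f) \sim f/\log$ to conclude. The only cosmetic difference is that you make the unit factors explicit via the power series of $\arctan$ and $\sqrt{1+T}$, where the paper simply invokes the asymptotics $\rho \sim \log$ and $\theta \sim f/\log$.
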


\begin{proof}
As $f$ is fixed in this proof, we simply write $\rho$, $\theta$ and $\mu$ in place of $\rho_{\log}(f)$, $\theta_{\log}(f)$ and $\nu_{\log}(f)$ as in Example \ref{arg_function_expls}(2).  From the definition of $\rho$ and $\theta$, and since $f/\log \in \D$, $\arctan x = x + o(x)$ as $x \to 0$ and $\theta$ is bounded, we get
\begin{equation}
\label{sim1} \rho \sim \log
\end{equation}
and
\begin{equation}
\label{sim2} \theta \sim \frac{f}{\log}\ .
\end{equation}
By \eqref{sim1}, there exists $v \in \H$ such that $v(+\infty) = 1$ and
$\rho = \log \cdot\, v$.
Composing on the right with $\psi:= \rho^{-1}$ yields
$p_1 = (\log \circ \psi) \cdot (v \circ \psi)$,
so there exists $u \in \H$ such that $u(+\infty) = 1$ and
$p_{1} \cdot u = \log \circ \psi$.
Composing on the left with $\exp$ now gives
\begin{equation}
\label{sim3}
\exp(p_{1} \cdot u) = \psi.
\end{equation}
We get from \eqref{sim2} and \eqref{sim3} that
\begin{equation*}
\nu = \theta \circ \psi \sim \frac{f}{\log} \circ \psi = \frac{f}{\log} \circ  \exp(p_{1} \cdot u),
\end{equation*}
as required.
\end{proof}

The previous two lemmas imply that the level comes close to behaving like the angular level we are looking for:

\begin{cor}
\label{nu_cor}
\begin{enumerate}
\item For $r>0$, the map $\nu_{p_r}:\hplus \into \hplus$ is an increasing bijection with inverse $\nu_{p_{1/r}}$, and we have $\level\left(\nu_{p_r}(f)\right) = \level{f}.$
\item The map $\nu_{\log}:\hplus \into \Hsr$ is an increasing bijection with inverse $\nu_{\exp}$, and we have
\begin{equation*}
\level\left(\nu_{\log}(f)\right) = \begin{cases} \level\left(f/\log\right) + 1 &\text{if } f \in \D\cdot\log, \\ -\infty &\text{otherwise.} \end{cases}
\end{equation*}
\end{enumerate}
\end{cor}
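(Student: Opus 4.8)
The plan is to read the corollary off the explicit descriptions of $\nu_{p_r}$, $\nu_{\log}$ and $\nu_{\exp}$ recorded in Examples \ref{arg_function_expls}, together with the monotonicity in Lemma \ref{nu_mono}(1) and the level calculus of Facts \ref{level_facts} and Proposition \ref{D_level_lemma}; Lemma \ref{nu_lemma} supplies the one genuinely new ingredient, needed only for the level formula in part (2).

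For part (1), I would start from $\nu_{p_r}(f) = m_r \circ f \circ p_{1/r}$ (Examples \ref{arg_function_expls}(1)). Direct substitution gives $\nu_{p_{1/r}}\bigl(\nu_{p_r}(f)\bigr) = (m_{1/r}\circ m_r)\circ f\circ(p_{1/r}\circ p_r) = f$, and symmetrically, so $\nu_{p_r}$ and $\nu_{p_{1/r}}$ are mutually inverse self-bijections of $\hplus$; that $\nu_{p_r}$ is increasing is Lemma \ref{nu_mono}(1). For the level, $m_r\circ f = r\,f\asymp f$ forces $\level(m_r\circ f)=\level(f)$ by Proposition \ref{D_level_lemma}(7), and then composing with $p_{1/r}\in\I$ (of level $0$ by Facts \ref{level_facts}(1)) and applying Proposition \ref{D_level_lemma}(3) gives $\level(\nu_{p_r}(f)) = \level(f)$.

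For part (2), bijectivity of $\nu_{\log}:\hplus\to\Hsr$ with inverse $\nu_{\exp}$ is already recorded in Examples \ref{arg_function_expls}(2), and monotonicity is again Lemma \ref{nu_mono}(1), so only the level formula remains; I would prove it by a case split on whether $f\in\D\cdot\log$. First note $f\in\D\cdot\log$ iff $f/\log\in\D$ iff $\lim_{x\to+\infty}f(x)/\log x = 0$ (the quotient, a Hardy-field germ, has a limit in $[0,+\infty]$), and that, since $\nu_{\log}(f)=\theta_{\log}(f)\circ\rho_{\log}(f)^{-1}$ with $\rho_{\log}(f)^{-1}\in\I$ (Examples \ref{arg_function_expls}(2)), we have $\lim\nu_{\log}(f) = \lim\theta_{\log}(f) = \arctan\bigl(\lim f/\log\bigr)$. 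If $f\notin\D\cdot\log$, this limit lies in $(0,\pi/2]$, so $\nu_{\log}(f)$ is bounded and bounded away from $0$; hence $\nu_{\log}(f)\in\Hone$ and $\level(\nu_{\log}(f))=-\infty$. If $f\in\D\cdot\log$, Lemma \ref{nu_lemma} gives $\nu_{\log}(f)\sim (f/\log)\circ\exp(p_1\cdot u)$ with $u\sim 1$; then $p_1\cdot u\asymp p_1$ has level $0$, so $\exp(p_1\cdot u)=\exp\circ(p_1\cdot u)\in\I$ has level $1$ by Facts \ref{level_facts}(3), and since $f/\log\in\D$, Proposition \ref{D_level_lemma}(3) and (7) yield $\level(\nu_{\log}(f)) = \level(f/\log)+1$.

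I do not expect a serious obstacle: the corollary is essentially a repackaging of the preceding lemmas. The step needing the most care is the bookkeeping in part (2) --- verifying that $\D\cdot\log = \{f\in\hplus:\ f\prec\log\}$, so that the case split is exhaustive, and that Proposition \ref{D_level_lemma}(3) and (7) are being applied with hypotheses that remain valid under the $-\infty$ convention.
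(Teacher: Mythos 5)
Your proposal is correct and follows exactly the route the paper intends: its proof of this corollary is just the citation ``Examples \ref{arg_function_expls}, Proposition \ref{D_level_lemma} and Lemmas \ref{nu_mono} and \ref{nu_lemma}'', and your argument is a faithful unpacking of those ingredients (explicit formulas for $\nu_{p_r}$, mutual inversion with $\nu_{p_{1/r}}$ and of $\nu_{\log}$ with $\nu_{\exp}$, monotonicity from Lemma \ref{nu_mono}(1), and the level computation via Lemma \ref{nu_lemma} plus the level calculus). The case split and the identification $\D\cdot\log=\set{f\in\hplus:\ f\prec\log}$ are handled correctly, so no gaps.
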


\begin{proof}
Examples \ref{arg_function_expls}, Proposition \ref{D_level_lemma} and Lemmas \ref{nu_mono} and \ref{nu_lemma}.
\end{proof}

The previous corollary suggests a definition of angular level, at least for $f \in \D \cdot \log$ of level at least $0$; since then $\level(f/\log) = \level(f)$, we could take $\alevel(f) = \level(f)$ for such $f$ (or, more appropriately as we shall see below, as $\alevel(f) = \level(f) + 1$).  To figure out how to define angular level for the remaining $f$, we let $k \in \ZZ$ and set $$\D_{\ge k}:= \set{f \in \D:\ \level(f) \ge k}.$$
Note that, for $k \ge 0$, we have $\D_k \cdot \log = \D_k$, while $\D_{-1} \subsetneq \D_{-1} \cdot \log$ contains $\sqrt{\log}$, which is infinitely increasing.  Indeed, we have $$\H^{>0} = \I \setminus (\D_{-1} \cdot \log)\ \cup\ \D_{-1} \cdot \log\ \cup\ \D_{\ge 0}.$$  Since $\D_{-1} \cdot \log$ is convex and $\D_{\ge0} = \D_{\ge 0} \cdot \log$, it follows that

\begin{lemma}
  \label{Hsr_lemma}
  We have $(\Hone \cup \D) \setminus \D_{\ge 0}\ \subseteq\ \D_{-1} \cdot \log$. \qed
\end{lemma}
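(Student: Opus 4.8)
The plan is to unwind the definition of $\D_{-1}\cdot\log$ and reduce everything to the basic level calculus recalled in Facts \ref{level_facts}. The key observation is that, for $f \in \hplus$, membership $f \in \D_{-1}\cdot\log$ is equivalent to $f/\log \in \D_{-1}$, which in turn — by the definition of $\D$ and of level on $\D$ — is equivalent to $\log/f$ being an infinitely increasing germ of level $-1$, i.e.\ $\log/f \in \I_{-1}$. So it suffices to show: if $f \in (\Hone \cup \D)\setminus\D_{\ge 0}$, then $\log/f \in \I_{-1}$.

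I would then split into the two cases given by $(\Hone \cup \D)\setminus\D_{\ge 0} = \Hone \cup (\D\setminus\D_{\ge 0})$. First, if $f \in \Hone$, then $0 < f(+\infty) < +\infty$, so $u := 1/f$ satisfies $0 < u(+\infty) < +\infty$ and $\log/f = \log\cdot u$ is infinitely increasing; hence $\level(\log/f) = \level(\log) = -1$ by Facts \ref{level_facts}(5) and (1), giving $\log/f \in \I_{-1}$. Second, if $f \in \D$ with $\level(f) = k$, then $f \notin \D_{\ge 0}$ forces $k \le -1$; writing $h := 1/f$, the definition of level on $\D$ gives $h \in \I$ with $\level(h) = k$. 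Then $\log/f = h\cdot\log$ is a product of two infinitely increasing germs, hence infinitely increasing, and Facts \ref{level_facts}(4) yields $\level(h\cdot\log) = \max\{k,-1\} = -1$; so again $\log/f \in \I_{-1}$. In both cases $f \in \D_{-1}\cdot\log$, which is the assertion.

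The argument is essentially bookkeeping; the only point requiring a little care is that Facts \ref{level_facts} is stated for germs in $\I$, so one must check at each step that the germs whose levels are computed — here $\log$, $h = 1/f$ and $\log/f$ — are genuinely infinitely increasing before invoking those facts. (Alternatively, once the displayed decomposition $\H^{>0} = (\I\setminus(\D_{-1}\cdot\log))\ \cup\ (\D_{-1}\cdot\log)\ \cup\ \D_{\ge 0}$ preceding the lemma is granted, the statement is immediate, since $\Hone\cup\D$ is disjoint from $\I$ and from $\D_{\ge 0}\setminus\D$; but establishing that decomposition requires the same computation.) I expect no genuine obstacle beyond this verification.
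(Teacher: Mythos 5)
Your proof is correct, and it is in fact more explicit than what the paper offers: the lemma there carries no separate proof, being dispatched by the sentence preceding it, which appeals to the displayed trichotomy $\H^{>0} = \I\setminus(\D_{-1}\cdot\log)\,\cup\,\D_{-1}\cdot\log\,\cup\,\D_{\ge 0}$ together with the convexity of $\D_{-1}\cdot\log$ and the identity $\D_{\ge 0} = \D_{\ge 0}\cdot\log$. The paper's implicit argument is a sandwich: any $f$ in $(\Hone\cup\D)\setminus\D_{\ge 0}$ lies between two elements of $\D_{-1}\cdot\log$ (a positive constant above, a reciprocal power of $\log$ below), and convexity of that set then forces membership. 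You instead verify membership element-wise, reducing $f\in\D_{-1}\cdot\log$ to $\log/f\in\I_{-1}$ and computing the level directly from Facts \ref{level_facts}: part (5) with $u=1/f$ in the case $f\in\Hone$ (where indeed $0<f(+\infty)<+\infty$), and part (4) applied to $h\cdot\log$ with $h=1/f\in\I$ of level $\le -1$ in the case $f\in\D\setminus\D_{\ge 0}$. Both routes are elementary bookkeeping of comparable length; yours has the advantage of not needing convexity of $\D_{-1}\cdot\log$ (nor the trichotomy, which, as you note, encodes the same computation), while the paper's sandwich argument is the one that generalizes directly to the convexity-style manipulations used later (e.g.\ in the proof of Proposition \ref{alevel_cor}(4)). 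Your care in checking that each germ whose level is invoked is genuinely in $\I$ is exactly the right point to flag; there is no gap.
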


%\begin{proof}
%  Let $f \in \H \setminus \D_{\ge 0}$ be such that $f < \frac\pi2$; we need to show that $f/\log \in \D_{-1}$.  Since $\level(f) \le -1$, we have $\level(f/\log) \le -1$ as well.  On the other hand $f < \frac{\pi}2$, so that $f/\log < \pi/2\log$, which implies that $f/\log \in \D$ and $\level(f/\log) \ge -1$.
%\end{proof}

From Corollary \ref{nu_cor}, we obtain:

\begin{cor}
  \label{nu_cor2}
  We have $$\nu_{\log}\left(\hplus \setminus (\D_{\ge -1} \cdot \log)\right) = \left(0,\frac\pi2\right)_\H \setminus \D_{\ge 0} \subseteq\ \D_{-1} \cdot \log,$$ $$\nu_{\log}\left(\D_{-1} \cdot \log\right) = \D_0$$ and $$\nu_{\log}(\D_k) = \D_{k+1}, \quad\text{for } k \ge 0. \qed$$
\end{cor}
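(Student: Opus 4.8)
The plan is to transport the three-way decomposition $\H^{>0} = \bigl(\I\setminus(\D_{-1}\cdot\log)\bigr) \cup (\D_{-1}\cdot\log) \cup \D_{\ge 0}$ recorded just before Lemma \ref{Hsr_lemma} along the bijection $\nu_{\log}$ of Corollary \ref{nu_cor}(2), using the level formula there to locate the image of each piece. Write $A := \H^{>0}\setminus(\D_{\ge -1}\cdot\log)$; since $\D_k\cdot\log = \D_k$ for $k\ge 0$ we have $\D_{\ge -1}\cdot\log = (\D_{-1}\cdot\log)\cup\D_{\ge 0}$, so $A = \I\setminus(\D_{-1}\cdot\log)$ and the sets $A$, $\D_{-1}\cdot\log$ and the $\D_k$ with $k\ge 0$ partition $\H^{>0}$. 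On the image side, $\nu_{\log}$ is a bijection onto $\Hsr$, and $\Hsr\supseteq\D$ (hence $\Hsr\supseteq\D_k$ for all $k\ge 0$): indeed $\Hsr$ is a downward closed cut (Corollary \ref{Hsr_cut}, Lemma \ref{tame_domain_lemma}) containing every proper sector, while each germ in $\D$ is eventually bounded by a constant $<\pi/2$. Thus $\Hsr$ is partitioned by $\Hsr\setminus\D_{\ge 0}$, $\D_0$ and the $\D_k$ with $k\ge 1$.

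Next I would determine the image of each block from Corollary \ref{nu_cor}(2), repeatedly using that a germ in $\Hsr\subseteq(0,\frac\pi2]_\H$ is bounded, hence not infinitely increasing, hence lies in $\D\cup\Hone$. For $f\in\D_k$ with $k\ge 0$, write $f=g\log$ with $g\in\D_k$; then $f/\log\in\D_k$, so $\level(\nu_{\log}(f)) = \level(f/\log)+1 = k+1$, and as $\nu_{\log}(f)$ is bounded with finite level it lies in $\D$, so $\nu_{\log}(f)\in\D_{k+1}$. The same computation with $k=-1$ shows $\nu_{\log}(\D_{-1}\cdot\log)\subseteq\D_0$. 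For $f\in A = \I\setminus(\D_{-1}\cdot\log)$ there are two cases. If $f\notin\D\cdot\log$ then $\level(\nu_{\log}(f)) = -\infty$, so $\nu_{\log}(f)\in\Hone$ and in particular $\nu_{\log}(f)\notin\D_{\ge 0}$. If instead $f=g\log\in\D\cdot\log$, then $f\in\I$ forces $\level(g)\le -1$ (a germ $g\in\D$ of level $\ge 0$ satisfies $1/g\ge x^{1/\nu}$ eventually for some $\nu\in\NN$, whence $g\log\to 0$, contradicting $f\in\I$) while $f\notin\D_{-1}\cdot\log$ forces $g\notin\D_{-1}$, so $\level(g)\le -2$; hence $\level(\nu_{\log}(f)) = \level(g)+1 \le -1$ and again $\nu_{\log}(f)\in\D\setminus\D_{\ge 0}$. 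In all cases $\nu_{\log}(A)\subseteq\Hsr\setminus\D_{\ge 0}$.

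Finally, since $\nu_{\log}:\H^{>0}\to\Hsr$ is a bijection sending the blocks of a partition of $\H^{>0}$ into the blocks of a partition of $\Hsr$, each of these three inclusions is forced to be an equality: every element of a target block has a unique $\nu_{\log}$-preimage, and that preimage can only lie in the one source block whose image meets that target block. This yields $\nu_{\log}(\D_k) = \D_{k+1}$ for $k\ge 0$, $\nu_{\log}(\D_{-1}\cdot\log) = \D_0$, and $\nu_{\log}(A) = \Hsr\setminus\D_{\ge 0}$; the asserted membership then follows from $\Hsr\setminus\D_{\ge 0}\subseteq\left(0,\frac\pi2\right)_\H\setminus\D_{\ge 0}\subseteq\D_{-1}\cdot\log$, where the second inclusion is Lemma \ref{Hsr_lemma} applied via $\left(0,\frac\pi2\right)_\H\subseteq\B\cap\H^{>0} = \Hone\cup\D$. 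I expect the only real obstacle to be the block $A$: one must recognize $A = \I\setminus(\D_{-1}\cdot\log)$ and run the two-case argument above, and one must keep in mind that the exact range of $\nu_{\log}$ is $\Hsr$, so that the first displayed equality is proved in the sharp form $\nu_{\log}(A)=\Hsr\setminus\D_{\ge 0}$, the containment in $\D_{-1}\cdot\log$ being what is then read off.
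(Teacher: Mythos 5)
Your overall route is exactly the one the paper leaves implicit: Corollary \ref{nu_cor2} carries a \qed because it is meant to be read off from Corollary \ref{nu_cor}(2) together with the three-way decomposition of $\hplus$ displayed before Lemma \ref{Hsr_lemma}, and your argument (map each block of that decomposition using the level formula, then use bijectivity of $\nu_{\log}$ onto $\Hsr$ to upgrade the inclusions to equalities) is that reasoning written out in full. Your remark that the first equality is really $\nu_{\log}(A)=\Hsr\setminus\D_{\ge 0}$ is also the correct reading: the literal equality with $(0,\pi/2)_\H\setminus\D_{\ge 0}$ would require $(0,\pi/2)_\H\setminus\D_{\ge 0}\subseteq\Hsr$, which fails (for instance $h:=\arccos(1/x^2)$ lies in $\Hone$, hence outside $\D_{\ge 0}$, but $\frU_h$ has unbounded vertical slices and so is not a standard domain, while every germ in the image of $\nu_{\log}$ is, by Example \ref{log_of_real}); only the inclusion into $\D_{-1}\cdot\log$ and the level values are used later (Definition \ref{ang_level}, Proposition \ref{alevel_cor}), so your version is the one that matters.

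There is one concrete slip to repair, in the case $f=g\log\in\I$ with $g\in\D$: the parenthetical claim that a germ $g\in\D$ of level $\ge 0$ satisfies $1/g\ge x^{1/\nu}$ eventually is false. Take $1/g=\exp\circ\, p_{1/2}\circ\log$, i.e.\ $\exp(\sqrt{\log x})$: it has level $1+0-1=0$ by Facts \ref{level_facts}(1),(3), yet is eventually smaller than every $x^{1/\nu}$; the lower bound coming from (E4)/Proposition \ref{1.11} is $\exp_k(x^{1/\nu})\circ\log_l$ with $k=l$, not a power of $x$. The conclusion you need ($\level(g)\le -1$, i.e.\ $g\log\to 0$ whenever $\level(g)\ge 0$) is still true, but argue it via comparability instead: if $\level(1/g)\ge 0$ then $1/g\asymp\log$ is impossible by Proposition \ref{D_level_lemma}(7), and $1/g\prec\log$ would give $\level(1/g)\le\level(\log)=-1$ by Facts \ref{level_facts}(2),(5); hence $1/g\succ\log$, so $g\log\prec 1$, contradicting $f\in\I$. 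With that local repair the rest of your proof (the level computations on each block, the disjointness and exhaustiveness of the two partitions, and the final bijection argument) checks out.
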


\begin{df}
\label{ang_level}
In view of the above, the \textbf{angular level} $\alevel:\hplus \into \{-1, 0, \dots \}$ is defined as \begin{align*}
\alevel(f):&= \max\set{-1, \level(\nu_{\log}(f))} \\ &= \begin{cases} -1 &\text{if } f \in \hplus \setminus (\D_{\ge -1} \cdot \log), \\  \level(\nu_{\log}(f)) &\text{if } f \in \D_{\ge -1} \cdot \log. \end{cases}
\end{align*}
\end{df}

From Proposition \ref{D_level_lemma} and Corollary \ref{nu_cor2}, we obtain:

\begin{prop}
\label{alevel_cor}
The angular level is decreasing and, for $f,g \in \hplus$, we have
\begin{enumerate}
\item $\alevel(\nu_{\log}(f)) = \alevel(f) + 1$;
\item if $f \in \D_{\ge -1} \cdot \log$, then $\alevel(f) = \max\{0,\level(f)+1\}$;
\item if $\alevel(f) \ge 1$, then $f \in \D$ and $f$ has level at least 0;
\item if $r,s>0$, then $\alevel(f) = \alevel(m_r \circ f \circ m_s)$;
\item for $r>0$, we have $\alevel(\nu_{p_r}(f)) = \alevel(f)$.
\end{enumerate}
\end{prop}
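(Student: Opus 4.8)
The plan is to unwind Definition~\ref{ang_level}, which reads $\alevel(f)=\max\{-1,\level(\nu_{\log}(f))\}$, and feed it the three equalities of Corollary~\ref{nu_cor2}. First I would record the partition
$$\hplus = \big(\hplus\setminus(\D_{\ge-1}\cdot\log)\big)\ \sqcup\ \D_{-1}\cdot\log\ \sqcup\ \D_{\ge 0},$$
valid because $\D_{\ge-1}\cdot\log = (\D_{-1}\cdot\log)\cup(\D_{\ge0}\cdot\log) = (\D_{-1}\cdot\log)\cup\D_{\ge0}$ and because every element $d\log$ of $\D_{-1}\cdot\log$ satisfies $\level(d\log)\le\max\{\level(d),\level(\log)\}=-1$ by Proposition~\ref{D_level_lemma}(4), so $\D_{-1}\cdot\log\cap\D_{\ge0}=\emptyset$. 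By Corollary~\ref{nu_cor2} the map $\nu_{\log}$ sends these three classes into $(0,\pi/2)_\H\setminus\D_{\ge0}\subseteq\D_{-1}\cdot\log$, into $\D_0$, and (when $f\in\D_k$ with $k\ge0$) into $\D_{k+1}$, respectively. Reading off $\level(\nu_{\log}(f))$ in each case — using that elements of $(\Hone\cup\D)\setminus\D_{\ge0}$ have level $\le-1$ for the first — gives $\alevel(f)=-1$, $0$, $k+1$ respectively. This yields (2) immediately (with $\level(f)\le-1$ in the middle class and $\level(f)=k$ in the third), and then (3), since only the third class allows $\alevel(f)\ge1$, forcing $f\in\D$ of level $\alevel(f)-1\ge0$. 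For (1) I would apply the same trichotomy with $\nu_{\log}(f)$ in place of $f$: Corollary~\ref{nu_cor2} moves $\nu_{\log}(f)$ one class up the list, and comparing the $\alevel$-values just computed gives $\alevel(\nu_{\log}(f))=\alevel(f)+1$.

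For monotonicity, I would use that $\nu_{\log}$ is order-preserving (Corollary~\ref{nu_cor}) with image in $\Hsr\subseteq\B\cap\hplus=\Hone\cup\D$, on which $\level$ is decreasing by Proposition~\ref{D_level_lemma}(2); since $\max\{-1,\cdot\}$ preserves $\ge$, it follows that $f\le g$ implies $\alevel(f)\ge\alevel(g)$.

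Parts (4) and (5) I would reduce to one scale-invariance lemma: for all $g\in\hplus$ and $t,v>0$,
$$\alevel(tg)=\alevel(g\circ m_v)=\alevel(g\circ p_v)=\alevel(g).$$
Here $g$ and each transform lie in $\D\cdot\log$ simultaneously — scaling by a positive constant stabilises $\D$, and if $g=d\log$ with $d\in\D$ then $g\circ m_v=(d\circ m_v)(\log+\log v)\asymp(d\circ m_v)\log$ and $g\circ p_v=(d\circ p_v)(v\log)\asymp(d\circ p_v)\log$, where $d\circ m_v,d\circ p_v\in\D$ because $\I$ is closed under composition, with the process reversed by $m_{1/v},p_{1/v}$ — and in that case Proposition~\ref{D_level_lemma}(3),(7) give that $tg/\log$, $(g\circ m_v)/\log$ and $(g\circ p_v)/\log$ all have the same level as $g/\log$. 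Hence, by the closed formula $\alevel(g)=\max\{-1,\level(g/\log)+1\}$ for $g\in\D\cdot\log$ and $\alevel(g)=-1$ otherwise (Corollary~\ref{nu_cor}(2)), the lemma follows. Now $m_r\circ f\circ m_s=r\,(f\circ m_s)$ (since $m_r(y)=ry$) gives $\alevel(m_r\circ f\circ m_s)=\alevel(f\circ m_s)=\alevel(f)$, proving (4); and $\nu_{p_r}(f)=m_r\circ f\circ p_{1/r}$ (Examples~\ref{arg_function_expls}(1)) gives $\alevel(\nu_{p_r}(f))=\alevel(f\circ p_{1/r})=\alevel(f)$, proving (5). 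Alternatively (5) drops straight out of Lemma~\ref{arg_fun_comp}: a direct computation on $\LL$ gives $\llog\circ\p_r=\m_r\circ\llog$, hence $\nu_{\log}\circ\nu_{p_r}=\nu_{m_r}\circ\nu_{\log}$ and $\nu_{\log}(\nu_{p_r}(f))=\nu_{\log}(f)\circ m_{1/r}$, which has the same level as $\nu_{\log}(f)$ by Proposition~\ref{D_level_lemma}(3) since $m_{1/r}\in\I$ has level $0$.

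I do not expect a serious obstacle: the one point needing care is the bookkeeping among $\D\cdot\log$, $\D_{\ge-1}\cdot\log$, $\D_{-1}\cdot\log$ and $\D_{\ge0}$ — that the trichotomy is genuinely a partition, and that membership in $\D\cdot\log$ together with the level of the $\log$-quotient is stable under the scalings and compositions used in (4) and (5) — and this is routine given Proposition~\ref{D_level_lemma} and Facts~\ref{level_facts}.
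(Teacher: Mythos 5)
Your proposal is correct and takes essentially the same route as the paper: parts (1)--(3) and the monotonicity are exactly the unwinding of Definition \ref{ang_level} through Corollaries \ref{nu_cor} and \ref{nu_cor2} (the paper compresses your three-class bookkeeping into the single identity $\level(\nu_{\log}(g))=\max\{0,\level(g)+1\}$ for $g\in\Hone\cup\D$, but the content is the same). For (4) and (5) the paper instead squeezes $m_r\circ f\circ m_s$ between $\left(m_{r/2}\circ(f/\log)\circ m_s\right)\cdot\log$ and $\left(m_{2r}\circ(f/\log)\circ m_s\right)\cdot\log$ and uses convexity of $\D_k$ (and for (5) only remarks that $\nu_{p_r}(\log)=\log$), whereas your exact identities combined with Proposition \ref{D_level_lemma}(3),(7) establish the same invariance of membership in $\D\cdot\log$ and of $\level(\cdot/\log)$, arguably more explicitly; this is a minor variation in execution, not a different method.
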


\begin{proof}
For part (1), since $\nu_{\log}(f) \in \Hone \cup \D$, we have from Corollary \ref{nu_cor}(2) that
\begin{align*}
\alevel(\nu_{\log}(f)) - 1 &= \level(\nu_{\log}(\nu_{\log}(f))) -1 \\ &= \max\{0, \level(\nu_{\log}(f))+1\} -1 \\ &= \max\{-1, \level(\nu_{\log}(f))\} \\ &= \alevel(f).
\end{align*}
Parts (2) and (3) follow similarly.  For part (4), note that if $f = g \cdot \log$, then
\begin{equation}\label{scaling_ineq}
  m_r \circ f \circ m_s = (m_r \circ g \circ m_s)\cdot (\log s + \log),
\end{equation}
which is greater than $(m_{r/2} \circ g \circ m_s) \cdot \log$ and less than $(m_{2r} \circ g \circ m_s) \cdot \log$.  Thus, if $g \in \D_k$, then $m_{r/2} \circ g \circ m_s$ and $m_{2r} \circ g \circ m_s$ belong to $\D_k$ and, since the latter is an interval, part (4) follows in this case.  If $g \notin \D$ then, by equation \eqref{scaling_ineq},  neither is $m_r \circ f \circ m_s$, so part (4) also follows in this case.

For part (5) and $r>0$, note that $\nu_{p_r}(\log) = \log$, which implies that $\nu_{p_r}(\D_{-1} \cdot \log) = \D_{-1} \cdot \log$.
\end{proof}

\begin{expls}
	\label{al_expls}
	\begin{enumerate}
		\item If $f \in \Hone$, then $\level(f) = -\infty$ and $f \in \D_{\ge 1}\cdot\log$, so that $\alevel(f) = 0$.  Thus, if $\frU \subseteq \LL$ is a standard power domain, then $\lim_{x \to +\infty}f_{\frU,\arg}(x) = \frac\pi2$, so that $\alevel\left(f_{\frU,\arg}\right) = 0$.
		\item If $f \in \I$, then $\alevel(f) \in \{-1,0\}$; in particular, if $f \succ \log$, then $\alevel(f) = -1$.
		\item If $f \in \D$, then $\alevel(f) \ge 0$; in particular, if $f \prec \frac1\log$, then $\alevel(f) = \level(f)+1$.  Thus, $\alevel(1/\exp_{k-1}) = k$, for $k \in \NN$.
		\item For $S:= \set{z \in \CC:\ \re z > 0,\ |\im z| < \pi}$, we have $f_{S,\im} = \pi$ and $f_{S,\arg} \sim \arctan(\pi/x)$.  Since $\arctan(\pi/x) \sim \pi/x \prec 1/\log x$ as $x \to +\infty$, it follows that $\level(f_{S,\arg}) = 0$ and, by the previous example, that $\alevel(f_{S,\arg}) = 1$.
	\end{enumerate}
\end{expls}

The angular level provides us with a way to assign size to certain domains in $\LL$:

\begin{df}
	\label{multi-quadratic_domain}
	For $k \in \NN p \{-1\}$, we set $$\Hang_k := \set{h \in \hplus:\ \alevel(h) = k};$$  in particular, $\Hang_{-1} = \hplus \setminus (\D_{\ge -1} \cdot \log)$ and $\Hang_0 = \D_{-1} \cdot \log$.
	%and $$\Q_k:= \Q_\la \cap \H_k.$$  Note that $\Q_k \subsetneq \H_k$ and $\Q_{-1} \subsetneq \I$.
	A domain $\frU \subseteq \LL$ is called a \textbf{$k$-domain} if there exist $f,g \in \Hang_k$ such that $\frU_f \subseteq \frU \subseteq \frU_g$.%, and $U$ is called \textbf{$\la$-quadratic} if it is definable real and $f_{U,\arg} \in \Q_\la$.
\end{df}

It follows from Example \ref{al_expls}(1) that every standard power domain is a $0$-domain.

\subsection*{Angular level for maps} 
Based on the notion of angular level, we can now define a notion of angular level for maps on $\LL$.

\begin{df}
	\label{complex level}
Let $\lambda \in \ZZ$, $\frU,\frV \subseteq \LL$ be domains and $\varphi:\frU \into \frV$.  We say that
$\varphi$ has \textbf{angular level $\lambda$} if, for $k \ge \lambda -1$ and every $k$-domain $\frU' \subseteq \frU$, the image $\varphi(\frU)$ is a $(k-\lambda)$-domain.
\end{df}

\begin{expl}
\label{levelled_maps}
It follows from Examples \ref{arg_function_expls}(1)--(3) and Proposition \ref{alevel_cor} that $\m_r, \p_r:\LL \into \LL$ have angular level 0, for $r>0$, that $\llog:\H_\LL(1) \into S_\LL(\pi/2)$ has angular level $-1$, and that $\eexp:S_\LL(\pi/2) \into H_\LL(1)$ has angular level 1.  Also, by Example \ref{arg_function_expls}(4), the map $\t_a:\LL \into \LL$ has angular level 0, for $a>0$; it is in order to include maps like the latter (which are analytic continuations of definable maps!) that the notions of $k$-domain and angular level for such maps are defined in such  generality.
\end{expl}

Finally, we can define the kind of analytic continuation property we are ultimately interested in.

\begin{df}
\label{weakly_extending}
Let $\eta \in \NN$ and $\lambda \in \ZZ$ be such that $\lambda \le \eta$.
\begin{enumerate}
\item Let $\frU, \frV \subseteq \LL$ and $\varphi:\frU \into \frV$.  We call $\varphi$ an \textbf{$(\eta,\lambda)$-map} if the following hold:
\begin{renumerate}

\item $\frU$ is an $(\eta-1)$-domain, $\frV$ is an $(\eta-\lambda-1)$-domain and $\varphi$ is biholomorphic of angular level $\lambda$;
\item for $k \ge \eta-1$ and every $(k-\lambda)$-domain $\frV' \subseteq \frV$, there exists a $k$-domain $\frU' \subseteq \frU$ such that $\varphi(\frU') \subseteq \frV'$;
\item there exist $h_1,h_2 \in \I$ of level $\lambda$ such that
\begin{equation*}
h_1(|x|) \le |\varphi(x)| \le h_2(|x|),
\end{equation*}
for all sufficiently large $x \in U$.
\end{renumerate}
\item Let $f \in \I$.  The function $f$ is called \textbf{$(\eta,\lambda)$-extendable} if there exists an $(\eta,\lambda)$-map $\f:\frU \into \frV$ that extends $f$.  In this situation, we refer to $\f$ as an \textbf{$(\eta,\lambda)$-extension} of $f$.
\end{enumerate}
\end{df}

\begin{expl}
	\label{extendable_expl}
	By Example \ref{levelled_maps}, for $r>0$ the maps $m_r$ and $p_r$ are $(0,0)$-extendable, $\exp$ is $(1,1)$-extendable and $\log$ is $(0,-1)$-extendable.
\end{expl}

\begin{lemma}
\label{weak_ext_comp}
If $f_i \in \I$ is $(\eta_i,\lambda_i)$-extendable, for $i=1,2$, then $f_2 \circ f_1$ is $(\eta,\lambda_1+\lambda_2)$-extendable, where $\eta:= \max\{\eta_1,\eta_2+\lambda_1\}$.
\end{lemma}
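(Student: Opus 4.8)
The plan is to compose the two given extensions and verify that the composite satisfies all three conditions (i)--(iii) of being an $(\eta,\lambda_1+\lambda_2)$-map, where $\eta = \max\{\eta_1,\eta_2+\lambda_1\}$. Let $\f_i:\frU_i \into \frV_i$ be an $(\eta_i,\lambda_i)$-extension of $f_i$, so that $\frU_i$ is an $(\eta_i-1)$-domain, $\frV_i$ is an $(\eta_i-\lambda_i-1)$-domain, and $\f_i$ is biholomorphic of angular level $\lambda_i$. The natural candidate is $\f:= \f_2 \circ \f_1$, but a subtlety is that $\frV_1$ and $\frU_2$ need not coincide as germs; they are both domains built from germs in $\hplus$, and what we know is that $\f_1$ has angular level $\lambda_1$ while $\frU_2$ is an $(\eta_2-1)$-domain. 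First I would use condition (i) for $\f_2$ together with condition (ii) for $\f_1$ to arrange compatible representatives: since $\frU_2$ is an $(\eta_2-1)$-domain and $\f_1$ has angular level $\lambda_1$, the preimage consideration in (ii) applied with $k = \eta_2-1+\lambda_1$ produces a $(\eta_2-1+\lambda_1)$-domain $\frU_1' \subseteq \frU_1$ with $\f_1(\frU_1') \subseteq \frU_2$ (after shrinking so that $\f_1(\frU_1')$ is a $(\eta_2-1)$-domain sitting inside $\frV_1 \cap \frU_2$). Restricting $\f_1$ to such a $\frU_1'$ and replacing $\frU_1$ by it, we may assume $\f_2 \circ \f_1$ is well-defined on $\frU_1$.

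The next step is to check each clause. For the domain conditions in (i): $\frU_1$ should be an $(\eta-1)$-domain, which holds because $\eta-1 = \max\{\eta_1-1, \eta_2-1+\lambda_1\} \ge \eta_1-1$ and a $(\eta_1-1)$-domain can be shrunk to a $k$-domain for any $k \ge \eta_1-1$ — here I would invoke that $\Hang_k$ germs exist at every level and that a $j$-domain contains $k$-domains for $k \ge j$ (this follows from the decreasing nature of $\alevel$ and the existence of germs of every angular level as in Examples \ref{al_expls}). Likewise $\frV_2$ is an $(\eta_2-\lambda_2-1)$-domain, and one checks $\eta - (\lambda_1+\lambda_2) - 1 = \max\{\eta_1-\lambda_1-\lambda_2-1, \eta_2-\lambda_2-1\} \ge \eta_2-\lambda_2-1$, so $\frV_2$ can serve (after shrinking) as the target $(\eta-\lambda_1-\lambda_2-1)$-domain. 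That $\f_2\circ\f_1$ is biholomorphic onto its image is immediate from biholomorphicity of each $\f_i$. That it has angular level $\lambda_1+\lambda_2$ follows from composing the two angular-level statements: if $\frU' \subseteq \frU_1$ is a $k$-domain with $k \ge \lambda_1+\lambda_2-1 \ge \lambda_1-1$, then $\f_1(\frU')$ is a $(k-\lambda_1)$-domain, and since $k-\lambda_1 \ge \lambda_2-1$, applying the angular level of $\f_2$ gives that $\f_2(\f_1(\frU'))$ is a $(k-\lambda_1-\lambda_2)$-domain. For condition (ii), I would chase preimages in two stages: given a $(k-\lambda_1-\lambda_2)$-domain $\frV' \subseteq \frV_2$ with $k \ge \eta-1 \ge \eta_2-1$, condition (ii) for $\f_2$ yields a $(k-\lambda_1)$-domain $\frW \subseteq \frV_1\cap\frU_2$ with $\f_2(\frW)\subseteq\frV'$ (here $k-\lambda_1 \ge \eta_2-1$, using $k \ge \eta_2-1+\lambda_1$); then, since $k \ge \eta_1-1$, condition (ii) for $\f_1$ yields a $k$-domain $\frU' \subseteq \frU_1$ with $\f_1(\frU')\subseteq\frW$, whence $\f(\frU')\subseteq\frV'$. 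For condition (iii), if $h_1^{(i)} \le |\f_i| \le h_2^{(i)}$ with $h_j^{(i)} \in \I$ of level $\lambda_i$, then for $x \in \frU_1$ we get $h_1^{(2)}(h_1^{(1)}(|x|)) \le |\f(x)| \le h_2^{(2)}(h_2^{(1)}(|x|))$, and by Facts \ref{level_facts}(3) the germs $h_j^{(2)} \circ h_j^{(1)}$ lie in $\I$ with level $\lambda_1+\lambda_2$. Finally $\f_2\circ\f_1$ extends $f_2\circ f_1$ since each $\f_i$ extends $f_i$ and the real germs compose correctly on $(0,+\infty)$.

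The main obstacle I anticipate is the bookkeeping in the first paragraph: matching up representatives so that $\f_2 \circ \f_1$ is genuinely defined as a germ, i.e. ensuring $\f_1$ can be restricted to land inside $\frU_2$ without destroying its $(\eta_1,\lambda_1)$-extension properties. This is exactly what clause (ii) is designed to handle, but one must be careful that the shrunk domain $\frU_1'$ is still an $(\eta-1)$-domain (not merely an $(\eta_2-1+\lambda_1)$-domain) — this works because $\eta-1 \le \eta_2-1+\lambda_1$ is \emph{false} in general; rather $\eta - 1 = \max\{\eta_1-1,\eta_2+\lambda_1-1\} \ge \eta_2+\lambda_1-1$, so a $(\eta_2+\lambda_1-1)$-domain need \emph{not} be an $(\eta-1)$-domain when $\eta_1 > \eta_2+\lambda_1$. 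In that case one instead keeps $\frU_1$ as an $(\eta_1-1) = (\eta-1)$-domain and shrinks only enough to enter $\frU_2$, using that $\f_1$ maps the ambient $(\eta_1-1)$-domain's sub-$k$-domains appropriately; the cleanest route is to fix a single $(\eta-1)$-domain $\frU \subseteq \frU_1$ at the outset with $\f_1(\frU) \subseteq \frU_2$, obtained by applying clause (ii) of $\f_1$ to a suitable sub-domain of $\frU_2$, and then all subsequent verifications run inside this fixed $\frU$. Once the domains are pinned down, the remaining verifications are the routine two-stage compositions indicated above.
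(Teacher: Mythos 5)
Your proposal is correct and follows essentially the same route as the paper: compose the two extensions, using clause (1b) of Definition \ref{weakly_extending} for $\f_1$ to shrink its domain so that the image lands in $\frU_2$, and then verify the clauses of the definition by the evident two-stage composition (the paper merely splits into the cases $\frV_1 \subseteq \frU_2$ and $\frV_1 \nsubseteq \frU_2$, producing an $(\eta_1,\lambda_1+\lambda_2)$- or $(\eta_2+\lambda_1,\lambda_1+\lambda_2)$-extension respectively, and leaves the clause-by-clause verification you spell out to the reader). Your closing concern about recovering a full $(\eta-1)$-domain when $\eta_1 > \eta_2+\lambda_1$ is resolved exactly as you suggest, by applying clause (1b) of $\f_1$ at $k=\eta_1-1$ to a sub-domain of $\frV_1\cap\frU_2$ of the appropriate angular level.
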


\begin{proof}
Let $\f_i: \frU_i \into \frV_i$ be an $(\eta_i,\lambda_i)$-extension of $f_i$, for each $i$.  If $\frV_1 \subseteq \frU_2$ then, by definition, $\f_2 \circ \f_1$ is an $(\eta_1, \lambda_1+ \lambda_2)$-extension of $f_2 \circ f_1$.  So assume that $\frV_1 \nsubseteq \frU_2$, and let $\frU_1'$ be an $(\eta_2+\lambda_1-1)$-domain such that $\f_1(\frU_1') \subseteq \frU_2$ (which exists by part (1b) of Definition \ref{weakly_extending}).  Then, again by definition, $\f_2 \circ (\f_1 \rest{\frU_1'})$ is an $(\eta_2+\lambda_1,\lambda_1+\lambda_2)$-extension of $f_2 \circ f_1$.
\end{proof}

\subsection*{Brief outline of how we obtain extendability}
Given $f \in \I$, we set $\eta:= \max\{0,\eh(f)\}$ and $\lambda:= \level(f) \le \eta$; our goal (see Continuation Theorem \ref{I_ext_prop} below) is to show that $f$ is $(\eta,\lambda)$-extendable.  
To prove this, we may assume $f \in \E$ by Proposition \ref{1.11} and Lemma \ref{weak_ext_comp}, because we already know that $\log$ is $(0,-1)$-extendable.  For $f \in \E$, we will proceed by induction on $\eh(f)$; in the inductive step, we first obtain extendability for principle monomials, which is straightforward from the inductive hypothesis and Lemma \ref{weak_ext_comp}, because we already know that $\exp$ is $(1,1)$-extendable.  Then we show that multiplying by a unit preserves the extension property of the principle monomials, which requires us to track additional properties introduced in the next section.

%\newpage
\section{Continuity and angular orientation properties}
\label{opposing}

A crucial step in proving the Continuation Theorem \ref{I_ext_prop}  involves simultaneously establishing two ``opposing'' continuity properties for the analytic continuations of units in $\E$ (which are \textit{$\D$-Lipschitz}, see Proposition \ref{u_small} below) and of infinitely increasing germs in $\E$ (which are \textit{expansive}, see Proposition \ref{exp_ext_prop} below). 

The upshot of these additional properties is that (the analytic continuation of) an infinitely increasing $f \in \E$ and the product of $f$ with a unit $u$ in $\E$ are ``almost the same''; for instance, the analytic continuation of $fu$ is still expansive.  As a result, establishing the extendability properties of $f$ reduces to establishing those of the leading monomial of $f$, which can be handled using the inductive hypothesis.

However, the distance $d$ on $\LL^2$ makes $\LL$ into a \textit{multiplicative} metric space, which makes the corresponding definitions of continuity and differentiability unfamiliar to work with.  We will therefore often work in one of the following two charts:

\subsection*{The standard chart}

We denote by $\pi_0$ the injective restriction of $\pi$ to $S_\LL(\pi)$, and we call $\pi_0:S_\LL(\pi) \into \CC \setminus (-\infty,0]$ the \textbf{standard chart} on $\LL$. Recall that, for $h \in \hplus$, we have $$h < \pi \quad\text{if and only if}\quad \pi_0^{-1}(\pi_0(U_h)) = U_h.$$ 
For $\kappa \ge 0$ and $U \subseteq \CC \setminus (-\infty,0]$, we call $U$ a \textbf{$\kappa$-domain} if and only if $\pi_0^{-1}(U)$ is a $\kappa$-domain.

%Since $\pi_0$ is a biholomorphism and $H(0)$ is closed under addition, the usual addition $+:(0,+\infty)^2 \into (0,+\infty)$ extends to a holomorphic map $\kreuz:S_\LL(\pi/2)^2 \into S_\LL(\pi/2)$ by setting $$x \kreuz y := \pi_0^{-1}(\pi_0(x) + \pi_0(y)).$$

\subsection*{The $\Log$-chart}

Recall that $\Log:\LL \into \CC$ is the biholomorphic map $$\Log x:= \log|x| + i \arg x$$ with compositional inverse $\Exp:\CC \into \LL$, and that $$d(x,y):= |\Log x - \Log y|$$ defines the corresponding distance function on $\LL$, making $(\LL,d)$ into a multiplicative metric space.  Given a domain $\frU \subseteq \LL$ and a map $\varphi:\frU \into \LL$, we denote by $\bar\varphi:\Log(\frU) \into \CC$ the holomorphic map $$\bar\varphi:= \Log \circ \varphi \circ \Exp.$$  If, in addition, $\varphi$ is holomorphic, then the derivative $\varphi':\frU \into \LL$  of $\varphi$ in the sense of the $\Log$ chart is given by $$\varphi' = \Exp \circ \left(\bar\varphi\right)' \circ \Log.$$  Note that $\re\bar\varphi = \log|\varphi| \circ \Exp$ and, if $\varphi$ is holomorphic, that $\bar{\varphi'} = \left(\bar\varphi\right)'$.

\begin{nrmks}
	\label{sr-domain_shrink}
	Let $h \in \H^{>0}$.  
	\begin{enumerate}
		\item The set $U:= \Log(\frU_h)$ is (the germ of) a definable standard domain such that $\bar h:= f_{U,\im} = h \circ \exp \in \H^{>0}$; in particular, if $V:= \Log(\frU_{h/2})$, we have $f_{V,\im} = (h/2) \circ \exp = \bar h/2$.
		\item If $h$ is bounded, the derivative of $h(x)$ approaches 0 as $x \to +\infty$, so we have $B\left(z,\min\{1,\bar h(\re z)/3\}\right) \subseteq \Log(\frU_h)$ for all $z \in \Log(\frU_{h/2})$ with sufficiently large real part.  
		On the other hand, if $h$ is infinitely increasing, then $B(z,1) \subseteq \Log(\frU_h)$ for all $z \in \Log(\frU_{h/2})$ with sufficiently large real part.  Finally, for $x \in \LL$ and $z = \Log x$, we have $\bar h(\re z) = h(|x|)$.  It follows that $$B\left(x,\min\left\{1,h(|x|)/3\right\}\right) \subseteq \frU_{h}$$ for all sufficiently large $x \in \frU_{h/2}$.
	\end{enumerate}
\end{nrmks}

\begin{expl}
	\label{lipschitz_expl}
	Let $\frU \subseteq \LL$ be a domain and $\varphi:\frU \into \LL$, and let $a > 0$ be a real constant.  Recall that $\varphi$ is \textbf{$a$-Lipschitz} if
	\begin{equation}\label{lipschitz_eq}
		d(\varphi(x),\varphi(y)) \le a \cdot d(x,y) \quad\text{for } x,y \in \frU. 
	\end{equation}
	Note that $\varphi$ is $a$-Lipschitz if and only if $\left|\bar\varphi(z) - \bar\varphi(w)\right) \le a \cdot |z-w|$ for $z,w \in \Log \frU$, that is, if and only if $\bar\varphi$ is $a$-Lipschitz.
\end{expl}

We now define two continuity properties of holomorphic maps on $\LL$ inspired by the previous example, as well as an angular orientation property that will come in handy later.

\begin{df}
	\label{opposing_df}
	Let $\frU \subseteq \LL$ be a domain and $\varphi:\frU \into \LL$.
	\begin{enumerate}
		\item The map $\varphi$ is \textbf{$\D$-Lipschitz} if there exists $\rho \in \D$ such that 
		\begin{equation*}
		\label{D-lipschitz_eq}
		d(\varphi(x),\varphi(y)) \le \rho(\min\{|x|,|y|\}) \cdot d(x,y) \quad\text{for } x,y \in \frU;  
		\end{equation*}
		in this situation, we say more precisely that $\varphi$ is \textbf{$\rho$-Lipschitz}.
		\item The map $\varphi$ is \textbf{expansive} if there exists a real $a>0$ such that 
		\begin{equation*}
		\label{expansive_eq}
		d(\varphi(x),\varphi(y)) \ge a \cdot d(x,y) \quad\text{for } x,y \in \frU;  
		\end{equation*}		
		in this situation, we say more precisely that $\varphi$ is \textbf{$a$-expansive}.
		\item The map $\varphi$ is \textbf{angle-positive} if, for all $x \in \frU$, $\arg x$ and $\arg\varphi(x)$ have the same sign.  The map $\varphi$ is \textbf{angle-negative} if, for all $x \in \frU$, $\arg x$ and $\arg\varphi(x)$ have opposite sign.
	\end{enumerate}
\end{df}

\begin{rmks}
	We shall use the following observations without explicit mention:
	in the setting of the previous definition, the map $\varphi$ is $\rho$-Lipschitz if and only if 
	\begin{equation*}
	\label{D-lipschitz_eq_2}
	\left|\bar\varphi(z)-\bar\varphi(w)\right| \le (\rho \circ \exp)(\min\{\re z, \re w\}) \cdot |z-w|  
	\end{equation*}
	for $z,w \in \Log(\frU)$.  Since $\bar{1/\varphi} = -\bar\varphi$, it follows that $\varphi$ is $\rho$-Lipschitz if and only if $1/\varphi$ is.
	
	Next, $\varphi$ is $a$-expansive if and only if
	\begin{equation*}
	\label{expansive_eq_2}
	\left|\bar\varphi(z)-\bar\varphi(w)\right| \ge a \cdot |z-w|  
	\end{equation*}	
	for $z,w \in \Log(\frU)$.  In particular, if $\varphi$ is expansive, then $\varphi$ is injective; the converse does not hold, as $\eexp$ is injective on $S_\LL(\pi/2)$ but is not expansive there (note that $\bar\eexp = \exp$).
	
	Finally, $\varphi$ is angle-positive if and only if $1/\varphi$ is angle-negative, and in both cases we have $\varphi((0,\infty)) \subseteq (0,\infty)$.  Conversely, if $\frU$ is a real domain, $\phi$ is continuous and injective and $\phi((0,\infty)) \subseteq (0,\infty)$, then $\phi$ is either angle-positive or angle-negative (because $(0,\infty)$ topologically separates $\frU\setminus(0,\infty)$).
\end{rmks}

\begin{expls}
	\label{expansive_expl}
	\begin{enumerate}
		\item For $r>0$, the maps $\m_r$ and $\p_r$ are expansive, because $\bar\m_r$ is translation by $\log r$, while $\bar\p_r$ is multiplication by $r$.  Moreover, $\eexp$ is expansive on $S_\LL(\pi/2) \cap H_\LL(1)$, as the next lemma shows.
		\item For $r>0$, both $\m_r$ and $\p_r$ are angle-positive, and both $\eexp$ and $\llog$ are angle-positive as well.
	\end{enumerate}
\end{expls}

\begin{lemma}
	\label{expansive_lemma}
	The map $\eexp$ is expansive on $S_\LL(\pi/2) \cap H_\LL(1)$.
\end{lemma}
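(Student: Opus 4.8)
The plan is to work in the $\Log$-chart, where $\eexp$ becomes the map $\overline{\eexp} = \exp$ on $\CC$, and show directly that $\exp$ is $a$-expansive on the region $\Log\left(S_\LL(\pi/2) \cap H_\LL(1)\right)$. By the remarks following Definition \ref{opposing_df}, $\eexp$ is $a$-expansive on $\frU$ if and only if $|\exp(z) - \exp(w)| \ge a|z-w|$ for all $z,w$ in $\Log(\frU)$, so it suffices to establish this inequality on the appropriate region of $\CC$. Now $\Log(S_\LL(\pi/2)) = \set{z \in \CC:\ |\im z| < \pi/2} = S$ and $\Log(H_\LL(1)) = H(0)$, so $\Log\left(S_\LL(\pi/2) \cap H_\LL(1)\right) = S \cap H(0)$, the half-strip $\set{z:\ \re z > 0,\ |\im z| < \pi/2}$.

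First I would reduce to an integral estimate along a path. For $z, w \in S \cap H(0)$, parametrize the segment from $w$ to $z$ by $\gamma(t) = w + t(z-w)$, $t \in [0,1]$; since $S \cap H(0)$ is convex, $\gamma(t)$ stays in the half-strip. Then
\begin{equation*}
\exp(z) - \exp(w) = \int_0^1 \exp(\gamma(t)) \cdot (z-w)\, dt,
\end{equation*}
so it is enough to bound $\left|\int_0^1 \exp(\gamma(t))\, dt\right|$ from below by a positive constant. Writing $\gamma(t) = \sigma(t) + i\tau(t)$ with $\sigma(t) = \re\gamma(t) > 0$ and $|\tau(t)| < \pi/2$, one has $|\exp(\gamma(t))| = e^{\sigma(t)} > 1$ and, crucially, $\re\exp(\gamma(t)) = e^{\sigma(t)}\cos\tau(t) > 0$. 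Hence all the values $\exp(\gamma(t))$ lie in the open right half-plane $\re > 0$, and in fact each has real part $> \cos\tau(t)$.

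The key step is then: since the integrand $\exp(\gamma(t))$ lies in the right half-plane for every $t$, its average $\int_0^1 \exp(\gamma(t))\,dt$ also lies in the closed right half-plane, and we can bound its modulus below by its real part:
\begin{equation*}
\left|\int_0^1 \exp(\gamma(t))\, dt\right| \ge \re \int_0^1 \exp(\gamma(t))\, dt = \int_0^1 e^{\sigma(t)}\cos\tau(t)\, dt.
\end{equation*}
Now I would argue that this last integral is bounded below by an absolute positive constant, say $a$. One clean way: on the half-strip we have $e^{\sigma(t)} \ge 1$, but $\cos\tau(t)$ can be arbitrarily small near the boundary $|\im z| = \pi/2$, so a naive pointwise bound fails and one must use that $\sigma$ and $\tau$ vary along a \emph{line segment}. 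If $z$ and $w$ are both close to the boundary, then so is the whole segment, and there $\cos\tau$ is uniformly small --- but then the segment is short unless $\re$ varies a lot, in which case $e^{\sigma(t)}$ is large somewhere. Making this precise: if $\max_t |\tau(t)| \le \pi/2 - \delta$ we get the bound $\int_0^1 e^{\sigma(t)}\cos\tau(t)\,dt \ge \sin\delta$ trivially; and one checks that along a segment in the half-strip, $\int_0^1 e^{\sigma(t)}\cos\tau(t)\,dt$ is bounded below independently of the segment. I expect \textbf{this uniform lower bound} --- controlling the interaction between the growth of $e^\sigma$ and the decay of $\cos\tau$ along an arbitrary chord of the half-strip --- to be the main obstacle; a robust route is to first establish the weaker but sufficient fact that $|\exp(z)-\exp(w)| \ge c\,|z-w|$ can be checked on the slightly smaller strips $|\im z| \le \pi/2 - \delta$ for each $\delta$, and then handle the boundary layer by a separate direct argument (e.g. comparing $\exp(z)$ and $\exp(w)$ via their real parts when both lie near the top or bottom edge, where the real parts have the same sign and differ by a definite amount). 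Once the constant $a > 0$ is in hand, $\eexp$ is $a$-expansive on $S_\LL(\pi/2)\cap H_\LL(1)$, as claimed.
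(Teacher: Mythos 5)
There is a genuine gap, and it sits exactly where you predicted: the uniform lower bound on $\int_0^1 e^{\sigma(t)}\cos\tau(t)\,dt$ is in fact \emph{false} over arbitrary chords of the half-strip. Take $z = \delta + i\left(\frac\pi2 - \epsilon\right)$ and $w = z + \epsilon$ with $\delta,\epsilon$ small: then $\tau \equiv \frac\pi2-\epsilon$ and $\sigma \le \delta+\epsilon$, so your integral is at most $e^{\delta+\epsilon}\sin\epsilon \to 0$, even though the true ratio $\left|\frac{e^z-e^w}{z-w}\right| = e^{\delta}\,\frac{e^{\epsilon}-1}{\epsilon} \ge 1$ is perfectly fine. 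The loss comes from replacing the modulus of the average by its real part: near the edges $|\im z| \approx \frac\pi2$ the real part of $e^{\gamma(t)}$ is the \emph{small} component and the imaginary part carries all the size, so this step cannot yield a uniform constant on the full half-strip. Your fallback for the boundary layer (``comparing $\exp(z)$ and $\exp(w)$ via their real parts'') is aimed at the wrong quantity for the same reason: e.g.\ $z = i\left(\frac\pi2-\epsilon\right)$ and $w = \log 2 + i\left(\frac\pi2-\epsilon'\right)$ with $\sin\epsilon = 2\sin\epsilon'$ have $\re e^z = \re e^w$ while $|z-w| \approx \log 2$; near the edges the useful comparison is via the moduli $e^{\re z}$ or the imaginary parts, and that case analysis is not carried out in your proposal.

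For comparison, the paper avoids this issue entirely by splitting according to $|z-w|$ rather than according to distance to the boundary: writing $\frac{e^z-e^w}{z-w} = e^w\bigl(\frac{e^{z-w}-1}{z-w}\bigr)$, for $|z-w| < R$ (with $R$ chosen so the tail of the power series of $\frac{e^{u}-1}{u}$ is at most $\frac12$) the factor in parentheses has modulus at least $\frac12$ and $|e^w| > 1$ since $\re w > 0$, which in particular handles all the troublesome short chords near the edges; for $|z-w| \ge R$ one of $|\re(z-w)|, |\im(z-w)|$ is at least $R/2$, and a direct lower bound on $|e^{z-w}-1|$ (using $|\im(z-w)| < \pi$ in the second case) finishes the argument. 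If you want to salvage your integral approach, you would need to replace the real-part bound by a genuine treatment of chords meeting the boundary layer, which essentially amounts to redoing a case split of this kind; as written, the proposal does not contain a proof of the key uniform estimate.
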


\begin{proof}
	Since $\bar\eexp = \exp$, we work with $\exp$ on the set $$S_{>0}:= \Log\left(S_\LL(\pi/2) \cap H_\LL(1)\right) = \set{z \in \CC:\ \re z > 0, |\im z| < \frac\pi2}.$$
	For $z,w \in \CC$, we have
	\begin{equation*}
	\frac{e^z-e^w}{z-w} = e^w \left(\frac{e^{z-w}-1}{z-w}\right) = e^w \left(1 + \sum_{n=2}^\infty \frac{(z-w)^{n-1}}{n!}\right).
	\end{equation*}
	Let $R>0$ be such that $\sum_{n=2}^\infty R^{n-1}/n! = 1/2$; it follows that, if $z,w \in S_{>0}$ with $|z-w| < R$, we have
	\begin{equation*}
	\left|\frac{e^z-e^w}{z-w}\right| \ge \frac{|e^w|}2 > \frac12.
	\end{equation*}
	On the other hand, set $A:= 1-e^{-R/2} > 0$ and let $B>0$ be the distance between 1 and the complement in $\CC$ of the sector $S(R/2)$.  Write $z = x+iy$ and $w = u+iv$, and assume that $z,w \in S_{>0}$ with $|z-w|\ge R$.  Then $|x-u| \ge R/2$ or $|y-v| \ge R/2$; in the former case, we get $|e^{z-w}-1| \ge A$, so that
	\begin{equation*}
	\left|\frac{e^z-e^w}{z-w}\right| \ge \frac{A}R;
	\end{equation*}
	in the latter case, since $|y-v| < \pi$, it follows that $e^{z-w}$ belongs to the complement in $\CC$ of the sector $S(R/2)$, so we get
	\begin{equation*}
	\left|\frac{e^z-e^w}{z-w}\right| \ge \frac{B}R
	\end{equation*}
	in this case.  This proves the lemma.
\end{proof}

Three of these properties are preserved under composition:

\begin{lemma}
	\label{comp_expansive}
	Let $\frU,\frV \subseteq \LL$ be domains and let $\varphi:\frU \into \frV$ and $\psi:\frV \into \LL$. 
	\begin{enumerate}
		\item Let $\rho,\sigma \in \D$.  If $\varphi$ is $\rho$-Lipschitz and $\psi$ is $\sigma$-Lipschitz, then $\psi\circ\varphi$ is $\rho\sigma$-Lipschitz.
		\item Let $a,b>0$.  If $\varphi$ is $a$-expansive and $\psi$ is $b$-expansive, then $\psi\circ\varphi$ is $ab$-expansive.
		\item If both $\varphi$ and $\psi$ are angle-positive, then so is $\psi \circ \varphi$.
	\end{enumerate} 
\end{lemma}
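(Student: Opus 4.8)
The plan is to verify each of the three statements in turn, working in the $\Log$-chart where all three properties have the convenient reformulations recorded in the Remarks following Definition \ref{opposing_df}. Throughout I write $\bar\varphi = \Log\circ\varphi\circ\Exp$ and $\bar\psi = \Log\circ\psi\circ\Exp$, so that $\overline{\psi\circ\varphi} = \bar\psi\circ\bar\varphi$, and I set $\frW:= \Log(\frU)$ and note $\bar\varphi(\frW) \subseteq \Log(\frV)$.

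For part (1), suppose $\varphi$ is $\rho$-Lipschitz and $\psi$ is $\sigma$-Lipschitz with $\rho,\sigma\in\D$. Fix $z,w\in\frW$ and set $z' = \bar\varphi(z)$, $w' = \bar\varphi(w)$, both in $\Log(\frV)$. Using the $\Log$-chart criterion twice,
\begin{equation*}
\left|\bar\psi(z')-\bar\psi(w')\right| \le (\sigma\circ\exp)(\min\{\re z',\re w'\})\cdot|z'-w'| \le (\sigma\circ\exp)(\min\{\re z',\re w'\})\cdot(\rho\circ\exp)(\min\{\re z,\re w\})\cdot|z-w|.
\end{equation*}
The remaining point is to bound $(\sigma\circ\exp)(\min\{\re z',\re w'\})$ by $(\sigma\circ\exp)(\min\{\re z,\re w\})$, up to the harmless replacement of $\sigma$ by a comparable element of $\D$. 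Since $\re\bar\varphi = \log|\varphi|\circ\Exp$ and $\varphi$ is the analytic continuation of an infinitely increasing germ — or, more to the point, since in every application of this lemma $\varphi$ carries condition (1b,c) of Definition \ref{weakly_extending}, giving $h_1(|x|)\le|\varphi(x)|$ for some $h_1\in\I$ — we have $\re z' \ge (\log\circ h_1\circ\exp)(\re z)$ for large $\re z$, and $\log\circ h_1\circ\exp$ is infinitely increasing; hence $\min\{\re z',\re w'\}\ge c\cdot\min\{\re z,\re w\}$ eventually, for any $c<1$, and since $\sigma$ is decreasing (being in $\D$) we get $(\sigma\circ\exp)(\min\{\re z',\re w'\})\le(\sigma\circ\exp)(c\cdot\min\{\re z,\re w\}) = (\sigma\circ m_c\circ\exp)(\min\{\re z,\re w\})$, and $\sigma\circ m_c\in\D$ by Facts \ref{level_facts}. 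Thus $\psi\circ\varphi$ is $(\rho\cdot(\sigma\circ m_c))$-Lipschitz, and after absorbing constants this is the asserted $\rho\sigma$-Lipschitz bound up to the usual germ conventions. \emph{I expect this monotonicity bookkeeping for the domain of $\sigma$ to be the only subtle point; everything else is a direct two-line chain of inequalities.}

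Parts (2) and (3) are immediate. For (2), if $\varphi$ is $a$-expansive and $\psi$ is $b$-expansive, then for $z,w\in\frW$, writing $z',w'$ as above,
\begin{equation*}
\left|\bar\psi(z')-\bar\psi(w')\right| \ge b\cdot|z'-w'| = b\cdot\left|\bar\varphi(z)-\bar\varphi(w)\right| \ge ab\cdot|z-w|,
\end{equation*}
so $\psi\circ\varphi$ is $ab$-expansive by the $\Log$-chart criterion. For (3), angle-positivity is literally transitive: if $\arg x$ and $\arg\varphi(x)$ have the same sign for all $x\in\frU$, and $\arg y$ and $\arg\psi(y)$ have the same sign for all $y\in\frV$, then applying the second statement with $y = \varphi(x)$ gives that $\arg\varphi(x)$ and $\arg\psi(\varphi(x))$ have the same sign, hence so do $\arg x$ and $\arg(\psi\circ\varphi)(x)$. (Here one uses that "same sign" in Definition \ref{opposing_df}(3) includes the case $\arg x = 0$, which forces $\arg\varphi(x) = 0$, consistent with the observation that angle-positive maps satisfy $\varphi(\RR)\subseteq\RR$.) This completes all three cases.
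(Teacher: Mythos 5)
Your parts (2) and (3) coincide with the paper's argument: the paper simply factors
\begin{equation*}
\frac{d((\psi \circ \varphi)(x), (\psi\circ\varphi)(y))}{d(x,y)} = \frac{d((\psi \circ \varphi)(x), (\psi\circ\varphi)(y))}{d(\varphi(x),\varphi(y))} \cdot \frac{d(\varphi(x),\varphi(y))}{d(x,y)}
\end{equation*}
and declares (1) and (2) to follow, with (3) ``straightforward''; your $\Log$-chart computation is the same argument in a different notation, and your first displayed chain for (1) is exactly what the paper's factorization yields.

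The divergence is the extra ``monotonicity bookkeeping'' you append to part (1), and that step is flawed. First, it imports a hypothesis that is not in the lemma: clause (1c) of Definition \ref{weakly_extending} is a property of $(\eta,\lambda)$-maps, whereas here $\varphi$ is an arbitrary $\rho$-Lipschitz map between domains, so you may not assume $h_1(|x|)\le|\varphi(x)|$ with $h_1\in\I$. Second, even granting that assumption, your claim that $\min\{\re z',\re w'\}\ge c\cdot\min\{\re z,\re w\}$ eventually, for any $c<1$, is false when $h_1$ has negative level: for $h_1=\log$ one only gets $\re z' \ge (\log\circ h_1\circ\exp)(\re z) = \log(\re z)$, so the replacement $\sigma\mapsto\sigma\circ m_c$ does not work; the correct witness under that hypothesis would be $\rho\cdot(\sigma\circ h_1)$, which lies in $\D$ (since $\I$ is closed under composition) but is not literally the asserted $\rho\sigma$. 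The paper itself performs no such renormalization: it reads the factorized bound $\sigma(\min\{|\varphi(x)|,|\varphi(y)|\})\cdot\rho(\min\{|x|,|y|\})$ as giving the conclusion, and in the sequel the lemma is cited only for expansiveness and angle-positivity of compositions, so only $\D$-Lipschitzness (not the precise witness) ever matters. So either follow the paper and stop after your first display, recording that the composite is $\D$-Lipschitz with the $\sigma$-factor evaluated at the image points, or, if you want the literal $(\rho\sigma)(\min\{|x|,|y|\})$ bound, state an explicit hypothesis such as $|\varphi(x)|\ge|x|$ on $\frU$; as written, your repair proves a different statement via a false intermediate inequality.
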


\begin{proof}
	For distinct $x,y \in \frU$, we have $$\frac{d((\psi \circ \varphi)(x), (\psi\circ\varphi)(y))}{d(x,y)} = \frac{d((\psi \circ \varphi)(x), (\psi\circ\varphi)(y))}{d(\varphi(x),\varphi(y))} \cdot \frac{d(\varphi(x),\varphi(y))}{d(x,y)},$$ so parts (1) and (2) follow.  Part (3) is straightforward.
\end{proof}

Crucial for us is the following observation that ``expansive trumps $\D$-Lipschitz'' under multiplication:

\begin{lemma}
	\label{expansive_unit}
	Let $\frU \subseteq \LL$ be a domain at $\infty$ and $\varphi,\psi:\frU \into \LL$ be such that $\varphi$ is expansive and $\psi$ is $\D$-Lipschitz.  Then there exists $R>0$ such that $\varphi\psi$ is expansive on $\frU \cap H_\LL(R)$.
\end{lemma}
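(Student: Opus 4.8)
The plan is to work in the $\Log$-chart, where the multiplicative metric $d$ becomes the ordinary Euclidean metric and expansiveness/$\D$-Lipschitz become statements about $\bar\varphi$ and $\bar\psi$. Writing $\bar\varphi$ for $\Log\circ\varphi\circ\Exp$ etc., and noting that $\overline{\varphi\psi}=\bar\varphi+\bar\psi$ since $\Log$ turns products into sums, the claim becomes: if $|\bar\varphi(z)-\bar\varphi(w)|\ge a|z-w|$ for some real $a>0$ and $|\bar\psi(z)-\bar\psi(w)|\le(\rho\circ\exp)(\min\{\re z,\re w\})|z-w|$ for some $\rho\in\D$, then on $\Log(\frU)\cap\{\re z>\log R\}$ for $R$ large we have $|(\bar\varphi+\bar\psi)(z)-(\bar\varphi+\bar\psi)(w)|\ge a'|z-w|$ for some real $a'>0$.

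The key estimate is the reverse triangle inequality:
\begin{equation*}
|(\bar\varphi+\bar\psi)(z)-(\bar\varphi+\bar\psi)(w)| \ge |\bar\varphi(z)-\bar\varphi(w)| - |\bar\psi(z)-\bar\psi(w)| \ge \left(a - (\rho\circ\exp)(\min\{\re z,\re w\})\right)|z-w|.
\end{equation*}
Since $\rho\in\D$, we have $\rho(t)\to 0$ as $t\to+\infty$, hence $(\rho\circ\exp)(s)\to 0$ as $s\to+\infty$; so there is $R>0$ such that $(\rho\circ\exp)(s)\le a/2$ whenever $s>\log R$. For $z,w\in\Log(\frU\cap H_\LL(R))$ we have $\re z,\re w>\log R$, so $\min\{\re z,\re w\}>\log R$ and the bracketed factor is at least $a/2$. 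Thus $\varphi\psi$ is $(a/2)$-expansive on $\frU\cap H_\LL(R)$, which is the conclusion. One should also remark that $\frU\cap H_\LL(R)$ is still a domain (it is open and, being a real domain intersected with $H_\LL(R)$, connected), so the statement makes sense.

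There is essentially no serious obstacle here: the only things to be careful about are translating between the $\LL$-picture and the $\Log$-chart picture (already spelled out in the Remarks following Definition \ref{opposing_df} and in Example \ref{lipschitz_expl}), and the fact that "$\D$-Lipschitz" gives a bound depending on $\min\{|x|,|y|\}$ rather than a uniform constant — which is exactly why we need to pass to a half-plane $H_\LL(R)$ rather than getting expansiveness on all of $\frU$. The choice of $R$ uses only that every $\rho\in\D$ is small (tends to $0$ at $+\infty$), which is immediate from the definition $\D=\{1/f:f\in\I\}$. If one wants the cleanest write-up, it suffices to pick $R$ with $(\rho\circ\exp)(s)<a$ for $s>\log R$ and record that $\varphi\psi$ is then $(a-\sup_{s>\log R}(\rho\circ\exp)(s))$-expansive; any positive lower bound for the bracket works.
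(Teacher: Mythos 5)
Your proof is correct and is essentially the paper's own argument: the paper likewise writes $\Log(\varphi\psi)=\Log\varphi+\Log\psi$, applies the reverse triangle inequality, and chooses $R$ so that $\rho(t)\le a/2$ for $t>R$, obtaining $(a/2)$-expansiveness on $\frU\cap H_\LL(R)$. The only cosmetic difference is that you phrase it through $\bar\varphi,\bar\psi$ in the $\Log$-chart while the paper computes directly with $d$ and $\Log$ on $\LL$; the content is identical.
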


\begin{proof}
	Let $\rho \in \D$ and $a>0$ be such that $\varphi$ is $a$-expansive and $\psi$ is $\D$-Lipschitz.  Let $R>0$ be such that $\rho(t) \le a/2$ for $t > R$.  Then, for distinct $x,y \in \frU \cap H_\LL(R)$, we have
	\begin{align*}
		\frac{d(\varphi\psi(x),\varphi\psi(y))}{d(x,y)} &= \frac{|\Log(\varphi\psi(x)) - \Log(\varphi\psi(y))|}{d(x,y)} \\
		&\ge \frac{|\Log\varphi(x) - \Log\varphi(y)| - |\Log\psi(x) - \Log\psi(y)|}{d(x,y)} \\
		&\ge a - \rho(\min\{|x|,|y|\}) \\
		&\ge \frac a2,
	\end{align*}
	as required.
\end{proof}

\begin{df}\label{unit_at_infty}
	Let $\frU \subseteq \LL$ be a domain at $\infty$ and $\varphi:\frU \into \LL$ be a map.  We call $\varphi$ a \textbf{unit at $\infty$} if $d(\varphi(x),1) \to 0$ as $|x| \to \infty$.
\end{df}

\begin{expl}
	\label{unit_expl}
	Let $u \in \Pc{R}{X}$ be such that $u(0) = 1$.  Then, by Example \ref{basic_extensions_2}(3), $u$ has an analytic continuation $$\u:\pi^{-1}(B(0,a)) \into \LL,$$ for some sufficiently small $a>0$, such that $\u \circ \p_{-1}:H_\LL(1/a) \into \LL$ is a unit at $\infty$.
\end{expl}

Finally, here is what happens when we combine all four properties:

\begin{lemma}
	\label{mult_by_unit_pos_exp}
	Let $k \ge -1$, $\frU \subseteq \LL$ a $k$-domain, $\frV \subseteq \LL$ a $(-1)$-domain and $\phi:\frU \into \frV$ be continuous, surjective, expansive and angle-positive.  Let also $\psi:\frU \into \LL$ be a $\D$-Lipschitz unit at $\infty$ such that $\psi(\frU \cap (0,\infty)) \subseteq (0,\infty)$.  Then there exists $r>0$ such that:
	\begin{enumerate}
		\item $\phi\psi\rest{\frU \cap H_{\LL}(r)}$ is expansive and angle-positive;
		\item for $x \in \frU \cap H_{\LL}(r)$, we have $$\frac12|\arg\phi(x)| \le |\arg(\phi\psi)(x)| \le \frac32|\arg\phi(x)|.$$
	\end{enumerate}
\end{lemma}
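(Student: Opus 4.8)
The approach is to settle the expansiveness in part~(1) immediately and to reduce everything else to a single pointwise inequality. First, $\phi$ is expansive and $\psi$ is $\D$-Lipschitz on the domain $\frU$, so by Lemma~\ref{expansive_unit} there is $R>0$ with $\phi\psi$ expansive on $\frU\cap H_\LL(R)$. The remaining assertions---angle-positivity of $\phi\psi$ on $\frU\cap H_\LL(r)$ and the two-sided bound in~(2)---will all follow once we know that
\[
|\arg\psi(x)|\ \le\ \tfrac12\,|\arg\phi(x)|
\]
for all $x\in\frU$ with $|x|$ sufficiently large; call this $(\ast)$. Indeed, writing $\arg(\phi\psi)(x)=\arg\phi(x)+\arg\psi(x)$, the triangle inequality turns $(\ast)$ into $\tfrac12|\arg\phi(x)|\le|\arg(\phi\psi)(x)|\le\tfrac32|\arg\phi(x)|$, which is~(2); and for angle-positivity, if $\arg x=0$ then $\arg\phi(x)=0$ (as $\phi$ is angle-positive) and $\arg\psi(x)=0$ (as $\psi(\frU\cap\RR)\subseteq\RR$), so $\arg(\phi\psi)(x)=0$, while if $\arg x\neq 0$ then $\arg\phi(x)$ is nonzero with the sign of $\arg x$ and, by $(\ast)$, strictly dominates $|\arg\psi(x)|$, so $\arg(\phi\psi)(x)$ again has the sign of $\arg x$.

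To prove $(\ast)$ I would establish the two one-sided estimates $|\arg\psi(x)|\le\rho(|x|)\,|\arg x|$ and $|\arg\phi(x)|\ge a\,|\arg x|$, where $\rho\in\D$ witnesses that $\psi$ is $\D$-Lipschitz and $a>0$ witnesses that $\phi$ is $a$-expansive, both valid for $x\in\frU$ with $|x|$ large. Both come from comparing $x$ to a suitable point of the positive real axis, on which $\phi$ and $\psi$ take values on that axis. Fix $f,g\in\Hang_k$ with $\frU_f\subseteq\frU\subseteq\frU_g$, so that the real point $|x|$ lies in $\frU_f\subseteq\frU$ once $|x|$ is large. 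Using $d(x,|x|)=\bigl|\Log x-\log|x|\bigr|=|\arg x|$ together with $\arg\psi(|x|)=0$, the $\D$-Lipschitz inequality applied to $x$ and $|x|$ gives $|\arg\psi(x)|\le d(\psi(x),\psi(|x|))\le\rho(|x|)\,|\arg x|$, the first estimate.

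For the second estimate the real comparison point must be chosen more carefully. Since $\phi$ is continuous, expansive (hence injective) and angle-positive, it maps the positive real axis into itself and is strictly monotone there; since $\phi$ is surjective onto the $(-1)$-domain $\frV$, which contains all large real points, this restriction is increasing with unbounded image, and a routine expansiveness argument shows $|\phi(x)|\to\infty$ as $|x|\to\infty$ in $\frU$. Hence, for $|x|$ large, there is a real $s>0$ such that $\phi(s)$ is the point of the positive real axis of modulus $|\phi(x)|$; then $\Log\phi(s)=\log|\phi(x)|=\re(\Log\phi(x))$, so $d(\phi(x),\phi(s))=|\arg\phi(x)|$ exactly, whereas $d(x,s)=\bigl|(\log|x|-\log s)+i\arg x\bigr|\ge|\arg x|$. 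Expansiveness of $\phi$ applied to $x$ and $s$ now yields $|\arg\phi(x)|=d(\phi(x),\phi(s))\ge a\,d(x,s)\ge a\,|\arg x|$. Choosing $r\ge R$ large enough that also $\rho(t)\le a/2$ for $t>r$ and that the ``$|x|$ large'' provisos above hold on $\frU\cap H_\LL(r)$, we get, for $x\in\frU\cap H_\LL(r)$, that $|\arg\psi(x)|\le\rho(|x|)\,|\arg x|\le\tfrac{\rho(|x|)}{a}\,|\arg\phi(x)|\le\tfrac12|\arg\phi(x)|$, i.e.\ $(\ast)$, which finishes the proof.

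The step I expect to be the real obstacle is the lower bound $|\arg\phi(x)|\ge a\,|\arg x|$: comparing $x$ to the real point $|x|$ of the same modulus fails, because the modulus of $\phi$ may vary with the argument, so the distance $d(\phi(x),\phi(|x|))$ need not be carried by its imaginary part. The fix is to calibrate the real comparison point $s$ by the condition that $\phi(s)$ have exactly modulus $|\phi(x)|$; this forces one first to check that $\phi$ maps the positive reals onto an unbounded interval and that $|\phi(x)|\to\infty$, which is precisely where surjectivity onto a $(-1)$-domain and expansiveness enter. Everything else is the triangle inequality and the choice of $r$.
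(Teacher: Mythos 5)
Your reduction of the lemma to the single pointwise estimate $|\arg\psi(x)|\le\frac12|\arg\phi(x)|$ is sound, and your bound for $\psi$ (comparing $x$ with the real point of modulus $|x|$, which lies in $\frU$ for $|x|$ large, and using the $\D$-Lipschitz inequality together with $d(x,|x|)=|\arg x|$) is correct. The gap is in the lower bound $|\arg\phi(x)|\ge a\,|\arg x|$: your calibrated comparison point $s$ exists only if the positive real point of modulus $|\phi(x)|$ lies in $\phi(\frU\cap\RR)=\frV\cap\RR$, and since $\frV$ is a $(-1)$-domain only in the germ-at-$\infty$ sense, this forces $|\phi(x)|$ to exceed a fixed threshold for all large $|x|$, i.e.\ essentially $|\phi(x)|\to\infty$. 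You assert this follows from "a routine expansiveness argument", but it does not: expansiveness only gives $d(\phi(x),\phi(y_0))=|\Log\phi(x)-\Log\phi(y_0)|\to\infty$ for a fixed real $y_0$, and this is compatible with $|\phi(x)|$ staying bounded while $|\arg\phi(x)|\to\infty$. Nothing among the stated hypotheses (continuous, surjective, expansive, angle-positive, $\frV$ squeezed between real domains only near $\infty$) rules out $\phi$ sending a far-out part of $\frU$ into an angle-unbounded piece of $\frV$ of bounded modulus. The growth bound that would rescue the step, namely clause (1c) of Definition \ref{weakly_extending} giving $h_1(|x|)\le|\phi(x)|$ with $h_1\in\I$, holds for the map $\f$ in the intended application (Lemma \ref{mult_by_unit}) but is not a hypothesis of the present lemma, so your argument does not prove the statement as given.

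For comparison, the paper's proof needs no lower bound on $|\phi|$. Expansiveness of $\phi\psi$ is obtained from Lemma \ref{expansive_unit} exactly as you do; angle-positivity is then obtained softly: $\phi\psi$ is injective, continuous and maps the positive reals onto a real half-line, hence is either angle-positive or angle-negative, and since $\frV=\phi(\frU)$ is a $(-1)$-domain while $|\arg\psi|$ is bounded (unit at $\infty$), there are arbitrarily large $x$ with $\arg x>0$ and $\arg(\phi\psi)(x)>0$, which forces angle-positivity. Part (2) is then deduced by applying part (1) with $\psi^2$ and $1/\psi$ in place of $\psi$ (both are again $\D$-Lipschitz units at $\infty$ preserving the reals), yielding $\arg\psi(x)\ge-\arg\phi(x)/2$ and $\arg\psi(x)\le\arg\phi(x)/2$ for $\arg x>0$ --- precisely your inequality $(\ast)$, obtained with no metric calibration. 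If you want to keep your quantitative route, you must either add the hypothesis that $|\phi(x)|\to\infty$ (or a bound as in Definition \ref{weakly_extending}(1c)), or replace the calibration step by an argument of the paper's type.
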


\begin{proof}
	(1) It follows from Lemma \ref{expansive_unit} that, after replacing $\frU$ by $\frU \cap H_\LL(r)$ for some $r>0$ if necessary, the map $\phi\psi$ is expansive; in particular, $\phi\psi$ is injective.
	Moreover, since $\psi$ is a unit at $\infty$, there exists $A>0$ such that, for all $x \in \frU$ with $|x|$ sufficiently large, we have $|\arg\psi(x)| \le A$.  Since $\frV = \phi(\frU)$ is a $(-1)$-domain, there exist arbitrarily large $x \in \frU$ such that $\arg x > 0$ and $\arg\phi(x) > A$; in particular, $\arg(\phi\psi(x)) > 0$ for arbitrarily large $x \in \frU$.  On the other hand, since $\phi\psi\rest{\RR}$ is continuous and unbounded, there exist $r,s>0$ such that $\phi\psi((r,\infty)) = (s,\infty)$.  Since $\phi\psi$ is injective, it follows that $\phi\psi\rest{\frU \cap H_{\LL}(r)}$ is angle-positive as well.
	
	(2) Note that $\psi^2$ is also a unit at $\infty$.  Moreover, since $\bar{\psi^2} = 2\bar\psi$, the map $\psi^2$ is also $\D$-Lipschitz.  Therefore, applying part (1) with $\psi^2$ in place of $\psi$ we get, for sufficiently large $x \in \frU$ with $\arg x > 0$, that $$0 < \arg(\phi\psi^2(x)) = \arg\phi(x) + 2\arg\psi(x);$$ that is, 
	\begin{equation}
	\label{unit_lower_bound}
	\arg\psi(x) \ge -\arg\phi(x)/2,
	\end{equation}
	for sufficiently large $x \in \frU$ with $\arg x > 0$.  Similarly, since $1/\psi$ is also a unit at $\infty$, and since $\bar{1/\psi} = -\bar\psi$ we obtain, from \eqref{unit_lower_bound} with $1/\psi$ in place of $\psi$, that
	\begin{equation}
	\label{unit_upper_bound}
	\arg\psi(x) \le \arg\phi(x)/2, 
	\end{equation}
	for sufficiently large $x \in \frU$ with $\arg x > 0$.  The two inequalities \eqref{unit_lower_bound} and \eqref{unit_upper_bound} together prove part (2) for $\arg x > 0$; the case $\arg x < 0$ follows by a symmetric argument.
\end{proof}

%\newpage
\section{Units in $\E$ are $\D$-Lipschitz}
\label{extendable}

The aim of this section is to show---see Proposition \ref{u_small} below---that units $u \in \E$ have $\D$-Lipschitz analytic continuations that are units at $\infty$, provided that all large principle monomials $m$ of $u$ are $(\mu,\mu)$-extendable, where $\mu = \level(m)$.  (This last assumption will be obtained from our inductive hypothesis in the proof of Proposition \ref{exp_ext_prop}.)

The first step towards establishing Proposition \ref{u_small} involves first describing the complex-valued analytic continuations of small germs in $\E$, see Proposition \ref{derivative_of_small} below.

We define $\arg:\CC \into (-\pi,\pi]$ to be the standard argument in $\CC \setminus (-\infty,0]$ and equal to $\pi$ on $(-\infty,0]$.

\begin{nrmk}
	\label{addition_rmk}
	%\begin{enumerate}
	%\item 
	For $z,w \in H(0)$, we have $$|\arg(z+w)| \le \max\{|\arg z|, |\arg w|\}.$$  %Therefore, for $x,y \in S_\LL(\pi/2)$, we have $$|\arg(x\kreuz y)| \le \max\{|\arg x|, |\arg y|\}.$$
	%\item Whenever $f$ and $g$ are germs of real functions with analytic continuations $\f,\g:U \into S_\LL(\pi/2)$, where $U \subseteq \LL$, the germ $f+g$ has analytic continuation $\f\kreuz\g:U \into S_\LL(\pi/2)$.
	%\end{enumerate}
\end{nrmk}

\begin{lemma}
\label{unit_arg_lemma}
Let $u \in \Pc{R}{X_1, \dots, X_k}$ be such that $u(0) = 1$.  Then there exists $B>0$ such that $$|\arg u(z_1, \dots, z_k)| \le B \cdot \max\{|\arg z_1|, \dots, |\arg z_k|\}$$
for sufficiently small $z = (z_1, \dots, z_k) \in \CC^k$.
\end{lemma}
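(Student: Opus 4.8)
The plan is to reduce the bound on $\arg u(z)$ to a bound on $\im(u(z)-1)$, and then bound that imaginary part termwise against $\max_i|\arg z_i|$ using the power series expansion of $u$ with its real coefficients.

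First I would write $u(z) = 1 + \sum_{\alpha \in \NN^k \setminus \{0\}} c_\alpha z^\alpha$ with $c_\alpha \in \RR$, absolutely convergent on some polydisc $\{z : |z_i| < \rho \text{ for all } i\}$. The series $\sum_{\alpha} |c_\alpha|\,|\alpha|\,X^\alpha$ (with $|\alpha| := \alpha_1 + \cdots + \alpha_k$) is, up to passing to absolute values of coefficients, the series $\sum_j X_j \partial_{X_j} u$, so it has the same polyradius of convergence on the open polydisc; hence I can fix $\delta \in (0,\rho)$ and $M > 0$ with $\sum_{\alpha \ne 0} |c_\alpha|\,|\alpha|\,\delta^{|\alpha|} \le M$ and $|u(z)-1| < \tfrac12$ whenever $|z_i| \le \delta$ for all $i$. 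On that polydisc $u(z) \in \CC \setminus (-\infty,0]$, so $\arg u(z)$ is defined, and since $\re u(z) > \tfrac12$ and $|\arctan t| \le |t|$ we get $|\arg u(z)| \le 2|\im u(z)| = 2|\im(u(z)-1)|$.

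Next I would estimate $\im(z^\alpha)$. Writing $z_i = |z_i|e^{i\arg z_i}$ gives $z^\alpha = \bigl(\prod_i |z_i|^{\alpha_i}\bigr)\exp\bigl(i\sum_i \alpha_i \arg z_i\bigr)$, so $|\im(z^\alpha)| = \bigl(\prod_i |z_i|^{\alpha_i}\bigr)\,\bigl|\sin(\sum_i \alpha_i \arg z_i)\bigr| \le \bigl(\prod_i |z_i|^{\alpha_i}\bigr)\,|\alpha|\,\max_i|\arg z_i|$ by $|\sin t|\le|t|$ (valid for all real $t$, so this is unaffected if some $z_i$ happens to lie on the negative real axis). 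Since $c_\alpha$ is real, $\im(c_\alpha z^\alpha) = c_\alpha\,\im(z^\alpha)$, and summing with the triangle inequality yields, for $|z_i|\le\delta$,
$$|\im(u(z)-1)| \;\le\; \Bigl(\sum_{\alpha \ne 0} |c_\alpha|\,|\alpha| \prod_i |z_i|^{\alpha_i}\Bigr)\max_i|\arg z_i| \;\le\; M\,\max_i|\arg z_i|.$$
Combining with the first paragraph gives $|\arg u(z)| \le 2M\,\max_i|\arg z_i|$ on the polydisc of radius $\delta$, so $B := 2M$ works.

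There is no substantial obstacle. The only points needing a moment's care are the convergence of the auxiliary series $\sum|c_\alpha|\,|\alpha|\,X^\alpha$ on a slightly smaller polydisc (routine, since $nq^n$ is bounded for $q<1$) and the degenerate case where some $z_i$ is a nonpositive real: there $\max_i|\arg z_i| = \pi$ and the inequality is immediate for $B \ge \tfrac12$, and in any event the estimate above uses only $|\sin t|\le|t|$ and so goes through verbatim.
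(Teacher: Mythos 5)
Your proof is correct and follows essentially the same strategy as the paper's: use the real coefficients and $u(0)=1$ to bound the imaginary part of $u(z)$ linearly in the arguments of the $z_i$, then convert this into a bound on $\arg u(z)$ because $u(z)$ stays near $1$. The only difference is mechanical: the paper factors the imaginary-part series as $I(X,Y)=Y_1I_1+\cdots+Y_kI_k$ to get $|\im u(z)|\le A_1|\im z_1|+\cdots+A_k|\im z_k|$ and then passes through $\im z=|z|\sin(\arg z)$ and $|\sin t|\le|t|$, whereas you estimate each monomial $z^\alpha$ termwise in polar form against the majorant $\sum_{\alpha\ne 0}|c_\alpha|\,|\alpha|\,\delta^{|\alpha|}$ and use the $\arctan$ bound $|\arg w|\le 2|\im w|$ for $\re w>\tfrac12$ --- both routes are sound, and yours makes explicit the final passage from a sine bound back to the argument that the paper leaves implicit.
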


\begin{proof}
Writing $z_i = x_i + i y_i$, $x = (x_1, \dots, x_k)$ and $y = (y_1, \dots, y_k)$, and since $u$ has real coefficients, there are $R,I \in \Pc{R}{X,Y}$ such that $u(z) = R(x,y) + iI(x,y)$.  Since the restriction of $u$ to $\RR^k$ is real-valued, we have $I(X,0) = 0$, so there are $I_i \in \Pc{R}{X,Y}$ such that $I(X,Y) = Y_1I_1(X,Y) + \cdots + Y_kI_k(X,Y)$.  It follows that there are $A_1, \dots, A_k>0$ such that
\begin{equation*}
|\im u(z)| \le A_1|\im z_1| + \cdots + A_k|\im z_k|
\end{equation*}
for sufficiently small $z \in \CC^k$.  On the other hand, for $z \in \CC$ we have $\im z = |z| \cdot \sin(\arg z)$.  Since $u(0) = 1$, it follows for sufficiently small $z \in \CC^k$ that
\begin{align*}
|\sin(\arg u(z))| &= \frac{|\im u(z)|}{|u(z)|} \\ &\le 2(A_1|\im z_1| + \cdots + A_k|\im z_k|) \\ &\le 2(A_1 |\sin(\arg z_1)| + \cdots + A_k |\sin(\arg z_k)|).
\end{align*}
Since $|\sin t| \le |t|$ for $t \in \RR$, the lemma follows.
\end{proof}

\begin{rmk}
	Let $G \in \Pc{R}{X_1, \dots,X_k}$ be such that $G(0) = 0$, and let $a>0$ be sufficiently small and $g$ be the complex-valued holomorphic map defined by $G$ on $B(0,a)^k$.  Then $g\circ\pi_k$ is a complex-valued holomorphic map on $\pi_k^{-1}\left((B(0,a)\setminus\{0\})^k\right)$, where $\pi_k:\LL^k \into (\CC\setminus\{0\})^k$ is defined by $\pi_k(x_1, \dots, x_k):= (\pi(x_1), \dots, \pi(x_k))$.  Moreover, there is a $K>0$ such that $$\left|(g \circ \pi_k)(x_1, \dots, x_k)\right| \le K \max\{|x_1|, \dots, |x_k|\}$$ for sufficiently small $x_1, \dots, x_k \in \LL$:  this follows, because the assumption $G(0) = 0$ implies that there exist $l \in \NN$, nonzero $\alpha_1, \dots \alpha_l \in \NN^k$ and $G_1, \dots, G_k \in \Pc{R}{X_1, \dots, X_k}$ such that $G = X^{\alpha_1} G_1 + \cdots + X^{\alpha_l} G_l$.
\end{rmk}

\begin{prop}
\label{derivative_of_small}
Let $G \in \Pc{R}{X_1, \dots,X_k}$ be such that $G(0) = 0$, let $k,n \in \NN$ and $m_1, \dots, m_k \in \M_n \setminus \M_{n-1}$ be large, and set $s:= G(1/m_1, \dots, 1/m_k)$.  Assume that each $m_i$ is  $(n,n)$-extendable.  Then:
\begin{enumerate}
\item if $n=0$, there exist $R,B>0$ and an analytic continuation $\ss: \frU:= H_\LL(R) \into \CC$ of $s$ such that $|\ss(x)| \le B/|x|$ for $x \in H_\LL(R)$;
\item if $n \ge 1$, there exist an $(n-1)$-domain $\frU \subseteq H_\LL(1)$, an analytic continuation $\ss:\frU \into \CC$ of $s$ and an $e \in \I$ of level $n$ such that $|\ss(x)| \le 1/e(|x|)$ for $x \in \frU$.
\end{enumerate}
%Moreover, if $\kappa>n$ and $V \subseteq U$ is a $\kappa$-domain, there exists $h \in \H_{\kappa}$ such that $|\arg\ss(x)| \le h(|x|)$, for all sufficiently large $x \in V$.
\end{prop}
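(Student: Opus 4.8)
The plan is to build $\ss$ by composing $G$ with the complex-valued continuations of $1/m_1,\dots,1/m_k$ supplied by the $(n,n)$-extensions of the $m_i$, and then to read the size estimate off the hypothesis $G(0)=0$. I would first set up the continuations of the $1/m_i$ on a common domain. If $n=0$, then $m_i\in\P\M_0$ is large, so $m_i=x^{k_i}$ for an integer $k_i\ge 1$; take $\m_i:=\p_{k_i}:\LL\into\LL$, so that $x\mapsto\pi(\m_i(x))^{-1}$ is a holomorphic continuation of $1/m_i$ on all of $\LL$ with $|\pi(\m_i(x))^{-1}|=|x|^{-k_i}\le|x|^{-1}$ for $|x|\ge 1$. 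If $n\ge 1$, I would invoke the hypothesis: each $m_i$ is $(n,n)$-extendable, so by Definition~\ref{weakly_extending} there is an $(n,n)$-extension $\m_i:\frU_i\into\frV_i$ with $\frU_i$ an $(n-1)$-domain, together with $h_i\in\I$ of level $n$ such that $h_i(|x|)\le|\m_i(x)|$ for all sufficiently large $x\in\frU_i$. By Definition~\ref{multi-quadratic_domain} each $\frU_i$ contains some $\frU_{g_i}$ with $g_i\in\Hang_{n-1}$; since any two Hardy germs are eventually comparable, $g:=\min_i g_i$ is eventually equal to one of the $g_i$, hence lies in $\Hang_{n-1}$, and $\frU_g\subseteq\bigcap_i\frU_i$ is an $(n-1)$-domain on which every $\m_i$ is defined. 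Likewise $h:=\min_i h_i\in\I$ has level $n$, and $1/|\m_i(x)|\le 1/h(|x|)$ for all large $x\in\frU_g$.

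Next I would assemble $\ss$. Since $G(0)=0$, the remark preceding the proposition provides $a',K>0$ such that $G$ converges absolutely on the polydisc $\set{z\in\CC^k:\max_i|z_i|<a'}$ and $|G(z)|\le K\max_i|z_i|$ there. Since $h\in\I$ (respectively $x\mapsto|x|$ is infinitely increasing when $n=0$), I can restrict to large enough modulus to get an $(n-1)$-domain $\frU\subseteq\frU_g\cap H_\LL(1)$ with $1/h(|x|)<a'$ for all $x\in\frU$ (respectively some $R\ge 1$ with $R^{-1}<a'$, in which case $\frU:=H_\LL(R)$). Then $x\mapsto(\pi(\m_1(x))^{-1},\dots,\pi(\m_k(x))^{-1})$ is holomorphic from $\frU$ into that polydisc, so
$$\ss(x):=G\bigl(\pi(\m_1(x))^{-1},\dots,\pi(\m_k(x))^{-1}\bigr)$$
defines a holomorphic $\ss:\frU\into\CC$; as each $\m_i$ restricts to $m_i$ on the positive real ray, $\ss$ restricts there to $s=G(1/m_1,\dots,1/m_k)$, so $\ss$ is an analytic continuation of $s$.

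Finally I would derive the estimates. For $x\in\frU$ one has $|\ss(x)|\le K\max_i|\pi(\m_i(x))^{-1}|$. When $n=0$ this is $\le K\max_i|x|^{-k_i}\le K/|x|$, so part~(1) holds with $B:=K$. When $n\ge 1$ it is $\le K/h(|x|)$; setting $e:=h/\max\{1,K\}$, which lies in $\I$ and has level $n$ by Facts~\ref{level_facts}(5), gives $1/e(|x|)=\max\{1,K\}/h(|x|)\ge|\ss(x)|$, which is part~(2). The argument is largely bookkeeping once the $(n,n)$-extensions are in hand; the one point that really needs care is passing to a single $(n-1)$-domain $\frU$ on which all $k$ continuations are simultaneously defined and land inside the polydisc of convergence of $G$, and this is exactly what the finite-minimum stability of $\Hang_{n-1}$ (and of level-$n$ germs in $\I$) together with the shrinking of the modulus accomplish.
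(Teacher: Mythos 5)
Your proposal is correct and follows essentially the same route as the paper: use the $(n,n)$-extensions of the $m_i$ (explicit power maps when $n=0$), compose the lifted power series $G\circ\pi_k$ with the reciprocals $1/\m_i$, and derive the estimate from $G(0)=0$ together with the lower bounds $h_i(|x|)\le|\m_i(x)|$ from clause (1c) of the definition of $(n,n)$-map. The only difference is that you spell out the bookkeeping the paper leaves implicit (passing to a common $(n-1)$-domain via comparability of Hardy germs, and absorbing the constant $K$ into $e$), which is fine.
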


\begin{rmk}
	By Lemma \ref{f_going_to_0}, every small germ in $\P\E_n$ is of the form $s$ as in the previous proposition.
\end{rmk}

\begin{proof}
(1) Assume $n=0$; without loss of generality, we may assume that $k=1$ and $m_1 = p_1$.  So part (1) follows in this case from the previous remark.

(2) Assume $n \ge 1$, and choose an $(n-1)$-domain $\frU \subseteq \LL$ and $(n,n)$-extensions $\m_i:\frU \into \frV_i$ of $m_i$, for $i=1, \dots, k$.  Let $e \in \I$ be of level $n$ such that $|\m_i(x)| \ge e(|x|)$ for all sufficiently large $x \in \frU$.  By the previous remark, there exists $K>0$ such that $|(g \circ \pi)(x_1, \dots, x_k)| \le K\max\{|x_1|, \dots, |x_k|\}$ for sufficiently small $x_1, \dots, x_k \in \LL$.  Hence $\ss := (g \circ \pi) (1/\m_1, \dots, 1/\m_k)$ is an analytic continuation of $s$ on $\frU$ such that $|\ss(x)| \le K/e(|x|)$ for sufficiently large $x \in \frU$. 
\end{proof}

Next, we fix $\eta \in \NN$ and let $u \in \E_\eta$ be such that $\lim_{x \to +\infty} u(x) = 1$; it follows that $\log \circ\, u$ belongs to $\E_\eta$ and is small.  Hence, by Lemma \ref{1.5}, there are small $u_n, v_n \in \P\E_n$, for $n=0, \dots, \eta$, such that $$u = 1+ u_0 + \cdots + u_\eta \quad\text{and}\quad \log \circ\, u = v_0 + \cdots + v_\eta.$$  By Lemma \ref{f_going_to_0}, for each $n$, there are large $m_{n,1}, \dots, m_{n,k_n} \in \M_n \setminus \M_{n-1}$ and $G_n, H_n \in \Pc{R}{X_1, \dots, X_{k_n}}$ such that $G_n(0) = H_n(0) = 0$ and $u_n = G_n(1/m_{n,1}, \dots, 1/m_{n,k_n})$ and $v_n = H_n(1/m_{n,1}, \dots, 1/m_{n,k_n})$.

Below, we consider the assumption
\begin{itemize}
\item[$(\ast)_\eta$] every large $m \in \M_n \setminus \M_{n-1}$ is $(n,n)$-extendable, for $n=0, \dots, \eta$.
\end{itemize}

\begin{cor}
\label{unit_lemma}
Assume that $(\ast)_\eta$ holds.  Then there exists an $(\eta-1)$-domain $\frU$ and an analytic continuation $\u: \frU \into \LL$ of $u$ that is a unit at $\infty$.
\end{cor}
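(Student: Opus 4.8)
The plan is to obtain $\u$ by exponentiating a complex‑valued analytic continuation of $\log\circ\, u$. Recall from the discussion preceding the corollary that $\log\circ\, u = v_0 + \cdots + v_\eta$ with each $v_n \in \P\E_n$ small, and $v_n = H_n(1/m_{n,1}, \dots, 1/m_{n,k_n})$ with $H_n \in \Pc{R}{X_1, \dots, X_{k_n}}$, $H_n(0) = 0$, and large $m_{n,j} \in \M_n \setminus \M_{n-1}$. Since $n \le \eta$, assumption $(\ast)_\eta$ guarantees that each $m_{n,j}$ is $(n,n)$-extendable, so Proposition \ref{derivative_of_small} applies to every $v_n$ (using the remark after that proposition, which says each small germ in $\P\E_n$ has the stated form). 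It yields $R_0, B_0 > 0$ and an analytic continuation $\v_0 : H_\LL(R_0) \into \CC$ of $v_0$ with $|\v_0(x)| \le B_0/|x|$, and, for $1 \le n \le \eta$, an $(n-1)$-domain $\frV_n \subseteq H_\LL(1)$, an analytic continuation $\v_n : \frV_n \into \CC$ of $v_n$, and a germ $e_n \in \I$ of level $n$ with $|\v_n(x)| \le 1/e_n(|x|)$ on $\frV_n$; when $v_n = 0$ we take $\v_n := 0$.

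Next I would bring all these continuations over a common domain. For $1 \le n \le \eta$ the set $\frV_n$ is an $(n-1)$-domain with $n - 1 \le \eta - 1$, while $\frV_0 = H_\LL(R_0)$ contains every real domain; since the angular level is decreasing (Proposition \ref{alevel_cor}) and $\Hang_{\eta-1} \neq \emptyset$ (Examples \ref{al_expls}), it follows that there is an $(\eta-1)$-domain $\frU$ with $\frU \subseteq \frV_n$, as germs at $\infty$, for every $n = 0, \dots, \eta$ (when $\eta \ge 1$ one may simply take $\frU = \frV_\eta$). On $\frU$ the map $$w := \v_0 + \cdots + \v_\eta : \frU \into \CC$$ is holomorphic, and its restriction to the positive real axis near $\infty$ equals $v_0 + \cdots + v_\eta = \log\circ\, u$.

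Finally I would set $$\u := \Exp \circ w : \frU \into \LL.$$ This map is holomorphic; since $w$ is real‑valued on the positive real axis near $\infty$, where it equals $\log\circ\, u$, and $\Exp$ restricts there to $t \mapsto (e^t,0)$, we get $\u = u$ on that half‑line, so $\u$ is an analytic continuation of $u$. Moreover $\Log\circ\,\u = w$, hence for all $x \in \frU$ $$d(\u(x),1) = |\Log\u(x) - \Log 1| = |w(x)| \le \frac{B_0}{|x|} + \sum_{n=1}^{\eta} \frac{1}{e_n(|x|)},$$ which tends to $0$ as $|x| \to \infty$ because each $e_n$ is infinitely increasing. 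Thus $\u$ is a unit at $\infty$, which completes the proof.

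All of the analytic substance — the existence of the continuations $\v_n$ of the small germs $v_n$ together with their decay estimates — is already packaged in Proposition \ref{derivative_of_small}, whose hypotheses are exactly what $(\ast)_\eta$ provides; so I do not expect a genuine obstacle. The only point needing a little care is the middle step, collecting the finitely many $\v_n$ over a single $(\eta-1)$-domain, and that is routine given the monotonicity of the angular level.
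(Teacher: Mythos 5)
Your argument is correct, and it reaches the conclusion by a genuinely different (though parallel) route.  The paper's own proof works with the additive decomposition of $u$ itself: it applies Proposition \ref{derivative_of_small} to the summands $u_n$ of $u = 1 + u_0 + \cdots + u_\eta$, sums the resulting continuations to get $\uu := 1 + \uu_0 + \cdots + \uu_\eta$, notes that $\uu(z) \to 1$, and sets $\u := \pi_0^{-1}\circ\uu$ on $\frU_\eta \cap H_\LL(a)$ for large $a$.  You instead continue the summands $v_n$ of $\log\circ u = v_0 + \cdots + v_\eta$ and set $\u := \Exp\circ\left(\v_0 + \cdots + \v_\eta\right)$.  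Both proofs rest on exactly the same inputs (the two decompositions fixed before the corollary, Lemma \ref{f_going_to_0}, and Proposition \ref{derivative_of_small} under $(\ast)_\eta$).  Your version makes the unit-at-$\infty$ property completely transparent, since $d(\u(x),1) = |\Log\u(x)| = |\v_0(x)+\cdots+\v_\eta(x)|$, with no need to invert $\pi_0$ near $1$, and it incidentally produces at once the continuations $\v_n$ with their decay bounds, which the paper re-derives anyway in the proof of Lemma \ref{u_Cauchy}; the paper's version stays with the decomposition of $u$ itself and avoids composing with $\Exp$.

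One small repair: the parenthetical suggestion that for $\eta \ge 1$ one may ``simply take $\frU = \frV_\eta$'' is not justified, since the angular levels alone do not give $\frV_\eta \subseteq \frV_n$ for $n < \eta$.  Your main assertion is nevertheless correct: choose $f_n \in \Hang_{n-1}$ with $\frU_{f_n} \subseteq \frV_n$ for $1 \le n \le \eta$, fix any $h \in \Hang_{\eta-1}$, and let $g$ be the minimum of $h, f_1, \dots, f_\eta$.  As a finite minimum in the Hardy field $\H$, the germ $g$ eventually coincides with one of these germs, each of angular level at most $\eta-1$, while $g \le h$ and the monotonicity of $\alevel$ give $\alevel(g) \ge \eta-1$; hence $\alevel(g) = \eta-1$ and $\frU := \frU_g \cap H_\LL(R_0)$ works.  (The paper's own proof is equally brisk at this point, summing the $\uu_n$ over $\frU_\eta$ without comment, so this is a gloss shared with the original rather than a gap in your argument.)
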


\begin{proof}
By Proposition \ref{derivative_of_small}, for $n= 0, \dots, \eta$, there exist an $(n-1)$-domain $\frU_n$ and an analytic continuation $\uu_n:\frU_n \into \CC$ of $u_n$ such that $|\uu_n(z)| \le 1/e_n(|z|) \le 1/|z|$, where $e_n \in \I$ is of level $n$.  Define $\uu:\frU_\eta \into \CC$ by $$\uu(z):= 1 + \uu_0(z) + \cdots + \uu_\eta(z);$$  then $\uu(z) \to 1$ as $|z| \to \infty$ in
$\frU_\eta$, so we define $\u:= \pi_0^{-1} \circ \uu$ on $\frU_\eta \cap H_\LL(a)$, for some sufficiently large $a>0$.
\end{proof}

We now fix an analytic continuation $\u:\frU \into \LL$ that is a unit at $\infty$, as obtained from Corollary \ref{unit_lemma}.  It remains to show that $\u$ is $\D$-Lipschitz.

\begin{lemma}
\label{u_Cauchy}
Assume that $(\ast)_\eta$ holds.  Then, for sufficiently small $h \in \Hang_{\eta-1}$, there exists $\sigma \in \D$ such that
\begin{equation}
\label{pre-lipschitz} d(\u(x), \u(y)) \le \sigma(|x|) \cdot d(x,y)
\end{equation}
for $x \in \frU_{h/2}$ and $y \in B\left(x,\min\left\{1,h(|x|)/3\right\}\right)$.
\end{lemma}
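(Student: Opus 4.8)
The plan is to extract from the proof of Corollary~\ref{unit_lemma} the complex‑valued pieces $\uu_0,\dots,\uu_\eta$ of $\u$ and apply Cauchy's integral estimate to each $\uu_n$ \emph{on its own domain} $\frU_n$; the point is that $\frU_n$ is an $(n-1)$-domain, hence for $n<\eta$ genuinely wider than the $(\eta-1)$-domain $\frU=\frU_\eta$ on which $\u$ lives, and this extra room keeps the resulting Lipschitz constant inside $\D$ (this lemma being the local ``pre‑Lipschitz'' input from which the full $\D$‑Lipschitz estimate for $\u$ is later assembled). So recall from that proof that, under $(\ast)_\eta$, there are $(n-1)$-domains $\frU_n$, complex analytic continuations $\uu_n:\frU_n\into\CC$ of the small germs $u_n\in\P\E_n^0$, and $e_n\in\I$ of level $n$, for $n=0,\dots,\eta$, with $|\uu_n(x)|\le 1/e_n(|x|)$ on $\frU_n$, $\frU:=\frU_\eta\subseteq\frU_n$ for all $n$, and $\u=\pi_0^{-1}\circ\uu$ with $\uu=1+\uu_0+\dots+\uu_\eta$ on $\frU\cap H_\LL(a)$ for suitable $a$. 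As $\frU\subseteq\frU_n$ gives $f_\frU\le f_{\frU_n}$, I impose on $h$ only that $\frU_h\subseteq\frU\cap H_\LL(a)$ and $h\le\tfrac1{10}f_\frU$, so $h$ stays below a tenth of every width. A computation in the $\Log$‑chart as in Remarks~\ref{sr-domain_shrink} — using that a bounded $\log$-$\exp$-analytic germ, here $f_{\frU_n}\circ\exp$ when $f_{\frU_n}$ is bounded, has derivative tending to $0$ — then yields a constant $c\in(0,1)$ with $B\bigl(x,\,c\min\{1,f_{\frU_n}(|x|)\}\bigr)\subseteq\frU_n$ for all $n$ and all $x\in\frU_{h/2}$ with $|x|$ large.

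Now fix $x\in\frU_{h/2}$ and $y\in B(x,\min\{1,h(|x|)/3\})$ with $|x|$ large. By Remarks~\ref{sr-domain_shrink} the segment $[\Log x,\Log y]$ lies in $\Log(\frU_h)\subseteq\Log(\frU_n)$; every point $\zeta$ of it has $|\zeta-\Log x|\le h(|x|)/3<\tfrac13 f_{\frU_n}(|x|)$, so with $r_n:=\tfrac14\min\{1,f_{\frU_n}(|x|)\}$ one still has $B(\zeta,r_n)\subseteq\frU_n$, on which all moduli exceed $|x|/2$. The Cauchy estimate for $\uu_n\circ\Exp$ therefore gives $|(\uu_n\circ\Exp)'(\zeta)|\le 2/\bigl(r_n\,e_n(|x|/2)\bigr)$, and integrating along the segment,
\[
 |\uu_n(x)-\uu_n(y)|\ \le\ d(x,y)\cdot\frac{2}{r_n\,e_n(|x|/2)}.
\]
Summing over $n$, and noting that $\bar\u$ is the principal logarithm of $\uu\circ\Exp$, which is $2$‑Lipschitz near $1$ while $\uu(x),\uu(y)\to 1$, we obtain $d(\u(x),\u(y))\le\sigma(|x|)\,d(x,y)$ with $\sigma\asymp\sum_{n=0}^{\eta}1/\bigl(\min\{1,f_{\frU_n}\}\cdot(e_n\circ m_{1/2})\bigr)$, a germ not depending on $h$.

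It remains to see that $\sigma\in\D$, which is the one step with real content; it suffices that each summand be a positive germ tending to $0$. If $f_{\frU_n}$ is bounded away from $0$ — automatic for $n=0$, since the width of a $(-1)$-domain lies in $\I$ — the summand is $\preceq 1/(e_n\circ m_{1/2})\in\D$. If $f_{\frU_n}\to 0$, then $n\ge1$ and $f_{\frU_n}$ is squeezed between two germs of $\Hang_{n-1}$ tending to $0$; by Proposition~\ref{alevel_cor}(2) these have level at most $n-2$, so $1/f_{\frU_n}\in\I$ has level at most $n-2$, strictly below $n=\level(e_n\circ m_{1/2})$. Hence $e_n\circ m_{1/2}\succ 1/f_{\frU_n}$ by Facts~\ref{level_facts}(2), so $f_{\frU_n}\cdot(e_n\circ m_{1/2})\in\I$ and the summand is in $\D$; thus $\sigma\in\D$. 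The main obstacle is exactly this: the naive estimate, using only the room $\asymp h(|x|)$ available inside $\frU_h$, yields $\sigma\asymp 1/(h\cdot e_n)$, which is not small when $h$ decays quickly, so one must estimate each $\uu_n$ on its wider domain $\frU_n$ and exploit the gap between the level $n-2$ of $1/f_{\frU_n}$ and the level $n$ of $e_n$.
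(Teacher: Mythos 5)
Your proof is correct and follows essentially the same route as the paper's: decompose the unit into its exponential-height pieces, use Proposition \ref{derivative_of_small} to bound the $n$-th piece by $1/e_n$ on an $(n-1)$-domain, apply Cauchy estimates whose radius is governed by the width of that wider domain rather than by $h$, and close with the level gap between that width and $e_n$ to see the Lipschitz factor lies in $\D$. The only differences are cosmetic: the paper decomposes $\log\circ u=v_0+\cdots+v_\eta$ so that $\bar\u$ is exactly the sum of the pieces (avoiding your extra, harmless step that the principal logarithm is Lipschitz near $1$) and estimates increments via the Taylor series rather than by integrating a derivative bound along the segment; also, strictly speaking the $(n-1)$-domains of Proposition \ref{derivative_of_small} need not be real domains, so your $f_{\frU_n}$ should be replaced by the width $g_1\in\Hang_{n-1}$ of an inner real domain (only this lower width, which does tend to $0$ in your Case B, is needed for the level argument), exactly as the paper does by working with $\frU_{h_n}$.
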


\begin{proof}
We work in the $\Log$-chart and show that
\begin{itemize}
	\item[$(\ast)$] for sufficiently small $h \in \Hang_{\eta-1}$, there exists $\rho \in \D$ such that $|\bar\u(z+w) - \bar\u(z)| \le \rho(\re z) \cdot |w|$ for $z \in \Log(\frU_{h/2})$ and $w \in B\left(z,\min\left\{1,h(|x|)/3\right\}\right)$;
\end{itemize} 
taking $\sigma:= \rho \circ \log$ then finishes the proof of the lemma.
We distinguish two cases: \medskip

\noindent\textbf{Case 1:} $\eta = 0$; note that $\Hang_{\eta-1} \subset \I$, $u = 1+u_0$ and $\log \circ\, u = v_0$ in this case.
By Proposition \ref{derivative_of_small}(1), there exist $R,B>0$ and an analytic continuation $\vv_0: H_\LL(R) \into \CC$ of $\log \circ \, u$ such that $|\vv_0(x)| \le B/|x|$ for $x \in H_\LL(R)$.  Since $\bar\u\rest{\RR} = v_0 \circ \exp$, it follows from the identity theorem for holomorphic functions that $\bar\u = \vv \circ \Exp$ on $\Log(\frU) \cap H(\log R)$; in particular, we have
\begin{equation}
\label{eta_0_bound}
|\bar\u(z)| \le \frac B{\exp(\re z)} \quad\text{for } z \in \Log(\frU) \text{ with } \re z > \log R.
\end{equation}
Now let $h \in \Hang_{\eta-1}$ be such that $\frU_h \subseteq U$.  Let also $z \in \Log(\frU_{h/2})$ with $\re z > 1+\log R$ be sufficiently large so that $B(z,1) \subseteq \Log(\frU_h)$ (see Remark \ref{sr-domain_shrink}(2)), and let $w \in B(0,1)$.  From the Taylor series of $\bar\u$ at $z$, we get
\begin{equation*}
\bar\u(z+w) - \bar\u(z) = \sum_{n=1}^\infty \frac{\bar\u^{(n)}(z)}{n!} w^n = w \cdot \sum_{n=0}^\infty \frac{\bar\u^{(n+1)}(z)}{(n+1)!} w^n.
\end{equation*}
By the Cauchy estimates and \eqref{eta_0_bound}, we have $\left|\bar\u^{(n)}(z)\right| \le \frac{n! B}{\exp(\re z-1)}$, so for $|w| \le 1/2$, we get
\begin{equation*}
|\bar\u(z+w) - \bar\u(z)| \le |w| \cdot \frac{2B}{\exp(\re z-1)} = |w| \cdot \frac{2Be}{\exp(\re z)},
\end{equation*}
so we can take $\rho(t):= \frac{2Be}{\exp t}$ for sufficiently large $t > 1+\log R$.
\medskip

\noindent\textbf{Case 2:} $\eta > 0$.  By Proposition \ref{derivative_of_small}(2), for $n \in \{0, \dots, \eta\}$, there exist $e_n \in \I$ of level $n$ and, for sufficiently small $h_n \in \Hang_{n-1}$, an analytic continuation $\vv_n:\frU_{h_n} \into \CC$ of $v_n$  such that $|\vv_n(x)| \le 1/e_n(|x|)$ for $x \in \frU_{h_n}$; without loss of generality, we may assume that $\frU_{h_\eta} = \frU$.  Since $\bar\u\rest{\RR} = v_0 \circ \exp + \cdots + v_\eta \circ \exp$ on $\RR$, the identity theorem for holomorphic functions and Remarks \ref{addition_rmk} imply that $$\bar\u = \vv_0 \circ \Exp + \cdots + \vv_\eta \circ \Exp$$ on $\Log(\frU) \cap H(r)$, for some sufficiently large $r \in \RR$; it therefore suffices to prove $(\ast)$ with each $n$ and $u_n := v_n \circ \exp$ in place of $\eta$ and $u$.

So we fix $n \in \{0, \dots, \eta\}$ and a sufficiently small $h \in \Hang_{n-1}$. The case $n=0$ follows from Case 1, so we assume that $n>0$ and $h$ is bounded.  From the assumptions we have
\begin{equation}
\label{eta_bound}
|\bar\u_n(z)| = |(\vv_n \circ \Exp)(z)| \le \frac 1{(e_n \circ \exp)(\re z)} \quad\text{for } z \in \Log(\frU_{h}).
\end{equation}
Set $\bar h:= h \circ \exp$, and let $z \in \Log(\frU_{h/2})$ be such that $$B\left(z,\bar h(\re z)/3\right) \subseteq \Log(\frU_h),$$ which happens by Remark \ref{sr-domain_shrink}(2) as soon as $\re z$ is sufficiently large, and let $w \in B\left(0,\bar h(\re z)/3\right)$.  By the Cauchy estimates and \eqref{eta_bound}, we get
\begin{align*}
\left|(\bar\u_n)^{(i)}(z)\right| &\le \frac{i! \cdot 3^i}{(e_n \circ \exp)\left(\re z-\bar h(\re z)\right) \cdot \bar h(\re z)^i} \\ &\le \frac{i! \cdot 3^i}{(e_n \circ \exp)\left(\re z/2)\right) \cdot \bar h(\re z)^i}.
\end{align*}
From the Taylor series of $\bar\u_n$ at $z$, we therefore get, for $|w| \le \bar h(\re z)/6$, that
\begin{equation*}
|\bar\u_n(z+w) - \bar\u_n(z)| \le |w| \cdot \frac{6}{(e_n \circ \exp)\left(\re z/2)\right) \cdot \bar h(\re z)}.
\end{equation*}
Now note that $\level(h) \le n-2$ by Proposition \ref{alevel_cor}(2), so we have $\level\left(\bar h\right) \le n-1$.  Since $\level(e_n \circ \exp \circ m_{1/2}) = n+1$, it follows that we can take $$\rho:= \frac6{(e_n \circ \exp \circ m_{1/2}) \cdot \bar h} \circ \log,$$ which finishes the proof.
\end{proof}

\begin{prop}
\label{u_small}
Assume that $(\ast)_\eta$ holds.  Then, for sufficiently small $h \in \Hang_{\eta-1}$, the restriction of $\u$ to $\frU_{h}$ is $\D$-Lipschitz.
\end{prop}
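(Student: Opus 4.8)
The plan is to promote the local Lipschitz estimate of Lemma~\ref{u_Cauchy} to a global one on $\frU_h$ by a chain argument along a path whose shape is chosen to suit the domain.

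First I would apply Lemma~\ref{u_Cauchy} with $2h$ in place of $h$; this is legitimate because $\alevel(2h)=\alevel(m_2\circ h)=\alevel(h)=\eta-1$ by Proposition~\ref{alevel_cor}(4), and $2h$ is small whenever $h$ is. It yields $\sigma\in\D$ with $d(\u(x),\u(y))\le\sigma(|x|)\,d(x,y)$ whenever $x\in\frU_h$ and $y\in B\bigl(x,\min\{1,2h(|x|)/3\}\bigr)$. Shrinking $h$, I may also assume that $\frU_{2h}$ lies in the domain of $\u$ and---since $\sigma\in\D\subseteq\H$ is eventually monotone and tends to $0$ through positive values---that $\sigma$ is decreasing. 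Because $d$ is symmetric, applying the estimate with the endpoint of smaller modulus as the centre of the ball already gives $d(\u(x),\u(y))\le\sigma(\min\{|x|,|y|\})\,d(x,y)$ for all $x,y\in\frU_h$ with $d(x,y)<\min\{1,2h(\min\{|x|,|y|\})/3\}$, which is the required inequality for such nearby pairs.

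For an arbitrary pair $x,y\in\frU_h$ I would instead join $\Log x$ to $\Log y$ by a path $\gamma$ lying in $\Log(\frU_h)$ with $\re w\ge\log\min\{|x|,|y|\}$ for all $w\in\gamma$, subdivide $\gamma$ finely enough (using compactness) that Lemma~\ref{u_Cauchy} applies across each subarc, apply it with the endpoint of smaller modulus as centre, and sum; since $\sigma$ is decreasing this gives $d(\u(x),\u(y))\le\sigma(\min\{|x|,|y|\})\cdot\length(\gamma)$. Hence it suffices to produce such a $\gamma$ with $\length(\gamma)\le C\,d(x,y)$ for a fixed constant $C$: then $\rho:=\max\{1,C\}\cdot\sigma\in\D$ witnesses the conclusion. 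By Remark~\ref{sr-domain_shrink}(1) this reduces the matter to the geometry of the standard domain $\Log(\frU_h)=\{a+bi:|b|<\bar h(a)\}$, where $\bar h:=h\circ\exp$.

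The construction of $\gamma$ is the heart of the proof, and I would carry it out by cases on the shape of $\bar h$, which---being a germ of $\H$---is eventually concave or eventually convex. If $\eta=0$, then by the construction underlying Corollary~\ref{unit_lemma} (via Proposition~\ref{derivative_of_small}(1)) the map $\u$ is defined on a half-plane $H_\LL(R)$, whose $\Log$-image is convex, and $\gamma:=[\Log x,\Log y]$ serves. If $\eta\ge1$, then $\alevel(h)=\eta-1\ge0$, so $h$ is not $\succ\log$ by Example~\ref{al_expls}(2) and hence $\bar h(a)=O(a)$; when $h$ is unbounded this forces $\bar h(a)/a\to0$, so $\bar h'\to0$ and $\bar h$ is eventually concave, $\Log(\frU_h)$ is again convex, and the straight segment works; when $h$ is bounded I would take $\gamma$ to be the polygonal path $\Log x\to(\log|x|,0)\to(\log|y|,0)\to\Log y$, which lies in $\Log(\frU_h)$, satisfies $\re w\ge\log\min\{|x|,|y|\}$, and---on the pairs not already settled---has length at most $|\arg x|+d(x,y)+|\arg y|<h(|x|)+d(x,y)+h(|y|)\le C\,d(x,y)$, since there $d(x,y)$ dominates $\min\{1,h(\min\{|x|,|y|\})/3\}$ and $h$ is decreasing. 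The step I expect to be the main obstacle is precisely this case split: one must know that for the relevant small $h\in\Hang_{\eta-1}$ the domain $\frU_h$ is, in the $\Log$-chart, either convex (concave width) or so thin that the detour to the real axis costs only $O(d(x,y))$ (bounded width), and in particular that the awkward unbounded-but-convex width profiles, such as $\bar h\asymp a^{3/2}$, occur only in $\Hang_{-1}$---that is, only in the already-handled case $\eta=0$.
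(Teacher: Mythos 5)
Your overall scheme is the same as the paper's: apply Lemma \ref{u_Cauchy} with $2h$ (the paper takes $g$ and sets $h:=g/2$, which is the same normalization) and then promote the local estimate to all of $\frU_h$ by chaining it along a path of length $O(d(x,y))$ that stays where the local estimate applies and along which $\sigma$ can be evaluated at $\min\{|x|,|y|\}$. Where you differ is the path construction, and here the paper is both simpler and uniform: since $h$ is eventually monotone, any two points of $\frU_h$ can be joined inside $\frU_h$ by one ``horizontal'' (constant-argument) and one ``vertical'' (constant-modulus) segment, whose total length is at most $2d(x,y)$ because each leg is bounded by the hypotenuse; interpolating along these two segments gives $\rho=2\sigma$ with no case analysis whatsoever. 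Your convexity/thinness dichotomy does work, but it costs you exactly the delicate points you flagged: (i) in the case $\eta=0$ the straight segment between points of $\frU_h$ can leave $\frU_h$ (e.g.\ $h=\log^{3/2}\in\Hang_{-1}$ has convex $\bar h$), so the \emph{statement} of Lemma \ref{u_Cauchy} does not license the centres along your segment; you repair this by appealing to the construction behind Corollary \ref{unit_lemma} and Proposition \ref{derivative_of_small}(1), i.e.\ to the fact that in Case 1 of the proof of Lemma \ref{u_Cauchy} the estimate really holds on a full half-plane with $\sigma(t)\asymp 1/t$ --- correct, but it makes your argument depend on proof-internals rather than on the lemma as stated, whereas the L-shaped path needs no such appeal; (ii) in the bounded case your inequality $h(|x|)+h(|y|)\le 2h(\min\{|x|,|y|\})$ uses ``$h$ decreasing'', which fails for bounded $h\in\Hang_0$ increasing to a positive limit (e.g.\ $c-1/\log$), though there $h\asymp 1$ and the same bound holds with a different constant, so this is only a cosmetic slip. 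In short: your proof is correct modulo these repairable points, but the paper's choice of path makes the entire case split (and the Hardy-field concavity argument for unbounded $h$) unnecessary.
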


\begin{proof}
Let $g \in \Hang_{\eta-1}$ and $\sigma \in \D$ be such that $\frU_g \subseteq \frU$ and \eqref{pre-lipschitz} holds with $g$ in place of $h$.  Set $h:= g/2$ and  $\v:= \u \rest{\frU_h}$, and set $\bar g:= g \circ \exp$ and $\bar h:= h \circ \exp = \bar g/2$.  

Let $x,y \in \frU_h$.  First, if $\arg x = \arg y$ then the horizontal segment $[x,y]$ is contained in $\frU_h$. If also $|x|<|y|$, say, we choose $x = x_0, \dots, x_n = y$ in $\frU_h$ such that $\arg x_0 = \cdots = \arg x_n$ and $d(x_{i+1},x_i) < \bar g(|x_{i}|)/3$, for $i = 0, \dots, n-1$.  Then by \eqref{pre-lipschitz}, we have
\begin{align*}
	d(\v(x),\v(y)) &= |\Log\v(x)-\Log\v(y)| \\ &\le \sum_{i=0}^{n-1} |\Log\v(x_{i+1}) - \Log\v(x_i)|\\ &\le \sum_{i=0}^{n-1} \sigma(|x_i|) \cdot |\log|x_{i+1}| - \log|x_i|| \\ &\le \sigma(|x|) \cdot |\log |x| - \log |y||\\ &= \sigma(|x|) \cdot d(x,y) 
\end{align*}
in this case.
Second, if $|x| = |y|$, then the vertical segment $[x,y]$ is contained in $\frU_h$.  Interpolating along this segment as in the first case, we  obtain
$$d(\v(x),\v(y)) \le \sigma(|x|) \cdot d(x,y)$$ in this case.
In general, there are a vertical segment $S_1 \subseteq \frU_h$ and a horizontal segment $S_2 \subseteq \frU_h$ whose union joins $x$ and $y$.  Since the hypotenuse of a rectangular triangle is at least as long as either of the other sides, we have $$|\log|x| - \log|y|| + |\arg x - \arg y| \le 2d(x,y),$$ so by the above two cases, $\v$ is $\rho$-Lipschitz with $\rho:= 2\sigma$.
\end{proof}

\section{Infinitly increasing germs in $\E$ are expansive and angle-positive}
\label{expansive_section}

We are almost ready to tackle Proposition \ref{exp_ext_prop}.  The next lemma details how the combination of extendability, expansiveness and angle-positivity is preserved under multiplication by a unit.

\begin{lemma}
\label{mult_by_unit}
Let $\eta \in \NN$ and $\f:\frU \into \frV$ be an expansive, angle-positive $(\eta,\eta)$-map.  Let also $\u:\frU \into \LL$ be a holomorphic, $\D$-Lipschitz unit at $\infty$.  Then there exists $r>0$ such that $\f\u \rest{\frU \cap H_{\LL}(r)}$ is an expansive, angle-positive $(\eta,\eta)$-map.
\end{lemma}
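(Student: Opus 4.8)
The plan is to verify, for $\f\u$ restricted to a suitable $\frU\cap H_\LL(r)$, the three clauses of Definition \ref{weakly_extending}(1) defining an $(\eta,\eta)$-map, together with expansiveness and angle-positivity.  The engine is Lemma \ref{mult_by_unit_pos_exp}.  Since $\f:\frU\to\frV$ is an expansive, angle-positive $(\eta,\eta)$-map, the set $\frU$ is an $(\eta-1)$-domain, $\frV=\f(\frU)$ is a $(-1)$-domain, and $\f$ is continuous, surjective, expansive and angle-positive; moreover $\u$ is a $\D$-Lipschitz unit at $\infty$ preserving the positive real axis near $\infty$ (as do all the units we consider, cf.\ Corollary \ref{unit_lemma}).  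Applying Lemma \ref{mult_by_unit_pos_exp} with $\phi:=\f$ and $\psi:=\u$ yields $r>0$ such that $\f\u\rest{\frU_0}$ is expansive and angle-positive, where $\frU_0:=\frU\cap H_\LL(r)$, and such that
\begin{equation*}
	\tfrac12\bigl|\arg\f(x)\bigr|\ \le\ \bigl|\arg(\f\u)(x)\bigr|\ \le\ \tfrac32\bigl|\arg\f(x)\bigr|,\qquad x\in\frU_0. \tag{$\star$}
\end{equation*}
As a germ at $\infty$, $\frU_0$ is the same $(\eta-1)$-domain as $\frU$.  Since $\bar{\f\u}=\bar\f+\bar\u$ is holomorphic and expansiveness forces injectivity, $\f\u:\frU_0\to\frV_0:=\f\u(\frU_0)$ is biholomorphic onto its open image.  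Clause (iii) is immediate: as $\u$ is a unit at $\infty$ we may enlarge $r$ so that $\tfrac12\le|\u(x)|\le 2$ on $\frU_0$, whence $\tfrac12 h_1(|x|)\le|\f\u(x)|=|\f(x)|\,|\u(x)|\le 2 h_2(|x|)$, where $h_1,h_2\in\I$ of level $\eta$ witness (iii) for $\f$ and $\tfrac12 h_1,2h_2$ again lie in $\I$ of level $\eta$ by Facts \ref{level_facts}(5).

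The crux is the claim that \emph{for every $k\ge\eta-1$ and every $k$-domain $\frU'\subseteq\frU_0$, the image $\f\u(\frU')$ is a $(k-\eta)$-domain}.  Granting it, $\f\u$ has angular level $\eta$ by Definition \ref{complex level}, and taking $\frU'=\frU_0$ (a $k$-domain with $k=\eta-1$) shows $\frV_0$ is a $(-1)$-domain, so clause (i) holds.  To prove the claim, pick $p_1,p_2\in\Hang_k$ with $\frU_{p_1}\subseteq\frU'\subseteq\frU_{p_2}\subseteq\frU_0$ (possible since $\alevel$ is decreasing and germs in $\H$ are eventually monotone, and after replacing $p_1$ by a slightly smaller $\Hang_k$-function so that $\partial\frU_{p_1}\subseteq\frU_0$).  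As $\f$ has angular level $\eta$, both $\f(\frU_{p_1})$ and $\f(\frU_{p_2})$ are $(k-\eta)$-domains, say $\frU_{q_1}\subseteq\f(\frU_{p_1})$ and $\f(\frU_{p_2})\subseteq\frU_{q_2}$ with $q_1,q_2\in\Hang_{k-\eta}$.  Using $(\star)$, the comparison $|\f(x)|\asymp|\f\u(x)|$ (to within a factor $2$), eventual monotonicity of $q_2$, and the invariance of $\alevel$ under positive scalar multiples and precomposition with $m_c$ (Proposition \ref{alevel_cor}(4)), the upper inclusion $\f\u(\frU')\subseteq\f\u(\frU_{p_2})\subseteq\frU_{\tilde q_2}$ follows directly, with $\tilde q_2:=\tfrac32\,q_2\circ m_c\in\Hang_{k-\eta}$.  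For the lower inclusion, if $x\in\partial\frU_{p_1}$ then $\f(x)\in\partial\f(\frU_{p_1})$ lies outside the open set $\frU_{q_1}$, so $|\arg\f(x)|\ge q_1(|\f(x)|)$, and $(\star)$ then gives $|\arg(\f\u)(x)|\ge\tilde q_1(|\f\u(x)|)$ with $\tilde q_1:=\tfrac12\,q_1\circ m_{c'}\in\Hang_{k-\eta}$.  Thus $\f\u(\partial\frU_{p_1})$ avoids the open domain $\frU_{\tilde q_1}$, which is connected and shares a ray of $\RR_{>0}$ with the open connected set $\f\u(\frU_{p_1})$ (the restriction of $\f\u$ to the positive real axis is continuous, positive and unbounded); hence $\frU_{\tilde q_1}\subseteq\f\u(\frU_{p_1})\subseteq\f\u(\frU')$.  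So $\frU_{\tilde q_1}\subseteq\f\u(\frU')\subseteq\frU_{\tilde q_2}$, proving the claim.

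It remains to verify clause (ii): given $k\ge\eta-1$ and a $(k-\eta)$-domain $\frV'\subseteq\frV_0$, pick $r_1\in\Hang_{k-\eta}$ with $\frU_{r_1}\subseteq\frV'$ and then, using Proposition \ref{alevel_cor}(4) and eventual monotonicity, choose $r_1'\in\Hang_{k-\eta}$ with $\frU_{r_1'}\subseteq\frV$ and $\tfrac32\,r_1'\circ m_c\le r_1$.  By clause (ii) for $\f$ there is a $k$-domain $\frU''\subseteq\frU$ with $\f(\frU'')\subseteq\frU_{r_1'}$; then $(\star)$ and the modulus comparison give $\f\u(\frU'')\subseteq\frU_{\tfrac32 r_1'\circ m_c}\subseteq\frU_{r_1}\subseteq\frV'$, so $\frU':=\frU''\cap\frU_0$ works.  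Combined with the expansiveness and angle-positivity from the first paragraph, this shows that $\f\u\rest{\frU_0}$ is an expansive, angle-positive $(\eta,\eta)$-map.

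I expect the main obstacle to be the lower inclusion in the central claim, that is, passing from the pointwise two-sided comparison $(\star)$ to a genuine containment of a real domain of the correct angular level.  One cannot simply argue that ``$\f\u$ is $C^1$-close to $\f$'': the total $\Log$-displacement $d(\u(x),1)$ may exceed the angular width of very narrow $k$-domains, and it is only the \emph{angular} part of the perturbation --- precisely what $(\star)$ controls --- that governs the angular width of the image, so the containment has to be extracted topologically via the connectedness argument above.  A secondary, purely technical point is the recurring need to absorb constants and scalings $m_c$ into the enclosing $\Hang_j$-functions without changing their angular level, which is handled throughout by Proposition \ref{alevel_cor}(4) and eventual monotonicity of germs in $\H$.
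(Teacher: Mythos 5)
Your proof is correct and follows essentially the same route as the paper's: both arguments rest on Lemma \ref{mult_by_unit_pos_exp} for expansiveness, angle-positivity and the two-sided comparison $\tfrac12|\arg\f(x)|\le|\arg(\f\u)(x)|\le\tfrac32|\arg\f(x)|$, deduce the angular-level clause from a pointwise upper bound (absorbing constants and the modulus distortion into $\Hang_{k-\eta}$ via Proposition \ref{alevel_cor}(4)) together with a lower bound obtained by a boundary-curve/connectedness argument, obtain clause (ii) of Definition \ref{weakly_extending} by pre-shrinking the target real domain, and obtain clause (iii) because $\u$ is a unit at $\infty$. The only quibble is your choice of $p_2\in\Hang_k$ with $\frU'\subseteq\frU_{p_2}\subseteq\frU_0$, which need not exist (for instance when $\frU'=\frU_0$ is itself not a real domain); it is also unnecessary, since $\f(\frU')$ is already a $(k-\eta)$-domain, so the upper estimate $|\arg\f(x)|\le q_2(|\f(x)|)$ holds directly for $x\in\frU'$, exactly as in the paper's proof.
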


\begin{proof}
By Lemma \ref{mult_by_unit_pos_exp}(1), $\f\u$ is expansive and angle-positive, so it remains to show that $\f\uu$ is an $(\eta,\eta)$-map.  The clauses below refer to Definition \ref{weakly_extending}.

To establish Clause (1a) for $\f\u$, we fix $k \ge \eta-1$ and a $k$-domain $\frU' \subseteq \frU$.
First, we show that $\f\u(\frU') \supseteq \frU_h$ for some $h \in \Hang_{k-\eta}$:  since $\f(\frU')$ is a $(k-\eta)$-domain, there exists $h_1 \in \Hang_{k-\eta}$ such that $\frU_{h_1} \subseteq \f(\frU')$; we may assume that $\cl(\frU_{h_1}) \subseteq \f(\frU')$.  Since $\f$ is biholomorphic, the preimage $\frU'':= \f^{-1}(\frU_{h_1})$ is a simply connected domain such that $\partial \frU'' = \f^{-1}(\partial \frU_{h_1})$, where $\partial A$ denotes the topological boundary of $A$.  Since $\f$ maps $(0,\infty)$ into $(0,\infty)$ and $|\f(x)| \to \infty$ as $|x| \to \infty$, it follows that $\frU''$ is a simply connected domain containing all sufficiently large positive reals, mirror symmetric with respect to the real line (by the Schwartz reflection principle) and bounded in $\set{\arg x > 0}$ by the image of the curve $t \mapsto \f^{-1}(t,h_1(t))$.  Since $\f\u$ is also biholomorphic, maps $(0,\infty)$ into $(0,\infty)$ and satisfies $|\f\u(x)| \to \infty$ as $|x| \to \infty$, the same argument gives that $\frV'':= \f\uu(\frU'')$ is such a domain with boundary curve $$t \mapsto \f\u\left(\f^{-1}(t, h_1(t)\right).$$  However, for sufficiently large $x = \f^{-1}(t,h_1(t))$, we get from Lemma  \ref{mult_by_unit_pos_exp}(2) and $2|\f\u(x)| \ge |\f(x)| = t \ge \frac12 |\f\u(x)|$ that
\begin{equation*}
%\label{unit_estimate_20}
\arg(\f\u(x)) \ge h_1(t)/2 \ge h(|\f\uu(x)|),
\end{equation*}
where $h:= m_{1/2} \circ h_1 \circ m_{c}$ belongs to $\Hang_{k-\eta}$, by Proposition \ref{alevel_cor}, with $c = \frac12$ if $h_1$ is increasing and $c = 2$ if $h_1$ is decreasing.  Arguing similarly for sufficiently large $x = \f^{-1}(t,-h_1(t))$, we conclude that $\frU_h \subseteq \f\u(\frU')$, as required.

Second, we show that $\f\u(\frU') \subseteq \frU_h$ for some $h \in \Hang_{k-\eta}$: since $\f(\frU')$ is a $(k-\eta)$-domain, there exists $h_1 \in \Hang_{k-\eta}$ such that $|\arg\f(x)| \le h_1(|\f(x)|)$ for $x \in \frU'$.  Arguing as above, it follows from Lemma \ref{mult_by_unit_pos_exp}(2) that
\begin{equation*}
|\arg\f\u(x)| \le 2h_1(c|\f\u(x)|)
\end{equation*}
for sufficiently large $x \in \frU'$, where $c = \frac12$ if $h_1$ is decreasing and $c = 2$ if $h_1$ is increasing.  In particular, we have $\f\u(\frU') \subseteq \frU_h$, where $h:= m_2 \circ h_1 \circ m_c$; this $h$ belongs to $\Hang_{k-\eta}$ by Proposition \ref{alevel_cor}, as required.

To establish Clause (1b) for $\f\u$, we let $h \in \Hang_{k-\eta}$ and set $h_1:= m_{1/2} \circ h \circ m_{1/c}$, where $c = \frac12$ if $h$ is decreasing and $c = 2$ if $h$ is increasing.  Then $h_1 \in \Hang_{k-\eta}$ by Proposition \ref{alevel_cor} and, since $\f$ is an $(\eta,\eta)$-map, there exists a $k$-domain $\frU' \subseteq U$ such that $\f(\frU') \subseteq \frU_{h_1}$.  The argument of the previous paragraph now implies that $\f\u(\frU') \subseteq \frU_h$, as required.

Finally, Clause (1c) for $\f\u$ follows from the corresponding clause for $\f$, because $\u$ is a unit at $\infty$.
\end{proof}

\begin{prop}
\label{exp_ext_prop}
Let $\eta \ge 0$ and $f \in \E_\eta \cap \I$ be of level $\lambda \in \{0, \dots, \eta\}$.  Then $f$ has an expansive, angle-positive $(\eta,\lambda)$-extension $\f:\frU \into \frV$.
\end{prop}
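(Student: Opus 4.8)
The plan is to argue by induction on $\eta$, establishing along the way the hypothesis $(\ast)_\eta$ of Section \ref{extendable} so that Proposition \ref{u_small} is available at the relevant stage. For the base case $\eta = 0$, the germ $f$ has level $0$, and by the description of $\E_0$ together with Remark \ref{simple_sum}(3) one can write $f = (m_a \circ p_k)\cdot(1-\epsilon)$ with $a>0$, $k \ge 1$ and $\epsilon \in \E_0$ small. Since $\m_a$ and $\p_k$ are expansive, angle-positive $(0,0)$-maps (Examples \ref{extendable_expl} and \ref{expansive_expl}) and $(\ast)_0$ holds trivially (the large monomials of $\M_0 \setminus \M_{-1}$ are the $p_k$, $k \ge 1$), Proposition \ref{u_small} supplies a $\D$-Lipschitz analytic continuation $\u$ of $1-\epsilon$ that is a unit at $\infty$, and Lemma \ref{mult_by_unit} yields the desired $(0,0)$-extension.

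For the inductive step I would first settle $(\ast)_\eta$. For $n < \eta$, a large $m \in \M_n \setminus \M_{n-1}$ is, after possibly replacing it by $1/m$, infinitely increasing, hence lies in $\E_n \cap \I$ with $\level(m) = \eh(m) = n$; the inductive hypothesis then gives it an expansive, angle-positive $(n,n)$-extension, and passing to reciprocals (which preserves the defining clauses of an $(n,n)$-map, though not angle-positivity) covers the decreasing case. For $n = \eta$, I would write a large increasing $m \in \M_\eta \setminus \M_{\eta-1}$ as $m = x^j\cdot\exp(g)$, where $j \in \ZZ$ and $g := \log\!\left(\prod_{i \in I(m),\, i \ge 1} m_i\right) \in \E_{\eta-1} \cap \I$ has level $\eta-1$, apply the inductive hypothesis to $g$, compose its extension with $\eexp$ — which is $(1,1)$-extendable (Example \ref{extendable_expl}), expansive on $S_\LL(\pi/2) \cap H_\LL(1)$ (Lemma \ref{expansive_lemma}) and angle-positive — to obtain an expansive, angle-positive $(\eta,\eta)$-extension of $\exp(g)$ via Lemmas \ref{weak_ext_comp} and \ref{comp_expansive}, and finally multiply by the extension of $x^j$ (an expansive, angle-positive $(0,0)$-map), using that the derivative of the extension of $\exp(g)$ dwarfs that of $x^j$ to see that the product remains expansive on some $H_\LL(r)$; the decreasing case again follows by reciprocals. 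This establishes $(\ast)_\eta$, and hence Proposition \ref{u_small} for every unit in $\E_\eta$.

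Now let $f \in \E_\eta \cap \I$ have level $\lambda$. By Remark \ref{simple_sum}(3) write $f = (m_a \circ \lm(f))\cdot(1-\epsilon)$ with $a>0$ and $\epsilon \in \E_\eta$ small, and recall from Corollary \ref{eh-cor}(2) that $\lm(f) \in \M_\lambda \setminus \M_{\lambda-1}$ is large and increasing. If $\lambda = \eta$, the monomial construction of the previous paragraph gives $\lm(f)$ an expansive, angle-positive $(\eta,\eta)$-extension; if $\lambda < \eta$, the inductive hypothesis applied to $\lm(f) \in \E_\lambda \cap \I$ gives an expansive, angle-positive $(\lambda,\lambda)$-extension which, restricted to a suitable $(\eta-1)$-subdomain of its domain, becomes an expansive, angle-positive $(\eta,\lambda)$-extension — here one checks that all clauses of Definition \ref{weakly_extending} are inherited under such a restriction, using that the map is biholomorphic of angular level $\lambda$ (so that its inverse has angular level $-\lambda$). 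Composing with $\m_a$ (Lemma \ref{comp_expansive}) produces an expansive, angle-positive $(\eta,\lambda)$-extension $\Phi$ of $m_a \circ \lm(f)$, defined on an $(\eta-1)$-domain that we may shrink so that the continuation $\u$ of $1-\epsilon$ from Proposition \ref{u_small} is a $\D$-Lipschitz unit at $\infty$, mapping reals to reals, on the same domain. Then Lemma \ref{mult_by_unit} (when $\lambda = \eta$), or the analogous statement for $(\eta,\lambda)$-maps proved by the identical argument via Lemmas \ref{expansive_unit} and \ref{mult_by_unit_pos_exp} (when $\lambda < \eta$), shows that $\Phi\cdot\u$, restricted to some $H_\LL(r)$, is an expansive, angle-positive $(\eta,\lambda)$-extension of $f$.

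I expect the main obstacle to be exactly this final multiplication-by-a-unit step. Expansiveness of $\Phi\cdot\u$ follows cheaply from Lemma \ref{expansive_unit} (``expansive trumps $\D$-Lipschitz''), but angle-positivity of the product and the preservation of Clauses (1a) and (1b) of an $(\eta,\lambda)$-map rest on the delicate argument comparison $\tfrac12|\arg\Phi(x)| \le |\arg(\Phi\u)(x)| \le \tfrac32|\arg\Phi(x)|$ of Lemma \ref{mult_by_unit_pos_exp}, which is where the three hypotheses on $\Phi$ and $\u$ (expansive, angle-positive, $\D$-Lipschitz) must be used simultaneously, together with the $\Hang_k$-bookkeeping of Proposition \ref{alevel_cor}. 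A secondary difficulty is that $\u$ is only $\D$-Lipschitz on the narrow $(\eta-1)$-domains, whereas the extension of $\lm(f)$ naturally lives on the wider $(\lambda-1)$-domains, so this cutting-down must be done first and one must verify that the argument of Lemma \ref{mult_by_unit_pos_exp} still goes through (or can be adapted) when the target of $\Phi$ is a $k$-domain with $k = \eta-\lambda-1 \ge 0$ rather than a $(-1)$-domain.
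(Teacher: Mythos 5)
Your overall architecture (induction, securing $(\ast)_\eta$ through the monomial case, then writing $f = a\,\lm(f)\,u$ and multiplying the monomial's extension by the $\D$-Lipschitz unit supplied by Corollary \ref{unit_lemma} and Proposition \ref{u_small}) matches the paper's up to exactly the point you flag as the main obstacle, and that is where the proposal has a genuine gap. For $\lambda < \eta$ you invoke ``the analogous statement for $(\eta,\lambda)$-maps proved by the identical argument via Lemmas \ref{expansive_unit} and \ref{mult_by_unit_pos_exp}''. But Lemma \ref{mult_by_unit_pos_exp} is stated, and its proof only works, when the image of $\phi$ is a $(-1)$-domain: angle-positivity of $\phi\psi$ is obtained there by producing arbitrarily large $x$ with $\arg\phi(x)$ exceeding the a priori bound $A$ on $|\arg\psi|$, and the comparison $\frac12|\arg\phi| \le |\arg(\phi\psi)| \le \frac32|\arg\phi|$ in part (2) is bootstrapped from part (1) applied to $\psi^2$ and $1/\psi$. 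When the target is an $(\eta-\lambda-1)$-domain with $\eta-\lambda-1 \ge 0$, the values $\arg\phi(x)$ need never exceed $A$ (for $\eta-\lambda-1\ge 1$ they even tend to $0$), so the \emph{identical} argument collapses; to salvage it one would need new quantitative input, e.g.\ a pointwise estimate $|\arg\psi(x)| \le \rho(|x|)\,|\arg x|$ extracted from the $\D$-Lipschitz property together with a matching lower bound on $|\arg\phi(x)|$ for extensions of monomials of level $\ge 0$, none of which appears in your proposal. Since $\lambda<\eta$ is the substantive case, the heart of the proposition is left unproved.

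The paper avoids this issue by a device you do not use, and which your induction on $\eta$ alone cannot accommodate: it inducts lexicographically on the pair $(\eta,\eta-\lambda)$ and, when $\lambda<\eta$, applies Proposition \ref{exp_comp} to see that $\exp\circ f$ still lies in $\E_\eta$ but has level $\lambda+1$; the inductive hypothesis then yields an expansive, angle-positive $(\eta,\lambda+1)$-extension $\h$ of $\exp\circ f$, and the extension of $f$ is recovered as $\llog\circ\h$, which coincides with $\g\u$ by the identity theorem. Thus Lemma \ref{mult_by_unit} is only ever applied when $\lambda=\eta$, where the target genuinely is a $(-1)$-domain. You should either adopt this reduction (which also disposes of the secondary domain-matching difficulty you raise) or actually prove the strengthened multiplication lemma; asserting it follows ``by the identical argument'' is not correct. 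Two smaller points: $(\ast)_\eta$ concerns only large, hence infinitely increasing, monomials, so your detour through reciprocals is unnecessary (and the reciprocal of an $(n,n)$-map is not an $(n,n)$-map in the sense of Definition \ref{weakly_extending}, since its image accumulates at $0$ rather than at $\infty$); and the claim that restricting a $(\lambda,\lambda)$-extension to an $(\eta-1)$-domain yields an $(\eta,\lambda)$-extension needs a short verification of clauses (1a)--(1c), which you assert but do not carry out.
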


\begin{proof}
By induction on the pair $(\eta,\eta-\lambda) \ge (0,0)$, ordered lexicographically.

If $\eta = 0$, then $\lambda = 0$, and we distinguish two cases: if $f \in \M_0$, then $f = p_r$ for some $r>0$, so $f$ has an expansive, angle-positive $(0,0)$-extension on $\LL$ by Examples \ref{levelled_maps} and  \ref{expansive_expl}; in particular, it follows that $(\ast)_0$ holds.  In general, we have $f = amu$ for some $a \in \RR$, $m \in \M_0$ and $u \in \E_0$ a unit at $+\infty$.  By the earlier-mentioned examples and Lemmas \ref{weak_ext_comp} and \ref{comp_expansive}, the function $g:= am$ has an expansive, angle-positive $(0,0)$-extension $\g:\frU \into \frV$.  On the other hand, after shrinking $\frU$ if necessary, we get from Corollary \ref{unit_lemma} and Proposition \ref{u_small} that $u$ has a holomorphic and $\D$-Lipschitz extension $\u:\frU \into \LL$ that is a unit at $\infty$.  It follows from Lemma \ref{mult_by_unit} that $\g\u$ is an expansive, angle-positive $(0,0)$-extension of $f$.

Assume now that $(\eta,\eta-\lambda) > (0,0)$ and that the proposition holds for lower values of $(\eta,\eta-\lambda)$.  If $f \in \M_\eta$, then $\lambda = \eta > 0$ and $f = \exp \circ g$ for some $g \in \E_{\eta-1}$, so $f$ has an expansive, angle-positive $(\eta,\eta)$-extension on $U$ by Examples \ref{levelled_maps} and \ref{expansive_expl}, Lemmas \ref{weak_ext_comp} and \ref{comp_expansive} and the inductive hypothesis; in particular, it follows that $(\ast)_\eta$ holds.  In general, we have $f = amu$ for some $a \in \RR$, $m \in \M_\lambda$ and $u \in \E_\eta$ a unit at $+\infty$.  As before, the product $g:= am$ has an expansive, angle-positive $(\eta,\lambda)$-extension $\g:\frU \into \frV$, and by Corollary \ref{unit_lemma} and Proposition \ref{u_small}, the germ $u$ has a holomorphic and $\D$-Lipschitz extension $\u:\frU \into \LL$ that is a unit at $\infty$.  It follows from Corollary \ref{expansive_unit} that $\g\u$ is expansive, so it remains to show that $\g\u$ is an angle-positive $(\eta,\lambda)$-map.

If $\lambda = \eta$, it follows from Lemma \ref{mult_by_unit} that $\g\uu$ is an angle-positive $(\eta,\eta)$-map, so we assume from now on that $\lambda < \eta$.  By Proposition \ref{exp_comp}, the function $\exp \circ f$ also belongs to $\E_\eta$ and, since it has level $\lambda+1$, the inductive hypothesis gives an expansive, angle-positive $(\eta,\lambda+1)$-extension $\h:\frU \into \frV'$ of $\exp \circ f$.  It follows from the identity theorem for holomorphic functions, Examples \ref{levelled_maps} and \ref{expansive_expl} and Lemmas \ref{weak_ext_comp} and \ref{comp_expansive} that $\g\uu = \llog \circ \h$ is an angle-positive $(\eta,\lambda)$-map.
\end{proof}

\begin{thm}[Continuation]
	\label{I_ext_prop}
	Let $f \in \I$, and set $$\eta:= \max\{0,\eh(f)\} \quad\text{and}\quad \lambda:= \level(f) \le \eta.$$  Then $f$ has an angle-positive $(\eta,\lambda)$-extension $\f:\frU \into \frV$.
\end{thm}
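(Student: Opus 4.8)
The plan is to reduce the statement to Proposition~\ref{exp_ext_prop} by peeling off iterated logarithms, following the outline given at the end of Section~\ref{angular_level}.

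First I would record the extendability of iterated logarithms. Since $\log$ is $(0,-1)$-extendable (Example~\ref{extendable_expl}) with the angle-positive extension $\llog$ (Examples~\ref{levelled_maps} and~\ref{expansive_expl}), an induction on $k$ using Lemma~\ref{weak_ext_comp}, together with the identity $\max\{0,\,0+(-1)\}=0$, shows that $\log_k$ is $(0,-k)$-extendable for every $k\in\NN$; moreover, by Lemma~\ref{comp_expansive}(3) the corresponding extension, a composite of $k$ copies of $\llog$, can be taken angle-positive.

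Next I would pass to $\E$. By Proposition~\ref{1.11} there are $k\in\NN$ and $g\in\E$ with $f=g\circ\log_k$, hence $g=f\circ\exp_k$; as $\exp_k\in\I$ and $\I$ is a group under composition, $g\in\E\cap\I$. Being infinitely increasing, $g$ is not constant, so $g\notin\E_{-\infty}=\RR$ and $\eh(g)\ge 0$; inequality~\eqref{level-eh} gives $\level(g)\le\eh(g)$, and $\level(g)\ge 0$ since the leading monomial of $g$ is large, hence not $1$. Thus $\level(g)\in\{0,\dots,\eh(g)\}$, and Proposition~\ref{exp_ext_prop} supplies an expansive, angle-positive $(\eh(g),\level(g))$-extension $\g$ of $g$. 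By the definition of $\eh$ on $\H$ (following Lemma~\ref{log_comp_lemma}) we have $\eh(g)=\eh(f)+k$, and by~\eqref{level_eq} together with $\level(\exp_k)=k$ we have $\level(g)=\lambda+k$.

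Finally I would recompose. Applying Lemma~\ref{weak_ext_comp} to $f=g\circ\log_k$, with data $(\eta_1,\lambda_1)=(0,-k)$ for $\log_k$ and $(\eta_2,\lambda_2)=(\eh(f)+k,\,\lambda+k)$ for $g$, produces an $(\eta',\lambda_1+\lambda_2)$-extension of $f$ with
$$\eta'=\max\bigl\{0,\,(\eh(f)+k)+(-k)\bigr\}=\max\{0,\eh(f)\}=\eta,\qquad \lambda_1+\lambda_2=-k+(\lambda+k)=\lambda;$$
since $\lambda\le\eta$ by Corollary~\ref{eh-cor}(2), this is exactly the $(\eta,\lambda)$-extendability asserted. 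Inspecting the proof of Lemma~\ref{weak_ext_comp}, the extension so obtained is the composite of $\g$ with the extension of $\log_k$ from the first step, possibly restricted to a smaller domain in $\LL$; as restrictions of angle-positive maps are again angle-positive, Lemma~\ref{comp_expansive}(3) shows it is angle-positive. I expect no serious obstacle at this stage: all of the genuine analytic work is already contained in Proposition~\ref{exp_ext_prop} (and, beneath it, in Propositions~\ref{u_small} and~\ref{derivative_of_small}), so the only points requiring attention are the exponent bookkeeping in the last step and the verification that the restriction step in Lemma~\ref{weak_ext_comp} does not destroy angle-positivity.
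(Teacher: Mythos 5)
Your proposal is correct and follows essentially the same route as the paper's proof: write $f=g\circ\log_k$ via Proposition~\ref{1.11}, apply Proposition~\ref{exp_ext_prop} to $g$, and compose with the angle-positive $(0,-k)$-extension of $\log_k$ using Lemmas~\ref{weak_ext_comp} and~\ref{comp_expansive}. The only cosmetic difference is that you apply Proposition~\ref{exp_ext_prop} with $\eh(g)$ exactly, whereas the paper uses $g\in\E_{\eta+k}$; both yield the same $(\eta,\lambda)$ after the bookkeeping in Lemma~\ref{weak_ext_comp}.
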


\begin{proof}
	By Proposition \ref{1.11}, there exist $k \in \NN$ and $g \in \E_{\eta+k}$ such that $f = g \circ \log_k$.  By Proposition \ref{exp_ext_prop}, the germ $g$ has an expansive, angle-positive $(\eta+k,\lambda+k)$-extension $\g:\frU' \into \frV'$.  By Examples \ref{levelled_maps} and \ref{expansive_expl} and Lemmas \ref{weak_ext_comp} and \ref{comp_expansive}, $\log_k$ has an angle-positive $(0,-k)$-extension $\llog_k:\frU \into \frU''$; in particular, we may assume that $\frU'' \subseteq \frU'$.  Hence, again by Lemmas \ref{weak_ext_comp} and \ref{comp_expansive}, the map $\g \circ \llog_k:\frU \into \frV:= \g(\frU'')$ is an angle-positive $(\eta,\lambda)$-extension of $f$.
\end{proof}

The following gives the left-to-right implication of the Continuation Corollary:

\begin{cor}
	\label{ltr-implication}
	Let $f \in \H$ and set $\eta:= \max\{0,\eh(f)\}$.  Then there exist an $(\eta-1)$-domain $\frU$ and a half-bounded, analytic continuation $\f:\frU \into \LL$ of $f$.
\end{cor}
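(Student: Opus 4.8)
The strategy is to reduce everything to Theorem \ref{I_ext_prop}, splitting into cases according to the behaviour of $f$ at $+\infty$ (recall every germ in $\H$ is eventually monotone, so $f(+\infty)$ exists in $\RR\cup\{\pm\infty\}$). \emph{If $f\in\I$}, Theorem \ref{I_ext_prop} provides an angle-positive $(\eta,\lambda)$-extension $\f:\frU\into\frV$ of $f$ with $\lambda=\level(f)$; by Definition \ref{weakly_extending} the domain $\frU$ is an $(\eta-1)$-domain, and clause (1c) gives $|\f(x)|\ge h_1(|x|)$ for some $h_1\in\I$, so $|\f(x)|\to\infty$ and $1/\f$ is bounded near $\infty$, i.e.\ $\f$ is half-bounded. \emph{If $f(+\infty)=-\infty$}, then $-f\in\I$; since each $\H_{\le k}$ is a field (Corollary \ref{eh-cor}(3)) we have $\eh(-f)=\eh(f)$, so $\eta$ is unchanged, and the previous case gives a half-bounded continuation $\g:\frU\into\LL$ of $-f$ with $|\g|\to\infty$ on an $(\eta-1)$-domain $\frU$. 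Composing with the biholomorphism of $\LL$ given by $(r,\theta)\mapsto(r,\theta+\pi)$, which covers $z\mapsto-z$, turns $\g$ into a continuation of $f$ on $\frU$ of the same modulus, again half-bounded.

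\emph{If $f$ is bounded}, set $c:=f(+\infty)\in\RR$; the case $f\equiv0$ being trivial and $f$ a nonzero constant being immediate (a constant map into $\LL$), we may assume $\epsilon:=f-c$ is a nonzero small germ, and then $\eta=\max\{0,\eh(\epsilon)\}$. It suffices to produce a $\CC$-valued analytic continuation $\mathfrak{e}$ of $\epsilon$ on an $(\eta-1)$-domain $\frU$ with $|\mathfrak{e}(x)|\to0$: if $c\ne0$, the map $x\mapsto c+\mathfrak{e}(x)$ is a $\CC$-valued continuation of $f$ tending to $c\ne0$, hence nonvanishing on $\frU\cap H_\LL(R)$ for large $R$ (a simply connected real domain), so it has a holomorphic logarithm there and lifts to a bounded $\LL$-valued continuation of $f$ on the $(\eta-1)$-domain $\frU\cap H_\LL(R)$; if $c=0$, then $f=\epsilon$, and normalizing $\mathfrak{e}$ as in Remark \ref{support_rmk} so that its leading asymptotics is a nonzero multiple of the continuation of $\lm(\epsilon)$, the map $\mathfrak{e}$ is itself nonvanishing near $\infty$ and lifts to an $\LL$-valued continuation of $f$ of modulus $\to0$.

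It remains to construct such an $\mathfrak{e}$. First assume $\epsilon\in\E$, so $\eh(\epsilon)=\eta\ge0$. Writing $\epsilon=\epsilon_0+\dots+\epsilon_\eta$ with $\epsilon_j\in\P\E_j$ small (Lemma \ref{1.5}), each $\epsilon_j$ is of the form to which Proposition \ref{derivative_of_small} applies (the remark following it), the required $(j,j)$-extendability of the large monomials involved being precisely assumption $(\ast)_\eta$, which is established inside the proof of Proposition \ref{exp_ext_prop}. Proposition \ref{derivative_of_small} then yields continuations $\mathfrak{e}_j$ of $\epsilon_j$ with $|\mathfrak{e}_j(x)|\le B_j/|x|$ for large $x$ (the level-$j$ estimate giving $B_j=1$ when $j\ge1$); passing to a common $(\eta-1)$-subdomain and summing gives $\mathfrak{e}$ with $|\mathfrak{e}(x)|\le B/|x|$. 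For general small $\epsilon\in\H$, write $\epsilon=\epsilon_0\circ\log_m$ with $\epsilon_0\in\E$ small (Proposition \ref{1.11}); then $\epsilon_0$ is small with $\eh(\epsilon_0)=\eh(\epsilon)+m\ge0$, so the case just treated gives a continuation $\mathfrak{e}_0$ of $\epsilon_0$ on an $(\eh(\epsilon_0)-1)$-domain $\mathfrak{W}$. Since $\log_m$ is $(0,-m)$-extendable (Example \ref{extendable_expl}, Lemma \ref{weak_ext_comp}), fix such an extension $\llog_m:\frU'\into\frU''$; applying clause (1b) of Definition \ref{weakly_extending} with $\kappa:=\eta-1$ (which is $\ge-1$) and $\frV':=\mathfrak{W}\cap\frU''$, an intersection of an $(\eh(\epsilon_0)-1)$-domain and an $(m-1)$-domain and hence a $(\kappa+m)$-domain, produces an $(\eta-1)$-domain $\frU'''\subseteq\frU'$ with $\llog_m(\frU''')\subseteq\mathfrak{W}$; then $\mathfrak{e}:=\mathfrak{e}_0\circ\llog_m\rest{\frU'''}$ is a continuation of $\epsilon$ with $|\mathfrak{e}(x)|\to0$ on an $(\eta-1)$-domain, as required.

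The main obstacle is the bounded case: both the domain bookkeeping needed to land the composition $\mathfrak{e}_0\circ\llog_m$ on an $(\eta-1)$-domain exactly—where clause (1b) of Definition \ref{weakly_extending} and the stability of $k$-domains under intersection do the work—and the passage from a $\CC$-valued continuation of a small germ to an $\LL$-valued one, which forces one to check, via the normalization of Remark \ref{support_rmk}, that the continuation of Proposition \ref{derivative_of_small} is nonvanishing near $\infty$.
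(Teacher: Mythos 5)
Your overall strategy is workable, and the cases $f\in\I$, $f(+\infty)=-\infty$, and $f$ bounded with nonzero limit are essentially correct, but you take a far longer route than the paper: for $f\notin\I$ the paper simply picks $c\in\RR$ so that (up to sign) $1/(f-c)\in\I$, notes this germ has the same exponential height as $f$ because $\H_{\le k}$ is a field (Corollary \ref{eh-cor}(3)), applies the Continuation Theorem \ref{I_ext_prop} to it, takes the reciprocal \emph{inside} $\LL$ (so nonvanishing and half-boundedness come for free), and finally re-adds the constant $c$ by composing with the inverse of a suitable branch $\sigma_c$ of $\pi$. Your reconstruction of a $\CC$-valued continuation of $\epsilon=f-c$ from Proposition \ref{derivative_of_small}, assumption $(\ast)_\eta$ and clause (1b) of Definition \ref{weakly_extending} re-derives machinery that the Continuation Theorem already packages, and it is exactly this detour that creates the one genuine problem in your argument.

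The gap is in the subcase $c=0$: you assert that, after ``normalizing as in Remark \ref{support_rmk}'', the summed continuation $\mathfrak{e}=\mathfrak{e}_0+\cdots+\mathfrak{e}_\eta$ is nonvanishing near $\infty$ on the whole $(\eta-1)$-domain and hence lifts to $\LL$. Remark \ref{support_rmk} only normalizes the series representation of the \emph{real} germ and says nothing about the behaviour of the continuation off the real axis; the only information you have about the $\mathfrak{e}_j$ are the upper bounds $|\mathfrak{e}_j(x)|\le 1/e_j(|x|)$ from Proposition \ref{derivative_of_small}, which give no lower bound on $|\mathfrak{e}|$ and therefore no nonvanishing. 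To repair this you must either factor $\epsilon=a\,\lm(\epsilon)\,u$ with $u$ a unit at $+\infty$ (Remark \ref{simple_sum}(3)) and continue the large monomial $1/\lm(\epsilon)$ by Proposition \ref{exp_ext_prop} and the unit by Corollary \ref{unit_lemma}, which yields an $\LL$-valued (hence nonvanishing) continuation directly, or, as the paper does, apply the Continuation Theorem to $\pm1/\epsilon\in\I$ and invert in $\LL$. A smaller point of the same kind: when you take a holomorphic logarithm of $c+\mathfrak{e}$ you call $\frU\cap H_\LL(R)$ a ``simply connected real domain'', but a general $(\eta-1)$-domain is only sandwiched between real domains and need not itself be simply connected; shrink first to an inner real domain $\frU_{g_1}$ with $g_1\in\Hang_{\eta-1}$, which is again an $(\eta-1)$-domain, before lifting.
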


\begin{proof}
	If $f \in \I$, the conclusion follows from the Continuation Theorem, so we assume from now on that $f \notin \I$.  Then there exists $c \in \RR$ such that $\frac1{f-c} \in \I$.  By the Continuation Theorem, there is an $(\eta,\lambda)$-extension $\g:\frU \into \frV$, where $\lambda:= \level\left(\frac1{f-c}\right)$.  Therefore, $1/\g:\frU \into \LL$ is a half-bounded, analytic continuation of $f-c$; in particular, we are done if $c=0$.  So assume also that $c \ne 0$, and let $r>0$ be such that $|1/\g(x)| < |c|/2$ for $x \in \frU \cap H_\LL(r)$.  If $c>0$, we set $\sigma_c:= \pi_0$, while if $c<0$, we let $\sigma_c$ be the restriction of $\pi$ to $\set{x \in \LL:\ 0 < \arg x < 2\pi}$.  Then the map $\f:\frU \cap H_\LL(r) \into \LL$ defined by $$\f(x):= \sigma_c^{-1}\left(\left(\pi \circ \frac1{\g}\right)(x) + c\right)$$ is a half-bounded, analytic continuation of $f$.
\end{proof}

\subsection*{Complex continuation}
For $\eta \ge -1$, we call a set $U \subseteq \CC$ an \textbf{$\eta$-domain} if there exists an $\eta$-domain $\frU \subseteq \LL$ such that $U = \pi(\frU)$.  For example, $\CC \setminus (-\infty,0]$ as well as each half-plane $H(a)$ are 0-domains, for $a \ge 0$, while it follows from Example \ref{al_expls}(4) that the strip $S = \set{z \in \CC:\ \re z > 0,\ |\im z| < \pi}$ is a 1-domain.  

What can be said about complex analytic extensions is exemplified by the germs $\exp$ and $\log$: $\exp$ has a non-injective analytic extension $\bexp$ on the 0-domain $\CC \setminus (-\infty,0]$.  Its image is the $(-1)$-domain $\pi(\U)$, where $\U = \eexp(S_\LL(\pi))$; in particular, the extension $\bexp$ is half-bounded.  The Continuation Theorem \ref{I_ext_prop} also implies that $\bexp$ maps $k$-domains contained in $\CC \setminus (-\infty,0]$ to $(k-1)$-domains, for $k \ge 0$, that is, it maps $k$-domains into $(k-\level(\exp))$-domains.  Moreover, it maps points with positive imaginary parts in $S$ to points with positive imaginary parts.  Correspondingly, the analytic extension $\blog$ of $\log$ to $\CC\setminus((-\infty,0] \cup \bar B(0,1))$ maps $k$-domains into $(k-\level(\log))$-domains.

\begin{nrmk}
	\label{eta-domain-rmk}
	Because $\eta$-domains in $\LL$ are connected and each of the lines $\{\arg x = \pi\}$ and $\{\arg x = -\pi\}$ topologically separates $\LL$, a set $U \subseteq \CC \setminus (-\infty,0]$ is an $\eta$-domain if and only if there is an $\eta$-domain $\frU \subseteq S_\LL(\pi)$ such that $U = \pi_0(\frU)$.  Thus, if $U \subseteq \CC$ is an $\eta$-domain, then $\eta > 0$ implies $U \subseteq \CC \setminus (-\infty,0]$, while $U \subseteq \CC \setminus (-\infty,0]$ implies $\eta \ge 0$.
\end{nrmk} 

\begin{cor}[Complex Continuation]
	\label{complex_cont_cor}
	Let $f \in \H$, set $\eta:= \max\{1,\eh(f)\}$ and $\lambda:= \level(f)$.
	\begin{enumerate}
		\item There are an $(\eta-1)$-domain $U \subseteq \CC \setminus (-\infty,0]$ and a half-bounded complex analytic continuation $\ff:U \into \CC$ of $f$.   
		\item If $f \in \I$, there are an $(\eta-1)$-domain $U \subseteq \CC \setminus (-\infty,0]$, an $(\eta-\lambda-1)$-domain $V \subseteq \CC$ and a complex analytic continuation $\ff:U \into V$ of $f$ such that 
		\begin{enumerate}
			\item $|\ff(z)| \to \infty$ as $|z| \to \infty$, for $z \in U$;
			\item for every $k$-domain $W \subseteq U$, the image $\ff(W)$ is a $(k-\lambda)$-domain, and if $\ff(W) \subseteq \CC \setminus (-\infty,0]$, then the restriction $\ff\rest{W}:W \into \ff(W)$ is biholomorphic and we have
			$$\sign(\arg\ff(z)) = \sign(\arg z) = \sign(\im z) =
			\sign(\im\ff(z))$$ for $z \in W$;
			\item if $f \prec p_2$, then $\ff(U \cap H(0)) \subseteq \CC \setminus (-\infty,0]$.
		\end{enumerate}
	\end{enumerate}
	In each of these situations, if $\,\eh(f) \le 0$, we can choose $U$ such that $H(a) \subseteq U$ for some $a \ge 0$.
\end{cor}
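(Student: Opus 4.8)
The plan is to deduce everything from the Continuation Theorem \ref{I_ext_prop} by projecting the $\LL$-analytic continuation down via $\pi$ (or the standard chart $\pi_0$), and then handling separately the cases $f \in \I$ and $f \notin \I$ using Corollary \ref{ltr-implication} in the latter case. First I would set $\eta := \max\{1,\eh(f)\}$; note that since $\eta \ge 1$, the exponent $\eta-1$ is $\ge 0$, which by Remark \ref{eta-domain-rmk} guarantees that any $(\eta-1)$-domain $\frU \subseteq \LL$ can be taken inside $S_\LL(\pi)$, hence that $\pi_0(\frU) = \pi(\frU) \subseteq \CC \setminus (-\infty,0]$ is an $(\eta-1)$-domain in the sense defined just before the corollary. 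This is the reason the statement uses $\max\{1,\cdot\}$ rather than $\max\{0,\cdot\}$: it forces the image of the chart to avoid the negative real axis, so that $\pi_0$ is genuinely a biholomorphism onto its image and the complex continuation is single-valued.

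For part (1): if $f \notin \I$, Corollary \ref{ltr-implication} already gives an $(\eta'-1)$-domain $\frU \subseteq \LL$ with $\eta' = \max\{0,\eh(f)\}$ and a half-bounded $\LL$-analytic continuation $\f:\frU \into \LL$; shrinking $\frU$ into $S_\LL(\pi)$ (legitimate since $\eta-1 \ge 0$) and postcomposing with $\pi_0$ (respectively with $\pi$ on a translated sheet, exactly as in the proof of Corollary \ref{ltr-implication}) yields the complex continuation $\ff := \pi_0 \circ \f$ on $U := \pi_0(\frU)$. If $f \in \I$, apply the Continuation Theorem to get an $(\eta-1,\lambda)$-map $\f:\frU \into \frV$; again shrink $\frU \subseteq S_\LL(\pi)$ and set $\ff := \pi_0 \circ \f$, noting that clause (1c) of Definition \ref{weakly_extending} forces $|\f(x)|$, hence $|\ff(z)|$, to go to $\infty$, which gives (2a). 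Note that $\eh(f) \le \eta-1$ always, and half-boundedness is preserved by $\pi_0$ since $\pi_0$ is an isometry-like chart (more precisely, $|\pi_0(x)| = |x|$).

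For part (2): with $\ff = \pi_0 \circ \f$ as above and $\f$ the angle-positive $(\eta-1,\lambda)$-extension, the fact that $\f$ maps $k$-domains to $(k-\lambda)$-domains transfers directly to $\ff$ via the definition of $\eta$-domains in $\CC$. For (2b), when $\ff(W) \subseteq \CC \setminus (-\infty,0]$ we may realize $\ff(W)$ as $\pi_0$ of a $(k-\lambda)$-domain in $S_\LL(\pi)$, so $\ff\rest{W} = \pi_0 \circ \f \circ \pi_0^{-1}$ is biholomorphic onto its image; the four-way sign equality follows from angle-positivity of $\f$ together with the elementary fact that on $S_\LL(\pi/2)$, hence on sufficiently narrow domains, $\sign(\arg x) = \sign(\im \pi_0(x))$ — one must check that the relevant $k$-domains are narrow enough, which holds for large modulus. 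For (2c), the hypothesis $f \prec p_2$ means (via Facts \ref{level_facts} and the transfer through $\nu_f$, cf. the behavior of $\alevel$) that the continuation of $f$ keeps the argument bounded by something making the image land in a half-plane-type region avoiding $(-\infty,0]$; I would argue that $f \prec p_2$ forces $\ff(U \cap H(0))$ to have arguments bounded away from $\pm\pi$, using the expansiveness/angle estimates underlying the Continuation Theorem. The final clause "if $\eh(f) \le 0$ we can choose $U \supseteq H(a)$" follows because then $\eta = 1$, $\eta-1 = 0$, and $H(a)$ is a $0$-domain (stated right after Remark \ref{eta-domain-rmk}), so we may enlarge the $0$-domain $\frU$ to contain $H_\LL(a)$ while keeping it inside $S_\LL(\pi/2) \subseteq S_\LL(\pi)$.

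The main obstacle I anticipate is part (2c) and the sign statement in (2b): both require translating the "angle-bounded" behavior of the $\LL$-continuation (which in the Continuation Theorem is only packaged implicitly through which $k$-domains appear) into a concrete statement that $|\arg \ff(z)| < \pi$, respectively $< \pi/2$. This likely needs the quantitative content of Proposition \ref{alevel_cor} — specifically that when $\lambda = \level(f) \ge 1$ or when $f \prec p_2$, the angular level of the image domains is $\ge 0$, forcing $f_{\frV,\arg} \le \pi/2$ — rather than just the formal $k \mapsto k-\lambda$ bookkeeping. Everything else is a routine transfer of the $\LL$-statements through the chart $\pi_0$.
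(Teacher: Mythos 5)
Your overall route (project the $\LL$-continuations from Corollary \ref{ltr-implication}, resp.\ the Continuation Theorem, through the chart $\pi_0$ on the source and $\pi$ on the target, and use Remark \ref{eta-domain-rmk} to stay in $\CC\setminus(-\infty,0]$) is exactly the paper's, and parts (1), (2a) and (2b) go through essentially as you sketch --- indeed your use of clause (1c) of Definition \ref{weakly_extending} for (2a) is if anything cleaner than the paper's compactness argument, and the sign identity in (2b) needs no ``narrowness for large modulus'': once $\ff(W)\subseteq\CC\setminus(-\infty,0]$, connectedness forces $\f(\pi_0^{-1}(W))\subseteq S_\LL(\pi)$, where $\sign(\arg x)=\sign(\im\pi_0(x))$ holds outright. (Two small slips: the target composition must be with $\pi$, not $\pi_0$, since the values of $\f$ need not lie in $S_\LL(\pi)$; and for the final clause the point is not that you may ``enlarge'' the domain, but that when $\eh(f)\le 0$ the $\LL$-domain is a $(-1)$-domain, hence not angle-bounded, so it already contains $\pi_0^{-1}(H(a))$ for some $a$.)

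The genuine gap is (2c), which you yourself flag but do not close. Your proposed mechanism --- that $f\prec p_2$ (or $\lambda\ge 1$) makes the angular level of the image domains $\ge 0$ and thereby forces $f_{\frV,\arg}\le\pi/2$ --- is false: germs of angular level $0$ include unbounded ones such as $\sqrt{\log}$ (note $\Hang_0=\D_{-1}\cdot\log\ni\sqrt\log$), so a $0$-domain, let alone a $(k-\lambda)$-domain, need not be angle-bounded, and no amount of the $k\mapsto k-\lambda$ bookkeeping yields the pointwise containment $\f(\frU\cap S_\LL(\pi/2))\subseteq S_\LL(\pi)$ that (2c) requires. The paper's argument is a specific pointwise trick you are missing: since $f\prec p_2$, the germ $h:=p_2/f$ lies in $\I$, so it too has an angle-positive continuation $\h$ on (a shrinking of) the same domain; by the identity theorem $\h=\p_2/\f$ there, and additivity of $\arg$ under multiplication in $\LL$ gives, for $\arg x>0$, $0<\arg\h(x)=2\arg x-\arg\f(x)$, hence $0<\arg\f(x)<2\arg x$ by angle-positivity of $\f$, and symmetrically for $\arg x<0$. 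This pointwise bound $|\arg\f(x)|<2|\arg x|$ immediately gives $\f(\frU\cap S_\LL(\pi/2))\subseteq S_\LL(\pi)$, i.e.\ $\ff(U\cap H(0))\subseteq\CC\setminus(-\infty,0]$. Without some argument of this kind (applying the Continuation Theorem to an auxiliary germ and exploiting angle-positivity of both continuations), your plan does not prove (2c).
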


\begin{proof}
	(1) Let $\f:\frU \into \LL$ be an $\LL$-analytic continuation of $f$ obtained from Continuation Corollary \ref{H_ext_cor}, and define $U:= \pi_0(\frU)$ and $\ff:U \into \CC$ by $$\ff(z):= \pi(\f(\pi_0^{-1}(z))).$$  
	
	(2) Let $\f:\frU \into \frV$ be an $\LL$-analytic continuation of $f$ obtained from the Simplified Continuation Theorem \ref{simplified_ext}, and define $U:= \pi_0(\frU)$, $V:= \pi(\frV)$ and $\ff:U \into V$ by $$\ff(z):= \pi(\f(\pi_0^{-1}(z))).$$   Since $\f$ is biholomorphic, after replacing $\frU$ by an $(\eta-1)$-domain $\frV$ such that $\cl(\frV) \subseteq \frU$ if necessary, the continuity of $f^{-1}$ on the closure of $\frU$ implies that preimages under $\f$ of compact sets are compact; in particular, $|\f(x)| \to \infty$ as $|x| \to \infty$, for $x \in \frU$, which implies (a).  Second, (b) follows from the facts that $\f$ is biholomorphic, maps $k$-domains to $(k-\lambda)$-domains and is angle-positive, and from Remark \ref{eta-domain-rmk}.  
	
	Third, assume that $f \prec p_2$, so that $h:= p_{2}/f \in \I$.  Let $\h:\frU' \into \frV'$ be an $\LL$-analytic continuation of $h$ obtained from the Simplified Continuation Theorem \ref{simplified_ext}.  Since $\eh(h) \le 0$, we may assume that $\frU' = \frU$.  Since $\h$ is angle-positive we have, for $x \in \frU$ with $\arg x > 0$, that
	\begin{equation*}
		0 < \arg\h(x) = 2\arg x - \arg\f(x).
	\end{equation*}
	Since $\f$ is also angle-positive, it follows that $0 < \arg\f(x) < 2\arg x$ for such $x \in \frU$.  Arguing similarly for $x \in \frU$ with $\arg x < 0$, we obtain 
	\begin{equation*}
		|\arg\f(x)| < 2 |\arg x| \quad\text{for } x \in \frU;
	\end{equation*}
	in particular, we have $\f(\frU \cap S_\LL(\pi/2)) \subseteq S_\LL(\pi)$, that is, $\ff(U \cap H(0)) \subseteq \CC \setminus (-\infty,0]$, which proves (c).
	
	Finally, if $\eh(f) \le 0$, then $\frU$ is a $(-1)$-domain; in particular, $\frU$ is not angle-bounded, so that $H(a) \subseteq U$ for some $a \ge 0$.
\end{proof}

%\newpage
\section{Upper bounds on exponential height and simple germs}
\label{geom_simple_section}

In this section, we consider the following question: given germs $f,g \in \I$, what can be said about $\eh\left(f \circ g^{-1}\right)$ in terms of $\eh(f)$ and $\eh(g)$?  

\begin{expl}
	\label{question_expl}
	Let $f,g \in \I$ be such that $\eh(f), \eh(g) \le 0$ and $g > f$.  Then $x > f \circ g^{-1}$ and $\lambda:= \level(f \circ g^{-1}) \le 0$, but we do not know $\eh(f \circ g^{-1})$.  Nevertheless, by the Continuation Theorem, there are a $(0,\level(f))$-extension $\f:\frU_1 \into \frV_1$ of $f$ and a $(0,\level(g))$-extension $\g:\frU_2 \into \frV_2$; by the definition of angular level for maps, we may assume that $\frU_2 \subseteq \frU_1$.  Hence $\f \circ \g^{-1}:\frV_2 \into \frV_1$ is analytic continuation of $f \circ g^{-1}$.  Note, however, that $\frV_2$ can be arbitrarily small, even if $\level(f) = \level(g)$; moreover, we do not know if $\f \circ \g^{-1}$ is an $(\eta,\lambda)$-extension of $f \circ g^{-1}$, for some appropriate $\eta$.
\end{expl}

The goal of this section is to get a better result than in the previous example, at least for certain germs in $\I$.  To get a handle on compositional inversion, we note the following uniqueness principle:

\begin{prop}
\label{P-L}
Let $k \ge -1$, $\frU \subseteq \LL$ be a $k$-domain and $\ff:\frU \into \CC$ be a bounded holomorphic function.  Assume that there exists $h \in \D$ of level $k+2$ such that $$|\ff(x)| = O(h(x)) \quad\text{as } x \to \infty \text{ in } \RR.$$  Then $\ff = 0$.
\end{prop}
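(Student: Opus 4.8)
The plan is to induct on $k \geq -1$. The inductive step will be a reduction along $\eexp$ (equivalently $\llog$) that simultaneously drops the angular level of the domain by one and the level of the dominating germ $h$ by one, so that the balance ``$\level(h) = k+2$'' is preserved; all the real work then lies in the base case $k = -1$, where the Phragm\'en--Lindel\"of principle enters.

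First, reductions valid for every $k$. Since $\frU$ is a $k$-domain there is $f \in \Hang_k$ with $\frU_f \subseteq \frU$, and replacing $\ff$ by $\ff\rest{\frU_f}$ (still bounded, still $O(h)$ on $\RR$) it suffices, by the identity theorem and connectedness of $\frU$, to prove $\ff = 0$ on $\frU_f$. For the inductive step ($k \geq 0$) I would note that $\frU_f \subseteq S_\LL(\pi/2)$: when $k \geq 1$ this holds because $f \in \D$ by Proposition \ref{alevel_cor}(3), hence $f \to 0$; when $k = 0$ one first replaces $f$ by $\min\{f,\pi/3\}$, which still lies in $\Hang_0$ (in the Hardy field $\H$ the germ $f-\pi/3$ has eventual constant sign, so $\min\{f,\pi/3\}$ is $f$ or $\pi/3$ as a germ, and $\pi/3 \in \Hone$ has angular level $0$ by Example \ref{al_expls}(1)). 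Since $\eexp\rest{S_\LL(\pi/2)}$ is injective and has angular level $1$ (Example \ref{levelled_maps}), $\frV := \eexp(\frU_f)$ is a $(k-1)$-domain; put $\gg := \ff \circ \llog$ on $\frV$, a bounded holomorphic function. For real $y \to +\infty$ one has $\llog(y) = \log y$, so $|\gg(y)| = |\ff(\log y)| = O(h(\log y))$; as $\level(\log) = -1$, Facts \ref{level_facts}(3) give $h\circ\log \in \D$ with $\level(h\circ\log) = (k+2)-1 = (k-1)+2$. Applying the inductive hypothesis to $\gg$ on the $(k-1)$-domain $\frV$ yields $\gg = 0$, hence $\ff = 0$ on $\frU_f$.

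It remains to settle the base case $k = -1$: $\ff$ bounded holomorphic on $\frU_f$ with $f \in \Hang_{-1}$, and $|\ff(x)| = O(h(x))$ on $\RR$ with $\level(h) = 1$. Passing to the $\Log$-chart, $\bar\ff := \ff \circ \Exp$ is bounded and holomorphic on $\Log(\frU_f) = \{u + iv :\ |v| < f(e^u)\}$. Unwinding the definition of angular level shows that every $f \in \Hang_{-1}$ satisfies $f(t) \geq c\,\log t / (\log_2 t)^\nu$ for large $t$, for suitable $c > 0$ and $\nu \in \NN$; hence $\Log(\frU_f)$ contains the ``polylogarithmically pinched sector'' $\Sigma := \{u + iv :\ u > u_0,\ |v| < c\,u/(\log u)^\nu\}$. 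An explicit conformal equivalence $\Psi$ of $\Sigma$ with a right half-plane --- obtained by first normalizing the width via $W = \int^u (\log s)^\nu\,s^{-1}\,ds \asymp (\log u)^{\nu+1}$ and then exponentiating the resulting half-strip --- transports the decay hypothesis; since $\level(h) = 1$ forces $h(x) \leq \exp(-\exp((\log x)^{1/m}))$ for some $m \in \NN$, a calculation shows that $\bar\ff \circ \Psi^{-1}$ decays faster than $e^{-c'|\xi|}$ along $\RR_+$ for every $c' > 0$. The Phragm\'en--Lindel\"of uniqueness theorem for a bounded holomorphic function on a half-plane that decays superexponentially along a boundary ray --- Ilyashenko and Yakovenko \cite[Lemma 24.37]{Ilyashenko:2008fk} --- then forces $\bar\ff \circ \Psi^{-1} = 0$, whence $\ff = 0$.

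The delicate step is the base case: one must verify that the conformal expansion of $\Sigma$ onto a half-plane upgrades the ``level-$1$'' decay of $h$ in the variable $x$ to genuine superexponential decay in the half-plane variable. This is precisely where the $+2$ (rather than $+1$) in the hypothesis is spent, and it relies on $\Hang_{-1}$ --- although narrower than a full half-plane --- still being wide enough to contain the domains $\Sigma$, in contrast with the parabolic $0$-domains, for which only the weaker level-$2$ decay would suffice. If \cite[Lemma 24.37]{Ilyashenko:2008fk} is already stated for a sufficiently flexible class of domains, the explicit conformal map can be replaced by a direct appeal.
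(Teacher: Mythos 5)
Your reduction to the base case $k=-1$ (compose with $\llog$, replace $h$ by $h\circ\log$, keep the balance $\level(h)=k+2$) is exactly the paper's first step, carried out there in one shot via $\llog_{k+1}$; that part is fine. The base case, however, rests on two false structural claims and misses the one idea that is actually needed there. First, it is not true that every $f \in \Hang_{-1}$ satisfies $f(t) \ge c\,\log t/(\log_2 t)^\nu$: the germ $f := \log/\bigl(\exp\circ p_2\circ\log_3\bigr)$ lies in $\Hang_{-1}$, because $f/\log \in \D_{-2}$ and hence $f \notin \D_{\ge -1}\cdot\log$, yet $f \prec \log/(\log_2)^\nu$ for every $\nu$; so your pinched region $\Sigma$ need not be contained in $\Log(\frU_f)$. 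In the opposite direction you underestimate these domains: by Lemma \ref{Hsr_lemma} every germ in $\Hang_{-1}$ is infinitely increasing, so a $(-1)$-domain eventually contains $\pi_0^{-1}(H(a))$, a full lifted half-plane, and no conformal map is needed at all --- this is what the paper uses. Second, it is not true that $\level(h)=1$ forces $h(x) \le \exp\bigl(-\exp((\log x)^{1/m})\bigr)$ for some $m$: take $h := 1/\bigl(\exp_4\circ p_{1/2}\circ\log_3\bigr)$, which has level $4-3=1$ but is eventually larger than $\exp\bigl(-\exp((\log x)^{1/m})\bigr)$ for every $m$. Moreover, level-$1$ decay can be subexponential (for $h = 1/\bigl(\exp_2\circ p_{1/2}\circ\log\bigr)$ one has $h(x) \ge e^{-\epsilon x}$ eventually for every $\epsilon>0$), so a direct appeal to \cite[Lemma 24.37]{Ilyashenko:2008fk} on a half-plane is impossible without a further normalization, and your conformal rescaling, resting on the two false bounds, does not supply it.

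The missing step is the paper's second reduction, which normalizes the decay rather than the domain: write $h = g\circ\log_l$ with $g \in \E$ of level $l+1$, choose $\nu$ with $|g| \le 1/(\exp_{l+1}\circ p_{1/\nu})$, and replace $\ff$ by $\ff\circ\eexp_l\circ\p_\nu\circ\llog_l\circ\p_2$. This composition has angular level $0$, so the new domain is again a $(-1)$-domain, while the decay hypothesis along $\RR$ becomes $|\ff(x)| = O\bigl(1/(\exp\circ p_2)\bigr) = O(e^{-x^2})$, genuinely superexponential. Since the $(-1)$-domain contains $\pi_0^{-1}(H(a))$ for large $a$, the Phragm\'en--Lindel\"of-type uniqueness statement \cite[Lemma 24.37]{Ilyashenko:2008fk} applies directly there, and the identity theorem gives $\ff = 0$ on all of $\frU$. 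Without some such renormalization your base case does not go through.
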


\begin{proof}
Replacing $\frU$ by a smaller $k$-domain if necessary, we may assume that $\frU = \llog_{k+1}(\frV)$ for some $(-1)$-domain $\frV \subseteq \LL$.  Then, replacing $\frU$ by $\frV$, $\ff$ by $\ff \circ \llog_{k+1}$ and $h$ by $h \circ \log_{k+1}$ if necessary, we may assume that $k=-1$.

Next, let $l \in \NN$ and $g \in \E$ of level $l+1$ be such that $h = g \circ \log_l$, and let $\nu \in \NN$ be nonzero such that $|g| \le 1/(\exp_{l+1} \circ p_{1/\nu})$.  Then $$\left|h \circ \exp_l \circ p_\nu \circ \log_l \circ p_2\right| \le 1/(\exp \circ p_2);$$ replacing $\frU$ by the $(-1)$-domain $p_{1/2}(\exp_l(p_{1/\nu}(\log_l(\frU))))$ and $\ff$ by $\ff \circ \eexp_l \circ \p_\nu \circ \llog_l \circ \p_2$, we may assume that $h = 1/(\exp \circ p_2)$.

Now the restriction of $\ff$ to $\pi_0^{-1}(H(a))$, for a sufficiently large $a \ge 0$, is bounded, holomorphic and satisfies $|\ff(x)| = o(\exp(-nx))$ as $x \to +\infty$ in $\RR$, for all $n \in \NN$.  Therefore, $\ff = 0$ by \cite[Lemma 24.37]{Ilyashenko:2008fk} and the identity theorem for holomorphic functions.
\end{proof}

From Proposition \ref{P-L} and the Continuation Theorem, we obtain the right-to-left implication of the Continuation Corollary:  

\begin{prop}
\label{max_eh}
Let $f \in \H$ and $\eta \in \NN$, and assume that $f$ has a half-bounded analytic continuation $\f:\frU \into \KK$, where $\frU \subseteq \LL$ is an $(\eta-1)$-domain and $\KK = \CC$ or $\KK = \LL$.  Then $\eh(f) \le \eta$.
\end{prop}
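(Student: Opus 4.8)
The plan is to argue by contradiction, using Proposition \ref{P-L} to kill a difference of continuations. Suppose $\eh(f) \ge \eta+1$. Replacing $f$ by $1/f$ and $\f$ by $1/\f$ if necessary (which leaves $\eh(f)$ unchanged, since $\H_{\le\eh(f)}$ is a field by Corollary \ref{eh-cor}(3), and disposing of the trivial case $f=0$), we may assume $\f$ is bounded; composing with $\pi$ if $\KK=\LL$, we obtain a bounded holomorphic $\ff:\frU\into\CC$ extending $f$. Since $\H$ is a Hardy field, $f$ has a finite limit $c:=f(+\infty)\in\RR$; if $f\equiv c$ then $\eh(f)=-\infty$, contrary to assumption, so $s:=f-c$ is a nonzero small germ with $\eh(s)=\eh(f)\ge\eta+1$, and $\hat s:=\ff-c$ is a bounded holomorphic extension of $s$ on $\frU$.

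Next I would decompose $s$ by exponential height. By Corollary \ref{direct_sum2} write $s=s_{\mathrm{low}}+s_{\mathrm{hi}}$, where $s_{\mathrm{low}}$ is the sum of the $\H_m$-components of $s$ with $m\le\eta$ and $s_{\mathrm{hi}}$ the sum of those with $m>\eta$. Writing $s=s_0\circ\log_J$ with $s_0\in\E$ (Proposition \ref{1.11}), $s_0$ is small, so by Corollary \ref{direct_sum} it lies in $\bigoplus_m\P\E_m^0$ (the $\P\E_m^\infty$-parts would be large, the constant part would be nonzero); hence every homogeneous component of $s$ is small, and since a monomial occurring in a component of $\P\E_m^0$ has exponential height exactly $m$, the leading monomial $\lm(s_{\mathrm{hi}})\in\la$ is small with $\eh(\lm(s_{\mathrm{hi}}))\ge\eta+1$. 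As $\eh(f)\ge\eta+1$, we also have $s_{\mathrm{hi}}\ne0$. Now $\lm(s_{\mathrm{hi}})\in\D_{\ge\eta+1}$ and $s_{\mathrm{hi}}\asymp\lm(s_{\mathrm{hi}})$ by Proposition \ref{transmonomials}(2), so $s_{\mathrm{hi}}$ is $\D$-valued of level $\ge\eta+1$ (Proposition \ref{D_level_lemma}(7)); since any $\D$-germ of level $>\eta+1$ is eventually dominated by one of level exactly $\eta+1$ (Facts \ref{level_facts}(2)), there is $h\in\D$ of level $\eta+1=(\eta-1)+2$ with $|s_{\mathrm{hi}}(x)|=O(h(x))$ on $\RR$.

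On the other hand, $s_{\mathrm{low}}$ is a small germ with $\eh(s_{\mathrm{low}})\le\eta$, so by Corollary \ref{ltr-implication} it has a half-bounded continuation on a $j$-domain with $j\le\eta-1$; restricting, I may take its domain to be an $(\eta-1)$-domain $\frU_1\subseteq\frU$ (two $(\eta-1)$-domains intersect in a set containing an $(\eta-1)$-domain, as $\Hang_{\eta-1}$ is closed under pointwise minimum). If $s_{\mathrm{low}}\ne0$, then this continuation $\hat s_{\mathrm{low}}$ is in fact bounded: otherwise $1/\hat s_{\mathrm{low}}$ is bounded, so $1/s_{\mathrm{low}}$ is bounded on $\RR$, contradicting $s_{\mathrm{low}}\to0$ (and if $s_{\mathrm{low}}=0$ take $\hat s_{\mathrm{low}}=0$). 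Then $\Psi:=\hat s-\hat s_{\mathrm{low}}$ is bounded and holomorphic on the $(\eta-1)$-domain $\frU_1$, with $\Psi\rest{\RR}=s-s_{\mathrm{low}}=s_{\mathrm{hi}}$, so $|\Psi(x)|=O(h(x))$ with $h\in\D$ of level $(\eta-1)+2$. Proposition \ref{P-L} forces $\Psi\equiv0$, hence $s_{\mathrm{hi}}=\Psi\rest{\RR}=0$, contradicting $s_{\mathrm{hi}}\ne0$. Therefore $\eh(f)\le\eta$.

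I expect the main obstacle to be the middle paragraph: verifying carefully, through the interplay of the decompositions $\E=\P\E_{-\infty}\oplus\bigoplus_m(\P\E_m^\infty\oplus\P\E_m^0)$ and $\H=\bigoplus_n\H_n$ and the passage between $\E$ and $\H$ via right-composition with $\log_J$, that the ``high-height'' part $s_{\mathrm{hi}}$ of $f$ has leading monomial of exponential height $\ge\eta+1$ and so decays at least as fast as a $\D$-germ of level $\eta+1$ — exactly the decay rate that makes Proposition \ref{P-L} applicable on an $(\eta-1)$-domain. The remaining bookkeeping with $\eta$-domains (intersections, restriction) and the passage between $\LL$- and $\CC$-valued continuations is routine.
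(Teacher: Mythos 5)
Your proposal is correct and follows essentially the same route as the paper: split $f$ (up to a constant and inversion) into a part of exponential height at most $\eta$ and a part all of whose principal monomials have height greater than $\eta$, continue the low part via the Continuation Theorem/Corollary \ref{ltr-implication}, subtract it from the given continuation, and kill the bounded difference with Proposition \ref{P-L} since the high part has level at least $\eta+1$. The only differences are cosmetic (contradiction framing, arguing the level bound through $\lm$ and $\asymp$ rather than directly from the monomials, and the explicit $\pi$-composition and domain-intersection bookkeeping), all within the paper's level of rigor.
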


\begin{proof}
Replacing $f$ by $1/f$ if necessary, we may assume that $f$ is bounded; in particular, every principle monomial of $f$ is either a constant or small.  Hence
by Lemma \ref{1.5}, there exist $a \in \RR$ and $f_1, f_2 \in \D$ such that $\eh(f_1) \le \eta$ and every $m \in \M(f_2)$ is small and satisfies $\eh(m) > \eta$, and such that $f = a + f_1 + f_2$.  We claim that $f_2 = 0$, which then proves the proposition.  

To see the claim, by Corollary \ref{eh-cor}(3), the germ $1/f_1 \in \I$ has exponential height at most $\eta$, so by the Continuation Theorem $1/f_1$ is $(\eta,\level(f_1))$-extendable.  So after shrinking $\frU$ if necessary, $f_1$ has a bounded analytic continuation $\f_1:\frU \into \LL$.  Therefore, $f_2$  has a bounded analytic continuation $\ff_2:\frU \into \CC$ defined by $$\ff_2:= \begin{cases}
	\pi \circ \f - a - \pi \circ \f_1 &\text{if } \KK = \LL, \\ \f - a - \pi \circ \f_1 &\text{if } \KK = \CC;
\end{cases}$$ on the other hand, since $\eh(m) \ge \eta+1$ for every principle monomial of $f_2$, we have $\level(f_2) \ge \eta+1$.  Hence $\f_2 = 0$ by Proposition \ref{P-L}; in particular, $f_2 = 0$, as claimed.
\end{proof}

Recall from Corollary \ref{eh-cor}(3) that $\H_{\le 0} = \set{f \in \H:\ \eh(f) \le 0}$ is a Hardy field.

\begin{cor}
	\label{comp_inequality}
	The Hardy field $\H_{\le 0}$ is stable under composition; that is, given $f,g \in \H_{\le 0}$ such that $g \in \I$, we have $f \circ g \in \H_{\le 0}$.
\end{cor}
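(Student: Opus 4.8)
The plan is to compose the analytic continuation of $f$ coming from the Continuation Corollary with the one for $g$ coming from the Continuation Theorem, and then to read off the bound from Proposition~\ref{max_eh}. First note that $f\circ g\in\H$, since $f,g$ are germs at $+\infty$ of definable functions and $g\in\I$; hence, by Proposition~\ref{max_eh} with $\eta=0$, it suffices to exhibit a $(-1)$-domain $\frU'$ and a half-bounded analytic continuation $\frU'\into\LL$ of $f\circ g$.

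To do so, I would first apply Corollary~\ref{ltr-implication} to $f$: as $\eh(f)\le 0$, there are a $(-1)$-domain $\frU_1$ and a half-bounded analytic continuation $\f:\frU_1\into\LL$ of $f$. Next set $\lambda:=\level(g)$; by Corollary~\ref{eh-cor}(2), $\lambda\le\eh(g)\le 0$, so the Continuation Theorem~\ref{I_ext_prop}, applied with $\eta=\max\{0,\eh(g)\}=0$, yields an angle-positive $(0,\lambda)$-extension $\g:\frU_2\into\frV_2$ of $g$, where $\frU_2$ is a $(-1)$-domain and $\frV_2$ a $(-\lambda-1)$-domain with $-\lambda-1\ge -1$. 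The composition $\f\circ\g$ will be an analytic continuation of $f\circ g$ once the image of an appropriate restriction of $\g$ is forced inside $\frU_1$, and it will be half-bounded, since one of $\f,1/\f$ is bounded and this is preserved by precomposition with $\g$.

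The last point is where the work lies, and it is handled by clause~(1b) of Definition~\ref{weakly_extending} for the $(0,\lambda)$-map $\g$. One checks that $\frV_2\cap\frU_1$ contains a $(-\lambda-1)$-domain $\frV'$: since $\Hone\cup\D\subseteq\D_{\ge-1}\cdot\log$ (by Lemma~\ref{Hsr_lemma} and Facts~\ref{level_facts}), we have $\Hang_{-1}=\hplus\setminus(\D_{\ge-1}\cdot\log)\subseteq\I$, so the $(-1)$-domain $\frU_1$ contains some $\frU_{g_1}$ with $g_1\in\I$; choosing a small enough $h\in\Hang_{-\lambda-1}$ (bounded, and below the germs defining $\frV_2$ and $g_1$) and setting $\frV':=\frU_h$ does it. Applying clause~(1b) with $k=-1$ and the $(k-\lambda)$-domain $\frV'\subseteq\frV_2$, one obtains a $(-1)$-domain $\frU'\subseteq\frU_2$ with $\g(\frU')\subseteq\frV'\subseteq\frU_1$. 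Then $\f\circ(\g\rest{\frU'}):\frU'\into\LL$ is a half-bounded analytic continuation of $f\circ g$ on the $(-1)$-domain $\frU'$, and Proposition~\ref{max_eh} gives $\eh(f\circ g)\le 0$, i.e.\ $f\circ g\in\H_{\le 0}$. The main obstacle is precisely this domain matching — essentially the fact that a $(-1)$-domain is the largest kind of $k$-domain — which the generality of the definitions of $k$-domain and of $(\eta,\lambda)$-map is designed to accommodate; the remaining steps are direct appeals to the Continuation machinery.
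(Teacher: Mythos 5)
Your proposal is correct and follows essentially the same route as the paper: obtain a half-bounded continuation of $f$ on a $(-1)$-domain, take a $(0,\level(g))$-extension of $g$ from the Continuation Theorem, use clause (1b) of Definition \ref{weakly_extending} to fit the image of $\g$ inside the domain of $\f$, and conclude via Proposition \ref{max_eh}. The only (harmless) deviations are that you invoke Corollary \ref{ltr-implication} for arbitrary $f \in \H_{\le 0}$, whereas the paper first treats $f \in \I$ and then reduces the general case via $\frac{1}{f-c}$ and Corollary \ref{eh-cor}(3), and that you spell out the domain-matching step that the paper's appeal to clause (1b) leaves implicit.
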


\begin{proof}
	Assume first that $f \in \I$, and let $k:= \level(f)$ and $l:= \level(g)$.  By the Continuation Theorem, $f$ has a $(0,k)$-extension $\f:\frU_1 \into \frV_1$ and $g$ has a $(0,l)$-extension $\g:\frU_2 \into \frV_2$.  Condition (b) of the definition of $(0,l)$-map implies that there exists a $(-1)$-domain $\frU_2' \subseteq \frU_2$ such that $\g(\frU'_2) \subseteq \frU_1$.  Hence $\f \circ \g:\frU'_2 \into \LL$ is a half-bounded analytic continuation of $f \circ g$, so $\eh(f \circ g) \le 0$ by Proposition \ref{max_eh} and Corollary \ref{eh-cor}(3).
	
	If $f \notin \I$, then there exists $c \in \RR$ such that $\frac1{f-c} \in \I$, so the corollary follows from the previous case and Corollary \ref{eh-cor}(3).
\end{proof}

\begin{cor}
\label{eh_of_inverse}
Let $f \in \I$, set $\eta:=\max\{\eh(f),0\}$ and $\lambda:= \level(f) \le \eta$.  Then $$\eh\left(f^{-1}\right) \le \eta-\lambda;$$ in particular, $f^{-1}$ is $(\eta-\lambda,-\lambda)$-extendable.
\end{cor}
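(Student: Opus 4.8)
The plan is to deduce both assertions from the Continuation Theorem \ref{I_ext_prop} applied to $f$, together with the maximality statement Proposition \ref{max_eh}. Since $f \in \I$ has $\eh(f) \le \eta$ and level $\lambda$, the Continuation Theorem provides an angle-positive $(\eta,\lambda)$-extension $\f:\frU \into \frV$ of $f$; in particular $\frV$ is an $(\eta-\lambda-1)$-domain, and by part (1c) of Definition \ref{weakly_extending} there are $h_1,h_2 \in \I$ of level $\lambda$ with $h_1(|x|) \le |\f(x)| \le h_2(|x|)$ for all large $x \in \frU$. As $\f$ is biholomorphic, $\f^{-1}:\frV \into \frU$ is an $\LL$-analytic continuation of $f^{-1}$ (which lies in $\I \subseteq \H$, since $\I$ is a group under composition). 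Writing $y = \f(x)$, the estimate $|y| \le h_2(|\f^{-1}(y)|)$ gives $|\f^{-1}(y)| \ge h_2^{-1}(|y|)$, and since $h_2^{-1} \in \I$ this forces $|\f^{-1}(y)| \to \infty$ as $|y| \to \infty$; hence $1/\f^{-1}$ is bounded near $\infty$ and $\f^{-1}$ is half-bounded. Because $\frV$ is an $(\eta-\lambda-1)$-domain and $\eta-\lambda \in \NN$ (as $\lambda \le \eta$), Proposition \ref{max_eh} yields $\eh(f^{-1}) \le \eta-\lambda$, which is the first claim.

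For the ``in particular'', first note $\level(f^{-1}) = -\lambda$: the germ $f \circ f^{-1}$ is the identity, of level $0$, so $\lambda + \level(f^{-1}) = 0$ by \eqref{level_eq}. I would then argue that $\f^{-1}$ itself is an $(\eta-\lambda,-\lambda)$-extension of $f^{-1}$: the domains $\frV$ and $\frU$ are, respectively, an $((\eta-\lambda)-1)$-domain and an $((\eta-\lambda)-(-\lambda)-1)$-domain as required by part (1a); $\f^{-1}$ is biholomorphic and angle-positive; part (1c) holds with $h_1^{-1},h_2^{-1} \in \I$ (both of level $-\lambda$) in place of $h_1,h_2$, from the two-sided estimate on $\f$ inverted as above; and parts (1a)--(1b) and the angular-level statement follow by inverting the corresponding properties of $\f$. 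Alternatively, one may simply re-apply the Continuation Theorem to $f^{-1}$ to get a $(\max\{0,\eh(f^{-1})\},-\lambda)$-extension and then restrict it to an $(\eta-\lambda-1)$-subdomain of its domain (possible since $\Hang_{\eta-\lambda-1} \neq \emptyset$ and $\eta-\lambda \ge \max\{0,\eh(f^{-1})\}$), all clauses of Definition \ref{weakly_extending} being inherited by such a restriction.

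The one point requiring genuine work is the verification underlying the extendability claim — equivalently, that inverting an $(\eta,\lambda)$-map produces an $(\eta-\lambda,-\lambda)$-map (or, in the alternative route, that extendability is monotone in the first parameter). The crux is showing that $\f^{-1}$ sends each $j$-domain $\frV' \subseteq \frV$ to a $(j+\lambda)$-domain: the lower bound on $\f^{-1}(\frV')$ is supplied directly by part (1b) of $\f$ (with $k = j+\lambda \ge \eta - 1$, which is legitimate since any $j$-domain inside the $(\eta-\lambda-1)$-domain $\frV$ satisfies $j \ge \eta-\lambda-1$), while for the upper bound one uses that $\f$ has angular level $\lambda$ together with the uniqueness of the integer $k$ for which a given real domain is a $k$-domain, plus the fact that the biholomorphic, angle-positive map $\f^{-1}$ carries real domains to real domains. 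I expect this bookkeeping with $k$-domains to be the main obstacle; everything else is immediate from the results already established.
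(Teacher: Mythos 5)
Your derivation of the bound $\eh\left(f^{-1}\right) \le \eta-\lambda$ coincides with the paper's proof: take an $(\eta,\lambda)$-extension $\f:\frU \into \frV$ of $f$ from the Continuation Theorem \ref{I_ext_prop}, observe that $\f^{-1}:\frV \into \frU$ is a half-bounded analytic continuation of $f^{-1}$ on the $(\eta-\lambda-1)$-domain $\frV$, and apply Proposition \ref{max_eh} (legitimate since $\eta-\lambda \in \NN$).

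For the ``in particular'', your fallback route is the correct one and is what the paper intends: apply the Continuation Theorem to $f^{-1}$, which now is known to satisfy $\max\{0,\eh(f^{-1})\} \le \eta-\lambda$ and $\level(f^{-1}) = -\lambda$, and restrict the resulting extension to an $(\eta-\lambda-1)$-subdomain; the clauses of Definition \ref{weakly_extending} do pass to such a restriction (for clause (1b) use injectivity: the $k$-domain it produces has image inside the restricted image, hence lies inside the restricted domain). Your preferred route, however --- that $\f^{-1}$ itself is an $(\eta-\lambda,-\lambda)$-map --- is precisely what the Remark immediately following this corollary in the paper warns is \emph{not} a priori true, and your sketch of the missing upper bound does not close the gap: for a $k$-domain $\frV' \subseteq \frV$ with $k > \eta-\lambda-1$, clause (1b) for $\f$ only supplies a $(k+\lambda)$-domain \emph{contained in} $\f^{-1}(\frV')$ (the lower bound), while nothing yields $\f^{-1}(\frV') \subseteq \frU_g$ for some $g \in \Hang_{k+\lambda}$; the only available upper bound is $\frU$ itself, which has the wrong angular level. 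The assertion that the biholomorphic, angle-positive map $\f^{-1}$ ``carries real domains to real domains'' is unjustified --- images of real domains under such continuations are in general neither real nor definable (compare the discussion of $\t_a$ in Example \ref{arg_function_expls}(3)) --- so the ``unique $k$ for a real domain'' argument has nothing to apply to. The proposal therefore goes through, but only via your second route; drop the first.
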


\begin{rmk}
	Note that the compositional inverse of an $(\eta,\lambda)$-map is not \textit{a priori} an $(\eta-\lambda,-\lambda)$-map.
\end{rmk}

\begin{proof}
Let $\f:\frU \into \frV$ be an $(\eta,\lambda)$-extension of $f$.  Since $\f$ is biholomorphic, the germ $f^{-1}$ has an analytic continuation $\f^{-1}:\frV \into \frU$; since $\frV$ is an $(\eta-\lambda-1)$-domain, it follows from Proposition \ref{max_eh} and Corollary \ref{eh-cor}(3) that $\eh\left(f^{-1}\right) \le \eta-\lambda$.
\end{proof}

\begin{df}
	\label{simple_df}
	We call $f \in \H$ \textbf{simple} if $\level(f) =\eh(f)$. 
\end{df}

\begin{expl}
	If $f \in \H$ is purely infinite then, by definition, we have $\eh(f) = \eh(\lm(f)) = \level(f)$, that is, $f$ is simple.
\end{expl}

\begin{lemma}
\label{gp_comp}
Let $f \in \I$ be simple.  Then $f \circ \exp$ and $f \circ \log$ are simple.
\end{lemma}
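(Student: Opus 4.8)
The plan is to deduce both claims directly from the known behaviour of $\eh$ and $\level$ under composition with $\exp$ and $\log$, so there is essentially nothing to do beyond bookkeeping. First I would observe that since $f \in \I$ and $\I$ is a group under composition, both $f \circ \exp$ and $f \circ \log$ lie in $\I$; in particular the composition formula for level, equation~\eqref{level_eq}, is applicable to them.

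Next I would record the two height identities from Corollary~\ref{eh-cor}(1), namely $\eh(f \circ \exp) = \eh(f) + 1$ and $\eh(f \circ \log) = \eh(f) - 1$, and the two level identities coming from \eqref{level_eq} together with Facts~\ref{level_facts}(1), namely
\[
\level(f \circ \exp) = \level(f) + \level(\exp) = \level(f) + 1
\]
and
\[
\level(f \circ \log) = \level(f) + \level(\log) = \level(f) - 1.
\]
Finally, since $f$ is simple we have $\level(f) = \eh(f)$, whence $\level(f \circ \exp) = \eh(f) + 1 = \eh(f \circ \exp)$ and $\level(f \circ \log) = \eh(f) - 1 = \eh(f \circ \log)$; by Definition~\ref{simple_df} this says precisely that $f \circ \exp$ and $f \circ \log$ are simple.

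I do not expect any genuine obstacle in carrying this out: the lemma is an immediate consequence of the additivity of $\eh$ under $\exp,\log$ (Corollary~\ref{eh-cor}(1)) and the additivity of $\level$ under composition (equation~\eqref{level_eq}), and is stated separately only because it will be used repeatedly when analysing simple germs.
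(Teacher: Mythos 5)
Your argument is correct and is essentially the paper's own proof: the paper cites Corollary \ref{eh-cor}(1) for the exponential-height identities and Facts \ref{level_facts}(3) (equivalently equation \eqref{level_eq}, together with \ref{level_facts}(1)) for the level identities, exactly as you do. Nothing further is needed.
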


\begin{proof}
Corollary \ref{eh-cor}(1) and Fact \ref{level_facts}(3).
%
%Next, let $k \in \NN$ and $g \in \F$ be such that $f = g \circ \log_k$, and set $m:= \lm(g)$, so that $m \circ \log_k$ is the leading monomial of $f$.  Since $f$ is simple, we have $\eh(m) = \eh(f)$  and $m \in \P\F^\infty$ so, by Corollary \ref{eh-lemma}(1), we have $\eh(\exp \circ m) = \eh(m)+1$.  Since $f = am + (f-am)$ for some nonzero $a \in \RR$, we have that $\exp \circ f = \exp(am) \cdot \exp(f-ma)$.  \marginpar{\tiny give more detail here!}  It follows that $\eh(\exp \circ f) \ge \eh(f) + 1$; in particular, $\exp \circ f$ is simple.
\end{proof}

We denote by $\S$ the set of all simple $f \in \I$ and set $$\S_0:= \set{f \in \S:\ \level(f) = 0}.$$

\begin{prop}
\label{gp_0_group}
The set $\S_0$ is a compositional subgroup of $\I$. 
\end{prop}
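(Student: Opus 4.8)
The claim is that $\S_0 = \{f \in \I : f \text{ simple}, \level(f) = 0\}$ is a subgroup of $(\I,\circ)$. Since $\I$ is already a group under composition, I need to check three things: that $\id = x \in \S_0$, that $\S_0$ is closed under composition, and that $\S_0$ is closed under compositional inverse. The identity is immediate: $\eh(x) = 0 = \level(x)$. For closure under composition, let $f, g \in \S_0$; then $\level(f \circ g) = \level(f) + \level(g) = 0$ by \eqref{level_eq}, so it remains to show $\eh(f \circ g) = 0$, i.e.\ that $f \circ g$ is simple. Since $\eh(f) = \eh(g) = 0$, both $f$ and $g$ lie in $\H_{\le 0}$, so $f \circ g \in \H_{\le 0}$ by Corollary \ref{comp_inequality}; hence $\eh(f \circ g) \le 0$. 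On the other hand, $\level(f \circ g) = 0 \le \eh(f \circ g)$ by Corollary \ref{eh-cor}(2) (using that $f \circ g \in \I$), so in fact $\eh(f\circ g) = 0 = \level(f \circ g)$ and $f \circ g \in \S_0$.

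For closure under inversion, let $f \in \S_0$, so $\eh(f) = \level(f) = 0$. Apply Corollary \ref{eh_of_inverse} with $\eta = \lambda = 0$: it gives $\eh(f^{-1}) \le \eta - \lambda = 0$. Also $\level(f^{-1}) = -\level(f) = 0$ by \eqref{level_eq} applied to $f^{-1} \circ f = x$ (or directly by the level-of-inverse fact). Again Corollary \ref{eh-cor}(2) gives $0 = \level(f^{-1}) \le \eh(f^{-1}) \le 0$, so $\eh(f^{-1}) = 0 = \level(f^{-1})$, i.e.\ $f^{-1} \in \S_0$. This completes the verification that $\S_0$ is a subgroup.

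\textbf{Main obstacle.} There is no deep obstacle here — the proposition is essentially a bookkeeping consequence of three earlier results: the level composition formula \eqref{level_eq}, the stability of $\H_{\le 0}$ under composition (Corollary \ref{comp_inequality}), and the bound on $\eh(f^{-1})$ (Corollary \ref{eh_of_inverse}), together with the general inequality $\level \le \eh$ on $\I$ (Corollary \ref{eh-cor}(2)). The only point requiring a moment's care is making sure that in every application of "$\level \le \eh$" the germ in question is genuinely infinitely increasing, so that Corollary \ref{eh-cor}(2) applies; since $\I$ is a composition group this is automatic for $f \circ g$ and $f^{-1}$ when $f, g \in \I$. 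One should also double-check that $\level(f^{-1}) = -\level(f)$ is legitimately available — it follows from \eqref{level_eq} since $\level(f^{-1}) + \level(f) = \level(f^{-1}\circ f) = \level(x) = 0$ — rather than needing a separate citation.
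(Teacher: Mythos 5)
Your proof is correct and follows essentially the same route as the paper: closure under inversion via Corollary \ref{eh_of_inverse} combined with $\level(f^{-1})=-\level(f)$ and $\level\le\eh$ on $\I$, and closure under composition via $\eh(f\circ g)\le 0$ together with $\level(f\circ g)=0$. Your citation of Corollary \ref{comp_inequality} for the bound $\eh(f\circ g)\le 0$ is in fact the appropriate one (the paper's text cites Corollary \ref{eh_of_inverse} at that step, which appears to be a slip), so nothing further is needed.
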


\begin{proof}
Let $f,g \in \S_0$.  Then $\eh\left(f^{-1}\right) \ge \level\left(f^{-1}\right) = 0$ and, by Corollary \ref{eh_of_inverse}, we also have $\eh\left(f^{-1}\right) \le 0$, so that $f^{-1} \in \S_0$.  Moreover, we have $$0 = \level(f \circ g) \le \eh(f \circ g) \le 0$$ by Corollary \ref{eh_of_inverse}, so $f \circ g \in \S_0$.
\end{proof}

\begin{cor}
\label{gp_ext}
Let $f \in \H$, $g \in \I$ and set $\eta:= \eh(f) \ge \lambda:= \level(f)$ and $k:= \eh(g) \ge l:= \level(g)$.  Then
\begin{enumerate}
\item $\eh\left(f \circ g^{-1}\right) \le \max\{\eta+k-2l,k-l\}$;
\item if $f,g \in \S$ and $\eta \ge k$, then $f \circ g^{-1} \in \S$;
\item if $g \in \S$ and $k \le 0$, then $g^{-1} \in \S$.
\end{enumerate}
\end{cor}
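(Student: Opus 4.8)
The plan is to derive all three statements from the results already established, principally Corollary \ref{eh_of_inverse}, Corollary \ref{comp_inequality}, Lemma \ref{1.5} (via its consequence on levels) and the additivity of level under composition, equation \eqref{level_eq}. For part (1), the idea is to bound $\eh(f \circ g^{-1})$ by splitting according to whether $f$ is infinitely increasing. First I would handle the main case $f \in \I$. By Corollary \ref{eh_of_inverse}, $g^{-1}$ is $(k-l,-l)$-extendable, and by the Continuation Theorem \ref{I_ext_prop}, $f$ has an $(\eta',\lambda)$-extension where $\eta' := \max\{0,\eta\}$. Composing via Lemma \ref{weak_ext_comp}, $f \circ g^{-1}$ is $(\tilde\eta, \lambda - l)$-extendable, where $\tilde\eta = \max\{k-l, \eta' + (-l)\} = \max\{k-l, \eta'-l\}$. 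Since exponential height is bounded by the first index of any extension (this is exactly Proposition \ref{max_eh} applied to the extension's domain, which is an $(\tilde\eta - 1)$-domain), we get $\eh(f \circ g^{-1}) \le \tilde\eta = \max\{k-l, \max\{0,\eta\}-l\}$. When $\eta \ge 0$ this is $\max\{k-l,\eta-l\}$, which is $\le \max\{\eta + k - 2l, k-l\}$ because $k \ge l$ forces $k - l \ge 0$ hence $\eta - l \le \eta + k - 2l$; when $\eta = -\infty$ or $\eta < 0$ the bound $k-l$ already dominates. For $f \notin \I$, pick $c \in \RR$ with $1/(f-c) \in \I$; then $f \circ g^{-1} = c + 1/\big((1/(f-c)) \circ g^{-1}\big)$ and $\eh$ is unchanged by adding constants and by reciprocation (Corollary \ref{eh-cor}(3) says $\H_{\le m}$ is a field for each $m$, so $\eh(1/h) = \eh(h)$ whenever $h$ is a unit with $\eh(h) = m$; more carefully, $\eh(h) = \eh(1/h)$ for nonzero $h \in \H$, which follows from Remark \ref{simple_sum}(3) and Lemma \ref{f_going_to_0} as in the proof of Corollary \ref{F_isa_field}), and $\eh(1/(f-c)) = \eh(f-c) = \eh(f) = \eta$, so the already-established case applies.

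For part (2), assume $f, g \in \S$ and $\eta \ge k$, so $\lambda = \eta$ and $l = k$. Then $\level(f \circ g^{-1}) = \lambda - l = \eta - k$ by equation \eqref{level_eq} (using $\level(g^{-1}) = -\level(g) = -k$). From part (1), $\eh(f \circ g^{-1}) \le \max\{\eta + k - 2k, k - k\} = \max\{\eta - k, 0\} = \eta - k$, the last equality because $\eta \ge k$. Combined with the general inequality $\level \le \eh$ from Corollary \ref{eh-cor}(2) (valid once we check $f \circ g^{-1} \in \I$, which holds since $\I$ is a group under composition), we get $\eh(f \circ g^{-1}) = \eta - k = \level(f \circ g^{-1})$, i.e.\ $f \circ g^{-1} \in \S$. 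I should double-check the edge case where $f \circ g^{-1}$ fails to be infinitely increasing: but $f, g \in \I$ implies $f \circ g^{-1} \in \I$ since $\I$ is a compositional group, so this case does not arise, and similarly $f \circ g^{-1}$ being simple requires only the identity $\level = \eh$, which we have.

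Part (3) is the special case of part (2) with $f = \id$ (which lies in $\S$ with $\eh = \level = 0$) and the hypothesis $k \le 0$: since $g \in \S \subseteq \I$, $\level(g) = \eh(g) = k$, and $\eta := \eh(\id) = 0 \ge k$, so part (2) gives $\id \circ g^{-1} = g^{-1} \in \S$. The main obstacle I anticipate is purely bookkeeping: making sure the reduction from arbitrary $f \in \H$ to $f \in \I$ (via $1/(f-c)$) interacts correctly with the arithmetic of $\eh$ and $\level$, and confirming that $\eh(1/h) = \eh(h)$ for units — but this is already implicit in the field structure of the $\H_{\le m}$ and in Corollary \ref{F_isa_field}, so no genuinely new work is needed; everything else is immediate from Corollary \ref{eh_of_inverse}, Lemma \ref{weak_ext_comp}, Proposition \ref{max_eh} and equation \eqref{level_eq}.
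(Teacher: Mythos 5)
Your toolkit is essentially the paper's (Continuation Theorem \ref{I_ext_prop}, Lemma \ref{weak_ext_comp}, Corollary \ref{eh_of_inverse}, Proposition \ref{max_eh}, equation \eqref{level_eq}), and your derivations of (2) and (3) from (1) coincide with the paper's. But your proof of (1) has a genuine gap when $\eh(g)<0$. Corollary \ref{eh_of_inverse} gives that $g^{-1}$ is $(\max\{k,0\}-l,\,-l)$-extendable, not $(k-l,-l)$-extendable; for $k<0$ the pair $(k-l,-l)$ even violates the requirement $\lambda\le\eta$ in Definition \ref{weakly_extending}, so the statement you invoke is not available. With the correct index, your composition only yields $\eh\left(f\circ g^{-1}\right)\le \max\{k,\eta,0\}-l$, and when $k<0$ and $\eta<0$ this equals $-l$, which can strictly exceed $\max\{\eta+k-2l,\,k-l\}$; your remark that for $\eta<0$ ``the bound $k-l$ already dominates'' tacitly assumes $k\ge0$. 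Concretely, for $f=\log_5$ and $g=\log$ the target bound is $0$, but your argument only gives $\eh(\log_4)\le 1$. Worse, the same configuration is needed in part (2): for $f=g=\log$ one must conclude $\eh\left(f\circ g^{-1}\right)=0=\level\left(f\circ g^{-1}\right)$, and the bound $\le 1$ produced by your part (1) does not suffice, so the gap propagates beyond a mere corner case.

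The missing idea is the paper's normalization of $g$ before composing extensions: first replace $g$ by $\exp_{k-l}\circ g$, which is simple by Proposition \ref{exp_comp}, using $\eh\left(f\circ g^{-1}\right)=\eh\left(f\circ g^{-1}\circ\log_{k-l}\right)+(k-l)$; then write $f\circ g^{-1}=(f\circ\log_k)\circ(g\circ\log_k)^{-1}$ and use Lemma \ref{gp_comp} to reduce to the case $k=l=0$, where $g^{-1}\in\S_0$ is $(0,0)$-extendable by Proposition \ref{gp_0_group}. After this reduction all indices are nonnegative, and your composition argument (Continuation Theorem, Lemma \ref{weak_ext_comp}, half-boundedness plus Proposition \ref{max_eh}) goes through and yields the stated bound in every sign configuration; without the reduction it does not. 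A smaller point: your reduction of the case $f\notin\I$ via a constant $c$ with $1/(f-c)\in\I$ is not always possible (e.g.\ $f\to-\infty$, or $f$ tending to a finite limit from below, or $f$ constant); you need to allow a sign change, i.e.\ work with $-f$ or $-1/(f-c)$ as the paper does for ``large'' $f$, using $\eh(-h)=\eh(h)$.
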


\begin{proof}
(1) By Proposition \ref{exp_comp}, the germ $\exp_{k-l} \circ g$ is simple; since $$\eh\left(f \circ g^{-1}\right) = \eh\left(f \circ g^{-1} \circ \log_{k-l}\right) + k-l,$$ we may assume that $g$ is simple and show that $$\eh\left(f \circ g^{-1}\right) \le \max\{\eta-l,0\}$$ in this case.  Since $g$ is simple, the germ $g \circ \log_k$ is simple by Lemma \ref{gp_comp}, and since $$f \circ g^{-1} = (f \circ \log_k) \circ (g \circ \log_k)^{-1},$$ we may even assume that $k=l = 0$.  But then $g^{-1} \in \S_0$ by Proposition \ref{gp_0_group}, so $g^{-1}$ is $(0,0)$-extendable, and we set $\eta':= \max\{\eta,0\}$.  

Next, assume that $f$ is large.  Since $\eta' = \max\{\eta,0\}$, we get from the Continuation Theorem that $f$ or $-f$ is $(\eta',\lambda)$-extendable; it follows from Lemma \ref{weak_ext_comp} that $f \circ g^{-1}$ or $-f \circ g^{-1}$ is $(\eta',\lambda)$-extendable as well.   Corollaries \ref{eh_of_inverse} and \ref{F_isa_field} now imply that $\eh(f \circ g^{-1}) \le \eta'$, as required.  

Finally, if $f$ is bounded, there exists $c \in \RR$ such that $\frac1{f-c}$ is large, so the previous subcase and Corollary \ref{F_isa_field} also give the claimed conclusion in this subcase.

Part (2) follows from part (1), since $\level\left(f  \circ g^{-1}\right) = \eta-k = \eta'$ in this case, and part (3) follows from part (2) with $f = p_1$.
\end{proof}

\begin{expl}
	\label{question_expl_2}
	In the setting of Example \ref{question_expl}, assume in addition that $f$ and $g$ are simple.  Then we get from Corollary \ref{gp_ext}(1) that $\eh\left(f \circ g^{-1}\right) \le 0$, so by the Continuation Theorem, the germ $f \circ g^{-1}$ is $(0,\level(f)-\level(g))$-extendable.  This holds irrespective of what $\level(g)$ actually is, which represents a big improvement over the  observation in Example \ref{question_expl}.
\end{expl}

%\newpage
 \section{Definable analytic continuations} 
\label{definability_section}

In this section, we study the definability of the analytic continuations obtained from the Continuation Theorem.  We call a set $\frS \subseteq \LL$ \textbf{angle-bounded} if there exists $K>0$ such that $\frS \subseteq S_\LL(K)$.

\begin{rmk}
	If $\frA,\frB \subseteq \LL$ are angle-bounded, then the sets $\frA\frB$ and $\frA/\frB$ are angle-bounded.
\end{rmk}

\begin{lemma}
\label{laurent_dfbl_ext}
	Let $k \in \NN$ and $P \in \Pc{R}{X_1, \dots, X_k}$, and let $r>0$ and $\bP:B_\LL(r)^{k} \into \CC$ be a sum of $P$.  Let $\frU_1, \dots, \frU_{k} \subseteq B_\LL(r)$ be definable domains and set $\frU:= \frU_1 \times \cdots \times \frU_{k}$.  Then $\bP\rest{\frU}$ is definable if and only if each $\frU_i$ is angle-bounded.
\end{lemma}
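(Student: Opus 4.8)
My plan is to identify $\bP$ with $p\circ\pi_k$, where $p$ is the holomorphic sum of $P$ on the polydisc $B_r(0)^k\subseteq\CC^k$ and $\pi_k(x_1,\dots,x_k):=(\pi(x_1),\dots,\pi(x_k))$, and to work throughout in the chart $\LL\cong(0,+\infty)\times\RR$, $x=(|x|,\arg x)$. I will use two harmless reductions without further comment: first, that the closed polydisc $\overline{B_r(0)}^k$ lies in the open domain of convergence of $P$, so that $p\rest{\overline{B_r(0)}^k}$ is a restricted analytic function and hence definable in $\Ran$ (this is automatic in the applications, and in general follows after slightly shrinking $r$); second, that $P$ genuinely involves each of $X_1,\dots,X_k$ (otherwise $\bP$ does not depend on the missing variable and the statement must be read relative to the variables that occur).

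For the ``if'' direction I would argue as follows. Suppose $\frU_i\subseteq S_\LL(K_i)$ for each $i$. In the chart, $\pi\rest{\frU_i}$ is the map $(\rho,\theta)\mapsto(\rho\cos\theta,\,\rho\sin\theta)$ with $\rho\in(0,r)$ and $\theta\in(-K_i,K_i)$; since $\cos$ and $\sin$ restricted to the bounded interval $(-K_i,K_i)$ are restricted analytic, $\pi\rest{\frU_i}$ is definable in $\Ran$. Hence $\pi_k\rest{\frU}$ is definable with image in $B_r(0)^k$, and composing it with the definable function $p\rest{\overline{B_r(0)}^k}$ shows that $\bP\rest{\frU}$ is definable.

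The ``only if'' direction I would prove by contraposition: assuming some $\frU_i$ is not angle-bounded, I will produce a definable unary function that fails to be eventually monotone, contradicting o-minimality of $\Ranexp$. Because $P$ involves $X_i$, the holomorphic function $\partial_{z_i}p$ is not identically zero on the nonempty open set $\prod_j\pi(\frU_j)\subseteq B_r(0)^k$, so I may fix $x_j^0\in\frU_j$ for $j\ne i$ so that the one-variable power series $q(z):=p(\pi(x_1^0),\dots,z,\dots,\pi(x_k^0))$ is non-constant. If $\bP\rest{\frU}$ were definable, so would be the parametrized section $x\mapsto q(\pi(x))$ on $\frU_i$. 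Next, $\frU_i$ not angle-bounded means (after possibly switching $\theta\leftrightarrow-\theta$) that $\sup\{\arg x:x\in\frU_i\}=+\infty$; using that $\{\theta:(\rho,\theta)\in\frU_i\text{ for some }\rho\}$ contains a ray, together with uniform finiteness of fibres and the monotonicity theorem, I get a ray $(a,+\infty)$ and a definable $\rho:(a,+\infty)\to(0,r)$ with $(\rho(\theta),\theta)\in\frU_i$ for all $\theta>a$ (take $\rho(\theta)$ to be the midpoint of a fixed component of the slice $\{\rho:(\rho,\theta)\in\frU_i\}$) and with $\rho^*:=\lim_{\theta\to+\infty}\rho(\theta)\in[0,r]$. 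Then $F(\theta):=\re\bigl(q(\rho(\theta)e^{i\theta})\bigr)$ is definable on $(a,+\infty)$, hence eventually monotone — yet it is not: if $\rho^*>0$, then (invoking analyticity of $q$ on a neighbourhood of $\overline{B_r(0)}$ when $\rho^*=r$) the points $q(\rho(\theta)e^{i\theta})$ accumulate, as $\theta\to+\infty$, on $q(\{|z|=\rho^*\})$, and by the maximum principle $\re q$ is non-constant on that circle since $q$ is non-constant, so $F$ is bounded with at least two distinct limit values; if $\rho^*=0$, writing $q(z)=q(0)+c_d z^d+O(z^{d+1})$ near $0$ with $c_d\ne0$ gives $F(\theta)=\re q(0)+\rho(\theta)^d\re(c_d e^{id\theta})+o(\rho(\theta)^d)$, whose middle term oscillates between $\pm|c_d|\rho(\theta)^d$ while dominating the error. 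Either way $F$ is not eventually monotone, the desired contradiction; so $\bP\rest{\frU}$ is not definable.

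The ``if'' direction is pure bookkeeping. The work is in the ``only if'' direction, and I expect two points to need genuine care: (i) extracting the definable curve $\theta\mapsto(\rho(\theta),\theta)$ inside $\frU_i$ along which $\arg\to+\infty$ while $|x|$ stays in a compact part of $(0,r)$ — a careful but routine application of cell decomposition and the monotonicity theorem; and (ii) the limiting case $\rho^*=0$, which the ``accumulating closed curve'' argument does not reach and which forces one to look at the leading term of $q$ at the origin to see that the spiralling still produces real oscillation of shrinking but never-vanishing amplitude. Point (ii) is the place where I anticipate the only genuine subtlety, the rest being diligence.
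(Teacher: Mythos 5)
Your proof is correct and is in essence the paper's own argument, which the paper compresses into the single sentence that $\bP$ is $2\pi$-periodic in each argument variable: angle-boundedness makes $\pi_k$, hence $\bP$, definable by composition with restricted sine/cosine and the restricted-analytic sum of $P$, while an angle-unbounded $\frU_i$ lets that periodicity contradict o-minimality, which you implement carefully via a definable curve $\theta\mapsto(\rho(\theta),\theta)$, the monotonicity theorem, and the maximum-principle/leading-term dichotomy according to whether $\rho^*>0$ or $\rho^*=0$. Your two caveats are also well taken: the ``if'' direction implicitly needs $P$ to converge on a neighbourhood of the closed polydisc, and the ``only if'' direction needs $P$ to genuinely involve each variable --- minor refinements of the statement that the paper's one-line proof passes over and that are harmless for the way the lemma is used in Proposition \ref{dfbl_ext_lemma}.
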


\begin{proof}
	This follows from the fact that $\bP$ is periodic in each variable.
\end{proof}

The next proposition implies the Definability Theorem.

\begin{prop}
\label{dfbl_ext_lemma}
	Let $f \in \I$, set $\eta:= \max\{0,\eh(f)\}$ and $\lambda:= \level(f)$, and let $\f:\frU \into \frV$ be an $(\eta,\lambda)$-extension of $f$.  Let also $\frU' \subseteq \frU$ be a definable domain.  If $\f(\frU')$ is angle-bounded, then $\f\rest{\frU'}$ is definable.
\end{prop}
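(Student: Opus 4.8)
The plan is to reduce the definability of $\f\rest{\frU'}$ to the definability statement already implicit in Lemma \ref{laurent_dfbl_ext}, by unwinding the construction of the $(\eta,\lambda)$-extension $\f$ along the lines of the proofs of Proposition \ref{exp_ext_prop} and Theorem \ref{I_ext_prop}. First I would write $f = g \circ \log_k$ with $g \in \E_{\eta+k}$, as in the proof of the Continuation Theorem, and recall that $\f = \g \circ \llog_k\rest{\frU''}$ where $\g$ is an expansive, angle-positive $(\eta+k,\lambda+k)$-extension of $g$ produced by Proposition \ref{exp_ext_prop}. Since $\llog_k$ is definable (it is the composite of the definable maps $\llog$ on the relevant half-planes), the map $\llog_k\rest{\frU'}$ is definable, and $\llog_k(\frU')$ is a definable domain; moreover, since $\f(\frU')$ is angle-bounded, it suffices to show that the restriction of $\g$ to any definable domain with angle-bounded $\g$-image is definable. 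Thus I may assume from the start that $f = g \in \E$ and that $\f$ is the expansive, angle-positive extension of Proposition \ref{exp_ext_prop}.

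Now I would induct on the pair $(\eta,\eta-\lambda)$, exactly mirroring the induction in Proposition \ref{exp_ext_prop}. In the base case $\eta = \lambda = 0$, either $f = p_r$, whose extension $\p_r$ is manifestly definable, or $f = amu$ with $m \in \M_0$, $u \in \E_0$ a unit; here $\m_r, \p_r$ are definable, and the extension $\u$ of $u$ is built (Corollary \ref{unit_lemma}) from a convergent power series $G$ precomposed with $\p_{-1}$ and the $\m_i$, so $\u\rest{\frU'}$ is definable on any definable $\frU'$ on which the relevant sub-domains $1/\m_i(\frU')$ are angle-bounded — and angle-boundedness of $\m_i(\frU')$ follows from angle-boundedness of $\f(\frU') = (\g\u)(\frU')$ together with the angle-control in Lemma \ref{mult_by_unit_pos_exp}(2), which compares $|\arg(\g\u)(x)|$ with $|\arg\g(x)|$ up to a factor of $2$, and the fact that $\g$ is a composite of $\m_r,\p_r$. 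For the inductive step with $f \in \M_\eta$, $f = \exp \circ h$, the extension is $\eexp$ precomposed with the (definable, by the inductive hypothesis, on angle-bounded-image domains) extension $\h$; note $\eexp\rest{S_\LL(\pi/2)} = \llog^{-1}$ is definable, and angle-boundedness of $\f(\frU')$ forces $\h(\frU') \subseteq S_\LL(\pi/2)$ after shrinking, so the inductive hypothesis applies. For general $f = amu$ with $\lambda < \eta$, I use $\g\u = \llog \circ \h$ where $\h$ extends $\exp\circ f$ with level $\lambda+1$: the inductive hypothesis handles $\h$ (the $\h$-image is angle-bounded because $\llog$ of an angle-bounded set is angle-bounded, being contained in $S_\LL(\pi/2)$, so we need $\h(\frU') \subseteq \llog^{-1}(\f(\frU'))$ angle-bounded, which is automatic since $\llog^{-1}$ lands in $H_\LL(1)$... wait, I must be careful: $\llog^{-1} = \eexp$ sends angle-bounded sets to angle-bounded sets only near $\infty$ in a controlled way, so I compare using the expansiveness of $\eexp$ on $S_\LL(\pi/2) \cap H_\LL(1)$ from Lemma \ref{expansive_lemma}), and $\llog$ is definable.

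The main obstacle I anticipate is the bookkeeping around angle-boundedness: at each stage of the induction I must verify that the relevant intermediate domain has an \emph{angle-bounded image} under the remaining part of the map, so that the inductive hypothesis or Lemma \ref{laurent_dfbl_ext} can be invoked. The key mechanism for this is that every map appearing in the construction is, up to composition with $\llog$, $\eexp$ and the definable $\m_r,\p_r$, a multiplication $\g\u$ with $\u$ a unit at $\infty$; by Lemma \ref{mult_by_unit_pos_exp}(2) the factor $\u$ changes $|\arg|$ by at most a bounded multiplicative factor near $\infty$, and $\llog$ maps everything into the angle-bounded strip $S_\LL(\pi/2)$ while $\eexp$ is expansive on $S_\LL(\pi/2) \cap H_\LL(1)$ (Lemma \ref{expansive_lemma}), so angle-bounded images are preserved or created in a traceable way. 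Once this is set up, the definability at each step reduces to Lemma \ref{laurent_dfbl_ext} (for the power-series pieces defining units and small germs) plus the definability of $\llog$, $\p_r$, $\m_r$, and the induction closes. I would then remark that the Definability Theorem follows immediately, since its hypotheses are precisely those of this proposition.
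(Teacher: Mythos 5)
Your overall skeleton (reduce to $f \in \E$ via $f = g\circ\log_k$ and the definability of $\llog_k$, then induct following the construction of the extension, with Lemma \ref{laurent_dfbl_ext} handling the power-series pieces) matches the paper's proof, and your base case is essentially the paper's. But there is a genuine gap in your inductive step for general $f = amu$ with $\lambda < \eta$: you route through the factorization $\f = \llog\circ\h$, where $\h$ extends $\exp\circ f$, and you want to apply the inductive hypothesis to $\h$. That hypothesis requires $\h(\frU')$ to be angle-bounded, and here $\h(\frU') = \eexp(\f(\frU'))$. But $\eexp$ does not map angle-bounded sets to angle-bounded sets: $\arg\eexp(x) = |x|\sin(\arg x)$, which is unbounded on any angle-bounded set on which $\arg x$ does not tend to $0$ sufficiently fast. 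So angle-boundedness of $\f(\frU')$ gives no control on $\h(\frU')$, and your appeal to expansiveness of $\eexp$ (Lemma \ref{expansive_lemma}) is irrelevant to this point --- expansiveness is a metric statement and says nothing about preservation of angle bounds. This is exactly the phenomenon behind the paper's remark that the converse of the proposition fails for $\exp$: $\eexp$ is definable yet has angle-unbounded image. You flagged your own unease here (``wait, I must be careful\dots'') but did not resolve it, and as written the induction does not close.

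The paper avoids this entirely: in the definability proof the case distinction $\lambda=\eta$ versus $\lambda<\eta$ (and hence the $\llog\circ\h$ trick from Proposition \ref{exp_ext_prop}) is not needed. Instead one inducts on $\eta=\eh(f)$ alone and proves the \emph{stronger} statement that if $\f(\frU')$ is angle-bounded then $\frU'$ itself is angle-bounded \emph{and} $\f\rest{\frU'}$ is definable. For general $f=(m_a\circ n)u$ with $n\in\M_\eta$ one uses that the constructed unit extension $\u$ has image contained in $S_\LL(\pi/2)$, so $(\m_a\circ\n)(\frU')\subseteq \f(\frU')/\u(\frU')$ is angle-bounded (quotients of angle-bounded sets are angle-bounded); the monomial case of the induction (treating $n=\exp\circ g$ with $g\in\E_{\eta-1}$, where $\g(\frU')\subseteq S_\LL(\pi/2)$ is automatic from the construction) then yields that $(\m_a\circ\n)\rest{\frU'}$ is definable and that $\frU'$ is angle-bounded, and only then can Lemma \ref{laurent_dfbl_ext} be applied to make the unit's restriction definable. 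Note that carrying the conclusion ``$\frU'$ is angle-bounded'' in the inductive statement is not optional bookkeeping: your proposal omits it beyond the base case, yet it is precisely what makes the unit step work when $\eta>0$. Your alternative use of Lemma \ref{mult_by_unit_pos_exp}(2) to pass angle bounds from $\f(\frU')$ to the monomial factor is a workable substitute for the paper's $\u(\frU)\subseteq S_\LL(\pi/2)$ argument, but it does not repair the $\llog\circ\h$ step, which should simply be abandoned in favour of the direct monomial-times-unit treatment.
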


\begin{rmk}
	The converse does not hold for $f = \exp$, as $\eexp$ is definable but has angle-unbounded image.
\end{rmk}

\begin{proof}
	Note that the conclusion holds if $f = \log$, because $\llog$ is definable.  In general, there are $k \in \NN$ and $g \in \E_{\eh(f)+k}$ such that $f = g \circ \log_k$.  We claim that if the proposition holds with $g$ in place of $f$, then it holds for $f$.
	
	To see the claim, assume the proposition holds with $g$ in place of $f$, and let $\g:\frU_1 \into \frV_1$ be an $(\eh(f)+k, \lambda+k)$-extension of $g$.  Intersecting $\frU'$ with $H_\LL(R)$ for some sufficiently large $R>0$, we may assume that $$\frU'':= \llog_k(\frU') \subseteq H_\LL(1);$$  in particular, $\llog_k\rest{\frU'}$ and $\frU''$ are definable.  Now, since $\f(\frU')$ is angle-bounded, we have $$\g(\frU'') = \g(\llog_k(\frU')) = \f(\frU')$$ is angle-bounded.  So by hypothesis, the restriction $\g\rest{\frU''}$ is definable; but then $$\f\rest{\frU'} = (\g \circ \llog_k)\rest{\frU'} = \left(\g\rest{\frU''}\right) \circ \left(\llog_k\rest{\frU'}\right)$$ is definable as well, which proves the claim.
	
	By the claim, we may assume that $f \in \E$ and, in this case we prove, by induction on $\eta = \eh(f)$, that 
	\begin{itemize}
		\item [$(\ast)$] if $\f(\frU')$ is angle-bounded, then $\frU'$ is angle-bounded and $\f\rest{\frU'}$ is definable.
	\end{itemize}
	
	\subsection*{Case $\eta = 0$:} If $f \in \M_0$, then $f = p_k$ for some $k \in \ZZ$, which has definable $(0,0)$-extension $\p_k:\LL \into \LL$.  The same goes for $f = m_a$ with $a \in \RR$, which has definable $(0,0)$-extension $\m_a:\LL \into \LL$.  Hence $\m_a \circ \p_k$ is a definable analytic continuation of $m_a \circ p_k$, and both the image and preimage under $\m_a \circ \p_k$ of any angle-bounded domain are angle-bounded. 
	
	On the other hand, if $f$ is a unit at $+\infty$, then $f = 1 + g$ with $g \in \E_0$ small.  By Lemma \ref{f_going_to_0}(1), there is a $P \in \Pc{R}{T}$ such that $g(x) = P(1/x)$ for sufficiently large $x>0$.  Let $r>0$ be such that $|P(z)| < 1/2$ for $|z| < r$, and let $\bP:B_\LL(r) \into \CC$ be the corresponding analytic continuation of $P$.  Then $$x \mapsto \f(x):= \pi_0^{-1}(1+\bP(1/x)):H_\LL(1/r) \into \LL$$ is an analytic continuation of $f$ that has image contained in $S_\LL(\pi/2)$ and, by Lemma \ref{laurent_dfbl_ext}, the restriction of $\f$ to any definable, angle-bounded domain is definable.
	
	In general, $f = (m_a \circ p_k) u$ for some nonzero $a \in \RR$, $k \in \ZZ$ and $u \in \E_0$ such that $u$ is a unit at $+\infty$.  Assume that $\f(\frU')$ is angle-bounded; since the analytic continuation $\u$ of $u$ has image contained in $S_\LL(\pi/2)$, and since $(\m_a \circ \p_k)(\frU') \subseteq \f(\frU') / \u(\frU')$, it follows that $(\m_a \circ \p_k)(\frU')$ is angle-bounded as well.  So by the above, $(\m_a \circ \p_k)\rest{\frU'}$ is definable and $\frU'$ is angle-bounded, so that $\u\rest{\frU'}$ and hence $\f\rest{\frU'}$ is definable as well.
	
	\subsection*{Case $\eta>0$:} Assume that $(\ast)$ holds for $g \in \E_{\eta-1}$ and that $\f(\frU')$ is angle-bounded.  If $f \in \M_\eta$, then $f = \exp \circ g$ for some $g \in \E_{\eta-1}$; let $\g:\frU_1 \into \frV_1$ be an $(\eta-1,\lambda-1)$-extension of $g$.  Since $\frU_1$ is an $(\eta-2)$-domain, we may assume that $\frU' \subseteq \frU_1$; and  since $|\f(z)| \to \infty$ uniformly in $|z|$, we may assume that $\f(\frU') \subseteq H_\LL(1)$.  Then $$\g(\frU') = \llog(\f(\frU')) \subseteq \llog(H_\LL(1)) = S_\LL(\pi/2);$$ so that $\frU'$ is angle-bounded and $\g\rest{\frU'}$ is definable, by the inductive hypothesis, and $\eexp\rest{\g(\frU')}$ is definable.  Hence $\f\rest{\frU'} = \eexp\rest{\g(\frU')} \circ \g\rest{\frU'}$ is definable as well, as claimed.
	
	On the other hand, if $f$ is a unit at $+\infty$, then $f = 1 + g$ with $g \in \E_\eta$ small.  By Lemma \ref{f_going_to_0}(1), there are $k \in \NN$, small $m_1, \dots, m_k \in \M_\eta$, an $(\eta-1)$-domain $\frU_1$ and a $P \in \Pc{R}{X_1, \dots, X_k}$ such that $\gg(x) = \bP(\m_1(x), \dots, \m_k(x))$ for sufficiently large $x \in \frU_1$, where $\g:\frU_1 \into \CC$ and $\m_1, \dots, \m_k:\frU_1 \into \LL$ are analytic continuations of $g$ and $m_1, \dots, m_k$, respectively, and $\bP:B_\LL(r) \into \CC$ be the corresponding analytic continuation of $P$, for some sufficiently small $r>0$.  Then $$x \mapsto \f(x):= \pi_0^{-1}(1+\bP(\m_1(x), \dots, \m_k(x))):H_\LL(1/r) \into \LL$$ is an analytic continuation of $f$ that has image contained in $S_\LL(\pi/2)$ and, by Lemma \ref{laurent_dfbl_ext} and the previous paragraph, if each $\m_i(\frU')$ is angle-bounded, then $\f\rest{\frU'}$ is definable as well.
		
	In general, $f = (m_a \circ n) u$ for some nonzero $a \in \RR$, $n \in \M_\eta$ and $u \in \E_\eta$ such that $u$ is a unit at $+\infty$.  Assume that $\f(\frU')$ is angle-bounded; since the analytic continuation $\u$ of $u$ has image contained in $S_\LL(\pi/2)$, and since $(\m_a \circ \n)(\frU') \subseteq \f(\frU') / \u(\frU')$, it follows that $(\m_a \circ \n)(\frU')$ is angle-bounded as well.  So by the above, $(\m_a \circ \n)\rest{\frU'}$ is definable and $\frU'$ is angle-bounded, so that $\u\rest{\frU'}$ is definable as well.
\end{proof}

As an application of this proposition, we obtain a variant of Wilkie's theorem \cite[Theorem 1.11]{MR3509953} on definable complex continuations: set $$\Hpoly:= \set{f \in \H:\ |f| \le p_n \text{ for some } n \in \NN}$$ and $$\Rpoly:= \set{f \in \H:\ m,\frac1m \in \Hpoly \text{ for every } m \in \M(f)}.$$  Then $\Hpoly$ is a convex subring of $\H$ with maximal ideal $$\Mpoly := \set{f \in \H:\ |f| < p_n \text{ for every } n \in \NN}.$$  Consequently, $\Rpoly$ is a subring of $\Hpoly$.  Moreover, if $P \in \Pc{R}{T}$ and $f \in \Rpoly$ is small, then $P \circ f \in \Rpoly$; it follows that $\Rpoly$ is a field.  Finally, it follows from (E4) and Proposition \ref{1.11} that $$\Hpoly = \Rpoly \oplus \Mpoly$$  as $\RR$-vector spaces;  in particular, $\Rpoly$ is isomorphic to $\Hpoly / \Mpoly$.  It follows from Kuhlmann \cite[Theorem 6.46]{MR1760173} that $\Rpoly$ is the same as the subfield with the same name in Wilkie \cite{MR3509953}.

\begin{rmk}
	Let $f \in \Rpoly$.  Then every $m \in \M(f)$ has comparability class equal to or slower than that of the identity function.  It follows that $\eh(f) \le 0$ so, by the Continuation Corollary, there exist a $(-1)$-domain $U_f$ and a half-bounded, analytic continuation $\f:\frU \into \LL$.  
\end{rmk}

To characterize the germs in $\Rpoly$ in terms of \textit{definable} analytic continuations, we need another lemma.

\begin{lemma}
	\label{cc_for_eh}
	Let $f \in \I$ be such that $\eh(f) \le 0$.  Then $\f$ maps angle-bounded domains into angle-bounded domains if and only if the comparability class of $f$ is equal to or slower than that of the identity map.
\end{lemma}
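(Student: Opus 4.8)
The plan is to transfer the problem to the $\Log$-chart and reduce it to a statement about the leading monomial of $f$. Fix, by the Continuation Theorem \ref{I_ext_prop}, an angle-positive $(\eta,\lambda)$-extension $\f\colon\frU\into\frV$ of $f$ with $\eta=\max\{0,\eh(f)\}=0$ and $\lambda=\level(f)\le0$; then $\frU$ is a $(-1)$-domain, so $\Log(\frU)$ contains every half-strip $\{w\in\CC:\ |\im w|<K,\ \re w>r_K\}$, and there are $h_1,h_2\in\I$ of level $\lambda$ with $h_1(|x|)\le|\f(x)|\le h_2(|x|)$ for large $x\in\frU$. Writing $\bar{\f}:=\Log\circ\f\circ\Exp$ one has $\re\bar{\f}(w)=\log|\f(\Exp w)|$ and $\im\bar{\f}(w)=\arg\f(\Exp w)$; since it suffices to test the images of the cofinal family of angle-bounded domains $S_{\LL}(K)$, $K>0$, the condition that $\f$ map angle-bounded domains into angle-bounded domains is equivalent to $\im\bar{\f}$ being bounded on every half-strip. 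On the other hand, the comparability class of $f$ is equal to or slower than that of the identity exactly when $\log f=O(\log x)$, that is, when $f\in\Hpoly$. If $\lambda\le-1$ both sides hold automatically: $\log f$ then has level $\le-2$, hence $\log f\prec\log x$, while $\f$, having angular level $\lambda\le-1$, maps the $0$-domain $S_{\LL}(K)$ into a $(-\lambda)$-domain with $-\lambda\ge1$, which is angle-bounded by Proposition \ref{alevel_cor}(3). So we may assume $\lambda=\level(f)=0$; as $\level(f)\le\eh(f)\le0$ this forces $\eh(f)=0$.

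Next I would reduce to $f$ being its own leading monomial. By Proposition \ref{transmonomials}(2) we have $f\asymp\lm(f)\in\la$, so $f$ and $\lm(f)$ have the same comparability class; and writing $f=a\cdot\lm(f)\cdot u$ with $a\in\RR^{>0}$ and $u$ a unit at $+\infty$ (this uses Proposition \ref{1.11} to pass through $\E$), Corollary \ref{unit_lemma} provides an extension $\u$ of $u$ on a $(-1)$-domain which is a unit at $\infty$; hence $\bar{\u}$ is bounded on every half-strip, the scalar multiplication $\m_a$ preserves arguments, and $\im\bar{\f}=\im\bar{\m}+\im\bar{\u}$ up to an additive constant, where $\m$ is the extension of $m:=\lm(f)$. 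Thus it suffices to prove, for $m\in\la$ with $\eh(m)=\level(m)=0$, that $\im\bar{\m}$ is bounded on every half-strip if and only if $m\in\Hpoly$.

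For such $m$, the continuation $\bar{\m}$ on $\Log(\frU)$ is the holomorphic continuation of the germ $\psi:=\log m\circ\exp$ (so $\bar{\m}(w)=\log m(e^{w})$ for real $w$); here $\phi:=\log m\in\U$ has level $-1$, $\eh(\psi)=\eh(\phi)+1=0$, and $m\in\Hpoly\iff\phi\preceq\log x\iff\psi\preceq\id$. If $\psi\preceq\id$, write $\psi=rx+\chi$ with $\chi=o(\id)$ and $\eh(\chi)\le0$; then $\chi'\to0$ in the Hardy field, so $\chi'$ is small and its continuation $\overline{\chi'}$ tends to $0$ on each half-strip (in the spirit of Proposition \ref{derivative_of_small}), whence $\bar{\m}'=r+\overline{\chi'}$ is bounded on each half-strip; since $\bar{\m}$ is real on $\RR$, for $w$ in such a half-strip $|\im\bar{\m}(w)|=|\im(\bar{\m}(w)-\bar{\m}(\re w))|\le(\sup|\bar{\m}'|)\,|\im w|=O(K)$, so $\m$ preserves angle-boundedness. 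Conversely, if $\psi\succ\id$, then $\psi'\succ1$, so $\bar{\m}'(s)=\psi'(s)\to+\infty$ as $s\to+\infty$ in $\RR$; since $\psi''/\psi'\to0$ (a standard property of large $\log$-$\exp$-analytic germs of exponential height $\le0$), a Cauchy estimate on a unit disc about $s$ gives $\re\bar{\m}'(s+i\tau)=\psi'(s)(1+o(1))$ uniformly for $|\tau|\le K$, and, $\bar{\m}$ being real on $\RR$, the Cauchy--Riemann equations give $\im\bar{\m}(s+it)=\int_{0}^{t}\re\bar{\m}'(s+i\tau)\,d\tau=t\,\psi'(s)(1+o(1))\to\infty$ for each fixed $t\neq0$, so $\m$ does not preserve angle-boundedness.

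The main obstacle is precisely this passage from the real axis to the half-strips, in both directions: controlling $\bar{\m}'$ (and $\overline{\chi'}$) on half-strips when $\psi\preceq\id$, and establishing $\re\bar{\m}'(s+i\tau)\sim\psi'(s)$ when $\psi\succ\id$. Each step rests on an a priori growth bound for $\bar{\m}$ (equivalently for $|\m|$) on half-strips, which I would extract from the two-sided estimate $h_1(|x|)\le|\m(x)|\le h_2(|x|)$ built into the definition of a $(0,0)$-map together with $\eh(m)=0$, the latter keeping the Cauchy estimates effective. A convenient way to package this would be an induction on exponential height, paralleling the proof of Proposition \ref{dfbl_ext_lemma}, showing that the holomorphic continuation of a small germ in $\H_{\le0}$ tends to $0$ on horizontal strips and, more generally, that the continuation of a germ in $\H_{\le0}$ which is $O(\id)$ has bounded imaginary part on horizontal strips.
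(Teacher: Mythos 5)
Your reductions (to $\lambda=0$, then via a unit to the leading monomial $m=\lm(f)$, then to a statement about $\im\bar{\m}$ on half-strips in the $\Log$-chart) are sensible, but the analytic core of the argument is missing, and you flag it yourself: both directions hinge on transferring real-axis information about $\psi=\log m\circ\exp$ to the half-strips $\{|\im w|<K\}$, namely that $\overline{\chi'}\to 0$ there when $\psi\preceq\id$, and that $\re\bar{\m}'(s+i\tau)=\psi'(s)(1+o(1))$ uniformly in $|\tau|\le K$ when $\psi\succ\id$. Neither claim is proved, and the Cauchy-estimate argument you sketch is circular as stated: to control $\bar{\m}'$ or $\bar{\m}''$ at $s+i\tau$ you need an a priori bound on $\bar{\m}$ (or $\bar{\m}'$) on a complex neighbourhood of the strip, and the only bound you invoke, $h_1(|x|)\le|\m(x)|\le h_2(|x|)$, controls $\re\bar{\m}$ only; converting that into control of the derivative off the axis needs an extra argument (Borel--Carath\'eodory plus Cauchy, say), and Proposition \ref{derivative_of_small} does not supply it either, since it gives decay of continuations of small germs on specific $(n-1)$-domains for a specific presentation, not estimates for $\overline{\chi'}$ where $\chi$ comes from the splitting $\psi=rx+\chi$. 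The claim that $\psi''/\psi'\to0$ for large germs of exponential height $\le 0$ is true but also needs a short Hardy-field argument rather than being "standard". So the proposal has a genuine gap exactly at what you call the main obstacle; the closing "induction on exponential height paralleling Proposition \ref{dfbl_ext_lemma}" is a plan, not a proof.

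For comparison, the paper's proof avoids all of this with a device you in fact already use in your own sketch (in the $\lambda\le-1$ case, and as in Corollary \ref{complex_cont_cor}(2c)): apply the Continuation Theorem to the auxiliary germs $p_{n+1}/f\in\I$ (when $f\le p_n$) and $f/p_n\in\I$ (when the identity is strictly slower than $f$), which again have exponential height $\le 0$, and use that their continuations $\g$, $\g_n$ are angle-positive. Since arguments add under multiplication on $\LL$, the identity theorem gives $\arg\g(x)=(n+1)\arg x-\arg\f(x)$, hence $0<\arg\f(x)\le(n+1)\arg x$ in one direction; and $\arg\g_n(x)=\arg\f(x)-n\arg x>0$ for every $n$ in the other, so points with $\arg x=c>0$ satisfy $\arg\f(x)>nc$ for all $n$, defeating angle-boundedness. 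No chart change, no leading-monomial reduction, and no derivative or Cauchy estimates are needed; if you want to salvage your route, the cleanest fix is to replace the half-strip derivative analysis by this quotient-plus-angle-positivity trick.
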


\begin{proof}
	Assume first that the comparability class of $f$ is equal to or slower than that of the identity map.  Then $f \le p_n$ for some $n \in \NN$, so that $\frac{p_{n+1}}{f} \in \I$.  Since $\eh\left(\frac{p_{n+1}}{f}\right) = \eh(f) \le 0$, there are a $(-1)$-domain $\frU$ and angle-positive, analytic continuations $\f,\g:\frU \into \LL$ of $f$ and $\frac{p_{n+1}}{f}$, respectively.  Thus, for $x \in \frU$ with $\arg x > 0$, we have $$0 < \arg\g(x) = (n+1)\arg x - \arg\f(x),$$ that is, $0 < \arg\f(x) \le (n+1)\arg x$, which proves one direction.
	
	Conversely, assume that the comparability class of the identity map is strictly slower than that of $f$, and fix an angle-positive, analytic continuation $\f:\frU \into \LL$ of $f$.  Let $n \in \NN$; then $\frac{f}{p_{n}} \in \I$, so there are a $(-1)$-domain $\frU_n \subseteq \frU$ and an angle-positive, analytic continuation $\g_n:\frU_n \into \LL$ of $\frac{f}{p_{n}}$.  Fix $c > 0$, and let $x \in \frU_n$ be such that $\arg x = c$; then $$0 < \arg\g_n(x) = \arg\f(x) - n \arg x = \arg\f(x) -nc,$$ so that $\arg\f(x) > nc$.  Since $n \in \NN$ was arbitrary, this means that we can find $x_n \in \frU$, for $n \in \NN$, such that $\arg x_n = c$ and $\arg\f(x_n) \to +\infty$ as $n \to \infty$, which proves the other direction.
\end{proof}

\begin{proof}[Proof of Application 2]
	First assume that $f \in \Rpoly$.  Then $\eh(f) \le 0$ so, by the Continuation Corollary, there are a $(-1)$-domain $\frU$ and a half-bounded, analytic continuation $\f:\frU \into \LL$.  Also, the comparability class of $f$ is equal to or slower than that of the identity function,  so by Lemma \ref{cc_for_eh}, $\f$ maps angle-bounded domains into angle-bounded domains.  Thus, if $\frU' \subseteq \frU$ is an angle-bounded, definable domain, then $\f(\frU')$ is angle-bounded, so by Proposition \ref{dfbl_ext_lemma}, the restriction of $\f:= \pi \circ \f$ to $\frU'$ is definable.
	
	Conversely, assume that there exist a $(-1)$-domain $\frU$ and a half-bounded, analytic continuation $\ff:\frU \into \CC$ of $f$ such that, for every angle-bounded, definable domain $\frU' \subseteq \frU$, the restriction $\ff\rest{\frU'}$ is definable.  Replacing $f$ by $f-c$ for some appropriate $c \in \RR$ if necessary, we may assume that $f$ is small or large; again replacing $f$ by $1/f$ if ncecessary, we may assume that $f$ is small; finally, replacing $f$ by $-f$ if necessary, we may also assume that $f > 0$.  Note that, in this case, $\ff$ is bounded.
	
	Since $f \in \Hpoly$, there are small $f_1 \in \Rpoly$ and $f_2 \in \Mpoly$ such that $f = f_1 + f_2$; we need to show that $f_2 = 0$.  By the left-to-right implication, after shrinking $\frU$ if necessary, there exists a bounded, analytic continuation $\ff_1:\frU \into \CC$ of $f_1$ such that, for every angle-bounded, definable domain $\frU' \subseteq \frU$, the restriction $\ff_1\rest{\frU'}$ is definable.  Therefore, $$\ff_2:= \ff - \ff_1:\frU \into \CC$$ is a bounded, analytic continuation of $f_2$.  Since $\frU$ is a $(-1)$-domain, it follows from Proposition \ref{max_eh} that $\eh(f_2) \le 0$; so by the Continuation Corollary and after shrinking $\frU$ again if necessary, $f_2$ has a half-bounded, analytic continuation $\f_2:\frU \into \LL$.
	
	Assume now, for a contradiction, that $f_2 \ne 0$.  Then by definition, the comparability class of the identity function is stictly slower than that of $f_2$, hence that of $1/f_2$.  By Lemma \ref{cc_for_eh}, there is an angle-bounded domain $\frU' \subseteq U$ such that $\f_2(\frU')$ is not angle-bounded; in particular, $(\pi \circ \f_2)\rest{\frU'}$ is not definable.  But $\pi \circ \f_2 = \ff_2$ by the identity theorem for holomorphic functions, so that $\ff_2\rest{\frU'}$ is not definable.  This contradicts the assumption that both $\ff\rest{\frU'}$ and $\ff_1\rest{\frU'}$ are definable.
\end{proof}

%\newpage

\end{document}